\newdimen{\auxlength}
\DeclareSymbolFont{forpolishl}{T1}{cmr}{m}{n}
\DeclareMathSymbol{\mathrmL}{0}{forpolishl}{'212}
\newcommand{\alg}{\mathbf}
\newcommand{\class}{\mathsf}
\newcommand{\logic}{\mathrm}
\newcommand{\tuple}{\overline}
\newcommand{\set}[2]{\{ #1 \mid #2 \}}
\newcommand{\pair}[2]{\langle #1, #2 \rangle}
\newcommand{\assign}{:=}
\newcommand{\equals}{\approx}
\newcommand{\inequals}{\leq}
\newcommand{\bs}{\backslash}
\newcommand{\allset}{\emptyset}
\newcommand{\CL}{\logic{CL}}
\newcommand{\IL}{\logic{IL}}
\newcommand{\FL}{\logic{FL}}
\newcommand{\FLe}{\FL_{\mathrm{e}}}
\newcommand{\FLew}{\FL_{\mathrm{ew}}}
\newcommand{\FLen}{\FL_{\mathrm{e}}^{n}}
\newcommand{\FLewn}{\FL_{\mathrm{ew}}^{n}}
\newcommand{\K}{\logic{K}}
\newcommand{\Kfour}{\logic{K4}}
\newcommand{\Knfour}{\logic{Kn.4}}
\newcommand{\Knfourfive}{\logic{Kn.45}}
\newcommand{\Sfour}{\logic{S}4}
\newcommand{\Sfive}{\logic{S}5}
\newcommand{\IK}{\logic{IK}}
\newcommand{\IKnfour}{\logic{IKn.4}}
\newcommand{\IKnfourfive}{\logic{IKn.45}}
\newcommand{\ISfour}{\logic{IS}4}
\newcommand{\WSfive}{\logic{WS}5}
\newcommand{\MIPC}{\logic{MIPC}}
\newcommand{\Luk}{\mathrmL}
\newcommand{\Lukinfty}{\Luk_{\infty}}
\newcommand{\Product}{\Pi}
\newcommand{\Productinfty}{\Product_{\infty}}
\newcommand{\BL}{\logic{BL}}
\newcommand{\BLinfty}{\BL_{\infty}}
\newcommand{\SBL}{\logic{SBL}}
\newcommand{\Kthree}{\logic{K}_{3}}
\newcommand{\FLalg}{\class{FL}}
\newcommand{\IKalg}{\class{IK}}
\newcommand{\logleq}{\leq}
\DeclareMathOperator{\Fm}{Fm}
\DeclareMathOperator{\Th}{Th}
\DeclareMathOperator{\Thm}{Thm}
\DeclareMathOperator{\Var}{Var}
\DeclareMathOperator{\Fg}{Fg}
\newcommand{\FmAlg}{\mathop{\alg{Fm}}}
\newcommand{\ssc}[1]{\alpha_{\Th}(#1)}
\newcommand{\ddtfamily}{\Phi}
\newcommand{\ddtset}{\mathrm{I}}
\newcommand{\dilfamily}{\Psi}
\newcommand{\dilset}{\mathrm{I}}
\newcommand{\dilsettwo}{\mathrm{J}}
\newcommand{\lemfamily}{\dilfamily}
\newcommand{\lemset}{\dilset}
\newcommand{\ilfamily}{\Psi}
\newcommand{\ilset}{\mathrm{I}}
\newcommand{\ilsettwo}{\mathrm{J}}
\newcommand{\ilsetthree}{\mathrm{K}}
\newcommand{\protoset}{\Delta}
\newcommand{\pcpset}{\sqcup}
\newcommand{\lem}{\mathbf{lem}}
\newcommand{\antitheorem}{\Pi}
\newcommand{\antiformula}{\pi}
\newcommand{\antiset}{P}
\newcommand{\Btwo}{\alg{B}_{\alg{2}}}
\newcommand{\genbot}{\boldsymbol{\bot}}
\newcommand{\genneg}{\mathord{\sim}}
\newcommand{\genrightarrow}{\mathbin{\Rightarrow}}
\newcommand{\smallleq}{{\scriptscriptstyle\leq}}
\newcommand{\negbs}{\neg}
\newcommand{\negs}{\reflectbox{$\neg$}}
\newtheorem{theorem}{Theorem}[section]
\newtheorem{lemma}[theorem]{Lemma}
\newtheorem{proposition}[theorem]{Proposition}
\newtheorem{definition}[theorem]{Definition}
\newtheorem{corollary}[theorem]{Corollary}
\newtheorem{example}[theorem]{Example}
\newtheorem{fact}[theorem]{Fact}
\begin{document}

\author[T. L\'{a}vi\v{c}ka]{Tom\'{a}\v{s} L\'{a}vi\v{c}ka}
\address[Tom\'{a}\v{s} L\'{a}vi\v{c}ka]{Institute of Information Theory and Automation, Czech Academy of Sciences \\ Pod Vod\'{a}renskou v\v{e}\v{z}\'{i} 4, Praha 8, 182 00, Czechia}
\email{lavicka.thomas@@gmail.com}
\author[A. P\v{r}enosil]{Adam P\v{r}enosil}
\address[Adam~P\v{r}enosil]{Department of Mathematics, Vanderbilt University \\ 1326 Stevenson Center, Nashville, TN 37240, USA}
\email{adam.prenosil@@vanderbilt.edu}
\thanks{The research of the first author was supported by project no.\ GA17-04630S of the Czech Science Foundation.}

\subjclass{Primary 03G27, 03C05, 06F05}

\keywords{Semisimplicity, Glivenko's theorem, law of the excluded middle, algebraic logic, inconsistency lemma, structural completeness}

\title{Semisimplicity, Glivenko~theorems, and the excluded middle}

\begin{abstract}
  We formulate a general, signature-independent form of the law of the excluded middle and prove that a logic is semisimple if and only if it enjoys this law, provided that it satisfies a weak form of the so-called inconsistency lemma of Raftery. We~then show that this equivalence can be used to provide simple syntactic proofs of the theorems of Kowalski and Kracht characterizing the semisimple varieties of $\FLew$-algebras and Boolean algebras with operators, and to extend them to $\FLe$-algebras and Heyting algebras with operators. Moreover, under stronger assumptions this correspondence works at the level of individual models: the semisimple models of such a logic are precisely those which \mbox{satisfy} an axiomatic form of the law of the excluded middle, and a Glivenko-like connection ob\-tains between the logic and its extension by the axiom of the excluded middle. This in particular subsumes the well-known Glivenko theorems relating intuitionistic and classical logic and the modal logics $\Sfour$ and $\Sfive$. As a consequence, we~also obtain a description of the subclassical substructural logics which are Glivenko related to classical logic.
\end{abstract}

\maketitle

\section{Introduction}
\label{sec: intro}

  This paper investigates the relationship between three fundamental notions of logic and algebra, namely semisimplicity, the~law of the excluded middle (LEM), and Glivenko theorems, which provide double negation translations between logics. These are all shown to be part of a single coherent circle of ideas: in the first part of the paper we prove that the semisimplicity of a~logic is equivalent to some form of the LEM, while in the second part we establish a Glivenko-like connection between a logic and what we call its semisimple companion, which is in many cases axiomatized by the LEM.

  Both the LEM and Glivenko theorems presuppose some notion of \mbox{negation}. For the purposes of this paper, we take the defining feature of negation to be a weak form of the so-called inconsistency lemma (IL) of Raftery~\cite{raftery13inconsistency}, which in its strong form states that $\varphi$ is inconsistent with $\Gamma$ if and only if $\neg \varphi$ is provable from~$\Gamma$. If~such a negation is available in a logic $\logic{L}$, then an axiomatic extension of~$\logic{L}$ is semisimple if and only if it enjoys the LEM. This~provides a useful strategy for describing the semisimple axiomatic extensions of~$\logic{L}$, which for \mbox{algebraizable} logics such as the sub\-structural logic $\FLew$ or the global modal logic $\K$ amounts to describing the semisimple subvarieties of the corresponding \mbox{varieties} of \mbox{algebras}. We shall see that such non-trivial algebraic questions concerning semisimplicity can be answered using a purely syntactic approach via the LEM, and more importantly that doing so will result in brief and transparent proofs.

  The study of the semisimple extensions of a given logic $\logic{L}$ naturally leads one to consider the semisimple companion of $\logic{L}$, which we define as the logic $\ssc{\logic{L}}$ determined by the maximal consistent theories of $\logic{L}$. The semisimple companion is often axiomatized relative to $\logic{L}$ by an axiomatic form of the LEM. It can be thought of as a dual counterpart to the so-called structural (or admissible) completion of $\logic{L}$, which is the logic determined by the smallest theory of $\logic{L}$, i.e.\ the set of theorems of~$\logic{L}$. Just like the rules valid in the structural completion may be described in terms of the theorems of $\logic{L}$ as the so-called admissible rules of $\logic{L}$, we describe the rules valid in the semisimple companion in terms of the antitheorems of $\logic{L}$ as what we call the antiadmissible rules of $\logic{L}$. The inconsistency lemma then allows us to reformulate antiadmissibility as a Glivenko-like theorem connecting $\logic{L}$ and $\ssc{\logic{L}}$ by means of a double negation translation. Examples of this phenomenon abound among substructural and modal logics.

  The present paper therefore brings together several strands of research within non-classical logic: the~study of inconsistency lemmas initiated by Raftery~\cite{raftery13inconsistency}, who established the equivalence between semisimplicity and the LEM (more precisely, what he called the classical IL) assuming a strong form of the IL, the descriptions of the semisimple varieties of $\FLew$-algebras and modal algebras (Boolean algebras with operators) due to Kowalski and Kracht~\cite{kowalski04,kowalski+kracht06}, and the investigation of Glivenko theorems due to Cignoli \& Torrens~\cite{cignoli+torrens03,cignoli+torrens04} in the context of extensions of integral residuated structures, Galatos \& Ono~\cite{galatos+ono06} in the context of substructural logics, and Bezhanishvili~\cite{bezhanishvili01} in the context of intuitionistic modal logics. We extend various results from these papers and put them into a \mbox{uniform} perspective. The theory of structural completions and admissible rules~\cite{pogorzelski71,bergman91,rybakov97} also finds a dual counterpart in our study of semisimple companions and antiadmissible rules.

% move Galatos & Ono reference here

  The general approach to Glivenko theorems due to Torrens~\cite{torrens08}, on the other hand, is not directly comparable to the approach we take in the present paper. Torrens understands Glivenko theorems to be relative to a unary formula which represents a homomorphism onto an algebra of ``regular'' elements, while we under\-stand Glivenko theorems to be relative to a family of sets of unary formulas related to inconsistency by means of an IL. Both approaches have their merits: ours does not directly yield the known Glivenko theorem relating H\'{a}jek's basic fuzzy logic $\BL$ and the infinite-valued {\L}ukasiewicz logic $\Luk$ (because $\BL$ does not satisfy a strong enough IL), while the approach of Torrens does not directly yield the known Glivenko theorem relating the global modal logics $\Sfour$ and $\Sfive$ (because $\neg \Box \neg \Box x$ does not represent a homomorphism onto an algebra of regular elements). We do, however, obtain a ``local'' Glivenko theorem relating $\BL$ to the infinitary version of {\L}ukasiewicz logic $\Lukinfty$.

  It is important to observe that although we choose to formulate our results in terms of Hilbert-style consequence relations (consequence relations on formulas), they can be extended to the so-called $k$-deductive systems of Blok \& Pigozzi~\cite{blok+pigozzi97} (consequence relations on $k$-tuples of formulas), and even further to universal Horn logic without equality~\cite{elgueta94}. These in particular subsume the equational consequence relations of (generalized) quasivarieties, which are classes of algebras axiomatized by (possibly infinitary) implications between equalities in a given set of variables. The extension to the $k$-deductive setting does not involve any substantial mathematical work, however, it does come with an increase in the complexity of notation. We~thus opt for formulating our results in logical terms and using the notion of algebraizability to transfer them to the algebraic realm. The reader familiar with $k$-deductive systems will see that our results can instead immediately be translated into results concerning equational consequence relations and (generalized) quasivarieties without any need for an intermediate step in the form of a Hilbert-style consequence relation.

\subsection{Semisimple algebras and logics}

  Having outlined the main themes of the present paper, let us now consider them in more detail, starting with semisimplicity. Recall that an algebra is called \emph{simple} if it has precisely two congruences, namely the equality relation and the total relation, and it is called \emph{semisimple} if it is a subdirect product of simple algebras, or equivalently if the equality relation is the inter\-section of all maximal non-trivial congruences. A~variety (equational class) of algebras $\class{K}$ is semisimple if each algebra in $\class{K}$ is semisimple. Boolean algebras, distributive lattices, and De~Morgan algebras are all semisimple varieties, while Heyting algebras and modal algebras (Boolean algebras with operators) are examples of varieties which are not semisimple. If we replace congruences by $\class{K}$-congruences (congruences $\theta$ on $\alg{A}$ such that $\alg{A} / \theta \in \class{K}$), this definition of semisimplicity extends to all classes of algebras in a given signature closed under subdirect products.

  Our notion of a semisimple logic, by contrast, will be syntactic. The~maximal non-trivial theories of a logic are called \emph{simple}, a theory is \emph{semisimple} if it is an intersection of simple theories, and a logic is semisimple if all of its theories~are. A~theory here is a set of formulas closed under the consequence relation of the logic. Under some mild assumptions, this extends to a semantic form of semi\-simplicity: each model of the logic is a subdirect product of simple models.

  If a logic $\logic{L}$ enjoys a tight connection to a class of algebras $\class{K}$, then the semantic semisimplicity of $\logic{L}$ is equivalent to the semisimplicity of $\class{K}$ in the algebraist's sense. In technical terms, this holds if $\logic{L}$ is (at least weakly) algebraizable and $\class{K}$ forms the algebraic counterpart of~$\logic{L}$. We can thus use the results proved here to answer algebraic questions concerning semisimplicity. We focus on cases where $\class{K}$ forms a variety, but the same reasoning applies to (generalized) quasivarieties.

  We consider two kinds of problems involving semisimplicity. Firstly:
\begin{align*}
  \text{given a variety $\class{K}$, describe its semisimple algebras.}
\end{align*}
  For example, a Heyting algebra is semisimple if and only if it is a Boolean algebra. Similarly, an $\Sfour$ modal algebra (i.e.\ a Boolean algebra with a topological interior operator) is semisimple if and only if it is an $\Sfive$ modal algebra (an $\Sfour$ modal algebra satisfying $x \inequals \Box \Diamond x$). A related but distinct problem is:
\begin{align*}
  \text{given a variety $\class{K}$, describe its semisimple subvarieties.}
\end{align*}
  For example, a result of Kowalski~\cite{kowalski04} states that a variety of $\FLew$-algebras (i.e.\ bounded commutative integral residuated lattices) is semisimple if and only if it validates the equation $x \vee \neg (x^{n}) \equals 1$ for some $n$, while an analogous result of Kowalski \& Kracht~\cite{kowalski+kracht06} states that a variety of modal algebras is semisimple if and only if it is validates the equational axioms of weak $n$-transitivity and \mbox{$n$-cyclicity} for some $n$. However, an individual $\FLew$-algebra or modal algebra may well be semisimple without validating any of these equations.

  In case the variety $\class{K}$ forms the algebraic counterpart of an (at least weakly) algebraizable logic $\logic{L}$, the first of these problems is equivalent to:
\begin{align*}
  \text{describe the semisimple models of $\logic{L}$}.
\end{align*}
  Similarly, the second one is then equivalent to:
\begin{align*}
  \text{describe the semisimple axiomatic extensions of $\logic{L}$}.
\end{align*}
  We shall see that, using the equivalence between semisimplicity and the LEM, both types of problems can be attacked by entirely syntactic methods. To see this strategy in action, the~interested reader may skip ahead to Subsection~\ref{subsec: applications}, where we extend the results of Kowalski and Kracht to $\FLe$-algebras and modal Heyting algebras. Note that the original proof of Kowalski for $\FLew$-algebras involves several pages of algebraic computations, which are far from straight\-forward, while our proof takes about two paragraphs (given the equivalence between semisimplicity and the LEM).

\subsection{The law of the excluded middle}

  Let us now explain what we mean by the LEM. We take it to be a syntactic principle which, roughly speaking, states that if in a given context $\Gamma$ a formula is derivable from both $\varphi$ and the negation of $\varphi$, then it is already derivable from $\Gamma$. For~example, classical logic enjoys the LEM in the~form
\begin{align*}
  \Gamma, \varphi \vdash_{\CL} \psi \text{ and } \Gamma, \neg \varphi \vdash_{\CL} \psi & \implies \Gamma \vdash_{\CL} \psi,
\end{align*}
  while the global modal logic $\Sfive$ enjoys the LEM in the form
\begin{align*}
  \Gamma, \varphi \vdash_{\Sfive} \psi \text{ and } \Gamma, \neg \Box \varphi \vdash_{\Sfive} \psi & \implies \Gamma \vdash_{\CL} \psi.
\end{align*}
  More generally, a whole family of formulas may play the role of a negation. Such principles will be called \emph{local} LEMs, as opposed to the \emph{global} LEMs shown above. For example, the infinitary {\L}ukasiewicz logic $\Lukinfty$ enjoys the following local LEM:
\begin{align*}
  \Gamma, \varphi \vdash_{\Lukinfty} \psi \text{ and } \Gamma, \neg (\varphi^{n}) \vdash_{\Lukinfty} \psi \text{ for each } n \in \omega & \implies \Gamma \vdash_{\Lukinfty} \psi.
\end{align*}
  Although the LEM is formulated as an implication between valid rules, under suitable assumptions it can be expressed in axiomatic form.

  The connection between semisimplicity and the LEM presupposes that the negation satisfies a so-called inconsistency lemma (IL). This is an equivalence which relates inconsistency and validity in a manner analogous to the following equivalence for classical logic:
\begin{align*}
  \Gamma, \varphi \vdash_{\CL} \allset & \iff \Gamma \vdash_{\CL} \neg \varphi.
\end{align*}
  Here $\Gamma, \varphi \vdash_{\CL} \allset$ represents the classical inconsistency of the set of formulas $\Gamma, \varphi$. For example, the global modal logic $\Sfour$ enjoys the IL
\begin{align*}
  \Gamma, \varphi \vdash_{\Sfour} \allset & \iff \Gamma \vdash_{\Sfour} \neg \Box \varphi,
\end{align*}
  which indicates that the connective $\neg \Box x$ (rather than merely $\neg x$) functions as a negation in $\Sfour$, while the $(n+1)$-valued {\L}ukasiewicz logic $\Luk_{n+1}$ enjoys the IL
\begin{align*}
  \Gamma, \varphi \vdash_{\Luk_{n+1}} \allset & \iff \Gamma \vdash_{\Luk_{n+1}} \neg (\varphi^{n}).
\end{align*}

  As with LEMs, we call such equivalences \emph{global} ILs, in contrast to \emph{local} ILs such as the one enjoyed by the substructural logic $\FLew$ and its axiomatic extensions, as well as the infinitary {\L}ukasiewicz logic $\Lukinfty$:
\begin{align*}
  \Gamma, \varphi \vdash_{\FLew} \allset & \iff \Gamma \vdash_{\FLew} \neg (\varphi^{n}) \text{ for some } n \in \omega.
\end{align*}
  \emph{Parametrized} local ILs, enjoyed e.g.\ by $\FL$, are more general still.

  Such equivalences (in their global form) were first studied as conditions in their own right by Raftery~\cite{raftery13inconsistency}. In addition to ordinary global ILs, Raftery also considered \emph{classical} global ILs. These combine an ordinary global IL with what we call a \emph{dual} IL, generalizing the classical equivalence
\begin{align*}
  \Gamma \vdash_{\CL} \varphi & \iff \Gamma, \neg \varphi \vdash_{\CL} \allset.
\end{align*}
  Equivalently, a classical IL combines an ordinary IL and with a LEM. Raftery then proved that for each algebraizable logic $\logic{L}$ whose algebraic counterpart is a quasivariety $\class{K}$, the classical global IL for the logic $\logic{L}$ corresponds precisely to the filtrality of the quasivariety $\class{K}$, i.e.\ to relative semisimplicity plus the equational definability of principal relative congruences in $\class{K}$.

  We are now ready to state our main results relating the semisimplicity of a logic and the LEM (Theorems~\ref{thm: semisimplicity} and~\ref{thm: semantic semisimplicity}), which can be thought of as extending Raftery's results to the (parametrized) local form of the classical IL: if~$\logic{L}$ enjoys a parametrized local IL, then
\begin{align*}
  \text{all theories of $\logic{L}$ are semisimple if and only if $\logic{L}$ satisfies the LEM}.
\end{align*}
  If $\logic{L}$ enjoys a local IL, then this can be upgraded to:
\begin{align*}
  \text{all models of $\logic{L}$ are semisimple if and only if $\logic{L}$ satisfies the LEM}.
\end{align*}
  Since the (parametrized) local IL is inherited by all axiomatic extensions of $\logic{L}$, these theorems will enable us to characterize the semisimple axiomatic extensions of the substructural logic $\FLe$ and the intuitionistic modal logic $\IK$.

\subsection{Semisimple companions}

  To tackle the second problem involving semisimplicity, namely describing the semisimple models of a given logic, we study the \emph{(syntactic) semisimple companion} $\ssc{\logic{L}}$ of a logic $\logic{L}$, defined as the logic determined by the simple theories of $\logic{L}$. That is, $\ssc{\logic{L}}$ is the logic of all matrices of the form $\pair{\FmAlg \logic{L}}{T}$ where $T$ ranges over simple theories of $\logic{L}$. Our next result (Theorem~\ref{thm: models of semisimple companion}) states that if $\logic{L}$ enjoys a local IL, then
\begin{align*}
  \text{a model of $\logic{L}$ is semisimple if and only if it is a model of $\ssc{\logic{L}}$}.
\end{align*}
  The problem of describing the semisimple models of $\logic{L}$ thus reduces to the problem of axiomatizing $\ssc{\logic{L}}$.

  For instance, the semisimple companion of H\'{a}jek's basic fuzzy logic $\BL$ (which inherits a local IL from~$\FLew$) is the infinitary {\L}ukasiewicz logic $\Lukinfty$. This is because the algebraic counterpart of $\BL$ is the variety of $\BL$-algebras, semisimple $\BL$-algebras coincide with semisimple MV-algebras (see~\cite{turunen99}), and these in turn form the algebraic counterpart of $\Lukinfty$ (as opposed to the finitary logic $\Luk$, whose algebraic counterpart is the variety of MV-algebras). In this case, we used our ability to identify the semisimple algebras in the given variety to describe $\ssc{\BL}$. However, we can also proceed in the opposite direction.

  The problem of axiomatizing the semisimple companion $\ssc{\logic{L}}$ can be solved in an entirely mechanical manner whenever $\logic{L}$ enjoys a global IL and the LEM can be expressed in axiomatic form. This occurs if $\logic{L}$ has either a well-behaved disjunction or a well-behaved implication. More precisely, we assume either a global \emph{deduction--detachment theorem} (DDT), generalizing the equivalence
\begin{align*}
  \Gamma, \varphi \vdash_{\CL} \psi & \iff \Gamma \vdash_{\CL} \varphi \rightarrow \psi,
\end{align*}
  or a global \emph{proof by cases property} (PCP), generalizing the equivalence
\begin{align*}
  \Gamma, \varphi \vdash_{\CL} \chi \text{ and } \Gamma, \psi \vdash_{\CL} \chi \iff \Gamma, \varphi \vee \psi \vdash_{\CL} \chi.
\end{align*}
  For example, the global modal logic $\Sfour$ enjoys a global DDT in the form
\begin{align*}
  \Gamma, \varphi \vdash_{\Sfour} \psi \iff \Gamma \vdash_{\Sfour} \Box \varphi \rightarrow \psi,
\end{align*}
  as well as a global PCP in the form
\begin{align*}
  \Gamma, \varphi \vdash_{\Sfour} \chi \text{ and } \Gamma, \psi \vdash_{\Sfour} \chi \iff \Gamma, \Box \varphi \vee \Box \psi \vdash_{\Sfour} \chi.
\end{align*}
  These indicate that the connectives $\Box x \rightarrow y$ and $\Box x \vee \Box y$ (rather than merely $x \rightarrow y$ and $x \vee y$) function respecitvely as an implication and a disjunction in $\Sfour$.

  In~logics which enjoy the global IL and either the global DDT or the global PCP, the LEM may be expressed by an axiom of one of the two forms
\begin{align*}
  & (p \rightarrow q) \rightarrow ((\neg p \rightarrow q) \rightarrow q) & & \text{or} & & p \vee \neg p,
\end{align*}
  where the negation, implication, and disjunction are the connectives which occur in the IL, DDT, and PCP. These need not coincide with the connectives which are customarily denoted $\neg x$, $x \rightarrow y$, and $x \vee y$ in the given logic: in $\Sfour$ these are respectively $\neg \Box x$, $\Box x \rightarrow y$, and $\Box x \vee \Box y$.

  Whenever such an axiomatic form of the LEM is available, the semisimple companion of $\logic{L}$ is the extension of $\logic{L}$ by the axiom of the excluded middle (Proposition~\ref{prop: axiomatization of semisimple companion}). It~immediately follows (Theorem~\ref{thm: semisimple models}) that
\begin{align*}
  \text{a model of $\logic{L}$ is semisimple if and only if it is satisfies the (axiomatic) LEM}.
\end{align*}
  This subsumes the facts that semisimple Heyting algebras are precisely Boolean algebras, and semisimple $\Sfour$ modal algebras are precisely $\Sfive$ modal algebras. (Recall that $\Sfive$ is the extension of $\Sfour$ by the axiom $p \rightarrow \Box \Diamond p$, which is equivalent in $\Sfour$ to the LEM in the form $\Box p \vee \Box \neg \Box p$, thus $\ssc{\Sfive} = \Sfour$.) These two particular facts are of course not difficult to prove directly, but the merit of our approach lies in providing a uniform proof of such facts in any logic with a well-behaved negation and implication or disjunction.

\subsection{Glivenko theorems}

  The framework outlined above now allows us to spell out a precise connection between semisimplicity and Glivenko theorems. The semisimple companion can be thought of as a dual counterpart to the so-called structural completion of a logic: the semisimple companion of $\logic{L}$ is the logic determined by the largest non-trivial theories of $\logic{L}$, while the structural completion of $\logic{L}$ is the logic determined by the smallest theory of $\logic{L}$, i.e.\ by the matrix $\pair{\FmAlg \logic{L}}{\Thm \logic{L}}$. Just~like the structural completion is the strongest logic with the same theorems, the antistructural completion of a compact logic is the strongest logic with the same antitheorems. This relates the study of semisimple companions to the established topic of structural completeness (see e.g.~\cite{pogorzelski71,bergman91,rybakov97}). Indeed, we may call~$\ssc{\logic{L}}$ the \emph{anti\-structural completion} of~$\logic{L}$.

  In particular, the notion of an admissible rule (see~\cite{rybakov97}) has a natural counterpart in the theory of semisimple companions. Recall that a rule $\Gamma \vdash \varphi$ is valid in the structural completion of~$\logic{L}$ if it is \emph{admissible}:
\begin{align*}
  \emptyset \vdash_{\logic{L}} \sigma[\Gamma] & \implies \emptyset \vdash_{\logic{L}} \sigma(\varphi) \text{ for each substitution $\sigma$.}
\end{align*}
  Similarly (Proposition~\ref{prop: antiadmissible rules}), a rule $\Gamma \vdash \varphi$ is valid in the antistructural completion of a compact logic $\logic{L}$, i.e.\ in $\ssc{\logic{L}}$, if it is \emph{antiadmissible}:
\begin{align*}
  \sigma(\varphi), \Delta \vdash_{\logic{L}} \allset & \implies \sigma[\Gamma], \Delta \vdash_{\logic{L}} \allset \text{ for each $\Delta \subseteq \Fm \logic{L}$ and each substitution $\sigma$.}
\end{align*}
  For compact logics which enjoy a local IL (Proposition~\ref{prop: simply antiadmissible rules}), this simplifies to:
\begin{align*}
  \varphi, \Delta \vdash_{\logic{L}} \allset & \implies \Gamma, \Delta \vdash_{\logic{L}} \allset \text{ for each $\Delta \subseteq \Fm \logic{L}$.}
\end{align*}

  A Glivenko theorem relating $\logic{L}$ and $\ssc{\logic{L}}$ now results if we apply the IL to this simplified definition of antiadmissibility and recall that $\Gamma \vdash_{\ssc{\logic{L}}} \varphi$ if and only if $\Gamma \vdash \varphi$ is antiadmissible in $\logic{L}$. This subsumes as special cases the familiar Glivenko theorems for intuitionistic logic and $\Sfour$ (see~\cite{glivenko29,matsumoto55}), or more precisely their extensions concerning valid rules rather than theorems:
\begin{align*}
  \Gamma \makebox[2.2em]{$\vdash_{\CL}$} \varphi & \iff \Gamma \makebox[2em]{$\vdash_{\IL}$} \neg \neg \varphi, \\
  \Gamma \makebox[2.2em]{$\vdash_{\Sfive}$} \varphi & \iff \Gamma \makebox[2em]{$\vdash_{\Sfour}$} \neg \Box \neg \Box \varphi.
\end{align*}
  For a logic such as $\FLew$ which only enjoys a local IL with respect to the family of formulas $\neg (x^{n})$, we obtain what we might call a \emph{local} Glivenko theorem:
\begin{align*}
  \Gamma \vdash_{\ssc{\FLew}} \varphi & \iff \Gamma \vdash_{\FLew} \set{\neg (\neg \varphi^{n})^{f(n)}}{n \in \omega} \text{ for some } f\colon \omega \to \omega.
\end{align*}
  In particular, a local Glivenko theorem connects $\BL$ and $\ssc{\BL} =\Lukinfty$:
\begin{align*}
  \Gamma \vdash_{\Lukinfty} \varphi & \iff \Gamma \vdash_{\BL} \set{\neg (\neg \varphi^{n})^{f(n)}}{n \in \omega} \text{ for some } f\colon \omega \to \omega.
\end{align*}
  This Glivenko theorem can perhaps more naturally be expressed as:
\begin{align*}
  \Gamma \vdash_{\Lukinfty} \varphi & \iff \text{ for each } n \in \omega \text{ there is some } k \in \omega \text{ such that } \Gamma \vdash_{\BL} \neg (\neg \varphi^{n})^{k}.
\end{align*}
  In case $\Gamma$ is finite, this is superseded by the global Glivenko theorem due to Cignoli and Torrens~\cite{cignoli+torrens03}:
\begin{align*}
  \Gamma \vdash_{\Lukinfty} \varphi & \iff \Gamma \vdash_{\Luk} \varphi \iff \Gamma \vdash_{\BL} \neg \neg \varphi.
\end{align*}

  Note that this approach to Glivenko theorems, which relies on the relationship between negation and inconsistency, differs substantially from the approach of Cignoli and Torrens~\cite{cignoli+torrens03,torrens08} and of Galatos \& Ono~\cite{galatos+ono06}, which relies on a suitable term-definable projection onto an algebra of regular elements.

\section{Preliminaries}
\label{sec: prelim}

  We now introduce the necessary preliminaries. We recall some basic notions of abstract algebraic logic concerning logics and their models. The reader unfamiliar with this framework may find the recent textbook~\cite{font16} helpful. We then introduce our running examples: substructural and normal modal logics. Finally, we review some notational conventions used in this paper.

% page break
\pagebreak

\subsection{Logics}

  A \emph{logic} $\logic{L}$ is a structural closure operator on the absolutely free algebra (the algebra of formulas) $\FmAlg \logic{L}$ on a given set of variables $\Var \logic{L}$, where structurality means that if $T$ is a closed set of formulas, i.e.\ a \emph{theory} of $\logic{L}$, then so is $\sigma^{-1}[T]$ for any substitution (homomorphism) $\sigma\colon \FmAlg \logic{L} \to \FmAlg \logic{L}$. Equivalently, we can think of a logic as a consequence relation, denoted $\Gamma \vdash_{\logic{L}} \varphi$, between sets of formulas $\Gamma$ and formulas $\varphi$ such that
\begin{itemize}
\item $\varphi \vdash_{\logic{L}} \varphi$,
\item if $\Gamma \vdash_{\logic{L}} \varphi$, then $\Gamma, \Delta \vdash_{\logic{L}} \varphi$,
\item if $\Gamma \vdash_{\logic{L}} \Phi$ and $\Phi \vdash_{\logic{L}} \varphi$, then $\Gamma \vdash_{\logic{L}} \varphi$, and
\item if $\Gamma \vdash_{\logic{L}} \varphi$, then $\sigma[\Gamma] \vdash_{\logic{L}} \sigma(\varphi)$ for each substitution $\sigma$.
\end{itemize}
  Here $\Gamma \vdash_{\logic{L}} \Phi$ abbreviates the claim that $\Gamma \vdash_{\logic{L}} \varphi$ for each $\varphi \in \Phi$. Given $\logic{L}$ as a structural closure operator, we define $\Gamma \vdash_{\logic{L}} \varphi$ to hold if and only if $\varphi$ is in the closure of $\Gamma$. Conversely, given $\logic{L}$ as a consequence relation, we define the closed sets (theories) of $\logic{L}$ to be the sets of formulas $T$ such that $T \vdash_{\logic{L}} \varphi$ implies $\varphi \in T$. The~set of all theories of $\logic{L}$ forms a complete lattice~$\Th \logic{L}$.

  A logic $\logic{L}$ is \emph{finitary}, or more generally $\kappa$-ary, if $\Gamma \vdash_{\logic{L}} \varphi$ implies $\Gamma' \vdash_{\logic{L}} \varphi$ for some finite $\Gamma' \subseteq \Gamma$, or more generally for some $\Gamma' \subseteq \Gamma$ such that $| \Gamma' | < \kappa$.

  A pair $\pair{\alg{A}}{F}$ consisting of an algebra $\alg{A}$ and a set $F \subseteq A$ is called a \emph{matrix}. A~matrix is a \emph{model} of a logic $\logic{L}$ if for each homomorphism $h\colon \FmAlg \logic{L} \to \alg{A}$ the set $h^{-1}[F]$ is a theory of $\logic{L}$, in other words if $h$ designates $\varphi$ whenever it designates $\Gamma$. Here we say that $h$ designates a set of formulas $\Phi$ in $\pair{\alg{A}}{F}$ if $h[\Phi] \subseteq F$. If $\pair{\alg{A}}{F}$ is a model of $\logic{L}$, we also say that $F$ is a \emph{filter} of $\logic{L}$ on $\alg{A}$. For each algebra~$\alg{A}$ the set of all filters of $\logic{L}$ on $\alg{A}$ forms a closure system. The filter \emph{generated} by $X \subseteq A$ is then denoted $\Fg_{\logic{L}}^{\alg{A}} X$. Mimicking the notation for formulas, we also write $X \vdash_{\logic{L}}^{\alg{A}} a$ for $a \in \Fg_{\logic{L}}^{\alg{A}} X$.

  A matrix $\pair{\alg{A}}{F}$, and by extension the filter $F$, is called \emph{trivial} if $F = A$. In~particular, the trivial theory of $\logic{L}$ is the theory $\Fm \logic{L}$. A \emph{simple} theory of $\logic{L}$ is a maximal non-trivial theory, while a \emph{semisimple} theory of $\logic{L}$ is an intersection of simple theories of $\logic{L}$. A logic~$\logic{L}$ is then called semisimple if each theory of $\logic{L}$ is semisimple. Every semisimple logic is \emph{coatomic}: each non-trivial theory is included in a simple theory.

  An \emph{antitheorem} of a logic $\logic{L}$ is a set of formulas $\Gamma$ which cannot be jointly designated in any non-trivial model. In other words, for each non-trivial model $\pair{\alg{A}}{F}$ of~$\logic{L}$ and each homomorphism $h\colon \FmAlg \logic{L} \to \alg{A}$ it is not the case that $h[\Gamma] \subseteq F$. The claim that $\Gamma$ is an antitheorem of $\logic{L}$ shall be abbreviated by $\Gamma \vdash_{\logic{L}} \allset$.\footnote{We take $\vdash \allset$ to be a single indivisible symbol, rather than analyzing it as $\vdash \Phi$ for ${\Phi = \emptyset}$. (The latter would be appropriate if sets of formulas $\Phi$ to the right of the turnstile were given a disjunctive rather than conjunctive reading.) While this notation clashes with the convention that $\Gamma \vdash_{\logic{L}} \Phi$ means $\Gamma \vdash_{\logic{L}} \varphi$ for each ${\varphi \in \Phi}$, no~confusion is likely to occur. The~motivation behind this notation stems from the duality between theorems and antitheorems.} It is straight\-forward to prove the following analogues of monotonicity, cut, and structurality for antitheorems:
\begin{itemize}
\item if $\Gamma \vdash_{\logic{L}} \allset$, then $\Gamma \vdash_{\logic{L}} \varphi$,
\item if $\Gamma \vdash_{\logic{L}} \allset$, then $\Gamma, \Delta \vdash_{\logic{L}} \allset$,
\item if $\Gamma \vdash_{\logic{L}} \Phi$ and $\Phi \vdash_{\logic{L}} \allset$, then $\Gamma \vdash_{\logic{L}} \allset$, and
\item if $\Gamma \vdash_{\logic{L}} \allset$, then $\sigma[\Gamma] \vdash_{\logic{L}} \allset$ for each substitution $\sigma$.
\end{itemize}

% page break

  A logic $\logic{L}$ is \emph{compact}, or more generally \emph{$\kappa$-compact}, if $\Gamma \vdash_{\logic{L}} \Fm \logic{L}$ implies $\Gamma' \vdash_{\logic{L}} \Fm \logic{L}$ for some finite $\Gamma' \subseteq \Gamma$, or more generaly for some $\Gamma' \subseteq \Gamma$ such that $|\Gamma'| < \kappa$. In~particular, each compact theory has a finite antitheorem, and conversely each finitary logic with an antitheorem is compact. For finitary logics the compactness of $\logic{L}$ coincides with the lattice-theoretic compactness of $\Fm \logic{L}$ in the lattice $\Th \logic{L}$. Observe that each compact logic is coatomic.

  A logic need not have antitheorems: for example, in the positive fragment of classical logic (whose signature consists of $\wedge$, $\vee$, $\top$, $\rightarrow$) every formula can be designated in the two-element Boolean matrix $\pair{\Btwo}{\{ 1 \}}$ by the homomorphism sending each variable to $1$. Thus $\Gamma \vdash_{\logic{L}} \allset$ is a stronger claim than $\Gamma \vdash_{\logic{L}} \Fm \logic{L}$.

\begin{fact}
  If a variable $p$ does not occur in $\Gamma$, then $\Gamma \vdash_{\logic{L}} \allset$ if and only if~$\Gamma \vdash_{\logic{L}} p$.
\end{fact}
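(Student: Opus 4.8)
The plan is to prove the two directions separately; the forward direction needs no hypothesis on $p$, while the backward direction is where the assumption that $p$ does not occur in $\Gamma$ does its work.

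For $\Gamma \vdash_{\logic{L}} \allset \implies \Gamma \vdash_{\logic{L}} p$, I would simply invoke the first of the properties of antitheorems recorded just above the statement --- the analogue of monotonicity, which says that an antitheorem entails every formula --- instantiated at $\varphi \assign p$.

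For the converse I would argue semantically, contraposing the definition of an antitheorem. Suppose $\Gamma \vdash_{\logic{L}} p$ and that $p$ occurs in no formula of $\Gamma$, but $\Gamma$ is not an antitheorem: then there are a non-trivial model $\pair{\alg{A}}{F}$ of $\logic{L}$ and a homomorphism $h \colon \FmAlg \logic{L} \to \alg{A}$ with $h[\Gamma] \subseteq F$. Since the model is non-trivial, pick $a \in A \bs F$ and let $h'$ be the homomorphism agreeing with $h$ on all variables other than $p$ and sending $p$ to $a$. Because $p$ does not occur in $\Gamma$ we still have $h'[\Gamma] = h[\Gamma] \subseteq F$, and since $\pair{\alg{A}}{F}$ is a model the set $h'^{-1}[F]$ is a theory of $\logic{L}$ containing $\Gamma$; by $\Gamma \vdash_{\logic{L}} p$ it therefore contains $p$, so $a = h'(p) \in F$, contradicting $a \notin F$. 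Hence $\Gamma \vdash_{\logic{L}} \allset$.

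I expect the only genuinely delicate point to be the temptation to shortcut the converse by structurality: substituting an arbitrary formula for $p$ gives $\Gamma \vdash_{\logic{L}} \psi$ for every $\psi$, i.e.\ $\Gamma \vdash_{\logic{L}} \Fm \logic{L}$, but as emphasised above this is strictly weaker than $\Gamma \vdash_{\logic{L}} \allset$, so it does not finish the proof. What is really needed is to exclude \emph{non-trivial} models designating $\Gamma$, and the freedom to reassign the value of $p$ in a putative designating homomorphism --- available precisely because $p$ does not occur in $\Gamma$ --- is exactly what produces the contradiction.
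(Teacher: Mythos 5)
Your proof is correct and follows essentially the same route as the paper: the easy direction is the antitheorem analogue of monotonicity, and the substantive direction exploits the freedom to reassign the value of $p$ in any homomorphism designating $\Gamma$ (the paper argues directly that every $a \in A$ then lands in $F$, you argue contrapositively from a single $a \notin F$, which is the same idea). Your closing remark about why the structurality shortcut only yields $\Gamma \vdash_{\logic{L}} \Fm \logic{L}$ rather than $\Gamma \vdash_{\logic{L}} \allset$ is also exactly the right caution.
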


\begin{proof}
  If~$h\colon \FmAlg \logic{L} \to \alg{A}$ designates $\Gamma$ in a model $\pair{\alg{A}}{F}$ of $\logic{L}$, then for each $a \in \alg{A}$ there is a homomorphism $g_{a}\colon \FmAlg \logic{L} \to \alg{A}$ such that $g_{a}[\Gamma] \subseteq F$ and $g_{a}(p) = a$. Thus $F = A$ whenever $\Gamma$ can be designated in $\pair{\alg{A}}{F}$.
\end{proof}

%pagebreak

\begin{fact}
  $\Gamma \vdash_{\logic{L}} \allset$ if and only if $\sigma[\Gamma] \vdash_{\logic{L}} \Fm \logic{L}$ for each substitution $\sigma$.
\end{fact}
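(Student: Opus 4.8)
The plan is to prove the two implications separately. The ``only if'' direction is immediate from the properties of $\vdash_{\logic{L}} \allset$ already recorded: if $\Gamma \vdash_{\logic{L}} \allset$, then structurality of antitheorems gives $\sigma[\Gamma] \vdash_{\logic{L}} \allset$ for every substitution $\sigma$, and applying the property ``$\Gamma \vdash_{\logic{L}} \allset$ implies $\Gamma \vdash_{\logic{L}} \varphi$'' to each formula $\varphi$ yields $\sigma[\Gamma] \vdash_{\logic{L}} \Fm \logic{L}$.

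For the ``if'' direction, assume $\sigma[\Gamma] \vdash_{\logic{L}} \Fm \logic{L}$ for every substitution $\sigma$. One would like to invoke the preceding Fact, but it requires a variable not occurring in the relevant set, and every variable might occur in $\Gamma$; so one first passes to a suitable renaming. Since $\Var \logic{L}$ is infinite, there is a substitution $\sigma_{0}$ sending each variable to a variable, injective on $\Var \logic{L}$, and with some variable $p$ lying outside its range; then $p$ occurs in no formula of $\sigma_{0}[\Gamma]$. From $\sigma_{0}[\Gamma] \vdash_{\logic{L}} \Fm \logic{L}$ we get in particular $\sigma_{0}[\Gamma] \vdash_{\logic{L}} p$, so the preceding Fact yields $\sigma_{0}[\Gamma] \vdash_{\logic{L}} \allset$. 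Finally, pick a substitution $\tau$ with $\tau \circ \sigma_{0} = \mathrm{id}$ on $\FmAlg \logic{L}$ — define $\tau(\sigma_{0}(x)) \assign x$ for $x \in \Var \logic{L}$ and let $\tau$ act arbitrarily on the remaining variables — and apply structurality of antitheorems to obtain $\Gamma = \tau[\sigma_{0}[\Gamma]] \vdash_{\logic{L}} \allset$.

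The only step that is not a bare appeal to earlier facts is this renaming argument, and I expect it to be the one place where a little care is needed: one must genuinely free up a variable in order to use the first Fact, and then undo the renaming afterwards. Both halves rest only on the availability of infinitely many variables and on the elementary observation that a relabelling substitution injective on $\Var \logic{L}$ admits a left inverse among the substitutions; everything else reduces to the properties of $\vdash_{\logic{L}} \allset$ proved above.
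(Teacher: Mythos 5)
Your proof is correct and takes essentially the same approach as the paper: for the nontrivial direction the paper likewise chooses substitutions $\sigma$ and $\tau$ with $\tau(\sigma(\varphi)) = \varphi$ for all $\varphi$ and with the variable $p$ absent from $\sigma[\Fm \logic{L}]$, applies the preceding Fact to $\sigma[\Gamma] \vdash_{\logic{L}} p$, and then transports the antitheorem back by structurality for antitheorems. Your explicit variable-renaming $\sigma_{0}$ with a left inverse is just a concrete instance of the paper's choice, and your spelled-out ``only if'' direction is the part the paper leaves implicit.
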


\begin{proof}
  Right to left, let $\sigma$ and $\tau$ be substitutions such that no formula in $\sigma[\Fm \logic{L}]$ contains the variable $p$ and $\tau(\sigma(\varphi)) = \varphi$ for each $\varphi$. Then $\sigma[\Gamma] \vdash_{\logic{L}} p$, so $\sigma[\Gamma] \vdash_{\logic{L}} \allset$ by the previous fact. Applying $\tau$ now yields that $\Gamma = \tau[\sigma[\Gamma]] \vdash_{\logic{L}} \allset$.
\end{proof}

\subsection{Running examples: substructural logics}
\label{sec: running examples substructural}

  Let us now introduce the two families of logics which will serve as running examples throughout the~\mbox{paper}. We first consider substructural logics, which we take to be the axiomatic extensions of the Full~Lambek calculus $\FL$. A standard reference text for substructural logics is~\cite{residuated07}. The signature of $\FL$ consists of:
\begin{itemize}
\item the lattice meet and join $\wedge$ and $\vee$,
\item the top and bottom constants $\top$ and $\bot$,
\item the monoidal multiplication and unit $\cdot$ and $1$,
\item the residuals of multiplication $\bs$ and $/$, and
\item the constant $0$.
\end{itemize}
  The traditional inclusion of the constant $0$ in the signature will be of little consequence to us (in fact, of no consequence except for the very last theorem of this paper). The~reader may well decide to drop this constant from the signature. The presence of $\bot$, on the other hand, will be crucial.

  We define the consequence relation of $\FL$ in terms of its algebraic semantics given by $\FL$-algebras. These are algebras in the above signature such that $\langle A, \wedge, \vee, \top, \bot \rangle$ is a bounded lattice, $\langle A, \cdot, 1 \rangle$ is a monoid, and moreover the residuation law holds:
\begin{align*}
  x \leq z / y \iff x \cdot y \leq z \iff y \leq x \bs z.
\end{align*}
  $\FL$-algebras form a variety. The subvariety of $\FLe$-algebras imposes the axiom of commutativity (also known as exchange) $x \cdot y \equals y \cdot x$, or equivalently $x \bs y \equals y / x$. The subvariety of $\FLew$-algebras moreover imposes the axiom of integrality $1 \equals \top$ (also known as weakening), or equivalently $x \cdot y \inequals x \wedge y$. Imposing the axiom of contraction $x \inequals x \cdot x$ on $\FLew$-algebras yields the variety of Heyting algebras.

  Throughout the paper we use the standard notation $x^{0} \assign 1$ and $x^{n+1} \assign x^{n} \cdot x$. The~varieties of $\FLen$ and $\FLewn$ algebras are then defined as the subvarieties of $\FLe$ and $\FLew$ which impose the axiom of $n$-contraction $(1 \wedge x)^{n+1} \equals (1 \wedge x)^{n}$. The notation $\neg x$ abbreviates $x \bs \bot$, with $\neg x^{n}$ interpreted as $\neg (x^{n})$.

  The consequence relation of the logic $\FL$ is defined in terms of the equational consequence relation $\vDash_{\FLalg}$ of $\FL$-algebras as follows:
\begin{align*}
  \Gamma \vdash_{\FL} \varphi & \iff \set{1 \inequals \gamma}{\gamma \in \Gamma} \vDash_{\FLalg} 1 \inequals \varphi.
\end{align*}
  Analogous equivalences relate the logics $\FLe$, $\FLew$, $\FLen$, and $\FLewn$ with the corresponding classes algebras. In the terminology of abstract algebraic logic, these logics are \emph{algebraizable}, and their \emph{algebraic counterparts} are the varieties of $\FL$-algebras, $\FLe$-algebras, and $\FLew$-algebras. We shall not need to define these terms precisely for the purposes of this paper.

  In $\FLe$-algebras we use $x \rightarrow y$ to denote the element $x \bs y = y / x$. In $\FLe$ and its extensions, we thus in effect replace the two residuals of multiplication in the signature of $\FL$ by a single residual $\rightarrow$.

  Among the best-known extensions of $\FLew$ are the finite-valued {\L}ukasiewicz logics $\Luk_{n+1}$, the finitary {\L}ukasiewicz logic $\Luk$, and its infinitary counterpart $\Lukinfty$. These may all be defined in terms of a single matrix and its finite submatrices, namely the standard {\L}ukasiewicz chain $[0, 1]_{\Luk}$ with the filter $\{ 1 \}$. This is the real interval $[0, 1]$ equipped with the standard lattice structure and the operations
\begin{align*}
  x \cdot y & \assign \max(x + y - 1, 0), \\
  x \rightarrow y & \assign \min(1 - x + y, 1).
\end{align*}
  In terms of these, we may define some further operations:
\begin{align*}
  \neg x & \assign x \rightarrow \bot = 1 - x, & 0 x & \assign 0, \\
  x \oplus y & \assign \neg (\neg y \cdot \neg x) = \min(x + y, 1), & (n+1) x & \assign x \oplus n x.
\end{align*}
  The matrix $\pair{[0, 1]_{\Luk}}{\{ 1 \}}$ determines the infinitary {\L}ukasiewicz logic $\Lukinfty$, of which the finitary {\L}ukasiewicz logic $\Luk$ is the finitary companion. That is,
\begin{align*}
  \Gamma \vdash_{\Luk} \varphi & \iff \Gamma' \vdash_{\Lukinfty} \varphi \text{ for some finite } \Gamma' \subseteq \Gamma.
\end{align*}
  The logics $\Luk_{n+1}$ are then determined by the submatrices over $\{ \frac{0}{n}, \frac{1}{n}, \dots, \frac{n-1}{n}, \frac{n}{n} \}$.

  The failure of finitarity in $\Lukinfty$ is witnessed by the rule
\begin{align*}
  \set{\neg p \rightarrow q^{n}}{n \in \omega} \vdash_{\Lukinfty} p \vee q,
\end{align*}
  which in fact axiomatizes $\Lukinfty$ relative to $\Luk$. Nevertheless, it is known that $\Lukinfty$ is compact~\cite{tnorms95}, along with other related logics such as the infinitary version of the product logic $\Productinfty$~\cite{cintula+navara04} or the infinitary version $\BLinfty$ of H\'{a}jek's basic fuzzy logic.\footnote{The logic $\Productinfty$ is determined by the standard product chain $[0, 1]_{\Product}$, while $\BLinfty$ is determined by the standard $\BL$-chains, i.e.\ $\BL$-algebras over the unit interval $[0, 1]$. The compactness of $\BLinfty$ follows from the compactness of $\Lukinfty$ and the decomposition of standard $\BL$-chains as ordinal sums of {\L}ukasiewicz, product, and G\"{o}del components \cite[Theorem~2.16]{metcalfe+olivetti+gabbay08}. The key observation is that if a finite set $\Gamma$ is satisfiable in the standard product chain $[0, 1]_{\Product}$ or the standard G\"{o}del chain $[0, 1]_{\mathrm{G}}$, then it is satisfiable in the standard {\L}ukasiewicz chain $[0, 1]_{\Luk}$.} Such logics provide a good motivation for taking care to formulate our theory for compact rather than just finitary logics.

% reference for axiomatization

\subsection{Running examples: normal modal logics}
\label{subsec: running examples modal}

  Just like substructural logics are axiomatic extensions of $\FL$, classical normal modal logics are axiomatic extensions of the basic global modal logic $\K$, and intuitionistic modal logics are axiomatic extensions of its intuitionistic counter\-part~$\IK$. Although many such logics enjoy a relational (Kripke) semantics, we shall define the consequence relations of $\K$ and $\IK$ in terms of their algebraic semantics given by what we call modal algebras and modal Heyting algebras.

  An introduction to classical modal logic may be found in~\cite{chagrov+zakharyashchev97, blackburn+derijke+venema02}. For an overview of different approaches to intuitionistic modal logic, the reader may consult~\cite{simpson94}. Our motivation for considering intuitionistic modal logics is to show how easy it is to extend the existing results for classical modal logics given the machinery introduced here. However, no familiarity with intuitionistic modal logic will be required: throughout the paper the reader may easily substitute a corresponding classical modal logic whenever we discuss an intuitionistic one.

  We define modal algebras as Boolean algebras with a unary operator $\Box$ which satisfies the equations $\Box (x \wedge y) \equals \Box x \wedge \Box y$ and $\Box \top \equals \top$. Modal~Heyting algebras are Heyting algebras equipped unary operators $\Box$ and $\Diamond$ which satisfy the following equations:
\begin{align*}
  \Box (x \wedge y) & \equals \Box x \wedge \Box y & \Box \top & \equals \top & \Box (x \rightarrow y) & \inequals \Diamond a \rightarrow \Diamond b \\
  \Diamond (x \vee y) & \equals \Diamond x \vee \Diamond y & \Diamond \bot & \equals \bot & \Diamond x \rightarrow \Box y & \inequals \Box (x \rightarrow y)
\end{align*}
  Each modal algebra is a modal Heyting algebra if we take $\Diamond x \assign \neg \Box \neg x$. However, the two operators are in general not interdefinable in modal Heyting algebras.

  Let us remark here that although the axiom $(\Diamond p \rightarrow \Box p) \rightarrow \Box (p \rightarrow q)$ is usually taken to be part of the basic intuitionistic modal logic, we do not use this axiom anywhere. In other words, we might even take our basic intuitionistic modal logic to be slightly weaker and drop the inequality $\Diamond x \rightarrow \Box y \inequals \Box (x \rightarrow y)$ from the above definition of a modal Heyting algebra.

  Throughout the paper we use the notation
\begin{align*}
  \Box_{n} x & \assign x \wedge \Box x \wedge \cdots \wedge \Box^{n} x, \\
  \Diamond_{n} x & \assign x \vee \Diamond x \vee \cdots \vee \Diamond^{n} x,
\end{align*}
  where $\Box^{n}$ and $\Diamond^{n}$ denote a string of $n$ boxes or diamonds. The~varieties of $\IKnfour$-algebras, $\IKnfourfive$-algebras, and $\ISfour$-algebras are defined by imposing the following equations on modal Heyting algebras for~$n \in \omega$:
\begin{itemize}
\item $\IKnfour$-algebras: $\Box_{n} x \inequals \Box_{n+1} x$ (weak $n$-transitivity),
\item $\IKnfourfive$-algebras: $\IKnfour$-algebras with $1 \equals x \vee \Box_{1} \neg \Box_{n} x$ ($n$-cyclicity),
\item $\ISfour$-algebras: $\Box x \inequals x$ (reflexivity) and $\Box x \inequals \Box \Box x$ (transitivity).
\end{itemize}
  $\Knfour$-algebras, $\Knfourfive$-algebras, and $\Sfour$-algebras are the intersections of the above varieties with the variety of modal (Boolean) algebras. Moreover, $\Sfive$-algebras are $\Sfour$-algebras which satisfy $x \inequals \Box \Diamond x$. The dual equations $\Diamond_{n+1} x \inequals \Diamond_{n} x$, $x \inequals \Diamond x$, and $\Diamond \Diamond x \inequals \Diamond x$ may be added to the above definitions of $\IKnfour$ and $\ISfour$ without changing any of our results.

  The consequence relation of the logic $\IK$ is defined in terms of the equational consequence relation $\vDash_{\IKalg}$ of modal Heyting algebras as follows:
\begin{align*}
  \Gamma \vdash_{\IK} \varphi & \iff \set{1 \equals \gamma}{\gamma \in \Gamma} \vDash_{\IKalg} 1 \equals \varphi.
\end{align*}
  Analogous equivalences relate the other intuitionistic modal logics introduced above with the corresponding classes of modal Heyting algebras. These logics are, in the terminology of abstract algebraic logic, algebraizable with their algebraic counterparts being the varieties of the same name.

  Although we shall not need to use Kripke semantics in this paper, for the benefit of the reader let us recall~that
\begin{itemize}
\item $\Sfour$ is the global logic of reflexive and transitive Kripke frames, and
\item $\Sfive$ is the global logic of reflexive, transitive, and symmetric Kripke frames.
\end{itemize}
 To define the Kripke semantics for $\Knfour$, let us introduce the notation $R^{n}$ for the $n$-fold relational composition of a binary relation $R$ on a set $W$: $R^{0}$ is the equality relation on~$W$ and $R^{n+1} = R^{n} \circ R$. We then use $R^{\smallleq n}$ to denote the relation $\bigcup_{0 \leq i \leq n} R^{i}$, i.e.\ the relation of being accessible in at most $n$ steps. The~relation $R$ will be called weakly $n$-transitive if $R^{\smallleq (n+1)} \subseteq R^{\smallleq n}$, and $n$-cyclic if $u R w$ implies $w R^{\smallleq n} u$. Then
\begin{itemize}
\item $\Knfour$ is the logic of weakly $n$-transitive Kripke frames.
\item $\Knfourfive$ is the logic of $n$-cyclic weakly $n$-transitive Kripke frames.
\end{itemize}
  Similar claims could be made for the intuitonistic versions of these logics.

\subsection{Notational conventions}
\label{subsec: notational conventions}

  We now review some notational conventions employed throughout the paper. For a given logic $\logic{L}$, $\kappa$ ranges over all cardinals such that ${1 < \kappa \leq |{\Var \logic{L}}|^{+}}$, possibly finite, while $\alpha$ ranges over all ordinals $0 < \alpha < \kappa$, and $\tuple{\varphi}$ and $\tuple{p}$ range over $\alpha$-tuples of formulas and variables. A set of formulas in the variables $\tuple{p}, q, \tuple{r}$ is denoted e.g.\ $\ddtset(\tuple{p}, q, \tuple{r})$, in which case $\ddtset(\tuple{\varphi}, \psi, \tuple{\chi})$ denotes the set of formulas obtained by substituting $\tuple{\varphi}$ for $\tuple{p}$, $\psi$ for $q$, and $\tuple{\chi}$ for~$\tuple{r}$. If $\Phi$ is a set of formulas of cardinality at most equal to $|\alpha|$, then $\ddtset(\Phi, \psi, \tuple{\chi})$ abbreviates $\ddtset(\tuple{\varphi}, \psi, \tuple{\chi})$ for some $\alpha$-tuple of formulas $\tuple{\varphi}$ consisting precisely of the elements of~$\Phi$, some of them possibly repeated. For each $\alpha$-tuple of formulas $\tuple{\varphi}$ and each substitution $\sigma$, the tuple obtained by applying $\sigma$ to each element of $\tuple{\varphi}$ is denoted~$\sigma(\tuple{\varphi})$. Analogous conventions hold for $\alpha$-tuples $\tuple{a}$ of elements of any given algebra. We generally avoid explicitly stating the lengths of tuples to avoid cluttering the text with technicalities which can be easily inferred from the context.

\section{Semisimplicity and the law of the excluded middle}
\label{sec: ils and ddts}

  The aim of this section is to show that under suitable conditions, semisimplicity and the law of the excluded middle (LEM) are equivalent. Theorem~\ref{thm: semisimplicity} provides the syntactic version of this equivalence, while Theorem~\ref{thm: semantic semisimplicity} provides a semantic version, which is then used in Subsection~\ref{subsec: applications} to describe the semisimple varieties of $\FLe$-algebras and modal Heyting algebras.

  We first introduce the syntactic principles which will play a crucial role in our frame\-work: deduction--detachment theorems (DDTs), inconsistency lemmas (ILs), dual inconsistency lemmas (dual~ILs), and simple inconsistency lemmas (simple ILs). Dual ILs in fact turn out to be nothing but LEMs in disguise. As~the reader will readily observe, these principles are preserved under axiomatic extensions as well as linguistic reducts, provided that the reducts contain all the syntactic material required to express the principle in question. Our methods will be entirely syntactic, with the exception of results which relate syntactic and semantic semisimplicity (Subsection~\ref{subsec: semantic semisimplicity}).

  Although we only investigate DDTs, ILs, and dual ILs from a syntactic point of view, the reader should be aware that these principles also have natural semantic correlates. For~DDTs such correlates are well known, as the theory of DDTs is a classical part of abstract algebraic logic (see e.g.~\cite{czelakowski01,font16}). By~contrast, the theory of ILs has not been systematically investigated so far, with the exception of the global IL studied by Raftery~\cite{raftery13inconsistency}. Since developing this theory is outside the scope of the present paper, let us merely remark here that a theory of ILs parallel to the existing theory of DDTs can be built where the known semantic characterizations of so-called global, local, and parametrized local DDTs have analogues usually obtained by restricting to simple theories or models at the appropriate places.

  The facts about DDTs stated here are well known and their proofs can be found in the literature (see again~\cite{czelakowski01,font16}), although we extend the standard framework slightly by allowing for infinitary DDTs. The facts about ILs are not present in the literature. However, many of them are extensions of the results of Raftery~\cite{raftery13inconsistency} on global~ILs in finitary protoalgebraic logics to a more general setting. In~such cases we provide a reference to the corresponding proposition in~\cite{raftery13inconsistency}. Some of these extensions are straightforward, while in other cases some subtleties arise. Moreover, while our proofs are invariable syntactic in nature, some of Raftery's proofs are semantic in nature and involve protoalgebraicity.

\subsection{Deduction theorems}
\label{subsec: ddts}

  A deduction--detachment theorem (DDT) for a given logic is, informally speaking, an equivalence which allows us to move formulas from the left of the turnstile to the right in a manner analogous to the following equivalence for classical logic:
\begin{align*}
  \Gamma, \varphi \vdash_{\CL} \psi \iff \Gamma \vdash_{\CL} \varphi \rightarrow \psi.
\end{align*}
  In finitary logics, such an equivalence allows us to reduce questions of validity to questions of theoremhood.

  Many familiar logics enjoy a DDT.  For example, the above equivalence holds for intuitionistic logic, while a more general one holds for the $n$-contractive Full~Lambek calculus with exchange $\FLen$ and its axiomatic extensions:
\begin{align*}
  \Gamma, \varphi \vdash_{\FLen} \psi \iff \Gamma \vdash_{\FLen} (1 \wedge \varphi)^{n} \rightarrow \psi.
\end{align*}
  A similar equivalence holds for the global modal logic $\IKnfour$ and its axiomatic extensions, including the classical modal logic $\Kfour$:
\begin{align*}
  \Gamma, \varphi \vdash_{\IKnfour} \psi \iff \Gamma \vdash_{\IKnfour} \Box_{n} \varphi \rightarrow \psi.
\end{align*}
  The above equivalences all involve a single formula schema to the right of the turnstile, no matter what $\Gamma$, $\varphi$, or $\psi$ are. Such DDTs are called global DDTs.

  A more complicated DDT holds for the Full~Lambek calculus with exchange $\FLe$ and its axiomatic extensions:
\begin{align*}
  \Gamma, \varphi \vdash_{\FLe} \psi \iff \Gamma \vdash_{\FLe} (1 \wedge \varphi)^{n} \rightarrow \psi \text{ for some } n \in \omega.
\end{align*}
  For $\FLew$ and its axiomatic extensions this simplifies to:
\begin{align*}
  \Gamma, \varphi \vdash_{\FLew} \psi \iff \Gamma \vdash_{\FLew} \varphi^{n} \rightarrow \psi \text{ for some } n \in \omega.
\end{align*}
  Similarly, the global modal logic $\IK$ and its axiomatic extensions, including the classical modal logic $\K$, enjoy the following equivalence:
\begin{align*}
  \Gamma, \varphi \vdash_{\IK} \psi \iff \Gamma \vdash_{\IK} \Box_{n} \varphi \rightarrow \psi \text{ for some } n \in \omega.
\end{align*}
  Recall that $\Box_{n} x$ stands for $x \wedge \Box x \wedge \cdots \wedge \Box^{n} x$, where $\Box^{n}$ abbreviates a string of $n$ boxes. Such equivalences, which feature a whole family of formulas (or more generally sets of formulas) to the right of the turnstile, are called local~DDTs.

  Some logics, such as the Full Lambek calculus~$\FL$ do not enjoy even a local form of the DDT, but they satisfy a still more general form of the DDT, called a para\-metrized local DDT (see~\cite[Chapter~2.4]{residuated07}). Other variants of the DDT not considered in the present paper also exist, such as so-called contextual deduction theorems~\cite{raftery11}. Having sketched some of the most important examples of DDTs, let us now state the relevant definitions properly.

\begin{definition} \label{def: ddt}
  A logic $\logic{L}$ enjoys the \emph{$\kappa$-ary parametrized local DDT}, where $\kappa \leq | {\Var \logic{L}} |^{+}$, if for each ordinal $0 < \alpha < \kappa$ there is a family of sets $\ddtfamily_{\alpha}(\tuple{p}, q, \tuple{r})$, where $\tuple{p}$ is an $\alpha$-tuple of variables, such that for each set of formulas $\Gamma$ and each $\alpha$-tuple of formulas $\tuple{\varphi}$:
\begin{align*}
  \Gamma, \tuple{\varphi} \vdash_{\logic{L}} \psi & \iff \Gamma \vdash_{\logic{L}} \ddtset(\tuple{\varphi}, \psi, \tuple{\pi}) \text{ for some } \ddtset(\tuple{p}, q, \tuple{r}) \in \ddtfamily_{\alpha}(\tuple{p}, q, \tuple{r}) \text{ and some } \tuple{\pi}.
\end{align*}
  It enjoys the \emph{$\kappa$-ary local DDT} if each $\ddtfamily_{\alpha}$ can be chosen of the form $\ddtfamily_{\alpha}(\tuple{p}, q)$. It~enjoys the \emph{$\kappa$-ary global DDT} if each $\ddtfamily_{\alpha}$ can be chosen of the form $\{ \ddtset(\tuple{p}, q) \}$.
\end{definition}

  The whole collection of families $\ddtfamily_{\alpha}$ will be referred to as the DDT family~$\ddtfamily$. Moreover,
\begin{itemize}
\item by the \emph{unary DDT} we mean the $2$-ary DDT,\footnote{In other words, the unary DDT allows us to transport a single formula from the left of the turnstile to the right. One might alternatively tweak the definition of the $\kappa$-ary DDT in case $\kappa$ is finite to make sure that it is the $1$-ary DDT rather than the $2$-ary DDT which transports a single formula across the turnstile. We opt against doing so as it would be rather tedious to split the definition of each syntactic principle into two cases depending on whether $\kappa$ is finite.}
\item by the \emph{finitary DDT} we mean the $\omega$-ary DDT, and
\item by the \emph{unrestricted DDT} we mean the $\kappa$-ary DDT for $\kappa = | {\Var \logic{L}} |^{+}$.
\end{itemize}
  The technical condition ${\kappa \leq | {\Var \logic{L}} |^{+}}$, which accounts for the fact that there are no tuples of distinct variables of length~$| {\Var \logic{L}} |^{+}$, will be always be assumed when we talk about $\kappa$-ary DDTs, or indeed any other $\kappa$-ary syntactic principles.\footnote{If $| {\Fm \logic{L}} | = | {\Var \logic{L}} |$, which is the case for most ``everyday'' logics, then there are no tuples of distinct formulas of length greater than $| {\Var \logic{L}} |$ anyway.}

  For a finitary ($\kappa$-ary) logic we may assume without loss of generality that each of the families $\ddtfamily_{\alpha}$ consists of finite sets (sets of cardinality less than~$\kappa$) only. A logic which enjoys a unary global (local, parametrized local) DDT in fact enjoys the finitary global (local, parametrized local) local DDT.\footnote{This is trivial for global and local DDTs. For parametrized local DDTs the presence of parameters complicates things, but the implication holds by Fact~\ref{fact: protoimplication}.} Moreover, a $\kappa$-ary logic which enjoys a $\kappa$-ary (parametrized) local DDT in fact enjoys the unrestricted (parametrized) local DDT. 

  Only unary (or equivalently, finitary) DDTs are usually considered in the literature. This omission is entirely justified when it comes to global DDTs in a setting where each connective is finitary: the~reader may easily observe that even classical logic fails to enjoy the infinitary global DDT. However, as a consequence of finitarity, classical logic does enjoy the unrestricted local DDT in the form
\begin{align*}
  \Gamma, \tuple{\varphi} \vdash_{\CL} \psi & \iff \Gamma \vdash_{\CL} \left( \bigwedge \tuple{\varphi}' \right) \rightarrow \psi \text{ for some finite subtuple } \tuple{\varphi}' \text{ of } \tuple{\varphi}.
\end{align*}
  More interestingly, the infinitary {\L}ukasiewicz logic $\Lukinfty$ enjoys a more complicated form of the unrestricted local DDT. Since ILs and dual ILs are instrumental in obtaining the infinitary local DDT of $\Lukinfty$, we postpone the formulation of this unrestricted DDT until after these have been introduced (see~Example~\ref{ex: infinitary ddt for lukinfty}).

  Our reasons for considering infinitary DDTs are three-fold. Firstly, we gain some generality with no corresponding increase in the complexity of our proofs. Secondly, building an arity into the definition of a DDT makes the parallels between the theory of DDTs and the theory of ILs more explicit. A unary IL need not induce a finitary IL, therefore one is led to consider ILs of different arities anyway, and no substantial simplicity is gained by restricting to finitary ILs. Finally, for infinitary protoalgebraic logics it is the unrestricted DDT rather than the finitary DDT which corresponds to a natural semantic condition, namely the Filter Extension Property (see~\cite[Proposition~6.95]{lavicka18thesis}).

  Although we did not provide an example of a parametrized local DDT in action, let us recall here that logics with such a DDT admit a more practical characterization in terms of the existence of a set of formulas which satisfies the axiom of Reflexivity and the rule of Modus Ponens. Most ``everyday'' logics with an implication connective therefore enjoy at least this form of the DDT.

\begin{fact} \label{fact: protoimplication}
  The following are equivalent for each logic $\logic{L}$:
\begin{enumerate}
\item $\logic{L}$ enjoys the unary parametrized local DDT.
\item $\logic{L}$ enjoys the unrestricted parametrized local DDT.
\item $\logic{L}$ has a protoimplication set, i.e.\ a set of formulas $\protoset(p, q)$ in two variables such that $\emptyset \vdash_{\logic{L}} \protoset(p, p)$ and $p, \protoset(p, q) \vdash_{\logic{L}} q$.
\end{enumerate}
\end{fact}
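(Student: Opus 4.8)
The plan is to prove the cycle of implications $(2) \Rightarrow (1)$, $(1) \Rightarrow (3)$, $(3) \Rightarrow (2)$. The first of these is immediate: the unrestricted parametrized local DDT is the $\kappa$-ary version for $\kappa = |\Var\logic{L}|^{+}$, and instantiating the definition at the ordinal $\alpha = 1$ (a one-element tuple $\tuple{p}$) gives exactly the unary parametrized local DDT. So the real content lies in the other two implications.

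For $(1) \Rightarrow (3)$, I would extract the protoimplication set directly from the unary DDT family. Applying the unary parametrized local DDT to the trivially valid rule $p \vdash_{\logic{L}} p$ — reading it as $\emptyset, p \vdash_{\logic{L}} p$ — yields that $\emptyset \vdash_{\logic{L}} \ddtset(p, p, \tuple{\pi})$ for some $\ddtset(p,q,\tuple{r})$ in the family $\ddtfamily_1$ and some parameter tuple $\tuple{\pi}$. Applying the DDT in the other direction to the instance $p, \ddtset(p,q,\tuple{r}) \vdash_{\logic{L}} q$? Here one must be a little careful, since the DDT family may contain several sets and the ``detachment'' direction ($\Leftarrow$) says that $\Gamma \vdash_{\logic{L}} \ddtset(\varphi,\psi,\tuple{\pi})$ for \emph{some} member implies $\Gamma, \varphi \vdash_{\logic{L}} \psi$. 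So for the chosen $\ddtset$ and $\tuple{\pi}$ witnessing reflexivity at variables $p,q$, one sets $\protoset(p,q) := \ddtset(p, q, \tuple{\pi})$ — but $\tuple{\pi}$ was chosen for the reflexivity instance, so I would instead argue more symmetrically: take $\Gamma := \ddtset(p,q,\tuple{\pi})$ (for a generic fresh parameter choice) and note the detachment direction gives modus ponens, while applying the deduction direction to $p \vdash p$ with parameters substituted appropriately gives reflexivity. The cleanest route is to define $\protoset(p,q)$ as the union over the (possibly several) sets needed, or to first show via a renaming-of-variables argument that a single $\ddtset$ and a single parameter assignment can be made to serve both purposes simultaneously. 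This bookkeeping with parameters is where the only genuine subtlety arises.

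For $(3) \Rightarrow (2)$, I would build the DDT family $\ddtfamily$ from a given protoimplication set $\protoset(p,q)$ by iteration. For a one-element antecedent, take $\ddtfamily_1 := \{\protoset(p,q)\}$, with the parameters being exactly the variables of $\protoset$ other than $p,q$ (hence genuinely parametrized). For an $\alpha$-tuple $\tuple{\varphi} = \langle \varphi_\xi \rangle_{\xi < \alpha}$, one iterates: $\Gamma, \tuple{\varphi} \vdash_{\logic{L}} \psi$ should be equivalent to discharging the $\varphi_\xi$ one at a time using $\protoset$, so $\ddtfamily_\alpha$ consists of the sets obtained by nesting $\protoset$ transfinitely — formally, one defines $\ddtset^{(0)} := \{q\}$, $\ddtset^{(\xi+1)} := \bigcup\{\protoset(p_\xi, \chi) \mid \chi \in \ddtset^{(\xi)}\}$, and takes unions at limits, with all the intermediate variables absorbed as parameters. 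The forward (deduction) direction follows by transfinite induction using $p, \protoset(p,q) \vdash_{\logic{L}} q$ and cut; the backward (detachment) direction follows similarly using $\emptyset \vdash_{\logic{L}} \protoset(p,p)$, monotonicity, and cut. One must check that this construction respects the arity bound $\kappa = |\Var\logic{L}|^{+}$ — i.e.\ that the nested sets and their parameter tuples can be indexed by fewer than $\kappa$ variables — which holds because $\alpha < \kappa$ at each stage and one has enough fresh variables available. The transfinite induction at limit ordinals, and verifying that cut (which is only assumed in the stated finitary/$\kappa$-ary-free form for arbitrary premise sets) propagates through the limit stages, is the main thing to get right, but it is routine once the construction is set up correctly.

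Thus the only real obstacle is notational: managing the parameter tuples in $(1)\Rightarrow(3)$ so that one $\ddtset$ serves both the reflexivity and the detachment roles, and managing the transfinite nesting in $(3)\Rightarrow(2)$ within the allowed supply of variables. The logical skeleton in both directions is just reflexivity, monotonicity, and cut applied to the defining properties of $\protoset$.
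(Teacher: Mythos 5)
Your step $(3) \Rightarrow (2)$ is where the real content of the fact lies, and the construction you propose does not work. Nesting the protoimplication set yields, at the unary level, the family $\ddtfamily_{1} = \{\protoset(p,q)\}$ with no parameters at all (by definition $\protoset$ has only the two variables $p,q$, so the parametrization is vacuous), and your DDT at $\alpha = 1$ then asserts $\Gamma, \varphi \vdash_{\logic{L}} \psi \iff \Gamma \vdash_{\logic{L}} \protoset(\varphi,\psi)$, i.e.\ a global DDT with respect to $\protoset$. This is false in general: the global modal logic $\K$ has the protoimplication set $\{p \rightarrow q\}$ (both $p \rightarrow p$ and modus ponens hold globally), yet $p \vdash_{\K} \Box p$ while $\emptyset \nvdash_{\K} p \rightarrow \Box p$; as the paper notes, $\K$ only enjoys the \emph{local} DDT w.r.t.\ the family $\Box_{n} p \rightarrow q$. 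Iterating $\protoset$ over longer antecedent tuples cannot repair this, since the failure already occurs with a single premise. The underlying problem is that reflexivity and modus ponens for $\protoset$ only give you the detachment direction; the deduction direction cannot be obtained by ``discharging the $\varphi_{\xi}$ one at a time,'' because nothing guarantees that $\protoset$ commutes with all valid rules (necessitation in $\K$ being the standard obstruction). The proof the paper points to (Theorem~6.22 of Font's textbook, of which the paper's own proof is just a citation) instead takes $\ddtfamily_{\alpha}$ to consist, essentially, of \emph{all} sets $\ddtset(\tuple{p}, q, \tuple{r})$ with $\tuple{p}, \ddtset \vdash_{\logic{L}} q$ -- equivalently, preimages $\sigma^{-1}[T]$ of theories under surjective substitutions -- and proves the deduction direction via surjective substitution swapping (cf.\ Proposition~\ref{prop: swapping}); the protoimplication set enters there through a theory-and-substitution argument, not through iterated discharge.

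On $(1) \Rightarrow (3)$: the subtlety you worry about (whether a single member of $\ddtfamily_{1}$ can serve both the reflexivity and the detachment role) is not an issue, since detachment holds automatically for \emph{every} $\ddtset \in \ddtfamily_{1}$ and every parameter tuple, by applying the detachment direction to $\ddtset(\varphi,\psi,\tuple{\pi}) \vdash_{\logic{L}} \ddtset(\varphi,\psi,\tuple{\pi})$. The genuine subtlety is the one you do not address: the reflexivity witness $\ddtset(p,p,\tuple{\pi})$ may involve variables other than $p$, whereas $\protoset(p,q)$ must be a set in two variables. This is fixed by structurality: apply the substitution $\sigma$ sending every variable other than $p$ to $p$, and set $\protoset(p,q) := \ddtset(p,q,\sigma(\tuple{\pi}))$; then $\emptyset \vdash_{\logic{L}} \protoset(p,p)$ and $p, \protoset(p,q) \vdash_{\logic{L}} q$ as required.
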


\begin{proof}
  The equivalence between (1) and (3) is well known, see e.g.\ Theorems~6.7 and 6.22 of \cite{font16}. The equivalence between conditions (2)~and~(3) is not stated explicitly there, but it holds by a trivial modification of the proof of Theorem 6.22 of \cite{font16}.
\end{proof}

  If a logic enjoys a parametrized local DDT w.r.t.\ two families $\ddtfamily$ and~$\ddtfamily'$, then these are equivalent in the following sense: for each $\ddtset(\tuple{p}, q, \tuple{r}) \in \ddtfamily_{\alpha}$ there are $\ddtset'(\tuple{p}, q, \tuple{r}') \in \ddtfamily'$ and $\tuple{\pi}'$ such that $\ddtset(\tuple{p}, q, \tuple{r}) \vdash_{\logic{L}} \ddtset'(\tuple{p}, q, \tuple{\pi}')$, and conversely for each $\ddtset'(\tuple{p}, q, \tuple{r}') \in \ddtfamily'_{\alpha}$ there are $\ddtset(\tuple{p}, q, \tuple{r}) \in \ddtfamily$ and $\tuple{\pi}$ such that $\ddtset'(\tuple{p}, q, \tuple{r}') \vdash_{\logic{L}} \ddtset(\tuple{p}, q, \tuple{\pi})$.

\begin{fact} \label{fact: all ddt families equivalent}
  Let $\logic{L}$ be a logic which enjoys the $\kappa$-ary parametrized local DDT w.r.t.\ a family $\ddtfamily$. Then $\logic{L}$ enjoys the $\kappa$-ary parameterized local DDT w.r.t.\ a family $\ddtfamily'$ if and only if $\ddtfamily$ and $\ddtfamily'$ are equivalent.
\end{fact}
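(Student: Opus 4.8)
The plan is to prove the two implications separately. Both are routine once the right trivial instances of the DDT have been isolated: the whole argument uses only reflexivity, cut and structurality of $\vdash_{\logic{L}}$, applied uniformly for each ordinal $0 < \alpha < \kappa$, so no finitarity, compactness or protoalgebraicity will be needed.

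For the ``if'' direction, I would assume $\ddtfamily$ and $\ddtfamily'$ are equivalent and verify, for each $\alpha$, that the biconditional of Definition~\ref{def: ddt} holds with $\ddtfamily'$ in place of $\ddtfamily$. For the left-to-right half, suppose $\Gamma, \tuple{\varphi} \vdash_{\logic{L}} \psi$; the DDT w.r.t.\ $\ddtfamily$ supplies $\ddtset \in \ddtfamily_{\alpha}$ and parameters $\tuple{\chi}$ with $\Gamma \vdash_{\logic{L}} \ddtset(\tuple{\varphi}, \psi, \tuple{\chi})$, and equivalence supplies $\ddtset' \in \ddtfamily'_{\alpha}$ together with a parameter tuple $\tuple{\pi}'(\tuple{p}, q, \tuple{r})$ such that $\ddtset(\tuple{p}, q, \tuple{r}) \vdash_{\logic{L}} \ddtset'(\tuple{p}, q, \tuple{\pi}')$. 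Instantiating this schematic consequence by the substitution $\tuple{p} \mapsto \tuple{\varphi}$, $q \mapsto \psi$, $\tuple{r} \mapsto \tuple{\chi}$ (structurality) and cutting against $\Gamma \vdash_{\logic{L}} \ddtset(\tuple{\varphi}, \psi, \tuple{\chi})$ gives $\Gamma \vdash_{\logic{L}} \ddtset'(\tuple{\varphi}, \psi, \tuple{\pi}'(\tuple{\varphi}, \psi, \tuple{\chi}))$, which is of the required form. The right-to-left half is symmetric, using the ``conversely'' clause of equivalence to pass from $\ddtfamily'$ back to $\ddtfamily$ before invoking the backward direction of the DDT w.r.t.\ $\ddtfamily$.

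For the ``only if'' direction, I would assume $\logic{L}$ enjoys the $\kappa$-ary parametrized local DDT w.r.t.\ both $\ddtfamily$ and $\ddtfamily'$ and produce the two clauses of equivalence. The key observation is that for every $\ddtset(\tuple{p}, q, \tuple{r}) \in \ddtfamily_{\alpha}$ one has $\ddtset(\tuple{p}, q, \tuple{r}), \tuple{p} \vdash_{\logic{L}} q$: indeed $\ddtset(\tuple{p}, q, \tuple{r}) \vdash_{\logic{L}} \ddtset(\tuple{p}, q, \tuple{r})$ holds trivially, so this is an instance of the right-hand side of the DDT w.r.t.\ $\ddtfamily$ (take $\Gamma = \ddtset(\tuple{p}, q, \tuple{r})$, $\tuple{\varphi} = \tuple{p}$, $\psi = q$, parameters $\tuple{r}$), and the backward direction of that biconditional yields the claim. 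Feeding $\ddtset(\tuple{p}, q, \tuple{r}), \tuple{p} \vdash_{\logic{L}} q$ into the forward direction of the DDT w.r.t.\ $\ddtfamily'$ then produces $\ddtset' \in \ddtfamily'_{\alpha}$ and a parameter tuple $\tuple{\pi}'$ with $\ddtset(\tuple{p}, q, \tuple{r}) \vdash_{\logic{L}} \ddtset'(\tuple{p}, q, \tuple{\pi}')$, which is precisely the first clause of equivalence. Interchanging the roles of $\ddtfamily$ and $\ddtfamily'$ gives the second clause.

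I do not expect a genuine obstacle. The only point needing mild care is the bookkeeping of parameter tuples: when a schematic consequence such as $\ddtset(\tuple{p}, q, \tuple{r}) \vdash_{\logic{L}} \ddtset'(\tuple{p}, q, \tuple{\pi}')$ is instantiated, the parameters $\tuple{\pi}'$ must themselves be instantiated, and one should check they remain legitimate formula tuples of the right length — but this is exactly the sort of detail the conventions of Subsection~\ref{subsec: notational conventions} are designed to absorb, so in the write-up it can be handled in one sentence.
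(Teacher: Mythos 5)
Your proof is correct and follows essentially the same route the paper takes: the paper omits the proof of Fact~\ref{fact: all ddt families equivalent} and points to the analogous Fact~\ref{fact: all il families equivalent} for ILs, whose proof is exactly your argument (structurality plus cut for the ``if'' direction, and reflexivity $\ddtset \vdash_{\logic{L}} \ddtset$ fed through the backward direction of one DDT and the forward direction of the other for the ``only if'' direction). Your handling of the instantiated parameter tuple $\tuple{\pi}'$ likewise matches the paper's use of $\sigma(\tuple{\pi}')$ there.
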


\begin{proof}
  We omit the proof of this known fact, since an analogous fact will be proved for~ILs.
\end{proof}

  An equivalent formulation of the (parametrized) local DDT is what we call the (surjective) substitution swapping property.

\begin{definition} \label{def: swapping}
  A logic $\logic{L}$ enjoys \emph{$\kappa$-ary (surjective) substitution swapping}, where $\kappa \leq | {\Var \logic{L}} |^{+}$, if for each ordinal $0 < \alpha < \kappa$, each (surjective) substitution~$\sigma$, each $\alpha$-tuple of formulas $\tuple{\varphi}$, each~$\psi$, and each $\logic{L}$-theory~$T$
\begin{align*}
  \sigma(\tuple{\varphi}), T \vdash_{\logic{L}} \sigma(\psi) \implies \tuple{\varphi}, \sigma^{-1}[T] \vdash_{\logic{L}} \psi.
\end{align*}
\end{definition}

\begin{proposition} \label{prop: swapping}
  A logic $\logic{L}$ enjoys the $\kappa$-ary (parametrized) \mbox{local} DDT if and only if it enjoys $\kappa$-ary (surjective) substitution swapping, for $\kappa \leq | {\Var \logic{L}} |^{+}$.
\end{proposition}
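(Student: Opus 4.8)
The plan is to prove both implications in parallel for the parametrized and non-parametrized versions, exploiting the fact that both principles are essentially ways of transporting an $\alpha$-tuple of formulas across the turnstile by means of a substitution. Throughout, the crucial auxiliary facts are that $\sigma^{-1}[T]$ is a theory whenever $T$ is (structurality) and that $\sigma(\chi) \in T$ iff $\chi \in \sigma^{-1}[T]$ for a theory $T$.

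\emph{From the DDT to substitution swapping.} Assume $\logic{L}$ enjoys the $\kappa$-ary (parametrized) local DDT w.r.t.\ a family $\ddtfamily$. I would fix a (surjective) substitution $\sigma$, an $\alpha$-tuple $\tuple{\varphi}$, a formula $\psi$, and a theory $T$ with $\sigma(\tuple{\varphi}), T \vdash_{\logic{L}} \sigma(\psi)$, and apply the DDT with $T$ in the role of $\Gamma$ to obtain $T \vdash_{\logic{L}} \ddtset(\sigma(\tuple{\varphi}), \sigma(\psi), \tuple{\pi})$ for some $\ddtset(\tuple{p}, q, \tuple{r}) \in \ddtfamily_{\alpha}$ and some $\tuple{\pi}$. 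The key step is to rewrite this set as $\sigma[\ddtset(\tuple{\varphi}, \psi, \tuple{\rho})]$: in the non-parametrized case there are no parameters and this is immediate, whereas in the parametrized case surjectivity of $\sigma$ lets me choose $\tuple{\rho}$ with $\sigma(\tuple{\rho}) = \tuple{\pi}$. Since $T$ is a theory, $T \vdash_{\logic{L}} \sigma[\ddtset(\tuple{\varphi}, \psi, \tuple{\rho})]$ forces $\ddtset(\tuple{\varphi}, \psi, \tuple{\rho}) \subseteq \sigma^{-1}[T]$, and the backward direction of the DDT then yields $\tuple{\varphi}, \sigma^{-1}[T] \vdash_{\logic{L}} \psi$.

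\emph{From substitution swapping to the DDT.} For each $0 < \alpha < \kappa$ I would fix a tuple $\tuple{p}$ of $\alpha$ distinct variables, a further variable $q$, and (in the parametrized case) a tuple $\tuple{r}$ enumerating the remaining variables of $\logic{L}$, and take $\ddtfamily_{\alpha}$ to consist of all sets $\ddtset$ of formulas in the listed variables with $\tuple{p}, \ddtset \vdash_{\logic{L}} q$; the backward direction of the DDT then holds for this family by structurality and cut. For the forward direction, given $\Gamma, \tuple{\varphi} \vdash_{\logic{L}} \psi$, I would pick a substitution $\sigma$ with $\sigma(p_{i}) = \varphi_{i}$ and $\sigma(q) = \psi$, taking in the parametrized case $\sigma$ additionally so that it maps $\tuple{r}$ onto $\Var\logic{L}$, hence $\sigma$ is surjective (this is exactly where the bound $\kappa \leq |{\Var \logic{L}}|^{+}$ is used). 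Since the theory $T$ generated by $\Gamma$ contains $\Gamma$, we get $\sigma(\tuple{p}), T \vdash_{\logic{L}} \sigma(q)$, and (surjective) substitution swapping delivers $\tuple{p}, \sigma^{-1}[T] \vdash_{\logic{L}} q$. In the parametrized case this already finishes the argument with $\ddtset := \sigma^{-1}[T]$ and $\tuple{\pi} := \sigma(\tuple{r})$, since then $\ddtset \in \ddtfamily_{\alpha}$ and $\ddtset(\tuple{\varphi}, \psi, \tuple{\pi}) = \sigma[\sigma^{-1}[T]] \subseteq T$, so $\Gamma \vdash_{\logic{L}} \ddtset(\tuple{\varphi}, \psi, \tuple{\pi})$.

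The non-parametrized case requires one extra move, which I expect to be the fussiest point of the proof: $\sigma^{-1}[T]$ may contain variables outside $\tuple{p}, q$, so it cannot directly be a member of a parameter-free DDT family. The remedy is to apply the collapsing substitution $\tau$ that fixes $\tuple{p}$ and $q$ and sends every other variable to $p_{0}$, and to put $\ddtset := \tau[\sigma^{-1}[T]]$; this lies in the variables $\tuple{p}, q$, still satisfies $\tuple{p}, \ddtset \vdash_{\logic{L}} q$ by structurality, and --- provided $\sigma$ is chosen with $\sigma \circ \tau = \sigma$ (e.g.\ $\sigma$ sending every variable outside $\tuple{p}, q$ to $\varphi_{0}$) --- one computes $\ddtset(\tuple{\varphi}, \psi) = \sigma[\ddtset] = (\sigma \circ \tau)[\sigma^{-1}[T]] = \sigma[\sigma^{-1}[T]] \subseteq T$, giving $\Gamma \vdash_{\logic{L}} \ddtset(\tuple{\varphi}, \psi)$ as required. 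The real content of the argument is thus the interplay between needing $\sigma$ surjective in order to invoke surjective substitution swapping and needing the witnessing set confined to the variables $\tuple{p}, q$ in the parameter-free setting: parameters resolve the former, and the substitution $\tau$ together with the compatibility $\sigma \circ \tau = \sigma$ resolves the latter.
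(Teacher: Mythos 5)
Your proposal is correct and matches the paper's intended argument: the paper omits the proof of this proposition, pointing instead to the analogous Proposition~\ref{prop: swapping for antitheorems} for inconsistency lemmas, and your proof is precisely that argument transposed to the DDT --- apply the DDT to the theory generated by $\Gamma$ and pull parameters back along a surjective $\sigma$ in one direction, and in the other take the canonical family $\set{\ddtset}{\tuple{p}, \ddtset \vdash_{\logic{L}} q}$, use (surjective) substitution swapping on $\sigma^{-1}[T]$, and collapse the extraneous variables with $\tau$ in the parameter-free case. The only point you gloss over (exactly as the paper's IL proof does) is that in the parametrized converse the variables $\tuple{p}, q$ must be chosen so that $|{\Var \logic{L}}|$ variables remain unused, so that a surjective $\sigma$ with $\sigma(\tuple{p}) = \tuple{\varphi}$ and $\sigma(q) = \psi$ actually exists.
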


\begin{proof}
  We again omit the proof of this known fact (see e.g.~\cite[Section~6.2]{font16}), since an analogous fact will be proved for~ILs.
\end{proof}

  Although in this paper we shall not be concerned with the semantic correlates of the various forms of the DDT, let us briefly state what they are for the benefit of the algebraically minded reader. If $\logic{L}$ is an algebraizable logic and $\class{K}$ is its equivalent algebraic semantics, then $\logic{L}$ always enjoys the parametrized local DDT. Moreover, modulo some finitarity and finiteness assumptions, it enjoys the local DDT if and only if $\class{K}$ enjoys the Relative Congruence Extension Property (RCEP), and it enjoys the global DDT if and only if $\class{K}$ has Equationally Definable Relative Principal Congruences (EDPRC).

\subsection{Inconsistency lemmas}
\label{subsec: ils}

  While DDTs allow us to move formulas from the left of the turnstile to the right in any context, inconsistency lemmas (ILs) have a more limited scope: they allow us to do so if the premises are inconsistent. That is, they generalize the following equivalence for classical logic:
\begin{align*}
  \Gamma, \varphi_{1}, \dots, \varphi_{k} \vdash_{\CL} \allset \iff \Gamma \vdash_{\logic{L}} \neg (\varphi_{1} \wedge \dots \wedge \varphi_{k}).
\end{align*}
  This is essentially the DDT for $\CL$ with the \emph{falsum} constant in place of~$\psi$. In~compact logics, such an equivalence allows us to reduce questions of inconsistency to questions of validity. It will come as no surprise to the reader that such equivalences also come in global, local, and parametrized local forms.

  Each logic with a DDT and a \emph{falsum} constant representing an inconsistent proposition enjoys an IL. Fragments of such logics which drop the implication but retain the negation provide examples of logics which enjoy the IL but not a DDT. For example, (the conjunction--negation--product--unit fragment of) $\FLen$ and its axiomatic extensions enjoy the IL in the form
\begin{align*}
  \Gamma, \varphi_{1}, \dots, \varphi_{k} \vdash_{\FLen} \allset & \iff \Gamma \vdash_{\FLen} \neg (1 \wedge \varphi_{1} \wedge \dots \wedge \varphi_{k})^{n},
\end{align*}
  while (the negation--conjunction--box fragment of) $\IKnfour$ and its axiomatic extensions enjoy the IL in the form
\begin{align*}
  \Gamma, \varphi_{1}, \dots, \varphi_{k} \vdash_{\IKnfour} \allset & \iff \Gamma \vdash_{\IKnfour} \neg \Box_{n} (\varphi_{1} \wedge \dots \wedge \varphi_{k}).
\end{align*}
  The local form of the IL is analogous to the local form of the DDT. For~example, in $\FLe$ and its axiomatic extensions, it takes the form
\begin{align*}
  \Gamma, \varphi_{1}, \dots, \varphi_{k} \vdash_{\FLe} \allset & \iff \Gamma \vdash_{\FLe} \neg (1 \wedge \varphi_{1} \wedge \dots \wedge \varphi_{k})^{n} \text{ for some } n \in \omega.
\end{align*}
  Similarly, in $\IK$ and its axiomatic extensions, it takes the form
\begin{align*}
  \Gamma, \varphi_{1}, \dots, \varphi_{k} \vdash_{\IK} \allset & \iff \Gamma \vdash_{\IK} \neg \Box_{n} (\varphi_{1} \wedge \dots \wedge \varphi_{k}) \text{ for some } n \in \omega.
\end{align*}

  A substructural logic may well enjoy a ``better'' IL than the one which comes from its DDT. For instance, the~product fuzzy logic~$\Product$ only enjoys the local DDT inherited from $\FLew$, but it enjoys the global IL of classical logic:
\begin{align*}
  \Gamma, \varphi_{1}, \dots, \varphi_{k} \vdash_{\Product} \allset & \iff \Gamma \vdash_{\Product} \neg (\varphi_{1} \wedge \dots \wedge \varphi_{k}).
\end{align*}
  A different example is the infinitary {\L}ukasiewicz logic $\Lukinfty$, which enjoys a complicated local DDT (see Example~\ref{ex: ddt for lukinfty}), but by compactness it inherits the fairly simple unrestricted local IL of the finitary {\L}ukasiewicz logic $\Luk$ (i.e.\ of $\FLew$). These are both local principles, but the local IL enjoyed by $\Lukinfty$ is substantially simpler than the local DDT enjoyed by $\Lukinfty$.

\begin{definition} \label{def: il}
  A logic $\logic{L}$ enjoys the \emph{$\kappa$-ary parametrized local IL}, where $\kappa \leq | {\Var \logic{L}} |^{+}$, if for each ordinal $0 < \alpha < \kappa$ there is a family of sets $\ilfamily_{\alpha}(\tuple{p}, \tuple{r})$, where $\tuple{p}$ is an $\alpha$-tuple of variables, such that for each set of formulas $\Gamma$ and each $\alpha$-tuple of formulas~$\tuple{\varphi}$:
\begin{align*}
  \Gamma, \tuple{\varphi} \vdash_{\logic{L}} \Fm \logic{L} & \iff \Gamma \vdash_{\logic{L}} \ilset_{\alpha}(\tuple{\varphi}, \tuple{\pi}) \text{ for some } \ilset(\tuple{p}, \tuple{r}) \in \ilfamily_{\alpha} \text{ and some } \tuple{\pi}.
\end{align*}
  It enjoys the \emph{$\kappa$-ary local IL} if each $\ilfamily_{\alpha}$ can be chosen of the form $\ilfamily_{\alpha}(\tuple{p})$. It~enjoys the \emph{$\kappa$-ary global IL} if each~$\ilfamily_{\alpha}$ can be chosen of the form $\{ \ilset(\tuple{p}) \}$.
\end{definition}

\begin{fact}
  Each logic with the parametrized local IL has an antitheorem.
\end{fact}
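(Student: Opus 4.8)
The plan is to unwind the definitions and show directly that the hypotheses of the parametrized local IL force the existence of a finite set of formulas that fails to be jointly designable in any nontrivial model. First I would fix any ordinal $0 < \alpha < \kappa$ (such an $\alpha$ exists since $\kappa > 1$) and pick the corresponding family $\ilfamily_{\alpha}(\tuple{p}, \tuple{r})$ witnessing the parametrized local IL. Now I would instantiate the defining equivalence at a convenient choice of $\tuple{\varphi}$ and $\Gamma$: take $\tuple{\varphi}$ to be an $\alpha$-tuple of distinct variables $\tuple{p}$ and $\Gamma = \emptyset$. The left-hand side of the equivalence, $\tuple{p} \vdash_{\logic{L}} \Fm \logic{L}$, is generally false; but rather than arguing about whether it holds, I would instead instantiate the right-to-left direction more cleverly.

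Here is the cleaner route. Take any single $\ilset(\tuple{p}, \tuple{r}) \in \ilfamily_{\alpha}$ and any tuple of parameter formulas $\tuple{\pi}$; then by the right-to-left direction of the IL applied with $\Gamma := \ilset(\tuple{p}, \tuple{\pi})$ and $\tuple{\varphi} := \tuple{p}$, we get $\ilset(\tuple{p}, \tuple{\pi}), \tuple{p} \vdash_{\logic{L}} \Fm \logic{L}$, using that trivially $\ilset(\tuple{p}, \tuple{\pi}) \vdash_{\logic{L}} \ilset(\tuple{p}, \tuple{\pi})$. By Fact~2 (the second Fact in the Preliminaries), to conclude that $\ilset(\tuple{p}, \tuple{\pi}) \cup \{\tuple{p}\}$ is an antitheorem, i.e.\ that $\ilset(\tuple{p}, \tuple{\pi}), \tuple{p} \vdash_{\logic{L}} \allset$, it suffices to check that $\sigma[\ilset(\tuple{p}, \tuple{\pi}) \cup \{\tuple{p}\}] \vdash_{\logic{L}} \Fm \logic{L}$ for every substitution $\sigma$. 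But applying $\sigma$ to both sides of $\ilset(\tuple{p}, \tuple{\pi}), \tuple{p} \vdash_{\logic{L}} \Fm \logic{L}$ and using structurality together with the fact that $\sigma$ maps $\Fm\logic{L}$ onto a subset of $\Fm\logic{L}$, we obtain exactly $\sigma[\ilset(\tuple{p}, \tuple{\pi})], \sigma[\tuple{p}] \vdash_{\logic{L}} \Fm \logic{L}$, as needed. Hence $\ilset(\tuple{p}, \tuple{\pi}) \cup \{\tuple{p}\}$ is an antitheorem of $\logic{L}$.

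The only mild subtlety is making sure this set is nonempty as a genuine "antitheorem" in the intended sense — but that is automatic, since it contains the variables $\tuple{p}$, and even if $\ilset(\tuple{p},\tuple{\pi})$ happened to be empty the tuple $\tuple{p}$ is nonempty because $\alpha > 0$. I expect the main (very minor) obstacle to be purely bookkeeping: being careful that "for some $\ilset \in \ilfamily_\alpha$ and some $\tuple{\pi}$" on the right-hand side of the IL is used in the right direction — we are free to pick any particular $\ilset$ and $\tuple{\pi}$ to establish the right-to-left implication, which is what we want, rather than needing to control all of them. No semantic argument beyond the already-established Facts on antitheorems is required.
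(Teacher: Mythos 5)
Your overall strategy is the right one and matches the paper's: apply the right-to-left direction of the IL to the trivial derivation $\ilset(\tuple{p},\tuple{\pi}) \vdash_{\logic{L}} \ilset(\tuple{p},\tuple{\pi})$ to conclude $\tuple{p}, \ilset(\tuple{p},\tuple{\pi}) \vdash_{\logic{L}} \Fm \logic{L}$. However, the last step — upgrading this to antitheoremhood — contains a genuine error. You claim that applying a substitution $\sigma$ to $\Gamma \vdash_{\logic{L}} \Fm \logic{L}$ and ``using structurality'' yields $\sigma[\Gamma] \vdash_{\logic{L}} \Fm \logic{L}$. Structurality only gives $\sigma[\Gamma] \vdash_{\logic{L}} \sigma(\psi)$ for each $\psi$, i.e.\ $\sigma[\Gamma]$ derives every formula in the \emph{image} $\sigma[\Fm \logic{L}]$, which may be a proper subset of $\Fm \logic{L}$ (your own phrase ``onto a subset of $\Fm\logic{L}$'' points at the problem: the inclusion goes the wrong way). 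The property ``derives all formulas'' is not substitution-invariant: if it were, then by the very Fact you invoke, $\Gamma \vdash_{\logic{L}} \Fm \logic{L}$ and $\Gamma \vdash_{\logic{L}} \allset$ would coincide for every logic, contradicting the paper's explicit remark that in the positive fragment of classical logic the set $\Fm \logic{L}$ derives everything but is not an antitheorem (e.g.\ the substitution sending all variables to $\top$ maps it to a set of tautologies, which does not derive a fresh variable). So as written this step is a fallacy, and it is precisely the point that makes the Fact require any argument at all.

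The repair is short and is what the paper's one-line proof implicitly does: for each substitution $\sigma$, do not push $\sigma$ through the conclusion but re-apply the IL to the substituted instance. Since $\sigma[\ilset(\tuple{p},\tuple{\pi})] = \ilset(\sigma(\tuple{p}),\sigma(\tuple{\pi}))$ trivially derives itself, the right-to-left direction of the IL (which holds for \emph{arbitrary} tuples of formulas $\tuple{\varphi}$, here $\sigma(\tuple{p})$, with parameters $\sigma(\tuple{\pi})$) gives $\sigma(\tuple{p}), \ilset(\sigma(\tuple{p}),\sigma(\tuple{\pi})) \vdash_{\logic{L}} \Fm \logic{L}$, which is exactly the hypothesis needed in the quantified Fact. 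With that substitution-instance argument in place your proof is correct and essentially the paper's (the paper simply specializes to $\alpha = 1$ with all parameters equal to the variable $p$, so as to get an antitheorem in a single variable).
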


\begin{proof}
  Consider any $\ilset \in \ilfamily_{1}$. Then $\ilset(p, \tuple{p}) \vdash_{\logic{L}} \ilset(p, \tuple{p})$, where $\tuple{p}$ is a tuple consisting entirely of copies of the variable $p$, therefore $p, \ilset(p, \tuple{p}) \vdash_{\logic{L}} \Fm \logic{L}$. It~follows that $p, \ilset(p, \tuple{p})$ is an antitheorem.
\end{proof}

  The remarks which follow the definition of DDTs (Definition~\ref{def: ddt}) also apply, \emph{mutatis mutandis}, to ILs. In particular,
\begin{itemize}
\item by the \emph{unary IL} we mean the $2$-ary IL,
\item by the \emph{finitary IL} we mean the $\omega$-ary IL, and
\item by the \emph{unrestricted IL} we mean the $\kappa$-ary IL for $\kappa = | {\Var \logic{L}} |^{+}$.
\end{itemize}
  The condition $\kappa \leq | {\Var \logic{L}} |^{+}$ will again be assumed when talking about $\kappa$-ary ILs.

  If a logic is $\kappa$-compact, then we may assume without loss of generality that the families $\ilfamily_{\alpha}$ consist of sets of cardinality less than~$\kappa$. Each unary IL induces a finitary IL provided that the logic in question has a conjunction: a~set of formulas $\bigwedge(p, q)$ in two variables such that $\bigwedge(p, q) \vdash_{\logic{L}} p$ and $\bigwedge(p, q) \vdash_{\logic{L}} q$ and $p, q \vdash_{\logic{L}} \bigwedge(p, q)$. Moreover, a $\kappa$-compact logic with a $\kappa$-ary (parametrized) local IL in fact enjoys the unrestricted (parametrized) local IL. 

  If a logic enjoys the IL w.r.t.\ two families, then the two families are in a suitable sense equivalent (see~\cite[Remark~3.2]{raftery13inconsistency}). Namely, we say that two families of sets of formulas $\ilfamily_{\alpha}(\tuple{p}, \tuple{q})$ and $\ilfamily'_{\alpha}(\tuple{p}, \tuple{q})$ for $0 < \alpha < \kappa$ are equivalent if for each $\ilset \in \ilfamily_{\alpha}$ there are $\ilset' \in \ilfamily'_{\alpha}$ and $\tuple{\pi}'$ such that $\ilset(\tuple{p}, \tuple{q}) \vdash_{\logic{L}} \ilset'(\tuple{p}, \tuple{\pi}')$, and conversely for each $\ilset' \in \ilfamily'_{\alpha}$ there are $\ilset \in \ilfamily_{\alpha}$ and $\tuple{\pi}$ such that $\ilset'(\tuple{p}, \tuple{q}) \vdash_{\logic{L}} \ilset(\tuple{p}, \tuple{\pi})$.

  Observe that if $\ilfamily$ is a local inconsistency family, i.e.\ it only consists of sets of the form $\ilset(\tuple{p})$, and $\ilfamily'$ is equivalent to $\ilfamily$, then $\ilfamily'$ can be transformed into a local inconsistency family replacing each $\ilset'(\tuple{p}, \tuple{q}) \in \ilfamily'_{\alpha}$ by $\ilset'(\tuple{p}, \sigma(\tuple{\pi}'))$ where $\tuple{\pi}'$ is the tuple whose existence is asserted by the equivalence between $\ilfamily$ and $\ilfamily'$ and $\sigma$ is any substitution such that $\sigma(\tuple{p}) = \tuple{p}$ and $\sigma(\tuple{q}) \subseteq \tuple{p}$. Moreover, if $\ilfamily$ is a global inconsistency family, i.e.\ it consists of a single set of the form $\ilset(\tuple{p})$, then $\ilfamily'$ may also be restricted to a single set of the form $\ilset(\tuple{p})$.

\begin{fact} \label{fact: all il families equivalent}
  Let $\logic{L}$ be a logic which enjoys the $\kappa$-ary parametrized local IL w.r.t. a family~$\ilfamily$. Then $\logic{L}$ enjoys the $\kappa$-ary parametrized local IL w.r.t.\ a family~$\ilfamily'$ if and only if $\ilfamily$ and $\ilfamily'$ are equivalent.
\end{fact}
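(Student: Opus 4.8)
The plan is to establish both implications directly from Definition~\ref{def: il} together with the structural properties of $\vdash_{\logic{L}}$ (monotonicity, cut, and structurality). The key elementary observation we will repeatedly use is that for any $\ilset(\tuple{p}, \tuple{r}) \in \ilfamily_{\alpha}$ we have $\ilset(\tuple{p}, \tuple{r}) \vdash_{\logic{L}} \ilset(\tuple{p}, \tuple{r})$, so the IL may be ``run on its own defining instance''.

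First I would treat the right-to-left direction: assuming $\ilfamily$ and $\ilfamily'$ are equivalent, verify that $\logic{L}$ enjoys the $\kappa$-ary parametrized local IL w.r.t.\ $\ilfamily'$. Fix $0 < \alpha < \kappa$, a set $\Gamma$, and an $\alpha$-tuple $\tuple{\varphi}$. If $\Gamma, \tuple{\varphi} \vdash_{\logic{L}} \Fm \logic{L}$, the IL for $\ilfamily$ yields $\ilset \in \ilfamily_{\alpha}$ and a parameter tuple $\tuple{\pi}$ with $\Gamma \vdash_{\logic{L}} \ilset(\tuple{\varphi}, \tuple{\pi})$; the equivalence supplies $\ilset' \in \ilfamily'_{\alpha}$ and $\tuple{\pi}'$ with $\ilset(\tuple{p}, \tuple{r}) \vdash_{\logic{L}} \ilset'(\tuple{p}, \tuple{\pi}')$, and applying the substitution $\tuple{p} \mapsto \tuple{\varphi}$, $\tuple{r} \mapsto \tuple{\pi}$ (extended arbitrarily) followed by cut with $\Gamma \vdash_{\logic{L}} \ilset(\tuple{\varphi}, \tuple{\pi})$ gives $\Gamma \vdash_{\logic{L}} \ilset'(\tuple{\varphi}, \tuple{\pi}'')$, where $\tuple{\pi}''$ is the corresponding substitution instance of $\tuple{\pi}'$. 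Conversely, if $\Gamma \vdash_{\logic{L}} \ilset'(\tuple{\varphi}, \tuple{\pi}'')$ for some $\ilset' \in \ilfamily'_{\alpha}$, the other half of the equivalence gives $\ilset \in \ilfamily_{\alpha}$ and $\tuple{\pi}$ with $\ilset'(\tuple{p}, \tuple{r}) \vdash_{\logic{L}} \ilset(\tuple{p}, \tuple{\pi})$; substituting and cutting as before produces $\Gamma \vdash_{\logic{L}} \ilset(\tuple{\varphi}, \tuple{\pi}''')$, whence $\Gamma, \tuple{\varphi} \vdash_{\logic{L}} \Fm \logic{L}$ by the IL for $\ilfamily$.

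For the left-to-right direction, assume $\logic{L}$ enjoys the IL w.r.t.\ both $\ilfamily$ and $\ilfamily'$ and show the two families are equivalent. Given $\ilset(\tuple{p}, \tuple{r}) \in \ilfamily_{\alpha}$, reflexivity gives $\ilset(\tuple{p}, \tuple{r}) \vdash_{\logic{L}} \ilset(\tuple{p}, \tuple{r})$; applying the ``right-to-left'' half of the IL biconditional for $\ilfamily$ with $\Gamma := \ilset(\tuple{p}, \tuple{r})$, $\tuple{\varphi} := \tuple{p}$ and parameters $\tuple{r}$ yields $\ilset(\tuple{p}, \tuple{r}), \tuple{p} \vdash_{\logic{L}} \Fm \logic{L}$. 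Now the ``left-to-right'' half of the IL biconditional for $\ilfamily'$, applied to the same $\Gamma$ and $\tuple{\varphi}$, produces $\ilset' \in \ilfamily'_{\alpha}$ and a parameter tuple $\tuple{\pi}'$ with $\ilset(\tuple{p}, \tuple{r}) \vdash_{\logic{L}} \ilset'(\tuple{p}, \tuple{\pi}')$, which is precisely the condition required for equivalence (with $\tuple{r}$ in the role of the second argument). The reverse half---producing for each $\ilset' \in \ilfamily'_{\alpha}$ some $\ilset \in \ilfamily_{\alpha}$ and $\tuple{\pi}$ with $\ilset'(\tuple{p}, \tuple{r}) \vdash_{\logic{L}} \ilset(\tuple{p}, \tuple{\pi})$---is obtained by exchanging the roles of $\ilfamily$ and $\ilfamily'$.

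The only point needing care is the bookkeeping of parameter variables: one must keep the tuple $\tuple{r}$ disjoint from $\tuple{p}$, track how the substitution $\tuple{p} \mapsto \tuple{\varphi}$, $\tuple{r} \mapsto \tuple{\pi}$ acts on the witnessing parameter tuples, and note that any extra variables occurring in those tuples are harmless, since both the definition of equivalence and the definition of the parametrized local IL merely assert the existence of suitable parameters. Beyond this, each step is a routine application of structurality and cut, so I expect no genuine obstacle; this same argument is the template for the omitted proof of the analogous DDT fact (Fact~\ref{fact: all ddt families equivalent}), with ``$\Fm \logic{L}$'' replaced by an arbitrary conclusion formula.
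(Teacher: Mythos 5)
Your proposal is correct and follows essentially the same route as the paper: the right-to-left direction passes through the equivalence by substituting $\tuple{p} \mapsto \tuple{\varphi}$, $\tuple{r} \mapsto \tuple{\pi}$ and cutting, and the left-to-right direction runs the IL on the instance $\ilset(\tuple{p},\tuple{r}) \vdash_{\logic{L}} \ilset(\tuple{p},\tuple{r})$ to get $\tuple{p}, \ilset(\tuple{p},\tuple{r}) \vdash_{\logic{L}} \Fm \logic{L}$ and then applies the IL for the other family, exactly as in the paper's proof.
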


\begin{proof}
  Right to left, if $\Gamma \vdash_{\logic{L}} \ilset(\tuple{\varphi}, \tuple{\pi})$ for $\ilset \in \ilfamily_{\alpha}$, then by structurality $\ilset(\tuple{\varphi}, \tuple{\pi}) \vdash_{\logic{L}} \ilset'(\tuple{\varphi}, \sigma(\tuple{\pi}'))$ for some $\ilset' \in \ilfamily'_{\alpha}$ where $\sigma$ is a substitution such that $\sigma(\tuple{p}) = \tuple{\varphi}$ and $\sigma(\tuple{q}) = \tuple{\pi}$. Thus $\Gamma \vdash_{\logic{L}} \ilset'(\tuple{\varphi}, \sigma(\tuple{\pi}'))$ by cut. Conversely, there is a substitution $\sigma'$ such that $\Gamma \vdash_{\logic{L}} \ilset'(\tuple{\varphi}, \tuple{\pi}')$ for $\ilset' \in \ilfamily_{\alpha}$ implies $\Gamma \vdash_{\logic{L}} \ilset(\tuple{\varphi}, \sigma'(\tuple{\pi}))$ for some $\ilset \in \ilfamily_{\alpha}$. The conditions $\Gamma \vdash_{\logic{L}} \ilset(\tuple{\varphi}, \tuple{\pi})$ for some $\tuple{\pi}$ and $\Gamma \vdash_{\logic{L}} \ilset'(\tuple{\varphi}, \tuple{\pi}')$ for some $\tuple{\pi}'$ are therefore equivalent.

  Left to right, by the IL $\ilset(\tuple{p}, \tuple{q}) \vdash_{\logic{L}} \ilset(\tuple{p}, \tuple{q})$ for $\ilset \in \ilfamily_{\alpha}$ implies $\tuple{p}, \ilfamily(\tuple{p}, \tuple{q}) \vdash_{\logic{L}} \allset$, hence by the IL there are $\ilset' \in \ilfamily'_{\alpha}$ and $\tuple{\pi}'$ such that $\ilfamily(\tuple{p}, \tuple{q}) \vdash_{\logic{L}} \ilset'(\tuple{p}, \tuple{\pi}')$. The~other implication is proved by switching the roles of $\ilfamily$ and $\ilfamily'$.
\end{proof}

  Each local DDT induces a corresponding local IL, provided that the logic in question has an antitheorem (see~\cite[Corollary~3.9]{raftery13inconsistency}). In the parametrized case, this holds if the antitheorem is small enough.

\begin{fact}
  Each logic which enjoys the $\kappa$-ary global (local, para\-metrized local) DDT also enjoys the $\kappa$-ary global (local, parametrized local)~IL, provided that it has an antitheorem (in the parametrized local~case, of cardinality at most $| {\Var \logic{L}} |$).
\end{fact}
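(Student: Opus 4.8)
The plan is to manufacture an IL family $\ilfamily$ out of the DDT family $\ddtfamily$ by substituting the formulas of a fixed antitheorem for the right-hand side of the DDT. First fix an antitheorem $\Delta$ of $\logic{L}$; by structurality of antitheorems we may assume that no formula in $\Delta$ contains any variable other than $p_{0}$, the first entry of the generic tuple $\tuple{p}$ (this is legitimate since $\alpha > 0$, so $\tuple{p}$ is nonempty, and collapsing variables can only shrink $\Delta$, so the bound $|\Delta| \leq |{\Var \logic{L}}|$ assumed in the parametrized case is preserved). The one elementary observation needed is that for any set of formulas $\Sigma$ we have $\Sigma \vdash_{\logic{L}} \Fm \logic{L}$ if and only if $\Sigma \vdash_{\logic{L}} \delta$ for every $\delta \in \Delta$: the forward direction is immediate (inconsistency entails every formula), while for the converse $\Sigma \vdash_{\logic{L}} \Delta$ together with $\Delta \vdash_{\logic{L}} \allset$ yields $\Sigma \vdash_{\logic{L}} \allset$ by cut, hence $\Sigma \vdash_{\logic{L}} \Fm \logic{L}$.

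Feeding each $\delta \in \Delta$ through the DDT (Definition~\ref{def: ddt}), I would then set, for each ordinal $0 < \alpha < \kappa$,
\begin{align*}
  \ilfamily_{\alpha}(\tuple{p}, \tuple{r}) := \set{\textstyle\bigcup_{\delta \in \Delta} D_{\delta}(\tuple{p}, \delta, \tuple{r}_{\delta})}{D_{\delta} \in \ddtfamily_{\alpha} \text{ for each } \delta \in \Delta},
\end{align*}
where the $\tuple{r}_{\delta}$ are disjoint fresh copies of the DDT parameter tuple, one for each $\delta \in \Delta$, concatenated into $\tuple{r}$. The equivalence is then read off directly, applying the observation above with $\Sigma := \Gamma, \tuple{\varphi}$. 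If $\Gamma, \tuple{\varphi} \vdash_{\logic{L}} \Fm \logic{L}$ then $\Gamma, \tuple{\varphi} \vdash_{\logic{L}} \delta(\tuple{\varphi})$ for each $\delta$, so the DDT supplies $D_{\delta} \in \ddtfamily_{\alpha}$ and $\tuple{\pi}_{\delta}$ with $\Gamma \vdash_{\logic{L}} D_{\delta}(\tuple{\varphi}, \delta(\tuple{\varphi}), \tuple{\pi}_{\delta})$, whence $\Gamma \vdash_{\logic{L}} \ilset(\tuple{\varphi}, \tuple{\pi})$ for the corresponding $\ilset \in \ilfamily_{\alpha}$ and the concatenated tuple $\tuple{\pi}$. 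Conversely, if $\Gamma \vdash_{\logic{L}} \ilset(\tuple{\varphi}, \tuple{\pi})$ for some $\ilset = \bigcup_{\delta} D_{\delta}(\tuple{p}, \delta, \tuple{r}_{\delta})$ in $\ilfamily_{\alpha}$, then $\Gamma \vdash_{\logic{L}} D_{\delta}(\tuple{\varphi}, \delta(\tuple{\varphi}), \tuple{\pi}_{\delta})$ for each $\delta$, so by the DDT $\Gamma, \tuple{\varphi} \vdash_{\logic{L}} \delta(\tuple{\varphi})$ for each $\delta$; since $\set{\delta(\tuple{\varphi})}{\delta \in \Delta}$ is an antitheorem (a substitution instance of $\Delta$), cut gives $\Gamma, \tuple{\varphi} \vdash_{\logic{L}} \Fm \logic{L}$.

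Finally I would check that the three grades match up. When $\ddtfamily_{\alpha} = \{ D(\tuple{p}, q) \}$ is global there is no choice of the $D_{\delta}$ and no parameters, so $\ilfamily_{\alpha} = \{ \bigcup_{\delta \in \Delta} D(\tuple{p}, \delta) \}$ is again a singleton without parameters, a global IL family; when $\ddtfamily_{\alpha}$ is parameter-free the resulting $\ilfamily_{\alpha}$ is too, a local IL family; in general we obtain a parametrized local IL family. The only step that is not purely mechanical is the parameter bookkeeping in the parametrized case: the concatenation $\tuple{r} = (\tuple{r}_{\delta})_{\delta \in \Delta}$ must still be a legitimate tuple of variables, hence of length at most $|{\Var \logic{L}}|$, and since each block already has length at most $|{\Var \logic{L}}|$ this needs exactly $|\Delta| \leq |{\Var \logic{L}}|$ --- the stated hypothesis. (In the global and local cases no parameters are concatenated, so no bound on $|\Delta|$ is needed there.) I expect this cardinality accounting, together with the preliminary reduction of $\Delta$ to a single variable, to be the only place where care is needed; the content of the argument is merely that inconsistency amounts to derivability of a fixed antitheorem, to which the DDT is then applied $|\Delta|$-many times in parallel, extending Corollary~3.9 of~\cite{raftery13inconsistency} from the local to the (parametrized) local case.
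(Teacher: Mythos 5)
Your proof is correct and follows essentially the same route as the paper: you reduce to a one-variable antitheorem by structurality, then build each IL set as a union $\bigcup_{\delta \in \Delta} D_{\delta}(\tuple{p}, \delta, \tuple{r}_{\delta})$ over choices of DDT sets with pairwise disjoint parameter blocks, which is exactly the paper's family $\ilset_{f}$ indexed by functions $f\colon \Delta \to \ddtfamily_{\alpha}$. The verification of both directions (via the DDT, structurality and cut for antitheorems) and the bookkeeping of where the cardinality hypothesis and the three grades (global, local, parametrized local) enter also match the paper's argument.
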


\begin{proof}
  If a logic has an antitheorem, then by structurality for antitheorems it has an antitheorem of cardinality at most $| {\Var \logic{L}} |$ in one variable only, say $\antitheorem(p_{1})$ for $p_{1} \in \tuple{p}$. Suppose that $\logic{L}$ enjoys the parametrized local DDT w.r.t.~$\ddtfamily$. We define the family $\ilfamily_{\alpha}$ as follows:
\begin{align*}
  \ilfamily_{\alpha} & \assign \set{\ilset_{f}}{f\colon \antitheorem \to \ddtfamily_{\alpha}}, \text{ where} \\
  \ilset_{f}(\tuple{p}, \tuple{r}) & \assign \bigcup_{\antiformula \in \antitheorem} f(\antiformula)(\tuple{p}, \antiformula(p_{1}), \tuple{r}_{\antiformula}),
\end{align*}
  where $\tuple{r}$ obtained by collecting the tuples $\tuple{r}_{\antiformula}$ into a single tuple, and $\tuple{r}_{\antiformula}$ are tuples of distinct variables such that $\tuple{r}_{\antiformula}$ and $\tuple{r}_{\antiformula'}$ are disjoint for distinct $\antiformula$ and $\antiformula'$, and they do not share any variables with~$\tuple{p}$. In the parametrized local case, such tuples exist by the assumption on the cardinality of~$\antitheorem$.

  Then ${\tuple{\varphi}, \ilset_{f}(\antiformula)(\tuple{\varphi}, \antiformula(\varphi_{1}), \tuple{r}_{\antiformula}) \vdash_{\logic{L}} \antiformula(\varphi_{1})}$ for each $\antiformula \in \antitheorem$ by the DDT, therefore ${\tuple{\varphi}, \ilset_{f}(\tuple{\varphi}, \tuple{\pi}) \vdash_{\logic{L}} \antitheorem(\varphi_{1})}$. But by structurality for antitheorems $\antitheorem(\varphi_{1}) \vdash_{\logic{L}} \allset$, hence $\tuple{\varphi}, \ilset_{f}(\tuple{\varphi}, \tuple{\pi}) \vdash_{\logic{L}} \allset$. Conversely, if $\Gamma, \tuple{\varphi} \vdash_{\logic{L}} \allset$, then $\Gamma, \tuple{\varphi} \vdash_{\logic{L}} \antitheorem(\varphi_{1})$ and by the DDT for each $\antiformula \in \antitheorem$ there are $\ddtset \in \ddtfamily$ and $\tuple{\rho}_{\antiformula}$ such that $\Gamma \vdash_{\logic{L}} \ddtset(\tuple{\varphi}, \antiformula(\varphi_{1}), \tuple{\rho}_{\antiformula})$. Therefore $\Gamma \vdash_{\logic{L}} \ilset_{f}(\tuple{\varphi}, \tuple{\rho})$, where $f\colon \antitheorem \to \ddtfamily_{\alpha}$ is the function which assigns the set $\ddtset$ to $\antiformula$ and $\tuple{\rho}$ is a tuple obtained by collecting all the tuples $\tuple{\rho}_{\antiformula}$ into a single tuple. Here we use the assumption about the disjointness of the tuples $\tuple{r}_{\antiformula}$.

  If no parameters occur in $\ddtfamily$, then the assumption on the cardinality of $\ddtfamily_{\alpha}$ for $0 < \alpha < \kappa$ is not needed, and the resulting family $\ilfamily$ also does not contain parameters. Moreover, if $\ddtfamily_{\alpha}$ is a singleton, so is $\ilfamily_{\alpha}$.
\end{proof}

  The technical assumption about the cardinality of an antitheorem is satisfied whenever $| {\Fm \logic{L}} | = | {\Var \logic{L}} |$, which is true for most ``everyday'' logics. However, the assumption will fail e.g.\ in a logic which has countable many variables but each antitheorem contains uncountably many constants.

  The (surjective) substitution swapping property, which is equivalent to the (para\-metrized) local DDT, has a direct analogue for (parametrized) local ILs.

\begin{definition} \label{def: swapping for antitheorems}
  A logic $\logic{L}$ with an antitheorem enjoys \emph{$\kappa$-ary (surjective) sub\-stitution swapping for antitheorems}, where $\kappa \leq |{\Var \logic{L}}|^{+}$, if for each ordinal $0 < \alpha < \kappa$, each (surjective) substitution~$\sigma$, each $\alpha$-tuple of formulas $\tuple{\varphi}$, and~each theory $T$ of $\logic{L}$
\begin{align*}
  \sigma(\tuple{\varphi}), T \vdash_{\logic{L}} \allset \implies \tuple{\varphi}, \sigma^{-1}[T] \vdash_{\logic{L}} \allset.
\end{align*}
\end{definition}

\begin{proposition} \label{prop: swapping for antitheorems}
  A logic $\logic{L}$ enjoys the $\kappa$-ary (parametrized) \mbox{local} IL if and only if it has an antitheorem and it enjoys $\kappa$-ary (surjective) substitution swapping for antitheorems, provided that $\kappa \leq | {\Var \logic{L}} |^{+}$.
\end{proposition}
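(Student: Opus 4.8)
The plan is to follow the proof of the analogous DDT equivalence (Proposition~\ref{prop: swapping}, cf.~\cite[Section~6.2]{font16}), putting the antitheorem symbol $\allset$ in the place of the succedent formula and invoking the monotonicity, cut, and structurality laws for antitheorems from Section~\ref{sec: prelim} in place of their ordinary counterparts. I would also use throughout that $\sigma^{-1}[T]$ is a theory whenever $T$ is.

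For the implication from the (parametrized) local IL to (surjective) substitution swapping for antitheorems, I would first note that $\logic{L}$ has an antitheorem by the Fact following Definition~\ref{def: il}. Then, given a (surjective) substitution $\sigma$, an $\alpha$-tuple $\tuple\varphi$, and a theory $T$ with $\sigma(\tuple\varphi), T \vdash_{\logic{L}} \allset$, I would feed $T$ and the tuple $\sigma(\tuple\varphi)$ into the IL to obtain $\ilset \in \ilfamily_\alpha$ and parameters $\tuple\pi$ with $\ilset(\sigma(\tuple\varphi), \tuple\pi) \subseteq T$. In the local case there are no parameters and $\ilset(\sigma(\tuple\varphi)) = \sigma[\ilset(\tuple\varphi)]$ by composition of substitutions, so $\ilset(\tuple\varphi) \subseteq \sigma^{-1}[T]$; in the parametrized case surjectivity of $\sigma$ supplies a $\tuple\pi'$ with $\sigma(\tuple\pi') = \tuple\pi$, so $\sigma[\ilset(\tuple\varphi, \tuple\pi')] \subseteq T$ and again $\ilset(\tuple\varphi, \tuple\pi') \subseteq \sigma^{-1}[T]$. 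Since $\sigma^{-1}[T]$ is a theory containing this set, the right-to-left half of the IL applied to it yields $\tuple\varphi, \sigma^{-1}[T] \vdash_{\logic{L}} \allset$.

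For the converse I would, for each $0 < \alpha < \kappa$, fix an $\alpha$-tuple $\tuple p$ of distinct variables and let $\tuple r$ enumerate the remaining variables, and take $\ilfamily_\alpha$ to consist of all sets $\ilset$ of formulas with $\tuple p, \ilset \vdash_{\logic{L}} \allset$ --- in the local case, only those $\ilset$ in the variables $\tuple p$ alone. The right-to-left half of the IL is then a routine use of structurality and cut for antitheorems. For the left-to-right half, given $\Gamma, \tuple\varphi \vdash_{\logic{L}} \allset$, I would let $T$ be the theory generated by $\Gamma$, so that $\sigma(\tuple p), T \vdash_{\logic{L}} \allset$, where $\sigma$ is: in the parametrized case, any surjective substitution with $\sigma(\tuple p) = \tuple\varphi$; in the local case, the substitution with $\sigma(\tuple p) = \tuple\varphi$ that is constantly $\varphi_1$ on $\tuple r$. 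Substitution swapping for antitheorems gives $\tuple p, \sigma^{-1}[T] \vdash_{\logic{L}} \allset$. In the parametrized case $\ilset := \sigma^{-1}[T]$ lies in $\ilfamily_\alpha$, and substituting $\tuple\varphi$ for $\tuple p$ and $\tuple\pi := \sigma(\tuple r)$ for $\tuple r$ is exactly $\sigma$, so $\ilset(\tuple\varphi, \tuple\pi) = \sigma[\sigma^{-1}[T]] \subseteq T$ and $\Gamma \vdash_{\logic{L}} \ilset(\tuple\varphi, \tuple\pi)$. In the local case I would further apply the substitution $e$ that fixes $\tuple p$ and sends $\tuple r$ to $p_1$, obtaining $\tuple p, e[\sigma^{-1}[T]] \vdash_{\logic{L}} \allset$ with $\ilset := e[\sigma^{-1}[T]]$ in the variables $\tuple p$ alone; since $e$ followed by $\tuple p \mapsto \tuple\varphi$ agrees with $\sigma$ on every variable (here is where $\sigma$ being constantly $\varphi_1$ off $\tuple p$ is used), $\ilset(\tuple\varphi) = \sigma[\sigma^{-1}[T]] \subseteq T$ and $\Gamma \vdash_{\logic{L}} \ilset(\tuple\varphi)$.

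The hard part will be the variable bookkeeping in the converse for the non-parametrized IL: the inverse image $\sigma^{-1}[T]$ may mention variables outside $\tuple p$, which must be collapsed to keep the witness inside the local family, and this collapse has to be compatible with $\sigma$ so that the required instance still lies in $T$. This is reconciled by exploiting that the local IL corresponds to substitution swapping for \emph{all} substitutions, not merely surjective ones, which lets $\sigma$ itself be taken constant off $\tuple p$; everything else is bookkeeping parallel to the DDT case.
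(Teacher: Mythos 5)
Your proposal is correct and follows essentially the same route as the paper's proof: the same surjectivity trick ($\tuple{\pi} = \sigma(\tuple{\rho})$) in the forward parametrized direction, the same canonical family ($\ilset \in \ilfamily_{\alpha}$ iff $\tuple{p}, \ilset \vdash_{\logic{L}} \allset$) for the converse, and in the local case the same choice of $\sigma$ constant off $\tuple{p}$ together with a collapsing substitution (your $e$, the paper's $\tau$) so that the witness lies in the variables $\tuple{p}$ and its $\tuple{\varphi}$-instance lands in $T$.
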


\begin{proof}
  Suppose that $\logic{L}$ enjoys the parametrized local IL. Then $\logic{L}$ has an antitheorem. Now suppose that $\sigma(\tuple{\varphi}), T \vdash_{\logic{L}} \allset$ for some $\logic{L}$-theory $T$ and some surjective substitution~$\sigma$. By the IL w.r.t.\ a family $\ilfamily$ there are $\ilset \in \ilfamily$ and $\tuple{\pi}$ such that $T \vdash_{\logic{L}} \ilset(\sigma (\tuple{\varphi}), \tuple{\pi})$. Because $\sigma$ is surjective, there are formulas $\tuple{\rho}$ such that $\tuple{\pi} = \sigma(\tuple{\rho})$, therefore $T \vdash_{\logic{L}} \ilset(\sigma (\tuple{\varphi}), \sigma (\tuple{\rho}))$ and $T \vdash_{\logic{L}} \sigma[\ilset(\tuple{\varphi}, \tuple{\rho})]$. But then $\sigma^{-1}[T] \vdash_{\logic{L}} \ilset(\tuple{\varphi}, \tuple{\rho})$ and by the IL $\tuple{\varphi}, \sigma^{-1}[T] \vdash_{\logic{L}} \allset$.

  Conversely, suppose that $\logic{L}$ has an antitheorem and enjoys surjective substitution swapping for antitheorems. We define the family $\ilfamily$ so that
\begin{align*}
  \ilset(\tuple{p}, \tuple{q}) \in \ilfamily_{\alpha} & \iff \tuple{p}, \ilset(\tuple{p}, \tuple{q}) \vdash_{\logic{L}} \allset.
\end{align*}
  If $\Gamma \vdash_{\logic{L}} \ilset(\tuple{\varphi}, \tuple{\pi})$, then $\Gamma, \tuple{\varphi} \vdash_{\logic{L}} \allset$ because $\tuple{\varphi}, \ilset(\tuple{\varphi}, \tuple{\pi}) \vdash_{\logic{L}} \allset$ by structurality for antitheorems. Conversely, suppose that $\Gamma, \tuple{\varphi} \vdash_{\logic{L}} \allset$ and let $\sigma$ be a surjective substitution such that $\sigma(\tuple{p}) = \tuple{\varphi}$. Such a substitution exists because $\tuple{p}$ is an $\alpha$-tuple for some $0 < \alpha < \kappa \leq | {\Var \logic{L}} |^{+}$, i.e.\ $\alpha$ has cardinality at most $| {\Var \logic{L}} |$. Then by surjective substitution swapping for antitheorems $\sigma^{-1}[T], \tuple{p} \vdash_{\logic{L}} \allset$ where $T$ is the $\logic{L}$-theory generated by $\Gamma$, hence $\sigma^{-1}[T] \in \ilset(\tuple{p}, \tuple{q})$. Applying $\sigma$ yields that $T = \sigma[\sigma^{-1}[T]] \in \ilset(\tuple{\varphi}, \sigma(\tuple{q}))$, hence $\Gamma \vdash_{\logic{L}} \ilset(\tuple{\varphi}, \tuple{\pi})$ for $\tuple{\pi} \assign \sigma(\tuple{q})$.

  Now suppose that $\logic{L}$ enjoys the local IL and ${\sigma(\tuple{\varphi}), T \vdash_{\logic{L}} \allset}$ for some $\logic{L}$-theory $T$. By the IL w.r.t.~$\ilfamily$ there is some $\ilset \in \ilfamily$ such that $T \vdash_{\logic{L}} \ilset(\sigma (\tuple{\varphi}))$, i.e.\ $T \vdash_{\logic{L}} \sigma[\ilset(\tuple{\varphi})]$. But then $\sigma^{-1}[T] \vdash_{\logic{L}} \ilset(\tuple{\varphi})$, hence $\tuple{\varphi}, \sigma^{-1}[T] \vdash_{\logic{L}} \allset$.

  Conversely, suppose that $\logic{L}$ enjoys substitution swapping for antitheorems. We define the family $\ilfamily$ so~that
\begin{align*}
  \ilset(\tuple{p}) \in \ilfamily_{\alpha} & \iff \tuple{p}, \ilset(\tuple{p}) \vdash_{\logic{L}} \allset.
\end{align*}
  As before, $\Gamma \vdash_{\logic{L}} \ilset(\tuple{\varphi})$ implies that $\Gamma, \tuple{\varphi} \vdash_{\logic{L}} \allset$. Conversely, suppose that $\Gamma, \tuple{\varphi} \vdash_{\logic{L}} \allset$ and let $\sigma$ be a substitution such that $\sigma(\tuple{p}) = \tuple{\varphi}$ and $\sigma(q) = \varphi_{1} \in \tuple{\varphi}$. Let $\tau$ be a substitution such that $\tau(\tuple{p}) = \tuple{p}$ and $\tau(q) = p_{1}$ otherwise. Then by substitution swapping for antitheorems $\sigma^{-1}[T], \tuple{p} \vdash_{\logic{L}} \allset$ where $T$ is the $\logic{L}$-theory generated by~$\Gamma$. By structurality for antitheorems, we may apply~$\tau$ to obtain $\tau[\sigma^{-1}[T]], \tuple{p} \vdash_{\logic{L}} \allset$. Thus $\tau[\sigma^{-1}[T]] \in \ilfamily_{\alpha}(\tuple{p})$. Applying~$\sigma$ again yields $\sigma[\tau[\sigma^{-1}[T]]], \tuple{\varphi} \vdash_{\logic{L}} \allset$. Since $\Gamma \vdash_{\logic{L}} T$, it now suffices to prove that $\sigma[\tau[\sigma^{-1}[T]]] \subseteq T$. Consider therefore a formula $\alpha(\tuple{p}, \tuple{q}) \in \sigma[\tau[\sigma^{-1}[T]]]$. Then there is a formula $\beta(\tuple{p}, \tuple{q})$ such that $\beta(\tuple{\varphi}, \varphi_{1}) = \sigma (\beta(\tuple{p}, \tuple{q})) \in T$ and $\alpha(\tuple{p}, \tuple{q}) = \sigma(\tau(\beta(\tuple{p}, \tuple{q}))) = \sigma(\beta(\tuple{p}, p_{1})) = \beta(\tuple{\varphi}, \varphi_{1})$ where $\varphi_{1} = \sigma(p_{1})$. Thus $\alpha(\tuple{p}, \tuple{q}) \in T$.
\end{proof}

\subsection{Dual and classical inconsistency lemmas}
\label{subsec: dils}

  Just like ordinary ILs allow us to reduce questions of inconsistency in a compact logic to questions of validity, dual ILs are syntactic principles which allow for the converse reduction, in a manner analogous to the following equivalence for classical logic:
\begin{align*}
  \Gamma \vdash_{\CL} \varphi \iff \Gamma, \neg \varphi \vdash_{\logic{L}} \allset.
\end{align*}
  More generally, the finite-valued {\L}ukasiewicz logics $\Luk_{n}$ satisfy the dual IL
\begin{align*}
  \Gamma \vdash_{\Luk_{n+1}} \varphi \iff \Gamma, \neg \varphi^{n} \vdash_{\Luk_{n+1}} \allset,
\end{align*}
  while in $\Sfive$ the dual IL takes the form
\begin{align*}
  \Gamma \vdash_{\Sfive} \varphi \iff \Gamma, \neg \Box \varphi \vdash_{\Sfive} \allset,
\end{align*}
  Most of our examples of logics with the IL, however, do not enjoy the dual IL: intuitionistic logic does not, neither do $\FLe$, $\FLen$, $\K$, or $\Sfour$.

  The dual local IL follows a quantifier pattern dual to that of ordinary local~ILs: a universal quantifier takes the place of the existential one. This corresponds to the fact that while in both cases the IL family informally has a disjunctive reading, the IL family occurs to the right of the turnstile, while the dual IL family occurs to the left of the turnstile.

  For example, the infinitary {\L}ukasiewicz logic $\Lukinfty$ enjoys the dual local IL
\begin{align*}
  \Gamma \vdash_{\Lukinfty} \varphi \iff \Gamma, \neg \varphi^{n} \vdash_{\Lukinfty} \allset \text{ for each } n \in \omega.
\end{align*}
  This is an immediate consequence of the fact that for each element~$a$ of the standard {\L}ukasiewicz chain on the real interval $[0, 1]$ either $a = 1$ or $a^{n} = 0$ for some $n$. The finitary {\L}ukasiewicz logic $\Luk$, on the other hand, only enjoys this dual local IL for finite sets of formulas $\Gamma$: the above equivalence fails for $\Luk$ if we take $\Gamma$ to be $\set{\neg \varphi \rightarrow \varphi^{n}}{n \in \omega}$. We shall see that this failure of the dual local IL corresponds to the fact that $\Lukinfty$ is semisimple while $\Luk$ is not.

\begin{definition} \label{def: dil}
  A logic $\logic{L}$ enjoys the \emph{$\kappa$-ary dual parametrized local IL}, where $\kappa \leq | {\Var \logic{L}} |^{+}$, if for each ordinal $0 < \alpha < \kappa$ there is a family of sets of formulas $\dilfamily_{\alpha}(\tuple{p}, \tuple{q})$, where $\tuple{p}$ is an $\alpha$-tuple of variables, such that for each set of formulas $\Gamma$ and each $\alpha$-tuple of formulas $\tuple{\varphi}$
\begin{align*}
  \Gamma \vdash_{\logic{L}} \tuple{\varphi} \iff \Gamma, \dilset(\tuple{\varphi}, \tuple{\pi}) \vdash_{\logic{L}} \Fm \logic{L} \text{ for each set } \dilset \in \dilfamily \text{ and each tuple } \tuple{\pi}.
\end{align*}
  It enjoys the \emph{$\kappa$-ary dual local IL} if each $\dilfamily_{\alpha}$ can be chosen of the form $\dilfamily_{\alpha}(\tuple{p})$. It enjoys the \emph{$\kappa$-ary dual global IL} if each $\dilfamily_{\alpha}$ can be chosen of the form $\{ \dilset(\tuple{p}) \}$.
\end{definition}

\begin{fact}
  Each logic with the dual parametrized local IL has an antitheorem.
\end{fact}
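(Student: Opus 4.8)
The plan is to mimic the proof of the earlier Fact asserting that every logic with the ordinary parametrized local IL has an antitheorem, working with the instance $\alpha = 1$ of the dual IL and feeding it the trivially valid rule $p \vdash_{\logic{L}} p$.

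Concretely, I would first choose a set $\dilset(p, \tuple{q}) \in \dilfamily_{1}$ (the vacuous case $\dilfamily_{1} = \emptyset$ is dealt with at the end). Applying Definition~\ref{def: dil} with $\Gamma := \{ p \}$ and $\tuple{\varphi} := p$, the left-hand side holds by reflexivity, so the right-hand side gives in particular $p, \dilset(p, \tuple{p}) \vdash_{\logic{L}} \Fm \logic{L}$, where $\tuple{p}$ is a tuple of copies of the variable $p$ of the same length as $\tuple{q}$. I would then put $\Sigma := \{ p \} \cup \dilset(p, \tuple{p})$. Since the formulas of $\dilset(\tuple{p}, \tuple{q})$ involve only the variables $\tuple{p}, \tuple{q}$, the set $\Sigma$ involves only the single variable $p$; hence for every substitution $\sigma$ we have $\sigma[\Sigma] = \{ \sigma(p) \} \cup \dilset(\sigma(p), \tuple{\sigma(p)})$, with $\tuple{\sigma(p)}$ a tuple of copies of $\sigma(p)$, and applying the dual IL once more --- now to $\sigma(p) \vdash_{\logic{L}} \sigma(p)$ with parameter tuple $\tuple{\sigma(p)}$ --- yields $\sigma[\Sigma] \vdash_{\logic{L}} \Fm \logic{L}$. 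By the earlier Fact that $\Gamma \vdash_{\logic{L}} \allset$ if and only if $\sigma[\Gamma] \vdash_{\logic{L}} \Fm \logic{L}$ for every substitution $\sigma$, it follows that $\Sigma$ is an antitheorem.

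I do not expect a serious obstacle; the one point requiring care is that exhibiting a set which entails $\Fm \logic{L}$ (i.e.\ generates the trivial theory) is \emph{a priori} weaker than exhibiting an antitheorem, the discrepancy being exactly closure under substitution. Confining $\Sigma$ to a single variable forces every substitution instance $\sigma[\Sigma]$ to retain the shape $\{ \psi \} \cup \dilset(\psi, \tuple{\psi})$, so the dual IL applies to it uniformly and the Fact just cited bridges the gap. Finally, in the degenerate case $\dilfamily_{1} = \emptyset$ the right-hand side of the dual IL at $\alpha = 1$ is vacuously true, so $\emptyset \vdash_{\logic{L}} \varphi$ for every formula $\varphi$; then the empty set is itself an antitheorem, e.g.\ because $\emptyset \vdash_{\logic{L}} r$ for any variable $r$ and hence $\emptyset \vdash_{\logic{L}} \allset$ by the Fact concerning variables not occurring in $\Gamma$.
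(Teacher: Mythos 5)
Your proof is correct and follows essentially the same route as the paper's: apply the dual IL to $p \vdash_{\logic{L}} p$ with the parameters instantiated by copies of $p$, obtaining $p, \dilset(p, \tuple{p}) \vdash_{\logic{L}} \Fm \logic{L}$, and conclude that this single-variable set is an antitheorem. You merely spell out the step the paper leaves implicit (closure under substitution via the fact that $\Gamma \vdash_{\logic{L}} \allset$ iff every substitution instance proves $\Fm \logic{L}$) and add the harmless degenerate case $\dilfamily_{1} = \emptyset$.
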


\begin{proof}
  The dual IL implies that $p, \dilset(p, \tuple{p}) \vdash_{\logic{L}} \Fm \logic{L}$, where $\tuple{p}$ is a tuple consisting entirely of copies of the variable $p$. Thus $p, \dilset(p, \tuple{p})$ is an antitheorem.
\end{proof}

  For dual (parametrized) local ILs there is no need to take note of the arity of the dual IL, unlike in the case of the dual global IL.

\begin{fact} \label{fact: dil arity}
  Each logic with the unary dual (parametrized) local IL has the unrestricted dual (parametrized) local IL.
\end{fact}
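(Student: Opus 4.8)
The plan is to leverage the fact that, in contrast with an ordinary inconsistency lemma, a \emph{dual} IL has a purely universal quantifier structure on its right-hand side; consequently the dual IL for an $\alpha$-tuple of formulas $\tuple{\varphi}$ should decompose coordinatewise into a conjunction of instances of the unary dual IL, one for each component $\varphi_{i}$. No compactness and no conjunction connective will be needed here, unlike in the corresponding passage from the unary to the unrestricted form of an \emph{ordinary} IL, precisely because the left-hand side $\Gamma \vdash_{\logic{L}} \tuple{\varphi}$ already abbreviates the conjunction of the statements $\Gamma \vdash_{\logic{L}} \varphi_{i}$.

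Concretely, I~would argue as follows. Suppose $\logic{L}$ enjoys the unary ($2$-ary) dual (parametrized) local IL witnessed by a family $\dilfamily_{1}$ of sets of formulas in a single principal variable $p$ (plus, in the parametrized case, some parameters $\tuple{q}$, possibly varying from set to set). For each ordinal $0 < \alpha < \kappa \assign |{\Var \logic{L}}|^{+}$ fix an $\alpha$-tuple $\tuple{p} = \pair{p_{i}}{i < \alpha}$ of distinct variables --- such a tuple exists since $|\alpha| \leq |{\Var \logic{L}}|$ --- and put
\begin{align*}
  \dilfamily_{\alpha} \assign \set{\dilset(p_{i}, \tuple{q})}{i < \alpha \text{ and } \dilset(p, \tuple{q}) \in \dilfamily_{1}},
\end{align*}
where $\dilset(p_{i}, \tuple{q})$ denotes the set obtained from $\dilset$ by substituting $p_{i}$ for its principal variable $p$ (leaving its parameters $\tuple{q}$ in place, relabelled so as to avoid $\tuple{p}$ if need be; this is harmless since parameters are universally quantified). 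If $\dilfamily_{1}$ is a local family then so is each $\dilfamily_{\alpha}$, so the construction respects the local versus parametrized local dichotomy. The verification is then a matter of chaining equivalences: for any $\Gamma$ and any $\alpha$-tuple $\tuple{\varphi}$,
\begin{align*}
  \Gamma \vdash_{\logic{L}} \tuple{\varphi}
  &\iff \Gamma \vdash_{\logic{L}} \varphi_{i} \text{ for each } i < \alpha \\
  &\iff \Gamma, \dilset(\varphi_{i}, \tuple{\pi}) \vdash_{\logic{L}} \Fm \logic{L} \text{ for each } i < \alpha, \text{ each } \dilset \in \dilfamily_{1}, \text{ and each } \tuple{\pi} \\
  &\iff \Gamma, \dilsettwo(\tuple{\varphi}, \tuple{\pi}) \vdash_{\logic{L}} \Fm \logic{L} \text{ for each } \dilsettwo \in \dilfamily_{\alpha} \text{ and each } \tuple{\pi},
\end{align*}
the middle equivalence being the unary dual IL and the last one holding because $(\dilset(p_{i}, \tuple{q}))(\tuple{\varphi}, \tuple{\pi}) = \dilset(\varphi_{i}, \tuple{\pi})$ and every member of $\dilfamily_{\alpha}$ arises in this way. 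Hence $\dilfamily = \set{\dilfamily_{\alpha}}{0 < \alpha < \kappa}$ witnesses the unrestricted dual (parametrized) local IL.

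I~do not expect a genuine obstacle; the only points requiring care are bookkeeping ones --- that $\Gamma \vdash_{\logic{L}} \tuple{\varphi}$ is read coordinatewise in accordance with the conventions of Section~\ref{sec: prelim}, that tuples of distinct variables of the needed lengths are available for every $\alpha < \kappa$ (guaranteed by the standing hypothesis $\kappa \leq |{\Var \logic{L}}|^{+}$), and that renaming the principal variable of each $\dilset$ to $p_{i}$ commutes harmlessly with the subsequent substitutions of $\tuple{\varphi}$ for $\tuple{p}$ and of $\tuple{\pi}$ for the parameters. The entire content of the argument is the opening observation that the dual-IL family is quantified universally (as opposed to the existential quantifier in ordinary ILs), which is exactly what lets the $\alpha$-ary instance be assembled from $\alpha$ copies of the unary instance.
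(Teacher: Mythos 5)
Your proof is correct and is essentially the paper's own argument: the paper's one-line proof takes $\dilfamily_{\alpha}(\tuple{p},\tuple{q}) \assign \bigcup_{p_{i} \in \tuple{p}} \dilfamily_{1}(p_{i},\tuple{q})$, which is exactly your family of all $\dilset(p_{i},\tuple{q})$ with $\dilset \in \dilfamily_{1}$ and $i < \alpha$. You merely spell out the coordinatewise verification that the paper leaves implicit, exploiting the same universal quantifier pattern of the dual IL.
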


\begin{proof}
  It suffices to take $\ilfamily_{\alpha}(\tuple{p}, \tuple{q}) \assign \bigcup_{p_{i} \in \tuple{p}} \ilfamily_{\alpha}(p_{i}, \tuple{q})$.
\end{proof}

  A suitable dual local IL allows us to strengthen the $\kappa$-compactness of a logic to $\kappa$-arity, provided that the cardinal $\kappa$ is a regular.

\begin{fact} \label{fact: kappa compact to kappa ary}
  Let $\kappa$ be a regular infinite cardinal. If a $\kappa$-compact logic $\logic{L}$ enjoys the dual local IL w.r.t.~$\ilfamily$ such that $| \ilfamily_{1} | < \kappa$, then $\logic{L}$ is a $\kappa$-ary logic.
\end{fact}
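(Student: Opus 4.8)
The plan is to use the dual local IL to replace a single consequence $\Gamma \vdash_{\logic{L}} \varphi$ by a small family of inconsistency statements, apply $\kappa$-compactness to each of these separately, and then reassemble the resulting small pieces using the regularity of $\kappa$. Since being $\kappa$-ary concerns derivations with a single conclusion, throughout the argument we only ever need the family $\ilfamily_{1}$, i.e.\ we only work with $1$-tuples, and the hypothesis $|\ilfamily_{1}| < \kappa$ is exactly the cardinality control we will need on the indexing set.

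First I would fix $\Gamma$ and $\varphi$ with $\Gamma \vdash_{\logic{L}} \varphi$, and aim to produce some $\Gamma' \subseteq \Gamma$ with $|\Gamma'| < \kappa$ and $\Gamma' \vdash_{\logic{L}} \varphi$. By the left-to-right direction of the dual local IL we have $\Gamma, \dilset(\varphi) \vdash_{\logic{L}} \Fm \logic{L}$ for every $\dilset \in \ilfamily_{1}$. Now I would invoke $\kappa$-compactness: for each such $\dilset$ there is a set $S_{\dilset} \subseteq \Gamma \cup \dilset(\varphi)$ with $|S_{\dilset}| < \kappa$ and $S_{\dilset} \vdash_{\logic{L}} \Fm \logic{L}$. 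Setting $\Gamma_{\dilset} \assign S_{\dilset} \cap \Gamma$ we have $|\Gamma_{\dilset}| < \kappa$, and since $S_{\dilset} \subseteq \Gamma_{\dilset} \cup \dilset(\varphi)$, monotonicity yields $\Gamma_{\dilset}, \dilset(\varphi) \vdash_{\logic{L}} \Fm \logic{L}$.

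Next I would put $\Gamma' \assign \bigcup_{\dilset \in \ilfamily_{1}} \Gamma_{\dilset}$. This is the one place where regularity of $\kappa$ is used: $\Gamma'$ is a union of $|\ilfamily_{1}| < \kappa$ many sets, each of cardinality $< \kappa$, so by regularity $|\Gamma'| < \kappa$. By monotonicity $\Gamma', \dilset(\varphi) \vdash_{\logic{L}} \Fm \logic{L}$ for every $\dilset \in \ilfamily_{1}$, and hence the right-to-left direction of the dual local IL gives $\Gamma' \vdash_{\logic{L}} \varphi$. Since $\Gamma' \subseteq \Gamma$ and $|\Gamma'| < \kappa$, this is precisely what is needed to conclude that $\logic{L}$ is $\kappa$-ary.

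There is no serious obstacle here; the argument is essentially a routine marriage of the dual local IL with $\kappa$-compactness. The only point that genuinely requires care is the cardinal bookkeeping — keeping both the index set $\ilfamily_{1}$ and each witness $\Gamma_{\dilset}$ of size $< \kappa$ so that regularity of $\kappa$ can be applied to bound $|\Gamma'|$ — and verifying that $\kappa$-compactness truly lets one localize each antitheorem $\Gamma \cup \dilset(\varphi)$ to a subset of $\Gamma$ (plus the full set $\dilset(\varphi)$, reinstated by monotonicity). The degenerate case $\ilfamily_{1} = \emptyset$ is harmless: it forces $\Gamma' = \emptyset$, and the dual IL is then vacuously satisfied, giving $\emptyset \vdash_{\logic{L}} \varphi$.
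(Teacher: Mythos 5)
Your argument is correct and is essentially the paper's own proof: apply the left-to-right dual local IL to get $\Gamma, \dilset(\varphi) \vdash_{\logic{L}} \Fm \logic{L}$ for each $\dilset \in \ilfamily_{1}$, use $\kappa$-compactness to extract a subset $\Gamma_{\dilset} \subseteq \Gamma$ of size $< \kappa$ for each $\dilset$, take the union using regularity and $|\ilfamily_{1}| < \kappa$, and conclude with the right-to-left direction of the dual IL. Your explicit intersection-with-$\Gamma$-and-reinstate-$\dilset(\varphi)$ step is just a careful spelling-out of what the paper does implicitly, so nothing further is needed.
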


\begin{proof}
  If $\Gamma \vdash_{\logic{L}} \varphi$, then $\Gamma, \ilset(\varphi) \vdash_{\logic{L}} \allset$ for all $\ilset \in \ilfamily_{1}$. For each $\ilset \in \ilfamily_{1}$, then is therefore $\Gamma_{\ilset} \subseteq \Gamma$ of cardinality less than $\kappa$ such that $\Gamma, \ilset(\varphi) \vdash_{\logic{L}} \allset$. It~follows from the regularity of $\kappa$ that $\Gamma' \assign \bigcup_{\ilset \in \ilfamily_{1}} \Gamma_{\ilset}$ is a set of cardinality less than $\kappa$ such that $\Gamma', \ilset(\varphi) \vdash_{\logic{L}} \allset$. Thus $\Gamma' \vdash_{\logic{L}} \varphi$ by the dual local IL.
\end{proof}

  The dual IL becomes an especially powerful principle in conjunction with the ordinary~IL. Following Raftery~\cite{raftery13inconsistency}, we call such a combination a classical IL.

\begin{definition} \label{def: cil}
  A logic enjoys the \emph{$\kappa$-ary classical IL} w.r.t.\ a family $\ilfamily$ if it enjoys both the ordinary and the dual $\kappa$-ary IL w.r.t.\ $\ilfamily$.
\end{definition}

  We now show that if a logic enjoys an ordinary IL with respect to a family $\ilfamily$ as well as \emph{some} form of the dual IL w.r.t.\ \emph{some} family, then it in fact it enjoys a classical IL w.r.t.~the family $\ilfamily$. This simple observation will turn out to be crucial when applying the framework developed here to specific logics.

\begin{proposition} \label{prop: il and dil imply cil}
  Let $\logic{L}$ be a logic which enjoys the $\kappa$-ary parametrized local IL w.r.t.\ a family~$\ilfamily$. If~$\logic{L}$ enjoys the dual parametrized local IL, then it enjoys the $\kappa$-ary classical parametrized local IL w.r.t.~$\ilfamily$.
\end{proposition}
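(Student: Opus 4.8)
The plan is to note that the ordinary $\kappa$-ary parametrized local IL w.r.t.\ $\ilfamily$ is already part of the hypothesis, so that by Definition~\ref{def: cil} it only remains to establish the \emph{dual} $\kappa$-ary parametrized local IL w.r.t.\ \emph{the same} family $\ilfamily$. To set this up I would first invoke Fact~\ref{fact: dil arity}: since $\logic{L}$ enjoys a dual parametrized local IL, in particular it enjoys the unary one, so by that fact it enjoys the unrestricted dual parametrized local IL w.r.t.\ some family $\dilfamily'$; as $\kappa \leq |{\Var \logic{L}}|^{+}$, this supplies the sets $\dilfamily'_{\alpha}$ for every $0 < \alpha < \kappa$. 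Then I would fix $0 < \alpha < \kappa$, a set of formulas $\Gamma$, and an $\alpha$-tuple $\tuple{\varphi}$, and verify the equivalence
\begin{align*}
  \Gamma \vdash_{\logic{L}} \tuple{\varphi} & \iff \Gamma, \ilset(\tuple{\varphi}, \tuple{\pi}) \vdash_{\logic{L}} \Fm \logic{L} \text{ for each } \ilset \in \ilfamily_{\alpha} \text{ and each } \tuple{\pi}.
\end{align*}

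The left-to-right direction uses only the ordinary IL and no dual IL at all. Given $\Gamma \vdash_{\logic{L}} \tuple{\varphi}$ and arbitrary $\ilset \in \ilfamily_{\alpha}$ and $\tuple{\pi}$, I would apply the IL to the trivial consequence $\ilset(\tuple{\varphi}, \tuple{\pi}) \vdash_{\logic{L}} \ilset(\tuple{\varphi}, \tuple{\pi})$ to obtain $\tuple{\varphi}, \ilset(\tuple{\varphi}, \tuple{\pi}) \vdash_{\logic{L}} \Fm \logic{L}$, and then cut this against $\Gamma \vdash_{\logic{L}} \tuple{\varphi}$ to conclude $\Gamma, \ilset(\tuple{\varphi}, \tuple{\pi}) \vdash_{\logic{L}} \Fm \logic{L}$.

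For the right-to-left direction, assuming $\Gamma, \ilset(\tuple{\varphi}, \tuple{\pi}) \vdash_{\logic{L}} \Fm \logic{L}$ for all $\ilset \in \ilfamily_{\alpha}$ and $\tuple{\pi}$, I would use the dual IL w.r.t.\ $\dilfamily'$ to reduce the goal $\Gamma \vdash_{\logic{L}} \tuple{\varphi}$ to proving $\Gamma, \dilset'(\tuple{\varphi}, \tuple{\rho}) \vdash_{\logic{L}} \Fm \logic{L}$ for each $\dilset' \in \dilfamily'_{\alpha}$ and each $\tuple{\rho}$. Fixing such a $\dilset'$ and $\tuple{\rho}$, I would first apply the dual IL w.r.t.\ $\dilfamily'$ to the premise set consisting of the entries of $\tuple{\varphi}$ (which trivially derives $\tuple{\varphi}$), obtaining in particular $\tuple{\varphi}, \dilset'(\tuple{\varphi}, \tuple{\rho}) \vdash_{\logic{L}} \Fm \logic{L}$. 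The key step is then to apply the ordinary IL w.r.t.\ $\ilfamily$ to the $\alpha$-tuple $\tuple{\varphi}$ with the set $\dilset'(\tuple{\varphi}, \tuple{\rho})$ playing the role of the premise set: from $\dilset'(\tuple{\varphi}, \tuple{\rho}), \tuple{\varphi} \vdash_{\logic{L}} \Fm \logic{L}$ the IL produces some $\ilset \in \ilfamily_{\alpha}$ and some tuple $\tuple{\pi}$ with $\dilset'(\tuple{\varphi}, \tuple{\rho}) \vdash_{\logic{L}} \ilset(\tuple{\varphi}, \tuple{\pi})$. Since the standing assumption gives $\Gamma, \ilset(\tuple{\varphi}, \tuple{\pi}) \vdash_{\logic{L}} \Fm \logic{L}$, monotonicity and a cut then yield $\Gamma, \dilset'(\tuple{\varphi}, \tuple{\rho}) \vdash_{\logic{L}} \Fm \logic{L}$, which is exactly what the dual IL w.r.t.\ $\dilfamily'$ required.

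The main obstacle I anticipate is not any computation but recognizing the key step for what it is: the move from $\tuple{\varphi}, \dilset'(\tuple{\varphi}, \tuple{\rho}) \vdash_{\logic{L}} \Fm \logic{L}$ to $\dilset'(\tuple{\varphi}, \tuple{\rho}) \vdash_{\logic{L}} \ilset(\tuple{\varphi}, \tuple{\pi})$ is simply the ordinary IL read with $\dilset'(\tuple{\varphi}, \tuple{\rho})$ sitting to the left of the turnstile, and it is precisely here that having the IL with respect to the \emph{particular} family $\ilfamily$ matters, since it lets $\ilfamily$ ``absorb'' the a priori unrelated family $\dilfamily'$ and is what makes the classical IL come out with respect to $\ilfamily$ itself rather than merely some common refinement of $\ilfamily$ and $\dilfamily'$. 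The remaining subtleties are pure bookkeeping: ensuring the dual IL is available at the relevant arity $\alpha$ and with parameter tuples of the appropriate kind (handled by Fact~\ref{fact: dil arity} together with the bound $\kappa \leq |{\Var \logic{L}}|^{+}$), and the routine monotonicity-and-cut manipulations used to paste the various consequences together.
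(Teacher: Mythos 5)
Your proof is correct and follows essentially the same route as the paper: the key step in both is applying the ordinary IL w.r.t.\ $\ilfamily$ to $\dilset'(\cdot), \tuple{\varphi} \vdash_{\logic{L}} \Fm \logic{L}$ so that $\ilfamily$ absorbs the auxiliary dual family $\dilfamily'$. The only differences are presentational: the paper argues the right-to-left direction contrapositively, using the unary dual IL on a single component $\psi \in \tuple{\varphi}$ with $\Gamma \nvdash_{\logic{L}} \psi$, whereas you argue directly at arity $\alpha$ via Fact~\ref{fact: dil arity}; both are fine.
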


\begin{proof}
  The left-to-right direction of the dual IL follows immediately from the IL applied to $\ilset(\tuple{\varphi}, \tuple{\pi}) \vdash_{\logic{L}} \ilset(\tuple{\varphi}, \tuple{\pi})$. Conversely, suppose that $\Gamma \nvdash_{\logic{L}} \psi$ for some $\psi \in \tuple{\varphi}$. Since $\logic{L}$ enjoys the unary dual IL w.r.t.\ some family $\dilfamily'$, then there are $\dilset' \in \dilfamily'_{1}$ and $\tuple{\pi}$ such that $\Gamma, \dilset'(\psi, \tuple{\pi}) \nvdash_{\logic{L}} \allset$. But the dual IL applied to $\psi \vdash_{\logic{L}} \psi$ yields that $\dilset'(\psi, \tuple{\pi}), \psi \vdash_{\logic{L}} \allset$. By monotonicity $\dilset'(\psi, \tuple{\pi}), \tuple{\varphi} \vdash_{\logic{L}} \allset$, hence by the IL there are $\ilset \in \ilfamily_{\alpha}$ and $\tuple{\rho}$ such that $\dilset'(\psi, \tuple{\pi}) \vdash_{\logic{L}} \ilset(\tuple{\varphi}, \tuple{\rho})$. Thus $\Gamma, \dilset'(\psi, \tuple{\pi}) \nvdash_{\logic{L}} \allset$ implies that $\Gamma, \ilset(\tuple{\varphi}, \tuple{\rho}) \nvdash_{\logic{L}} \allset$.
\end{proof}

  The mere fact that a logic enjoys of a global IL therefore immediately upgrades a dual parametrized local IL to a dual global IL. Similarly, a finitary parametrized local IL immediately upgrades a unary dual IL to a finitary one.

\begin{corollary} \label{cor: il and dil imply cil}
  If a logic enjoys the $\kappa$-ary parametrized local IL and the unary dual local (global) IL, then it enjoys the $\kappa$-ary classical local (global) IL.
\end{corollary}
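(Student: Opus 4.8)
The statement is the mirror image of Proposition~\ref{prop: il and dil imply cil}, with the roles of the ordinary and the dual IL interchanged, and the plan is to run the symmetric argument. First I would invoke Fact~\ref{fact: dil arity} to promote the unary dual local (resp.\ global) IL to the unrestricted --- in particular the $\kappa$-ary --- dual local (resp.\ global) IL, and fix a witnessing family $\ilfamily$; here each $\ilfamily_{\alpha}$ consists of sets of the form $\ilset(\tuple{p})$ with no parameters (resp.\ of a single such set). By Definition~\ref{def: cil} it then suffices to show that $\logic{L}$ also enjoys the $\kappa$-ary \emph{ordinary} IL with respect to this same family $\ilfamily$; since $\ilfamily$ is local (resp.\ global), this gives precisely the $\kappa$-ary classical local (resp.\ global) IL.

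To prove the ordinary IL w.r.t.\ $\ilfamily$, I would proceed as follows. The dual IL applied to the valid inference $\tuple{p} \vdash_{\logic{L}} \tuple{p}$ yields, for every $\alpha < \kappa$ and every $\ilset \in \ilfamily_{\alpha}$, the inconsistency $\tuple{p}, \ilset(\tuple{p}) \vdash_{\logic{L}} \Fm \logic{L}$, hence by structurality $\tuple{\varphi}, \ilset(\tuple{\varphi}) \vdash_{\logic{L}} \Fm \logic{L}$ for every $\alpha$-tuple $\tuple{\varphi}$. The easy direction is then immediate: if $\Gamma \vdash_{\logic{L}} \ilset(\tuple{\varphi})$ for some $\ilset \in \ilfamily_{\alpha}$, then $\Gamma, \tuple{\varphi} \vdash_{\logic{L}} \Fm \logic{L}$ by monotonicity and cut. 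For the converse I would argue contrapositively: suppose $\Gamma, \tuple{\varphi} \nvdash_{\logic{L}} \Fm \logic{L}$, so that by the given $\kappa$-ary ordinary \emph{parametrized} local IL, witnessed by some family $\ilfamily'$, we have $\Gamma \nvdash_{\logic{L}} \ilset'(\tuple{\varphi}, \tuple{\pi})$ for every $\ilset' \in \ilfamily'_{\alpha}$ and every $\tuple{\pi}$. Fixing $\ilset \in \ilfamily_{\alpha}$ and using the inconsistency $\ilset(\tuple{\varphi}), \tuple{\varphi} \vdash_{\logic{L}} \Fm \logic{L}$ recorded above, the parametrized local IL supplies $\ilset' \in \ilfamily'_{\alpha}$ and $\tuple{\rho}$ with $\ilset(\tuple{\varphi}) \vdash_{\logic{L}} \ilset'(\tuple{\varphi}, \tuple{\rho})$; were $\Gamma \vdash_{\logic{L}} \ilset(\tuple{\varphi})$, cut would give $\Gamma \vdash_{\logic{L}} \ilset'(\tuple{\varphi}, \tuple{\rho})$, contradicting the previous line. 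Hence $\Gamma \nvdash_{\logic{L}} \ilset(\tuple{\varphi})$ for every $\ilset \in \ilfamily_{\alpha}$, which is the contrapositive of the remaining direction.

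I expect the one genuinely delicate point to be this last step, namely matching up the a priori parameter-carrying ordinary family $\ilfamily'$ with the parameter-free family $\ilfamily$ coming from the dual IL; this is precisely where the availability of a dual IL is used, and it mirrors the nontrivial direction in the proof of Proposition~\ref{prop: il and dil imply cil}. A routine but necessary check is that the arity promotion of Fact~\ref{fact: dil arity} respects the shape of the dual family: a local family stays local, and in the global case the single witnessing set for each arity is built coordinatewise and remains a single set. Once the ordinary IL w.r.t.\ $\ilfamily$ is in hand, the classical local (resp.\ global) IL follows at once from Definition~\ref{def: cil}.
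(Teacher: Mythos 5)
There is a genuine gap, and it occurs at exactly the step you flag as delicate. You reduce the corollary to showing the ordinary $\kappa$-ary IL with respect to the dual family $\ilfamily$ obtained from Fact~\ref{fact: dil arity}, and you claim to prove its hard direction contrapositively. But what your second paragraph actually establishes is: if $\Gamma, \tuple{\varphi} \nvdash_{\logic{L}} \Fm \logic{L}$, then $\Gamma \nvdash_{\logic{L}} \ilset(\tuple{\varphi})$ for every $\ilset \in \ilfamily_{\alpha}$. That is the contrapositive of the \emph{easy} direction (``if $\Gamma \vdash_{\logic{L}} \ilset(\tuple{\varphi})$ for some $\ilset$, then $\Gamma, \tuple{\varphi} \vdash_{\logic{L}} \Fm \logic{L}$''), which you had already proved directly. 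The contrapositive of the remaining direction would be: if $\Gamma \nvdash_{\logic{L}} \ilset(\tuple{\varphi})$ for \emph{every} $\ilset \in \ilfamily_{\alpha}$, then $\Gamma, \tuple{\varphi} \nvdash_{\logic{L}} \Fm \logic{L}$. So the ordinary IL with respect to the dual family is never established, and your argument proves nothing beyond the trivial half.

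Worse, the statement you are aiming at is false in general, so no repair along these lines can work. Take $\logic{L} = \CL$ with the unary dual (even global) IL w.r.t.\ $\{\neg p\}$; the extension of Fact~\ref{fact: dil arity} at arity $2$ is the family $\{\{\neg p_{1}\}, \{\neg p_{2}\}\}$, and the ordinary IL w.r.t.\ it would require that $\Gamma, \varphi_{1}, \varphi_{2} \vdash_{\CL} \allset$ implies $\Gamma \vdash_{\CL} \neg \varphi_{1}$ or $\Gamma \vdash_{\CL} \neg \varphi_{2}$, which fails for $\Gamma = \{\neg(p_{1} \wedge p_{2})\}$ with $\varphi_{i} = p_{i}$. (Note also that this extension of a single unary set is not a global family at higher arities, contrary to your closing remark.) The paper goes the other way around: by Proposition~\ref{prop: il and dil imply cil}, the unary dual local (global) IL, being in particular a dual parametrized local IL, transfers the dual IL to the family $\ilfamily$ witnessing the \emph{ordinary} IL, so the classical IL of the conclusion is taken with respect to that ordinary inconsistency family (whose shape supplies the ``local (global)''), not with respect to the dual family. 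In short: keep the ordinary family fixed and upgrade its dual half via the Proposition, rather than trying to equip the dual family with an ordinary IL, which it need not admit.
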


  Recalling the notion of equivalence of IL families (Fact~\ref{fact: all il families equivalent}), it follows that the classical IL is preserved under equivalence of IL families (see~\cite[p.\,400]{raftery13inconsistency}).

\begin{fact} \label{fact: all cil families are equivalent}
  If $\logic{L}$ enjoys a classical IL w.r.t.\ $\ilfamily$, then it enjoys a classical IL w.r.t.\ $\ilfamily'$ if and only if $\ilfamily$ and $\ilfamily'$ are equivalent.
\end{fact}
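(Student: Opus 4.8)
The plan is to reduce the statement to two results already established: the transfer of ordinary ILs along equivalence of families (Fact~\ref{fact: all il families equivalent}), and the upgrade of an ordinary IL together with \emph{some} dual IL to a classical IL with respect to the \emph{same} family (Proposition~\ref{prop: il and dil imply cil}). Granting these, the ``only if'' direction is immediate: if $\logic{L}$ enjoys a classical IL w.r.t.\ $\ilfamily'$, then in particular it enjoys the ordinary (parametrized local) IL w.r.t.\ both $\ilfamily$ and $\ilfamily'$, and hence $\ilfamily$ and $\ilfamily'$ are equivalent by the ``only if'' half of Fact~\ref{fact: all il families equivalent}.

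For the ``if'' direction, assume $\ilfamily$ and $\ilfamily'$ are equivalent. By Fact~\ref{fact: all il families equivalent} the logic $\logic{L}$ enjoys the ordinary IL w.r.t.\ $\ilfamily'$. Since $\logic{L}$ enjoys the classical IL w.r.t.\ $\ilfamily$, it in particular enjoys the dual parametrized local IL (namely w.r.t.\ $\ilfamily$), so Proposition~\ref{prop: il and dil imply cil}, applied with $\ilfamily'$ in the role of the distinguished family, yields that $\logic{L}$ enjoys the classical IL w.r.t.\ $\ilfamily'$. If one wishes to keep track of the local/global shape of the families, one observes (using the remark preceding Fact~\ref{fact: all il families equivalent}) that equivalence preserves being a local, respectively global, family, and invokes Corollary~\ref{cor: il and dil imply cil} in place of Proposition~\ref{prop: il and dil imply cil} to obtain the classical local, respectively global, IL in those cases.

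The only step that is not pure bookkeeping is the second direction, where one might naively expect to have to transfer \emph{both} halves of the classical IL along the equivalence of families. The ordinary half does transfer directly, by Fact~\ref{fact: all il families equivalent}. For the dual half, however, one transfers nothing: Proposition~\ref{prop: il and dil imply cil} shows that the ordinary IL w.r.t.\ $\ilfamily'$ together with the bare existence of \emph{some} dual IL already forces the dual IL w.r.t.\ $\ilfamily'$. Thus the substitution-and-cut argument already carried out inside the proof of Proposition~\ref{prop: il and dil imply cil} does all the real work, and no further computation is required here.
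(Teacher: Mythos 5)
Your argument is correct and is essentially the route the paper intends: the paper states this fact without a written proof, remarking only that it ``follows'' from the equivalence of ordinary IL families (Fact~\ref{fact: all il families equivalent}) together with Proposition~\ref{prop: il and dil imply cil}, which is precisely your decomposition. In particular, your key observation that the dual half need not be transferred at all, since the ordinary IL w.r.t.\ $\ilfamily'$ plus the mere existence of some dual IL already yields the classical IL w.r.t.\ $\ilfamily'$, is exactly the point of Proposition~\ref{prop: il and dil imply cil}.
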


  The power of a dual parametrized local IL, a seemingly weak principle on its own, therefore lies in upgrading ordinary ILs to classical ILs, which then imply full-blooded DDTs (see \cite[Corollary~4.4]{raftery13inconsistency}).

\begin{proposition} \label{prop: cil implies ddt}
  Let $\logic{L}$ be a logic which enjoys the classical $(\kappa+\lambda)$-ary local (global) IL w.r.t.\ $\ilfamily$ such that each set in $\ilfamily_{1}$ has cardinality less than $\lambda+1$. Then $\logic{L}$ enjoys the $\kappa$-ary local (global)~DDT.
\end{proposition}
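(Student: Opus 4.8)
The plan is to build the DDT family $\ddtfamily$ by composing two of the hypotheses: the dual IL, used to fold the single formula on the right of the turnstile into a set of inconsistency witnesses, and the ordinary IL, used afterwards on the enlarged premise set on the left. Fix an ordinal $0 < \alpha < \kappa$ and an $\alpha$-tuple of variables $\tuple{p}$. For each $\ilset \in \ilfamily_1$ the set $\ilset(q)$ has cardinality at most $\lambda$, so the list consisting of $\tuple{p}$ followed by an enumeration of $\ilset(q)$ has length strictly below $\kappa + \lambda$; write $\ilfamily^{\ilset}$ for the member of $\ilfamily$ at that arity. Given a map $f$ assigning to each $\ilset \in \ilfamily_1$ a set $f(\ilset) \in \ilfamily^{\ilset}$, put
\begin{align*}
  \ddtset_{f}(\tuple{p}, q) & \assign \bigcup_{\ilset \in \ilfamily_1} f(\ilset)(\tuple{p}, \ilset(q)),
\end{align*}
and let $\ddtfamily_{\alpha}$ consist of all $\ddtset_{f}$ obtained this way (the substitution of the set of formulas $\ilset(q)$ into $f(\ilset)$ being read via the conventions of Subsection~\ref{subsec: notational conventions}). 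In the global case every $\ilfamily^{\ilset}$ and $\ilfamily_1$ is a singleton, so $\ddtfamily_{\alpha}$ will turn out to be a singleton as well.

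For the left-to-right direction of the DDT, fix $\Gamma$, an $\alpha$-tuple $\tuple{\varphi}$, and a formula $\psi$, and apply the dual IL to the formula $\psi$ with premise set $\Gamma, \tuple{\varphi}$: this says that $\Gamma, \tuple{\varphi} \vdash_{\logic{L}} \psi$ holds if and only if $\Gamma, \tuple{\varphi}, \ilset(\psi) \vdash_{\logic{L}} \Fm \logic{L}$ for every $\ilset \in \ilfamily_1$. For each such $\ilset$, the ordinary IL applied to the premises $\tuple{\varphi}$ together with $\ilset(\psi)$ (legitimate by the arity choice above) converts $\Gamma, \tuple{\varphi}, \ilset(\psi) \vdash_{\logic{L}} \Fm \logic{L}$ into $\Gamma \vdash_{\logic{L}} \ilsettwo(\tuple{\varphi}, \ilset(\psi))$ for some $\ilsettwo \in \ilfamily^{\ilset}$. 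Collecting these witnesses into a single map $f$ and observing that $\Gamma \vdash_{\logic{L}} \ddtset_{f}(\tuple{\varphi}, \psi)$ is precisely $\Gamma \vdash_{\logic{L}} f(\ilset)(\tuple{\varphi}, \ilset(\psi))$ for every $\ilset$, we conclude that $\Gamma, \tuple{\varphi} \vdash_{\logic{L}} \psi$ implies $\Gamma \vdash_{\logic{L}} \ddtset_{f}(\tuple{\varphi}, \psi)$ for some $\ddtset_{f} \in \ddtfamily_{\alpha}$.

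Conversely, if $\Gamma \vdash_{\logic{L}} \ddtset_{f}(\tuple{\varphi}, \psi)$ for some $\ddtset_{f} \in \ddtfamily_{\alpha}$, then $\Gamma \vdash_{\logic{L}} f(\ilset)(\tuple{\varphi}, \ilset(\psi))$ for every $\ilset \in \ilfamily_1$, whence the right-to-left (easy) direction of the ordinary IL gives $\Gamma, \tuple{\varphi}, \ilset(\psi) \vdash_{\logic{L}} \Fm \logic{L}$ for every $\ilset \in \ilfamily_1$, and so $\Gamma, \tuple{\varphi} \vdash_{\logic{L}} \psi$ by the dual IL. This proves the $\kappa$-ary local DDT. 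When $\ilfamily$ is a global family, $\ilfamily_1$ and each $\ilfamily^{\ilset}$ are singletons, $f$ is forced, and $\ddtfamily_{\alpha} = \{ \ddtset_{f} \}$, so the same computation yields the $\kappa$-ary global DDT. This is exactly Raftery's \cite[Corollary~4.4]{raftery13inconsistency} pushed to the local and infinitary setting.

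The only genuinely delicate point I anticipate is the arity bookkeeping: one has to check that appending $\ilset(q)$, of cardinality at most $\lambda$, to the $\alpha$-tuple $\tuple{p}$ with $\alpha < \kappa$ still produces a list whose length lies within the scope of the hypothesized $(\kappa + \lambda)$-ary IL — this is precisely what the bound $\kappa + \lambda$ buys us — and, relatedly, to track carefully how a set of formulas is substituted into a member of the IL family (so that $\ddtset_{f}(\tuple{\varphi}, \psi)$ really equals $\bigcup_{\ilset} f(\ilset)(\tuple{\varphi}, \ilset(\psi))$). The logical core, ``dual IL on the right, then ordinary IL on the left,'' is short, and both directions of the DDT drop out as soon as $\ddtfamily$ has been set up correctly.
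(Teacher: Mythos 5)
Your proposal is correct and follows essentially the same route as the paper's own proof: the DDT family is built from exactly the same sets $\ddtset_{f}(\tuple{p},q) = \bigcup_{\ilset \in \ilfamily_{1}} f(\ilset)(\tuple{p} \cup \ilset(q))$ indexed by choice functions $f$, with the forward direction obtained by applying the dual IL to $\psi$ and then the ordinary IL to the enlarged tuple, and the converse by reversing both steps; the singleton observation handles the global case identically. The arity bookkeeping you flag is the same one the paper performs (the bound on $|\ilset|$ guaranteeing that $\tuple{p}$ concatenated with $\ilset(q)$ stays within the $(\kappa+\lambda)$-ary scope of the IL), so no gap remains.
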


\begin{proof}
  Fix some $\alpha$ such that $0 < \alpha < \kappa$. Let us define the family of functions
\begin{align*}
  \mathcal{F} \assign \set{f\colon \ilfamily_{1} \to \bigcup_{n \in \omega} \ilfamily_{n}}{f(\ilsettwo) \in \ilfamily_{| {\ilsettwo} | + |\alpha|} \text{ for each } \ilsettwo \in \ilfamily_{1}}.
\end{align*}
  For each $f \in \mathcal{F}$ we define the set of formulas $\ddtset_{f}(\tuple{p}, q)$ as follows:
\begin{align*}
  \ddtset_{f}(\tuple{p}, q) \assign \bigcup_{\ilsettwo \in \ilfamily_{1}} f(\ilsettwo)(\tuple{p} \cup \ilsettwo(q)).
\end{align*}
  We claim that $\logic{L}$ enjoys the local DDT with respect to $\ddtfamily \assign \set{\ddtset_{f}}{f \in \mathcal{F}}$. The~claim for the classical global IL then follows immediately from the fact that $\mathcal{F}$ and therefore $\ddtfamily$ is a singleton if each $\ilfamily_{n}$ is a singleton.

  If $\Gamma, \tuple{\varphi} \vdash_{\logic{L}} \psi$, then $\Gamma, \tuple{\varphi}, \ilsettwo(\psi) \vdash_{\logic{L}} \allset$ for each $\ilsettwo \in \ilfamily_{1}$ by the dual IL. Consider some $\ilsettwo \in \ilfamily_{1}$. By assumption $| {\ilsettwo} | < \kappa$. Then by the IL there is $\ilsetthree \in \ilfamily_{| {\ilsettwo} | + |\alpha|}$ such that $\Gamma \vdash_{\logic{L}} \ilsetthree(\tuple{\varphi} \cup \ilsettwo(\psi))$. Let $f \in \mathcal{F}$ be the function which to each $\ilsettwo \in \ilfamily_{1}$ assigns this $\ilsetthree \in \ilfamily_{| {\ilsettwo} |+|\alpha|}$. Then $\Gamma \vdash_{\logic{L}} \ddtset_{f}(\varphi, \psi)$ for some $f \in \mathcal{F}$. Conversely, suppose that $\Gamma \vdash_{\logic{L}} \ddtset_{f}(\varphi, \psi)$. Then $\Gamma, \varphi, \ilsettwo(\psi) \vdash_{\logic{L}} \allset$ for each $\ilsettwo \in \ilfamily_{1}$ by the IL, therefore $\Gamma, \varphi \vdash_{\logic{L}} \psi$ by the dual IL.
\end{proof}

\begin{proposition} \label{prop: cplil implies plddt}
  Let $\logic{L}$ be a logic which enjoys the classical $(\kappa+1)$-ary global (local, parametrized local) IL w.r.t.\ $\ilfamily$ such that each set in $\ilfamily_{1}$ has cardinality less than $\kappa$. Then $\logic{L}$ has a protoimplication set.
\end{proposition}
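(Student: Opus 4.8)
The plan is to reduce the statement, via Fact~\ref{fact: protoimplication}, to the construction of a protoimplication set, and to produce such a set by a variant of the argument in the proof of Proposition~\ref{prop: cil implies ddt}, carried out for $\alpha = 1$ and in the presence of parameters. By Fact~\ref{fact: protoimplication} it suffices to exhibit a set of formulas $\protoset(p, q)$ in the two variables $p, q$ with $\emptyset \vdash_{\logic{L}} \protoset(p, p)$ and $p, \protoset(p, q) \vdash_{\logic{L}} q$, since the passage from such a set to the (unrestricted) parametrized local DDT is handled by that fact.

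To build $\protoset$, I would fix for each $\ilset \in \ilfamily_{1}$ a tuple $\tuple{r}_{\ilset}$ of variables disjoint from $p, q$ (and, where there are enough variables, pairwise disjoint for distinct $\ilset$); these exist because $|\ilset| < \kappa \leq |\Var \logic{L}|^{+}$. Applying the dual IL to $p \vdash_{\logic{L}} p$ yields $p, \ilset(p, \tuple{r}_{\ilset}) \vdash_{\logic{L}} \allset$, so the tuple $\langle p \rangle \cup \ilset(p, \tuple{r}_{\ilset})$, which has length $1 + |\ilset| \leq \kappa < \kappa + 1$, is an antitheorem; this is exactly where the $(\kappa+1)$-ary hypothesis together with the bound $|\ilset| < \kappa$ on the sets in $\ilfamily_{1}$ is used, as it guarantees that the ordinary IL is available at this arity. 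Applying the ordinary IL at arity $1 + |\ilset|$ then gives $\ilsetthree_{\ilset} \in \ilfamily_{1 + |\ilset|}$ and a tuple $\tuple{\pi}_{\ilset}$ with $\emptyset \vdash_{\logic{L}} \ilsetthree_{\ilset}(\langle p \rangle \cup \ilset(p, \tuple{r}_{\ilset}), \tuple{\pi}_{\ilset})$, and by substituting $p$ for any other variables we may assume $\tuple{\pi}_{\ilset}$ involves only $p$ and $\tuple{r}_{\ilset}$. I would then set
\[
  \protoset(p, q, \tuple{r}) := \bigcup_{\ilset \in \ilfamily_{1}} \ilsetthree_{\ilset}\!\left(\langle p \rangle \cup \ilset(q, \tuple{r}_{\ilset}),\ \tuple{\pi}_{\ilset}\right),
\]
with $\tuple{r}$ collecting the blocks $\tuple{r}_{\ilset}$, and finally let $\protoset(p, q)$ be the result of substituting $p$ for every variable in $\tuple{r}$. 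Reflexivity is then immediate: substituting $p$ for $q$ in $\protoset(p, q, \tuple{r})$ returns the union of the theorem-sets $\ilsetthree_{\ilset}(\langle p \rangle \cup \ilset(p, \tuple{r}_{\ilset}), \tuple{\pi}_{\ilset})$, and a further substitution of $p$ for $\tuple{r}$ preserves $\emptyset \vdash_{\logic{L}}$ by structurality.

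The real work is modus ponens, and here the parametrized setting departs from Proposition~\ref{prop: cil implies ddt}. By the dual IL, $p, \protoset(p, q) \vdash_{\logic{L}} q$ amounts to $p, \protoset(p, q), \ilset(q, \tuple{\tau}) \vdash_{\logic{L}} \allset$ for \emph{every} $\ilset \in \ilfamily_{1}$ and \emph{every} tuple $\tuple{\tau}$, whereas the easy direction of the ordinary IL, applied to the $\ilset$-block $\ilsetthree_{\ilset}(\langle p \rangle \cup \ilset(q, \tuple{r}_{\ilset}), \tuple{\pi}_{\ilset}) \subseteq \protoset$, only yields this directly for $\tuple{\tau} = \tuple{r}_{\ilset}$. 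The strategy I would adopt is to establish modus ponens for $\protoset(p, q, \tuple{r})$ while the blocks $\tuple{r}_{\ilset}$ are still genuine variables, and only afterwards substitute $p$ for $\tuple{r}$ (which preserves the conclusion by structurality): keeping $\tuple{r}_{\ilset}$ variable makes it available to absorb the universally quantified parameter tuples of the dual IL, using that substituting arbitrary formulas for parameter variables preserves antitheoremhood. Making this absorption precise — reconciling the fixed parameter blocks occurring inside $\protoset$ with the arbitrary parameter tuples ranged over by the dual IL, and managing the variables so that the $\tuple{r}_{\ilset}$ do not interfere with one another — is the technical crux; the remainder is the bookkeeping about arities already indicated above, after which Fact~\ref{fact: protoimplication} completes the argument.
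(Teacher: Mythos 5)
Your construction and overall route (apply the dual IL to $p \vdash_{\logic{L}} p$, then the ordinary IL at arity $1+|\ilset| < \kappa+1$ to produce theorem sets, then assemble these into a candidate protoimplication set and collapse variables at the end) is the same as the paper's, and in the global and local cases, where the sets in $\ilfamily_{1}$ carry no parameters, your argument is complete: the only instances the dual IL asks about are the sets $\ilset'(q)$ themselves, each of which is handled by its own block. In the parametrized local case, however, there is a genuine gap, and it is exactly the step you flag as the ``technical crux''. Once you fix one parameter tuple $\tuple{r}_{\ilset}$ per $\ilset \in \ilfamily_{1}$ and form $\protoset(p,q,\tuple{r})$, the dual IL requires the \emph{fixed} set $\{p\} \cup \protoset(p,q,\tuple{r})$ to be inconsistent with $\ilset'(q,\tuple{\tau})$ for \emph{every} parameter tuple $\tuple{\tau}$. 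Your absorption idea, substituting $\tuple{\tau}$ for $\tuple{r}_{\ilset'}$, does not deliver this: since $\tuple{r}_{\ilset'}$ also occurs inside $\protoset$, the substitution only yields inconsistency of a substitution instance $\nu[\protoset(p,q,\tuple{r})]$ with $\ilset'(q,\tuple{\tau})$, not of $\protoset$ itself, and there is no way to keep the parameters of $\protoset$ ``free'' for this purpose; once the candidate set is written down, its parameters are fixed formulas and cannot be traded against the universally quantified $\tuple{\tau}$ of the dual IL.

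The paper resolves this precisely by not economizing on parameters at the intermediate stage: for each $\ilset \in \ilfamily_{1}$ and \emph{each} tuple $\tuple{\pi}$ it places a block $\ilsettwo(\{p\}\cup\ilset(q,\tuple{\pi}),\tuple{\rho})$ into a single (typically infinite, many-variable) set $\Gamma$. Then for every $\ilset'$ and every $\tuple{\tau}$ the relevant block is literally contained in $\Gamma$, so $p, \ilset'(q,\tuple{\tau}), \Gamma \vdash_{\logic{L}} \allset$ by the easy direction of the IL, and the dual IL gives $p, \Gamma \vdash_{\logic{L}} q$ outright. Only afterwards is $\Gamma$ collapsed into two variables, via the substitution $\sigma$ with $\sigma(p)=p$ and $\sigma(r)=q$ for all other variables: modus ponens survives because $\sigma$ fixes $p$ and $q$, and reflexivity of $\sigma[\Gamma]$ holds because substituting $p$ for every variable turns each block back into a substitution instance of one of the theorem sets. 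So the repair of your argument is not finer bookkeeping of the variables $\tuple{r}_{\ilset}$ but enlarging the candidate set to contain a block for every parameter instance before collapsing; with that change your outline becomes the paper's proof. Your reflexivity argument and the arity bookkeeping are fine as they stand.
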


\begin{proof}
  (Recall the definition of a protoimplication set from Fact~\ref{fact: protoimplication}.) By~the dual IL $p \vdash_{\logic{L}} p $ implies ${p, \ilset(p, \tuple{\pi}) \vdash_{\logic{L}} \allset}$ for each $\ilset \in \ilfamily$ and each tuple $\tuple{\pi}$. Thus by the assumption concerning the cardinality of sets in $\ilfamily_{1}$, for each $\ilset \in \ilfamily$ and each tuple $\tuple{\pi}$ there are $\ilsettwo \in \ilfamily_{|\ilset|+1}$ and $\tuple{\rho}$ such that 
\begin{align*}
  \emptyset \vdash_{\logic{L}} \ilsettwo(\{ p \} \cup \ilset(p, \tuple{\pi}), \tuple{\rho}).
\end{align*}
  Let $\Gamma$ be the union of the sets of formulas $\ilsettwo(\{ p \} \cup \ilset(q, \tuple{\pi}), \tuple{\rho})$ where $q$ is distinct from $p$ and $\ilsettwo(\{ p \} \cup \ilset(p, \tuple{\pi}), \tuple{\rho})$ ranges over all the sets obtained above. Then by structurality $\emptyset \vdash_{\logic{L}} \sigma[\Gamma]$ where $\sigma$ is the substitution such that $\sigma(p) = p$ and $\sigma(r) = q$ for every variable $r$ other than $p$. On the other hand, $p, \ilset(q, \tuple{\pi}), \Gamma \vdash_{\logic{L}} \allset$ for each $\ilset \in \ilfamily$ and each $\tuple{\pi}$ by the IL, therefore $p, \Gamma \vdash_{\logic{L}} q$ by the dual IL. Structurality now implies that $p, \sigma[\Gamma] \vdash_{\logic{L}} q$, thus $\sigma[\Gamma]$ is the required protoimplication set.
\end{proof}

\begin{corollary}
  Each $\kappa$-compact logic with the $\kappa$-ary classical global (local, para\-metrized local) IL enjoys the $\kappa$-ary global (local, parametrized local) DDT, where $\kappa \leq |{\Var \logic{L}}|^{+}$ is an infinite cardinal.
\end{corollary}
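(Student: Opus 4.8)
The plan is to deduce the corollary from Propositions~\ref{prop: cil implies ddt} and~\ref{prop: cplil implies plddt}. Those propositions require an inconsistency family whose arity lies one step (or more) above $\kappa$ and whose $1$-component consists of small sets, so the only work is to manufacture such a family out of the given $\kappa$-ary classical IL, and $\kappa$-compactness is exactly the tool for this.

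First I would raise the arity of the ordinary part of the IL. Since $\logic{L}$ is $\kappa$-compact and enjoys the $\kappa$-ary (parametrized) local IL, it enjoys the unrestricted (parametrized) local IL (as noted after Definition~\ref{def: il}), say with respect to a family $\ilfamily$ which, again by $\kappa$-compactness, may be taken to consist of sets of cardinality less than $\kappa$; in particular each set in $\ilfamily_{1}$ has cardinality less than $\kappa$, and the passage to $\ilfamily$ keeps the classical IL in force, since the resulting family is equivalent to the original one (Fact~\ref{fact: all cil families are equivalent}). The dual part survives too: because $1 < \kappa$, the given classical IL provides the unary dual (parametrized) local IL, which by Fact~\ref{fact: dil arity} lifts to the unrestricted dual (parametrized) local IL, so Proposition~\ref{prop: il and dil imply cil} yields the unrestricted classical (parametrized) local IL with respect to $\ilfamily$. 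As there are no tuples of distinct variables of length $|{\Var \logic{L}}|^{+}$ or more, this is in particular a classical $(\kappa+\kappa)$-ary IL and a classical $(\kappa+1)$-ary IL, with $\ilfamily_{1}$ consisting of sets of cardinality less than $\kappa$ (the arities at or above $|{\Var \logic{L}}|^{+}$, if any remain, being vacuous).

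It then remains to invoke the propositions. In the global and local cases, Proposition~\ref{prop: cil implies ddt} applied with $\lambda := \kappa$ yields the $\kappa$-ary global, respectively local, DDT: its cardinality hypothesis holds because the sets in $\ilfamily_{1}$ have cardinality less than $\kappa$, hence less than $\lambda+1$; its arity hypothesis holds because we have the classical $(\kappa+\kappa)$-ary IL; and the DDT family its proof produces is a singleton precisely when the IL family is, i.e.\ in the global case. In the parametrized local case, Proposition~\ref{prop: cplil implies plddt} (with its cardinal parameter taken to be our $\kappa$) shows that $\logic{L}$ has a protoimplication set, and then Fact~\ref{fact: protoimplication} upgrades this to the unrestricted — a fortiori the $\kappa$-ary — parametrized local DDT. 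The point demanding attention throughout is the arity bookkeeping: that raising the arity of the classical IL does not destroy classical-ness (the ordinary side goes up by $\kappa$-compactness, the dual side by Fact~\ref{fact: dil arity}, and Proposition~\ref{prop: il and dil imply cil} re-combines them with respect to the same family), and that ``unrestricted'' genuinely covers every arity the two propositions can demand, so that the limiting case $\kappa = |{\Var \logic{L}}|^{+}$ — where the extra arities requested are vacuous — causes no trouble.
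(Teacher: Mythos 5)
Your derivation is correct and is essentially the argument the paper intends (the corollary is stated without proof, as an immediate consequence of Propositions~\ref{prop: cil implies ddt} and~\ref{prop: cplil implies plddt}): $\kappa$-compactness lets one normalize the IL family to sets of cardinality less than $\kappa$ and supply all arities the two propositions need, the dual side is preserved via Fact~\ref{fact: dil arity} and Proposition~\ref{prop: il and dil imply cil}, and Fact~\ref{fact: protoimplication} converts the protoimplication set into the parametrized local DDT. Your arity bookkeeping (only arities below $\kappa$ are ever consulted when $\kappa$ is infinite, so the global form of the family is never disturbed where it matters) matches what the paper tacitly relies on.
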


  In the local and global cases, our proof that a classical IL yields a DDT was constructive. This recipe for constructing DDTs from classical ILs is in fact eminently practical. To illustrate this, let us apply it to obtain a local DDT for the infinitary {\L}ukasiewicz logic $\Lukinfty$, which to the best of our knowledge was previously unknown. This also yields the only example known to us of a local DDT where the DDT family necessarily consists of infinite sets.

\begin{example} \label{ex: ddt for lukinfty}
  $\Lukinfty$ enjoys the unary local DDT:
\begin{align*}
  \Gamma, \varphi \vdash_{\Lukinfty} \psi & \iff \Gamma \vdash_{\Lukinfty} \set{f(n) (\varphi \rightarrow \psi^{n})}{n \in \omega} \text{ for some } f\colon \omega \to \omega,
\end{align*}
  or equivalently:
\begin{align*}
  \Gamma, \varphi \vdash_{\Lukinfty} \psi & \iff \text{ for each $n \in \omega$ there is $k \in \omega$ such that } \Gamma \vdash_{\Lukinfty} k(\varphi \rightarrow \psi^{n}).
\end{align*}
  However, it does not enjoy the unary LDDT w.r.t.\ any family of finite sets.
\end{example}

\begin{proof}
  By compactness, $\Lukinfty$ enjoys the same local IL as its finitary counter\-part~$\Luk$. The above recipe for constructing the local DDT now yields the DDT family consisting of the sets
\begin{align*}
  \set{\neg (p \cdot \neg q^{n})^{f(n)}}{n \in \omega} \text{ for } f\colon \omega \to \omega,
\end{align*}
  and each such set is equivalent, formula by formula, to $\set{f(n) (p \rightarrow q^{n})}{n \in \omega}$.

  If $\Lukinfty$ satisfied the local DDT w.r.t.\ a family $\ddtfamily$ of finite sets, then we claim that it satisfies the local DDT w.r.t.\ the family $\set{p^{n} \rightarrow q}{n > 0}$. Recall that this is a DDT family for $\Luk$, and $\Luk$ is precisely the finitary companion of $\Lukinfty$. If $\Gamma \vdash_{\Lukinfty} \varphi^{n} \rightarrow \psi$, then $\Gamma, \varphi \vdash_{\Lukinfty} \psi$ because $\varphi, \varphi^{n} \rightarrow \psi \vdash_{\Lukinfty} \psi$. Conversely, $\varphi, \ddtset(\varphi, \psi) \vdash_{\Lukinfty} \psi$ for each $\ddtset \in \ddtfamily$, therefore $\varphi, \ddtset(\varphi, \psi) \vdash_{\Luk} \psi$ by the finiteness assumption and $\ddtset(\varphi, \psi) \vdash_{\Luk} \varphi^{n} \rightarrow \psi$ for some $n > 0$ by the local DDT for $\Luk$. Thus $\ddtset(\varphi, \psi) \vdash_{\Lukinfty} \varphi^{n} \rightarrow \psi$ for some $n > 0$. On the other hand, $\varphi^{n} \rightarrow \psi \vdash_{\Lukinfty} \ddtset(\varphi, \psi)$ for some $\ddtset \in \ddtfamily$ by the local DDT w.r.t.~$\ddtfamily$ applied to $\varphi, \varphi^{n} \rightarrow \psi \vdash_{\Lukinfty} \psi$. Therefore $\ddtfamily$ and $\set{p^{n} \rightarrow q}{n > 0}$ are equivalent families and $\Lukinfty$ enjoys the local DDT w.r.t.\ the latter.

  We now show that $\Lukinfty$ does not satisfy the local DDT w.r.t.~this family. Let
\begin{align*}
  \Gamma & \assign \set{p^{i+1} \rightarrow (q_{i+1} \rightarrow q_{i}), \neg q_{0} \rightarrow q_{i}^{i}}{i \in \omega}.
\end{align*}
  Then $\Gamma, p \vdash_{\Lukinfty} q_{0}$. This is because $\Gamma, p \vdash_{\Lukinfty} q_{i+1} \rightarrow q_{i}$ for each $i \in \omega$, hence $\Gamma, p \vdash_{\Lukinfty} \neg q_{0} \rightarrow q_{0}^{i}$ for each $i \in \omega$, but $\set{\neg q_{0} \rightarrow q_{0}^{i}}{i \in \omega} \vdash_{\Lukinfty} q_{0}$.

  We prove that $\Gamma \nvdash_{\Lukinfty} p^{n} \rightarrow q_{0}$ for each $n > 0$. Given an $\varepsilon \in (0, 1)$, consider the valuation $v$ such that
\begin{align*}
  v(p) & \assign 1 - \frac{\varepsilon}{n+1}, \\
  v(q_{i}) & \assign \min \left( 1, 1 - \varepsilon + \frac{1 + \dots + i}{n+1} \varepsilon \right).
\end{align*}
  Then $v(p^{n}) = 1 - \frac{n \varepsilon}{n+1} \nleq 1 -\varepsilon = v(q_{0})$, hence $v(p^{n} - q) < 1$. Moreover, $v(q_{i+1} - q_{i}) \leq \frac{i+1}{n+1} \varepsilon = 1 - v(p^{i+1})$, thus $v(p^{i+1}) \leq v(q_{i+1} \rightarrow q_{i})$. It follows that $v(p^{i+1} \rightarrow (q_{i+1} \rightarrow q_{i})) = 1$. Finally, observe that $v(q_{i}) < 1$ only if $1 + \dots + i = \frac{i (i+1)}{2} \leq n$, i.e.\ only if $i \leq \sqrt{2n}$. But then $v(q_{i}^{i}) \geq 1 - i \varepsilon \geq 1 - \sqrt{2n} \varepsilon$. Taking any $\varepsilon \leq 1 - \sqrt{2n} \varepsilon$, e.g.\ $\varepsilon \assign \frac{1}{2\sqrt{2n}}$, now yields $v(\neg q_{0}) = \varepsilon \leq 1 -\sqrt{2n} \varepsilon \leq v(q_{i}^{i})$ for each $i \in \omega$. Thus $v(\neg q_{0} \rightarrow q_{i}^{i}) = 1$ for each $i \in \omega$.
\end{proof}

  The logic $\Lukinfty$ in fact enjoys an unrestricted local DDT.

\begin{example} \label{ex: infinitary ddt for lukinfty}
  The infinitary {\L}ukasiewicz logic $\Lukinfty$ enjoys the unrestricted local DDT:
\begin{align*}
  \Gamma, \tuple{\varphi} \vdash_{\Lukinfty} \psi \iff ~\, & \text{for each $n$ there is $k$ and a finite subtuple $\tuple{\varphi}'$ of $\tuple{\varphi}$ such that } \\
  & \Gamma \vdash_{\Lukinfty} k \left( \bigwedge \tuple{\varphi}' \rightarrow \psi^{n} \right).
\end{align*}
\end{example}

\begin{proof}
  By compactness, $\Lukinfty$ has the same finitary local IL as $\Luk$, which again by compactness extends to an unrestricted local IL as follows:
\begin{align*}
  \Gamma, \tuple{\varphi} \vdash_{\Lukinfty} \allset & \iff \Gamma \vdash_{\Lukinfty} \neg \left( \bigwedge \tuple{\varphi}' \right)^{n} \text{ for some finite subtuple } \tuple{\varphi}' \text{ of } \tuple{\varphi}.
\end{align*}
  (The conjunction $\bigwedge \tuple{\varphi}'$ may equivalently be replaced by a product.) The recipe for constructing DDTs from classical ILs now yields the stated DDT.
\end{proof}

\subsection{Simple inconsistency lemmas}

  The ordinary IL and the dual IL in fact coincide when restricted to simple theories $T$, thanks to the equivalences
\begin{align*}
  T \vdash_{\logic{L}} \varphi & \iff T, \varphi  \nvdash_{\logic{L}} \allset, \\
  T \vdash_{\logic{L}} \ilset(\varphi) & \iff T, \ilset(\varphi) \nvdash_{\logic{L}} \allset.
\end{align*}
  This common restriction of the IL and dual IL to simple theories will be called the simple IL, although it might equally well be called the simple dual IL.

\begin{definition} \label{def: sil}
  A logic $\logic{L}$ enjoys the \emph{$\kappa$-ary simple parametrized local IL}, where $\kappa \leq | {\Var \logic{L}} |^{+}$, if for each ordinal $0 < \alpha < \kappa$ there is a family of sets of formulas $\ilfamily_{\alpha}(\tuple{p}, \tuple{q})$, where $\tuple{p}$ is an $\alpha$-tuple of variables, such that for each simple theory $T$ of $\logic{L}$ and each $\alpha$-tuple $\tuple{\varphi}$ we have $\tuple{\varphi}, \ilset(\tuple{\varphi}, \tuple{\pi}) \vdash_{\logic{L}} \allset$ for each $\tuple{\pi}$ and
\begin{align*}
  T, \tuple{\varphi} \vdash_{\logic{L}} \allset \implies T \vdash_{\logic{L}} \ilset(\tuple{\varphi}, \tuple{\pi}) \vdash_{\logic{L}} \allset \text{ for some set } \ilset \in \ilfamily \text{ and some tuple } \tuple{\pi},
\end{align*}
  or equivalently 
\begin{align*}
  T,\ilset(\tuple{\varphi}, \tuple{\pi}) \vdash_{\logic{L}} \allset \text{ for each set } \ilset \in \ilfamily \text{ and each tuple } \tuple{\pi} \implies T \vdash_{\logic{L}} \tuple{\varphi}.
\end{align*}
  It enjoys the \emph{$\kappa$-ary simple local IL} if each $\ilfamily_{\alpha}$ can be chosen of the form $\ilfamily_{\alpha}(\tuple{p})$. It enjoys the \emph{$\kappa$-ary simple global IL} if each $\ilfamily_{\alpha}$ can be chosen of the form $\{ \ilset(\tuple{p}) \}$.
\end{definition}

  Just like with dual (parametrized) local ILs, there is again no need to take note of the arity of a simple (parametrized) local IL, since each unary simple (parametrized) local IL extends to an unrestricted one.

  While the ordinary (parametrized) local IL is equivalent to the (surjective) substitution swapping property for antitheorems, the simple (parametrized) \mbox{local} IL is equivalent to a weaker condition, namely the closure of the \mbox{family} of simple theories under preimages with respect to (surjective) substitutions. For~many of our proofs this weaker condition will be sufficient.

\begin{proposition}
  The following are equivalent for each coatomic logic $\logic{L}$ with an antitheorem:
\begin{enumerate}
\item $\logic{L}$ enjoys the simple parametrized local IL.
\item If $\sigma$ is surjective and $T$ is a simple theory of $\logic{L}$, then so is~$\sigma^{-1}[T]$.
\end{enumerate}
\end{proposition}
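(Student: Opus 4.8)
The plan is to prove the two implications separately, taking as a basic tool the fact that, since $\logic{L}$ has an antitheorem, a set of formulas generates the trivial theory if and only if it is an antitheorem, and hence that a simple (i.e.\ maximal non-trivial) theory $T$ satisfies $T \vdash_\logic{L} \psi$ exactly when $T, \psi \nvdash_\logic{L} \allset$: if $\psi \notin T$ then $T \cup \{\psi\}$ generates a proper, hence trivial, extension of $T$, so $T, \psi \vdash_\logic{L} \allset$; conversely if $\psi \in T$ then $T, \psi$ generates $T$, which is non-trivial. (This observation is also what makes the two displayed formulations of the simple parametrized local IL in Definition~\ref{def: sil} interchangeable.) The overall argument closely parallels that of Proposition~\ref{prop: swapping for antitheorems}.

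For (2)~$\Rightarrow$~(1), I would use the same candidate family as in Proposition~\ref{prop: swapping for antitheorems}: for each $\alpha$ let $\ilfamily_\alpha$ consist of all sets of formulas $\ilset(\tuple{p}, \tuple{q})$ such that $\tuple{p}, \ilset(\tuple{p}, \tuple{q}) \vdash_\logic{L} \allset$, where $\tuple{p}$ is the fixed $\alpha$-tuple of distinguished variables and $\tuple{q}$ lists the remaining variables. The requirement $\tuple{\varphi}, \ilset(\tuple{\varphi}, \tuple{\pi}) \vdash_\logic{L} \allset$ then holds for all $\tuple{\varphi}, \tuple{\pi}$ by structurality for antitheorems. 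For the implication, suppose $T$ is simple and $T, \tuple{\varphi} \vdash_\logic{L} \allset$; then $\varphi_i \notin T$ for some $i$, since otherwise $T, \tuple{\varphi}$ would generate the non-trivial theory $T$. Fix a surjective substitution $\sigma$ with $\sigma(\tuple{p}) = \tuple{\varphi}$ (which exists because $|\alpha| \leq |{\Var \logic{L}}|$, just as in Proposition~\ref{prop: swapping for antitheorems}) and put $T' \assign \sigma^{-1}[T]$, a simple theory by~(2). From $\varphi_i = \sigma(p_i) \notin T$ we get $p_i \notin T'$, so $T', p_i \vdash_\logic{L} \allset$ by maximality of $T'$, and hence $T', \tuple{p} \vdash_\logic{L} \allset$. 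Thus $T'$, viewed as a set of formulas in the variables $\tuple{p}, \tuple{q}$, belongs to $\ilfamily_\alpha$, and since $\sigma$ is surjective, $\ilset(\tuple{\varphi}, \sigma(\tuple{q})) = \sigma[T'] = T$; so $T \vdash_\logic{L} \ilset(\tuple{\varphi}, \tuple{\pi})$ with $\ilset \assign T'$ and $\tuple{\pi} \assign \sigma(\tuple{q})$, as required, and since the construction is uniform in $\alpha$ it yields the unrestricted simple parametrized local IL.

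For (1)~$\Rightarrow$~(2), let $\sigma$ be surjective and $T$ simple, and put $T' \assign \sigma^{-1}[T]$, which is a theory by structurality of $\logic{L}$. It is non-trivial: if $T' = \Fm \logic{L}$ then $\sigma[\Fm \logic{L}] \subseteq T$, and $\sigma[\Fm \logic{L}] = \Fm \logic{L}$ by surjectivity, contradicting the non-triviality of $T$. To see that $T'$ is maximal non-trivial, take any theory $T''$ with $T' \subsetneq T''$ and any $\varphi \in T'' \setminus T'$. Then $\sigma(\varphi) \notin T$, so $T, \sigma(\varphi) \vdash_\logic{L} \allset$ by maximality of $T$, and the unary simple parametrized local IL yields $\ilset \in \ilfamily_1$ and a tuple $\tuple{\pi}$ with $T \vdash_\logic{L} \ilset(\sigma(\varphi), \tuple{\pi})$. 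Writing $\tuple{\pi} = \sigma(\tuple{\rho})$ (possible since $\sigma$ is surjective), we obtain $\sigma[\ilset(\varphi, \tuple{\rho})] \subseteq T$, hence $\ilset(\varphi, \tuple{\rho}) \subseteq \sigma^{-1}[T] = T' \subseteq T''$. Since also $\varphi \in T''$ and $\varphi, \ilset(\varphi, \tuple{\rho}) \vdash_\logic{L} \allset$ by the first clause of the simple IL, cut for antitheorems gives $T'' \vdash_\logic{L} \allset$, i.e.\ $T'' = \Fm \logic{L}$. Thus every proper extension of $T'$ is trivial, so $T'$ is simple.

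The routine-but-fiddly points — treating a theory such as $T'$ as a member of the family $\ilfamily_\alpha$ by partitioning its variables into distinguished ones and parameters, and the existence of a surjective substitution $\sigma$ with $\sigma(\tuple{p}) = \tuple{\varphi}$ — are exactly those already dealt with in Proposition~\ref{prop: swapping for antitheorems}. The only thing that really needs checking is the short logical core above, which rests on nothing more than the maximality of simple theories and the presence of an antitheorem; I do not expect any genuine obstacle beyond this bookkeeping.
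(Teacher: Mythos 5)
Your proof is correct and follows essentially the same route as the paper's: the same candidate family $\set{\ilset(\tuple{p},\tuple{q})}{\tuple{p}, \ilset(\tuple{p},\tuple{q}) \vdash_{\logic{L}} \allset}$ together with a surjective substitution $\sigma$ with $\sigma(\tuple{p}) = \tuple{\varphi}$ for (2)$\Rightarrow$(1), and the same preimage computation via $\tuple{\pi} = \sigma(\tuple{\rho})$ for (1)$\Rightarrow$(2). The only differences are presentational: you build the families for all arities $\alpha$ directly where the paper treats the unary case and invokes the remark that a unary simple IL extends to an unrestricted one, and you make explicit the non-triviality of $\sigma^{-1}[T]$, which the paper leaves tacit.
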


\begin{proof}
  (1) $\Rightarrow$ (2): if $\sigma^{-1}[T] \nvdash_{\logic{L}} \varphi$ for some simple theory $T$, then $T \nvdash_{\logic{L}} \sigma(\varphi)$, hence $T, \sigma(\varphi) \vdash_{\logic{L}} \allset$ by the simplicity of $T$ and $T \vdash_{\logic{L}} \ilset(\sigma(\varphi), \tuple{\pi})$ for some $\ilset \in \ilfamily_{1}$ and $\tuple{\pi}$ by the simple local IL. By surjectivity $\tuple{\pi} = \sigma(\tuple{\rho})$ for some $\tuple{\rho}$, so $T \vdash_{\logic{L}} \sigma[\ilset(\varphi, \tuple{\rho})]$. It follows that $\sigma^{-1}[T] \vdash_{\logic{L}} \ilset(\varphi, \tuple{\rho})$ and $\sigma^{-1}[T], \varphi \vdash_{\logic{L}} \allset$. Thus $\sigma^{-1}[T]$ is simple.

  (2) $\Rightarrow$ (1): take $\ilfamily_{1} \assign \set{\ilset(p, \tuple{q})}{p, \ilset(p, \tuple{q}) \vdash_{\logic{L}} \allset}$ and asume that $T, \varphi \vdash_{\logic{L}} \allset$ for some simple theory $T$. Consider a surjective substitution such that $\sigma(p) = \varphi$. Then $T \nvdash_{\logic{L}} \varphi = \sigma(p)$, hence $\sigma^{-1}[T] \nvdash_{\logic{L}} p$. But the theory $\sigma^{-1}[T]$ is simple by assumption, hence $\sigma^{-1}[T], p \vdash_{\logic{L}} \allset$ and $\sigma^{-1}[T] \in \ilfamily_{1}$. That is, $\sigma^{-1}[T] = \ilset(p, \tuple{q})$ for some $\ilset \in \ilfamily_{1}$. Then $T \vdash_{\logic{L}} \sigma[\sigma^{-1}[T]] = \ilset(\varphi, \sigma(\tuple{q}))$.
\end{proof}

\begin{proposition}
  The following are equivalent for each coatomic logic $\logic{L}$ with an antitheorem:
\begin{enumerate}
\item $\logic{L}$ enjoys the simple local IL.
\item If $\sigma$ is a substitution and $T$ a simple theory of $\logic{L}$, then so is $\sigma^{-1}[T]$.
\end{enumerate}
\end{proposition}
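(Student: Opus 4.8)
The plan is to follow the proof of the preceding proposition almost verbatim, adjusting for the two differences in the statements: there are no longer any parameter tuples to track (which merely streamlines the bookkeeping), and we may no longer appeal to surjectivity of $\sigma$. Surjectivity was used there in exactly two places: in the direction (1)~$\Rightarrow$~(2) it ensured (implicitly) that $\sigma^{-1}[T]$ is non-trivial and it allowed parameters to be pulled back along $\sigma$; in the direction (2)~$\Rightarrow$~(1) it let one recognise $\sigma^{-1}[T]$ itself as a member of the inconsistency family. I will replace both by arguments that avoid surjectivity.

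For (1)~$\Rightarrow$~(2), fix a substitution $\sigma$ and a simple theory $T$. Then $\sigma^{-1}[T]$ is a theory by structurality of $\logic{L}$, and it is non-trivial: since $\logic{L}$ has an antitheorem, structurality for antitheorems yields an antitheorem $\antitheorem(p)$ in a single variable $p$, and were $\sigma^{-1}[T]$ trivial it would contain $\antitheorem(p)$, so $T$ would contain the antitheorem $\sigma[\antitheorem(p)] = \antitheorem(\sigma(p))$, against the non-triviality of $T$. Maximality of $\sigma^{-1}[T]$ is shown just as in the preceding proposition: if $\varphi \notin \sigma^{-1}[T]$ then $\sigma(\varphi) \notin T$, so $T, \sigma(\varphi) \vdash_{\logic{L}} \allset$ by the simplicity of $T$, so $T \vdash_{\logic{L}} \ilset(\sigma(\varphi))$ for some $\ilset \in \ilfamily_{1}$ by the simple local IL; since $\ilset$ involves only the variable $p$ we have $\ilset(\sigma(\varphi)) = \sigma[\ilset(\varphi)]$, whence $\ilset(\varphi) \subseteq \sigma^{-1}[T]$ and therefore $\sigma^{-1}[T], \varphi \vdash_{\logic{L}} \allset$ because $\varphi, \ilset(\varphi) \vdash_{\logic{L}} \allset$. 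A non-trivial theory with no proper non-trivial extension is simple.

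For (2)~$\Rightarrow$~(1), I would let $\ilfamily_{\alpha}$ consist of all sets $\ilset(\tuple{p})$ of formulas in the variables $\tuple{p}$ with $\tuple{p}, \ilset(\tuple{p}) \vdash_{\logic{L}} \allset$; the clause $\tuple{\varphi}, \ilset(\tuple{\varphi}) \vdash_{\logic{L}} \allset$ of the simple local IL then holds for this family by structurality for antitheorems. For the main clause, suppose $T, \tuple{\varphi} \vdash_{\logic{L}} \allset$ with $T$ simple and let $\sigma$ be the substitution with $\sigma(\tuple{p}) = \tuple{\varphi}$ that sends every other variable to $\varphi_{1}$. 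By (2) the theory $\sigma^{-1}[T]$ is simple; and since $T, \tuple{\varphi} \vdash_{\logic{L}} \allset$ while $T$ is non-trivial, not every $\varphi_{i}$ lies in $T$, so some $p_{i} \notin \sigma^{-1}[T]$ and hence $\sigma^{-1}[T], \tuple{p} \vdash_{\logic{L}} \allset$ by the maximality of $\sigma^{-1}[T]$. Here I reuse the device from the proof of Proposition~\ref{prop: swapping for antitheorems}: applying the substitution $\tau$ which fixes $\tuple{p}$ and sends every other variable to $p_{1}$, structurality for antitheorems gives $\tau[\sigma^{-1}[T]], \tuple{p} \vdash_{\logic{L}} \allset$, and as $\tau[\sigma^{-1}[T]]$ consists of formulas in $\tuple{p}$ only it is a member of $\ilfamily_{\alpha}$. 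The last step is to check that $\sigma[\tau[\sigma^{-1}[T]]] \subseteq T$, which gives $T \vdash_{\logic{L}} \tau[\sigma^{-1}[T]](\tuple{\varphi})$ and completes the argument; this is the same short computation as in Proposition~\ref{prop: swapping for antitheorems}, using that $\sigma$ collapses every variable outside $\tuple{p}$ to $\varphi_{1} = \sigma(p_{1})$.

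I expect (2)~$\Rightarrow$~(1) to be the only real obstacle. The tempting shortcut of passing directly from $T, \tuple{\varphi} \vdash_{\logic{L}} \allset$ to $\sigma^{-1}[T], \tuple{p} \vdash_{\logic{L}} \allset$ is a form of substitution swapping for antitheorems, which by Proposition~\ref{prop: swapping for antitheorems} is equivalent to a (parametrized) local IL and hence not available under the weaker hypotheses here; the way around it is to route that implication through the simplicity, and so maximality, of $\sigma^{-1}[T]$ provided by (2), and then to compress $\sigma^{-1}[T]$ down to formulas in $\tuple{p}$ by means of $\tau$. The remaining points either coincide with those of the preceding proposition with the parameters deleted or, for the non-triviality of $\sigma^{-1}[T]$, follow immediately from the existence of an antitheorem.
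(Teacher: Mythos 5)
Your proof is correct and follows essentially the same route as the paper's: the direction (1)~$\Rightarrow$~(2) is the paper's argument (with the non-triviality of $\sigma^{-1}[T]$, which the paper leaves implicit, spelled out via the antitheorem hypothesis), and the direction (2)~$\Rightarrow$~(1) uses the same family $\set{\ilset(\tuple{p})}{\tuple{p}, \ilset(\tuple{p}) \vdash_{\logic{L}} \allset}$, the same substitutions $\sigma$ and $\tau$, and the same final containment $\sigma[\tau[\sigma^{-1}[T]]] \subseteq T$ borrowed from Proposition~\ref{prop: swapping for antitheorems}.
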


\begin{proof}
  (1) $\Rightarrow$ (2): if $\sigma^{-1}[T] \nvdash_{\logic{L}} \varphi$ for some simple theory $T$, then $T \nvdash_{\logic{L}} \sigma(\varphi)$, hence $T, \sigma(\varphi) \vdash_{\logic{L}} \allset$ by the simplicity of $T$ and $T \vdash_{\logic{L}} \ilset(\sigma(\varphi)) = \sigma[\ilset(\varphi)]$ for some $\ilset \in \ilfamily_{1}$ by the simple local IL. It follows that $\sigma^{-1}[T] \vdash_{\logic{L}} \ilset(\varphi)$ and $\sigma^{-1}[T], \varphi \vdash_{\logic{L}} \allset$. Thus $\sigma^{-1}[T]$ is simple.

  (2) $\Rightarrow$ (1): take $\ilfamily_{1} \assign \set{\ilset(p)}{p, \ilset(p) \vdash_{\logic{L}} \allset}$ and asume that $T, \varphi \vdash_{\logic{L}} \allset$ for some simple theory $T$. Consider the substitutions $\sigma(q) = \varphi$ and $\tau(q) = p$ for each variable $q$. Then $T \nvdash_{\logic{L}} \varphi = \sigma(p)$, so $\sigma^{-1}[T] \nvdash_{\logic{L}} p$, but the theory $\sigma^{-1}[T]$ is simple by assumption, hence $\sigma^{-1}[T], p \vdash_{\logic{L}} \allset$ and $\tau[\sigma^{-1}[T]] \in \ilfamily_{1}$. Moreover, $T \vdash_{\logic{L}} \sigma[\tau[\sigma^{-1}[T]]]$ by the same reasoning as in the proof of Proposition~\ref{prop: swapping for antitheorems}.
\end{proof}

  The strong three-valued Kleene logic $\Kthree$ is an example of a logic with the simple global IL but not even the parametrized local IL. We provide a sketch of the proof for readers familiar with $\Kthree$ (see e.g.~\cite[Ch.\,7]{priest08intro}).

\begin{fact}
  The strong three-valued Kleene logic $\Kthree$ enjoys the simple global IL w.r.t.~$\neg p$ but not the parametrized local IL.
\end{fact}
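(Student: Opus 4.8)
The plan is to reason directly from the defining matrix. Let $\alg{K}_{3}$ be the three-element chain $\{0, \tfrac{1}{2}, 1\}$ with $\wedge$ and $\vee$ read as $\min$ and $\max$ and with $\neg$ fixing $\tfrac{1}{2}$ and swapping $0$ and $1$, so that $\Kthree$ is the logic determined by the matrix $\langle \alg{K}_{3}, \{1\} \rangle$. Since the valuation sending every variable to $\tfrac{1}{2}$ sends every formula to $\tfrac{1}{2}$, the logic $\Kthree$ has no theorems. For a valuation $v \colon \Var \Kthree \to \{0, \tfrac{1}{2}, 1\}$ I write $T_{v} \assign \set{\varphi}{\bar{v}(\varphi) = 1}$ for the associated theory.

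The first and main step is to determine the simple theories of $\Kthree$. By completeness with respect to $\alg{K}_{3}$, every theory $T$ equals the intersection of the theories $T_{v}$ with $T \subseteq T_{v}$, and each $T_{v}$ is non-trivial since no valuation designates $p \wedge \neg p$; hence a maximal non-trivial theory $T$ equals $T_{v}$ for every $v$ designating it. I would then show that any such $v$ must be \emph{classical}, i.e.\ have range contained in $\{0, 1\}$. This rests on the standard monotonicity of the strong Kleene operations with respect to the information order in which $\tfrac{1}{2}$ lies strictly below both $0$ and $1$: this monotonicity gives $T_{v} \subseteq T_{v'}$ whenever $v'$ refines $v$ in that order, so replacing each value $\tfrac{1}{2}$ of $v$ by $1$ produces a strictly larger non-trivial theory (strictly larger because it designates a variable that $v$ sent to $\tfrac{1}{2}$), contradicting the maximality of $T = T_{v}$ unless $v$ was already classical. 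Conversely each $T_{v}$ with $v$ classical is simple, since adjoining to it any non-member $\varphi$ (for which then $\neg \varphi \in T_{v}$) yields the antitheorem $\{\varphi, \neg \varphi\}$ and hence the trivial theory. Thus the simple theories of $\Kthree$ are precisely the sets $T_{v}$ with $v$ classical, and for such $v$ the map $\bar{v}$ has range in $\{0, 1\}$, so exactly one of $\varphi$ and $\neg \varphi$ belongs to $T_{v}$ for each formula $\varphi$.

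Granting this, the simple global IL with respect to $\neg p$ drops out: $\varphi, \neg \varphi \vdash_{\Kthree} \allset$ holds trivially, and for a simple theory $T = T_{v}$ one has $T, \varphi \vdash_{\Kthree} \allset$ iff $v$ does not designate $\varphi$, iff $\bar{v}(\varphi) = 0$ (as $v$ is classical), iff $\neg \varphi \in T_{v}$, iff $T \vdash_{\Kthree} \neg \varphi$; the dual equivalence $T, \neg \varphi \vdash_{\Kthree} \allset \iff T \vdash_{\Kthree} \varphi$ is obtained the same way. Since $\Kthree$ has a conjunction, this unary principle lifts to all finite arities via $\neg(\varphi_{1} \wedge \dots \wedge \varphi_{k})$.

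For the failure of the parametrized local IL I would argue by contradiction, exploiting that $\Kthree$ has no theorems. The formula $p \wedge \neg p$ is an antitheorem in a single variable, so $p \wedge \neg p \vdash_{\Kthree} \allset$. If $\Kthree$ enjoyed the unary parametrized local IL with family $\ilfamily$, there would be $\ilset \in \ilfamily_{1}$ and a tuple $\tuple{\pi}$ with $\allset \vdash_{\Kthree} \ilset(p \wedge \neg p, \tuple{\pi})$; as $\Kthree$ has no theorems, this set is empty, which forces $\ilset(p, \tuple{r})$ itself to be empty and hence $\allset \in \ilfamily_{1}$. But with $\allset \in \ilfamily_{1}$ the right-hand side of the IL equivalence is satisfied vacuously for every premise set, so the equivalence would give $\Gamma, \varphi \vdash_{\Kthree} \allset$ for all $\Gamma$ and $\varphi$, contradicting $p \nvdash_{\Kthree} \allset$ (witnessed by any valuation sending $p$ to $1$). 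Finally, since $\Kthree$ is finitary and has a conjunction, failure of the unary parametrized local IL already precludes the parametrized local IL of every arity. I expect the characterization of simple theories in the second paragraph — in particular the use of Kleene monotonicity to exclude non-classical valuations — to be the only non-routine part; everything else is bookkeeping against the matrix.
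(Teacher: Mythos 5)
Your proof is correct, but it takes a genuinely different route from the paper's in both halves. For the simple global IL the paper stays syntactic: it first proves $\Gamma, \varphi \vdash_{\Kthree} \allset \iff \Gamma \vdash_{\Kthree} \neg \varphi$ for literals, extends this to arbitrary premises via disjunctive normal forms and the proof by cases property, and concludes that every simple theory contains $\varphi$ or $\neg\varphi$ for each formula by induction on complexity; you instead characterize the simple theories semantically as the theories of classical (two-valued) valuations, using monotonicity of the strong Kleene operations in the information order, and negation-completeness of simple theories then drops out. For the failure of the parametrized local IL the paper shows (again via DNF) that every $\ilset \in \ilfamily_{1}$ must satisfy $\ilset(p, \tuple{q}) \vdash_{\Kthree} \neg p$ and contradicts this with $\emptyset \nvdash_{\Kthree} \neg(p \wedge \neg p)$; your argument is shorter and rests on a cleaner observation: since $\Kthree$ has no theorems, applying the IL to the inconsistent single formula $p \wedge \neg p$ in the empty context forces some $\ilset \in \ilfamily_{1}$ to be empty, which trivializes the equivalence -- in effect, any logic with the parametrized local IL in which some formula is inconsistent must have theorems. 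The paper's route buys a purely syntactic argument and the slightly stronger intermediate fact that each inconsistency set entails $\neg p$; yours buys brevity and avoids the DNF machinery. Two cosmetic points: the closing appeal to finitarity and a conjunction is unnecessary, since the $\kappa$-ary parametrized local IL includes the $\alpha = 1$ family by definition, so refuting the unary case already refutes every arity; and the left-hand side of the IL in Definition~\ref{def: il} is $\vdash_{\logic{L}} \Fm \logic{L}$ rather than $\vdash_{\logic{L}} \allset$, though your contradiction goes through on either reading.
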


\begin{proof}
  Recall that a literal is a formula of the form $p$ or $\neg p$ where~$p$ is a variable. If $\Gamma$ is a set of literals and $\varphi$ is a literal, then $\Gamma, \varphi \vdash_{\Kthree} \allset$ if and only if $\Gamma \vdash_{\Kthree} \neg \varphi$. Using the disjunctive normal form and the proof by cases property
\begin{align*}
  \Gamma, \varphi \vdash_{\Kthree} \chi \text{ and } \Gamma, \psi \vdash_{\Kthree} \chi & \iff \Gamma, \varphi \vee \psi \vdash_{\Kthree} \chi,
\end{align*}
  this equivalence extends to arbitrary sets of formulas $\Gamma$. This now implies that each simple theory $T$ contains either $p$ or $\neg p$ for each variable $p$. By induction over the complexity of $\varphi$, it follows that $T$ contains either $\varphi$ or $\neg \varphi$ for each~$\varphi$. The simple global IL follows: $T, \varphi \vdash_{\Kthree} \allset \implies T \nvdash_{\Kthree} \varphi \implies T \vdash_{\Kthree} \neg \varphi$.

  On the other hand, if $\Kthree$ enjoys the parametrized local IL w.r.t.~a family $\ilfamily$, then $p, \ilset(p, \tuple{q}) \vdash_{\Kthree} \allset$ for each $\ilset \in \ilfamily_{1}$ and each $\tuple{q}$. It follows that $\ilset(p, \tuple{q}) \vdash_{\Kthree} \neg p$, e.g.\ by using the disjunctive normal form. But $p \wedge \neg p \vdash_{\Kthree} \allset$ and $\emptyset \nvdash_{\Kthree} \neg (p \wedge \neg p)$.
\end{proof}

\subsection{The law of the excluded middle}
\label{subsec: lem}

  The dual IL can also be restated in the form of a law of the excluded middle (LEM). Here we take the LEM to be a principle which states that if in some context $\Gamma$ we can deduce~$\psi$ from both $\varphi$ and the negation of $\varphi$, then we may deduce $\psi$ from $\Gamma$ itself. Although we take the LEM to be a meta-rule (an~implication between valid rules) rather than an axiom or a rule, we shall see later that the LEM can often be reduced to the validity of some set of axioms or rules.

  The canonical example of the LEM is of course the one for classical~logic:
\begin{prooftree}
  \def\fCenter{\vdash_{\CL}}
  \Axiom$\Gamma, \varphi \fCenter \psi$
  \Axiom$\Gamma, \neg \varphi \fCenter \psi$
  \BinaryInf$\Gamma \fCenter \psi$
\end{prooftree}
  (The horizontal line above represents an implication from the premises to the conclusion.) More generally, the finite-valued {\L}ukasiewicz logics~$\Luk_{n+1}$ enjoy the LEM in the form:
\begin{prooftree}
  \def\fCenter{\vdash_{\Luk_{n+1}}}
  \Axiom$\Gamma, \varphi \fCenter \psi$
  \Axiom$\Gamma, \neg \varphi^{n} \fCenter \psi$
  \BinaryInf$\Gamma \fCenter \psi$
\end{prooftree}
  The~global modal logic~$\Sfive$ enjoys the LEM in the form:
\begin{prooftree}
  \def\fCenter{\vdash_{\Sfive}}
  \Axiom$\Gamma, \varphi \fCenter \psi$
  \Axiom$\Gamma, \neg \Box \varphi \fCenter \psi$
  \BinaryInf$\Gamma \fCenter \psi$
\end{prooftree}

  The above implications are all examples of global LEMs. The local form of the LEM follows the same quantifier pattern as the local form of the dual IL. For example, the infinitary {\L}ukasiewicz logic $\Lukinfty$ enjoys the local LEM
\begin{prooftree}
  \def\fCenter{\vdash_{\Lukinfty}}
  \Axiom$\Gamma, \varphi \fCenter \psi$
  \Axiom$\Gamma, \neg \varphi^{n} \fCenter \psi \text{ for each } n \in \omega$
  \BinaryInf$\Gamma \fCenter \psi$
\end{prooftree}
  Let us now provide a precise definition of the LEM in our sense of the term.

\begin{definition}
  A logic $\logic{L}$ enjoys the \emph{$\kappa$-ary parametrized local LEM}, where $\kappa \leq |{\Var \logic{L}}|^{+}$, if for each ordinal $0 < \alpha < \kappa$ there is a family of sets of formulas $\lemfamily_{\alpha}(\tuple{p}, \tuple{q})$, where $\tuple{p}$ is an $\alpha$-tuple of variables, such that
\begin{align*}
  \tuple{\varphi}, \lemset(\tuple{\varphi}, \tuple{\pi}) \vdash_{\logic{L}} \Fm \logic{L} \text{ for each $\lemset \in \lemfamily_{\alpha}$ and each $\tuple{\varphi}$ and $\tuple{\pi}$},
\end{align*}
  and moreover $\Gamma \vdash_{\logic{L}} \psi$ whenever
\begin{align*}
  \Gamma, \tuple{\varphi} \vdash_{\logic{L}} \psi \text{ and } \Gamma, \lemset(\tuple{\varphi}, \tuple{\pi}) \vdash_{\logic{L}} \psi \text{ for each set } \lemset \in \lemfamily \text{ and each tuple } \tuple{\pi}.
\end{align*}
  It enjoys the \emph{local LEM} if each $\lemfamily_{\alpha}$ can be chosen of the form $\lemfamily_{\alpha}(\tuple{p})$. It enjoys the \emph{global LEM} if each $\lemfamily_{\alpha}$ can be chosen of the form $\{ \lemset(\tuple{p}) \}$.
\end{definition}

  It is no accident that the LEMs above correspond precisely to the dual ILs enjoyed by these logics.

\begin{proposition} \label{prop: dil iff lem}
  A logic enjoys the $\kappa$-ary dual parametrized local IL w.r.t.\ the family $\dilfamily$ if and only if it enjoys the $\kappa$-ary parametrized local LEM w.r.t.~$\dilfamily$.
\end{proposition}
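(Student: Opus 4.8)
The plan is to derive both implications directly from the structural rules (Reflexivity, monotonicity, cut) together with the trivial absorption property that $\Delta \vdash_{\logic{L}} \Fm \logic{L}$ entails $\Delta \vdash_{\logic{L}} \chi$ for every formula $\chi$. Before splitting into cases I would record one auxiliary fact: if $\logic{L}$ enjoys the dual parametrized local IL w.r.t.\ $\dilfamily$, then $\tuple{\varphi}, \dilset(\tuple{\varphi}, \tuple{\pi}) \vdash_{\logic{L}} \Fm \logic{L}$ for every $\dilset \in \dilfamily_{\alpha}$, every $\alpha$-tuple $\tuple{\varphi}$ and every $\tuple{\pi}$ --- this is just the left-to-right half of the dual IL applied to the Reflexivity instance $\{\tuple{\varphi}\} \vdash_{\logic{L}} \tuple{\varphi}$. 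Since this fact is verbatim the first clause in the definition of the parametrized local LEM, the two notions share that clause, and what remains is to show that, relative to one and the same family $\dilfamily$, the biconditional of the dual IL is interderivable with the meta-rule clause of the LEM.

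For ``LEM $\Rightarrow$ dual IL'': from the shared first clause, $\Gamma \vdash_{\logic{L}} \tuple{\varphi}$ gives $\Gamma, \dilset(\tuple{\varphi}, \tuple{\pi}) \vdash_{\logic{L}} \Fm \logic{L}$ for all $\dilset, \tuple{\pi}$ by monotonicity and cut, which is the left-to-right half of the dual IL. For the right-to-left half, suppose $\Gamma, \dilset(\tuple{\varphi}, \tuple{\pi}) \vdash_{\logic{L}} \Fm \logic{L}$ for all $\dilset \in \dilfamily_{\alpha}$ and all $\tuple{\pi}$; for each component $\varphi_{j}$ of $\tuple{\varphi}$ the LEM meta-rule applied with $\psi := \varphi_{j}$ has both hypotheses available --- $\Gamma, \tuple{\varphi} \vdash_{\logic{L}} \varphi_{j}$ by Reflexivity and $\Gamma, \dilset(\tuple{\varphi}, \tuple{\pi}) \vdash_{\logic{L}} \varphi_{j}$ by absorption from the assumption, since $\varphi_{j} \in \Fm \logic{L}$ --- and therefore yields $\Gamma \vdash_{\logic{L}} \varphi_{j}$; as $j$ was arbitrary, $\Gamma \vdash_{\logic{L}} \tuple{\varphi}$.

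The substantive direction is ``dual IL $\Rightarrow$ LEM'', where only the meta-rule clause needs proof. Assume $\Gamma, \tuple{\varphi} \vdash_{\logic{L}} \psi$ and $\Gamma, \dilset(\tuple{\varphi}, \tuple{\rho}) \vdash_{\logic{L}} \psi$ for all $\dilset \in \dilfamily_{\alpha}$ and all $\tuple{\rho}$. By the dual IL for the $1$-tuple $(\psi)$ it is enough to prove $\Gamma, \dilset'(\psi, \tuple{\pi}) \vdash_{\logic{L}} \Fm \logic{L}$ for every $\dilset' \in \dilfamily_{1}$ and every $\tuple{\pi}$, so fix such $\dilset'$ and $\tuple{\pi}$ and set $\Delta := \Gamma \cup \dilset'(\psi, \tuple{\pi})$. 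The auxiliary fact at arity $1$ gives $\psi, \dilset'(\psi, \tuple{\pi}) \vdash_{\logic{L}} \Fm \logic{L}$, hence $\Delta, \psi \vdash_{\logic{L}} \Fm \logic{L}$ because $\dilset'(\psi, \tuple{\pi}) \subseteq \Delta$. Using cut with the weakened hypotheses $\Delta, \tuple{\varphi} \vdash_{\logic{L}} \psi$ and $\Delta, \dilset(\tuple{\varphi}, \tuple{\rho}) \vdash_{\logic{L}} \psi$ we obtain $\Delta, \tuple{\varphi} \vdash_{\logic{L}} \Fm \logic{L}$ and $\Delta, \dilset(\tuple{\varphi}, \tuple{\rho}) \vdash_{\logic{L}} \Fm \logic{L}$ for all $\dilset \in \dilfamily_{\alpha}$ and all $\tuple{\rho}$; the dual IL for the $\alpha$-tuple $\tuple{\varphi}$, with $\Delta$ in the role of the (arbitrary) premise set, then yields $\Delta \vdash_{\logic{L}} \tuple{\varphi}$, and a final cut against $\Delta, \tuple{\varphi} \vdash_{\logic{L}} \Fm \logic{L}$ gives $\Delta \vdash_{\logic{L}} \Fm \logic{L}$, that is, $\Gamma, \dilset'(\psi, \tuple{\pi}) \vdash_{\logic{L}} \Fm \logic{L}$, as needed. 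The one point requiring care is exactly this bookkeeping: the dual IL is invoked at two different arities --- at arity $\alpha$ to convert the ``$\psi$-on-the-left'' hypotheses into ``$\tuple{\varphi}$-on-the-right'', and at arity $1$ to extract $\Gamma \vdash_{\logic{L}} \psi$ --- and one must keep straight that the auxiliary set $\Delta$ is a legitimate premise set in Definition~\ref{def: dil}, which it is since the premise set there is arbitrary. Everything else is routine.
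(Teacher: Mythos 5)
Your proof is correct and follows essentially the same route as the paper's: the direction from the LEM to the dual IL instantiates the LEM with $\psi \assign \varphi_{i}$, and the converse direction runs the same interplay between the dual IL at arity $\alpha$ (for $\tuple{\varphi}$) and at arity $1$ (for $\psi$), with your auxiliary set $\Delta$ just making explicit the bookkeeping the paper carries out with $\dilsettwo(\psi,\tuple{\rho})$ on the left of the turnstile. No gaps.
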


\begin{proof}
  If $\logic{L}$ has the LEM w.r.t.\ $\dilfamily$ and $\Gamma, \dilset(\tuple{\varphi}, \tuple{\pi}) \vdash_{\logic{L}} \Fm \logic{L}$ for each $\dilset \in \dilfamily$ and each $\tuple{\pi}$, then for each $\varphi_{i} \in \tuple{\varphi}$ the LEM for $\psi \assign \varphi_{i}$ yields $\Gamma \vdash_{\logic{L}} \varphi_{i}$. The other implication of the dual IL follows from $\tuple{\varphi}, \dilset(\tuple{\varphi}, \tuple{\pi}) \vdash_{\logic{L}} \Fm \logic{L}$ for each $\dilset \in \dilfamily$.

  Conversely, suppose $\logic{L}$ has the dual IL w.r.t.\ $\dilfamily$ and $\Gamma, \tuple{\varphi} \vdash_{\logic{L}} \psi$ and $\Gamma, \dilset(\tuple{\varphi}, \tuple{\pi}) \vdash_{\logic{L}} \psi$ for each $\dilset \in \dilfamily$ and each $\tuple{\pi}$. We need to show that $\Gamma \vdash_{\logic{L}} \psi$.

  By the dual IL $\Gamma, \dilset(\tuple{\varphi}, \tuple{\pi}) \vdash_{\logic{L}} \psi$ yields that $\Gamma, \dilset(\tuple{\varphi}, \tuple{\pi}), \dilsettwo(\psi, \tuple{\rho}) \vdash_{\logic{L}} \allset$ for each $\dilsettwo \in \dilfamily_{1}$ and each $\tuple{\rho}$. By the dual IL this implies $\Gamma, \dilsettwo(\psi, \tuple{\rho}) \vdash_{\logic{L}} \tuple{\varphi}$. Applying the assumption that $\Gamma, \tuple{\varphi} \vdash_{\logic{L}} \psi$ yields $\Gamma, \dilsettwo(\psi, \tuple{\rho}) \vdash_{\logic{L}} \psi$, but $\psi, \dilsettwo(\psi, \tuple{\rho}) \vdash_{\logic{L}} \allset$, therefore $\Gamma, \dilsettwo(\psi, \tuple{\rho}) \vdash_{\logic{L}} \allset$. But by the dual IL this implies that $\Gamma \vdash_{\logic{L}} \psi$.

  The equivalences for dual local and global ILs and the corresponding forms of the LEM are an immediate consequence of the equivalence between the dual parametrized local IL and the parametrized local LEM.
\end{proof}

\begin{corollary}
  A logic enjoys the $\kappa$-ary dual global (local, parametrized local) IL if and only if it enjoys the $\kappa$-ary global (local, parametrized local) LEM.
\end{corollary}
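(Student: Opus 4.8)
The plan is to derive the corollary directly from Proposition~\ref{prop: dil iff lem}, whose key feature is that the equivalence between the dual parametrized local IL and the parametrized local LEM is witnessed by one and the same family $\dilfamily$. First I would recall the purely definitional point that the dual global (respectively dual local) IL is nothing but the dual parametrized local IL witnessed by a family each of whose members $\dilfamily_{\alpha}$ has the special shape $\{ \dilset(\tuple{p}) \}$ (respectively $\dilfamily_{\alpha}(\tuple{p})$, i.e.\ with no parameter variables $\tuple{q}$); and, symmetrically, that the global (respectively local) LEM is just the parametrized local LEM witnessed by a family of exactly the same shape.

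Given this, the argument is immediate. Suppose $\logic{L}$ enjoys the dual global IL, and fix a witnessing family $\dilfamily$ with each $\dilfamily_{\alpha}$ a singleton $\{ \dilset(\tuple{p}) \}$. By Proposition~\ref{prop: dil iff lem}, $\logic{L}$ enjoys the parametrized local LEM with respect to the very same $\dilfamily$, which, still having the singleton shape, witnesses the global LEM. The converse direction is obtained by reading the biconditional of Proposition~\ref{prop: dil iff lem} the other way, and the local case is identical upon replacing ``singleton'' by ``parameter-free'' throughout. This establishes the three-way correspondence claimed in the corollary.

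I do not anticipate any real obstacle here: the content of the corollary is already contained in the closing sentence of the proof of Proposition~\ref{prop: dil iff lem}, and the only thing that needs to be verified is the bookkeeping observation that the shape of the witnessing family is preserved verbatim when passing between the dual IL and the LEM. Accordingly, in the write-up I would simply state that the claim follows at once from Proposition~\ref{prop: dil iff lem} together with this observation, without further calculation.
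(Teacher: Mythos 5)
Your proposal is correct and coincides with the paper's own route: the paper treats the corollary as an immediate consequence of Proposition~\ref{prop: dil iff lem}, precisely because the same family $\dilfamily$ witnesses both the dual IL and the LEM, so the global (singleton, parameter-free) or local (parameter-free) shape of the witnessing family is preserved when passing between the two principles. The bookkeeping observation you flag is exactly what the closing sentence of the paper's proof of that proposition records.
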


  In particular, for the (parametrized) local form of the LEM there is no need to take note of the arity, since the unary (parametrized) local LEM extends to an unrestricted one by Fact~\ref{fact: dil arity}.

\subsection{Syntactic characterization of semisimplicity}
\label{subsec: syntactic semisimplicity}

  We now show that, assuming the simple IL, semisimplicity is in fact equivalent to the law of the excluded middle (LEM), or equivalently the dual IL. Recall that the simple IL follows both from the ordinary IL and from the dual~IL. We first show that the dual IL then extends from simple to semi\-simple theories.

\begin{proposition}
  If $\logic{L}$ enjoys the $\kappa$-ary simple parametrized local IL w.r.t.\ a family $\ilfamily$, then for each semisimple theory $T$ of $\logic{L}$
\begin{align*}
  T,\ilset(\tuple{\varphi}, \tuple{\pi}) \vdash_{\logic{L}} \allset \text{ for each set } \ilset \in \ilfamily \text{ and each tuple } \tuple{\pi} \implies T \vdash_{\logic{L}} \tuple{\varphi}.
\end{align*}
\end{proposition}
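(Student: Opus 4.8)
The plan is to combine the definition of a semisimple theory with the simple IL, which by hypothesis constrains \emph{every} simple theory of $\logic{L}$. First I would fix a semisimple theory $T$ and, by definition, write it as an intersection $T = \bigcap_{i \in I} S_{i}$ of simple theories $S_{i}$ of $\logic{L}$. If $I = \emptyset$ then $T$ is the trivial theory $\Fm \logic{L}$, which contains every formula, so the implication holds vacuously; I would dispose of this degenerate case at the outset and assume $I \neq \emptyset$ from now on.

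Next I would assume the antecedent of the implication, namely that $T, \ilset(\tuple{\varphi}, \tuple{\pi}) \vdash_{\logic{L}} \allset$ for each $\ilset \in \ilfamily$ and each tuple $\tuple{\pi}$. Fixing an index $i \in I$ and using $T \subseteq S_{i}$ together with monotonicity for antitheorems, I obtain $S_{i}, \ilset(\tuple{\varphi}, \tuple{\pi}) \vdash_{\logic{L}} \allset$ for each $\ilset \in \ilfamily$ and each $\tuple{\pi}$. Since $S_{i}$ is a simple theory and $\logic{L}$ enjoys the simple parametrized local IL w.r.t.\ $\ilfamily$, the second (equivalent) formulation in Definition~\ref{def: sil} applies to $S_{i}$ and yields $S_{i} \vdash_{\logic{L}} \tuple{\varphi}$; as $S_{i}$ is a theory, every component of $\tuple{\varphi}$ belongs to $S_{i}$.

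Finally, since this holds for every $i \in I$, every component of $\tuple{\varphi}$ belongs to $\bigcap_{i \in I} S_{i} = T$, whence $T \vdash_{\logic{L}} \tuple{\varphi}$, as desired. I do not anticipate any real obstacle: the content of the proposition is simply that the hypothesis of the simple IL is monotone while its conclusion is preserved under intersections of theories, so the principle automatically lifts from simple to semisimple theories. The only point requiring a moment of attention is the empty-intersection (trivial theory) case handled at the start.
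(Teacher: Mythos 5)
Your proof is correct and is essentially the paper's argument: the paper merely phrases it contrapositively, choosing a single simple theory $T' \supseteq T$ with $T' \nvdash_{\logic{L}} \tuple{\varphi}$ and applying the simple IL there, whereas you apply the simple IL directly to every simple theory in the intersection. The content is the same in both cases—the antecedent is inherited upward by monotonicity for antitheorems, while the conclusion passes down to the intersection.
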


\begin{proof}
  If $T \nvdash_{\logic{L}} \tuple{\varphi}$ and $T$ is semisimple, then there is some semisimple theory $T' \supseteq T$ such that $T' \nvdash_{\logic{L}} \tuple{\varphi}$. By~the simple IL there are $\ilset \in \ilfamily_{\alpha}$ and $\tuple{\pi}$ such that $T', \ilset(\tuple{\varphi}, \tuple{\pi}) \nvdash_{\logic{L}} \allset$, hence also $T, \ilset(\tuple{\varphi}, \tuple{\pi}) \nvdash_{\logic{L}} \allset$.
\end{proof}

\begin{proposition} \label{prop: semisimple and il implies dil}
  Each semisimple logic with the $\kappa$-ary parametrized local IL w.r.t.~$\ilfamily$ enjoys the $\kappa$-ary classical parametrized local IL w.r.t.~$\ilfamily$.
\end{proposition}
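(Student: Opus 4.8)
The plan is to reduce the statement to the dual $\kappa$-ary parametrized local IL. Since $\logic{L}$ already enjoys the ordinary $\kappa$-ary parametrized local IL w.r.t.\ $\ilfamily$ by hypothesis, it suffices, by the definition of a classical IL (Definition~\ref{def: cil}), to show that $\logic{L}$ enjoys the dual $\kappa$-ary parametrized local IL w.r.t.\ the \emph{same} family $\ilfamily$. Two facts already at hand do essentially all the work. First, the ordinary IL w.r.t.\ $\ilfamily$ entails the simple IL w.r.t.\ $\ilfamily$ (recalled above, the simple IL being the restriction of the ordinary IL to simple theories), and in particular $\tuple{\varphi}, \ilset(\tuple{\varphi}, \tuple{\pi}) \vdash_{\logic{L}} \allset$ for every $\ilset \in \ilfamily$, every $\alpha$-tuple $\tuple{\varphi}$, and every $\tuple{\pi}$. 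Second, the preceding proposition tells us that, under the simple IL, every \emph{semisimple} theory $T$ satisfies the implication ``$T, \ilset(\tuple{\varphi}, \tuple{\pi}) \vdash_{\logic{L}} \allset$ for all $\ilset \in \ilfamily$ and $\tuple{\pi}$ $\Rightarrow$ $T \vdash_{\logic{L}} \tuple{\varphi}$''. As $\logic{L}$ is semisimple, \emph{every} theory of $\logic{L}$ is semisimple, so this implication holds for all theories of $\logic{L}$ without exception.

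I would then verify the two directions of the dual IL. For the easy direction, if $\Gamma \vdash_{\logic{L}} \tuple{\varphi}$, then since $\tuple{\varphi}, \ilset(\tuple{\varphi}, \tuple{\pi}) \vdash_{\logic{L}} \allset$, a cut (using monotonicity and cut for antitheorems) gives $\Gamma, \ilset(\tuple{\varphi}, \tuple{\pi}) \vdash_{\logic{L}} \allset$ for every $\ilset \in \ilfamily$ and $\tuple{\pi}$. For the converse, suppose $\Gamma, \ilset(\tuple{\varphi}, \tuple{\pi}) \vdash_{\logic{L}} \allset$ for every $\ilset \in \ilfamily$ and $\tuple{\pi}$, and let $T$ be the $\logic{L}$-theory generated by $\Gamma$. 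Since $\Gamma \subseteq T$, monotonicity for antitheorems yields $T, \ilset(\tuple{\varphi}, \tuple{\pi}) \vdash_{\logic{L}} \allset$ for all $\ilset \in \ilfamily$ and $\tuple{\pi}$; because $T$ is semisimple, the implication from the previous paragraph gives $T \vdash_{\logic{L}} \tuple{\varphi}$, and hence $\Gamma \vdash_{\logic{L}} \tuple{\varphi}$ by cut (as $\Gamma \vdash_{\logic{L}} T$). This is precisely the dual $\kappa$-ary parametrized local IL w.r.t.\ $\ilfamily$, and together with the hypothesised ordinary IL w.r.t.\ $\ilfamily$ it gives the classical IL w.r.t.\ $\ilfamily$.

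The one place demanding a little attention is bookkeeping rather than mathematics: one must make sure the dual IL obtained is w.r.t.\ the same family $\ilfamily$ appearing in the ordinary IL, so that Definition~\ref{def: cil} applies verbatim; here this is automatic, since both the simple IL and the preceding proposition are phrased in terms of $\ilfamily$. A slightly more modular packaging would be to extract only \emph{some} dual parametrized local IL from the argument above and then appeal to Proposition~\ref{prop: il and dil imply cil}, which upgrades ``ordinary IL w.r.t.\ $\ilfamily$ plus some dual IL'' to ``classical IL w.r.t.\ $\ilfamily$''. I do not anticipate any genuine obstacle: the substantive content has already been isolated in the simple IL and in the preceding proposition, and what remains is the routine passage from an arbitrary premise set to the theory it generates.
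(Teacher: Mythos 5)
Your proof is correct and follows essentially the same route as the paper: the ordinary IL yields the simple IL, the preceding proposition gives the dual IL for semisimple theories, and semisimplicity of all theories (plus the routine passage from $\Gamma$ to the theory it generates) yields the dual, hence classical, IL w.r.t.\ the same family $\ilfamily$. Your write-up merely spells out the two directions and the bookkeeping that the paper's two-line proof leaves implicit.
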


\begin{proof}
  The IL implies the simple IL, which implies the dual IL for semisimple theories. But by assumption each theory of the logic is semisimple.
\end{proof}

  Conversely, the dual parametrized local IL implies semisimplicity.

\begin{proposition} \label{prop: dil implies semisimple}
  Each coatomic logic with the dual para\-metrized local IL is semisimple.
\end{proposition}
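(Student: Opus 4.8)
The plan is to prove directly that every theory $T$ of $\logic{L}$ is the intersection of the simple theories extending it. By the standard argument, this reduces to the following separation property: for every theory $T$ and every formula $\psi \notin T$ there is a simple theory $S \supseteq T$ with $\psi \notin S$. (If $T = \Fm \logic{L}$ this is vacuous: the trivial theory is the empty intersection of simple theories, and hence semisimple.) So fix a non-trivial theory $T$ and a formula $\psi$ with $T \nvdash_{\logic{L}} \psi$.

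First I would extract a suitable set of formulas from the failure of $T \vdash_{\logic{L}} \psi$, using the unary instance of the dual parametrized local IL (available since the $\kappa$-ary dual IL is asserted for every ordinal $0 < \alpha < \kappa$, in particular for $\alpha = 1$). By that IL, $T \vdash_{\logic{L}} \psi$ holds iff $T, \dilset(\psi, \tuple{\pi}) \vdash_{\logic{L}} \Fm \logic{L}$ for every $\dilset \in \dilfamily_{1}$ and every $\tuple{\pi}$; since the left-hand side fails, the right-hand side fails too, so there exist $\dilset \in \dilfamily_{1}$ and a tuple $\tuple{\pi}$ with $T, \dilset(\psi, \tuple{\pi}) \nvdash_{\logic{L}} \Fm \logic{L}$. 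In other words, the theory $U$ generated by $T \cup \dilset(\psi, \tuple{\pi})$ is non-trivial. By coatomicity there is then a simple theory $S \supseteq U$, and of course $T \subseteq U \subseteq S$ and $\dilset(\psi, \tuple{\pi}) \subseteq U \subseteq S$.

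It remains to check $\psi \notin S$, and here I would use the converse (easy) direction of the dual IL. Applying it to the instance $\psi \vdash_{\logic{L}} \psi$ gives $\psi, \dilset(\psi, \tuple{\pi}) \vdash_{\logic{L}} \Fm \logic{L}$. If $\psi$ were in $S$, then $S$ would contain both $\psi$ and $\dilset(\psi, \tuple{\pi})$, and being closed under $\vdash_{\logic{L}}$ it would contain all of $\Fm \logic{L}$, contradicting the fact that the simple theory $S$ is non-trivial. Hence $\psi \notin S$. Since $S$ is a simple theory extending $T$ and omitting the arbitrary $\psi \notin T$, we conclude that $T$ equals the intersection of the simple theories extending it, i.e.\ $T$ is semisimple.

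The argument is short and I do not anticipate a serious obstacle; it is essentially the contrapositive of the dual IL combined with the given coatomicity, which is exactly what supplies the simple theory in which the separation takes place. The one point requiring a little care is the distinction between $\vdash_{\logic{L}} \Fm \logic{L}$ (triviality) and $\vdash_{\logic{L}} \allset$ (antitheoremhood): the dual IL is stated with $\Fm \logic{L}$, and it is precisely triviality — not antitheoremhood — that clashes with $S$ being a maximal non-trivial theory, so the proof goes through without any reference to antitheorems.
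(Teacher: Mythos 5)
Your proof is correct and follows essentially the same route as the paper's: negate the dual IL to obtain a non-trivial extension $T, \dilset(\psi,\tuple{\pi})$, pass to a simple theory by coatomicity, and exclude $\psi$ via the easy direction of the dual IL applied to $\psi \vdash_{\logic{L}} \psi$. Your closing remark distinguishing $\vdash_{\logic{L}} \Fm \logic{L}$ from $\vdash_{\logic{L}} \allset$ is well taken and in fact slightly more scrupulous than the paper's own wording, which writes $\allset$ where triviality is all that is needed.
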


\begin{proof}
  Suppose that $\Gamma \nvdash_{\logic{L}} \varphi$. Then by the dual IL there are $\dilset \in \dilfamily$ and $\tuple{\pi}$ such that $\Gamma, \dilset(\varphi, \tuple{\pi}) \nvdash_{\logic{L}} \allset$. By~coatomicity $\Gamma, \dilset(\varphi, \tuple{\pi})$ extends to a simple $\logic{L}$-theory $\Delta$. But $\varphi, \dilset(\varphi, \tuple{\pi}) \vdash_{\logic{L}} \allset$ by the dual IL applied to $\varphi \vdash_{\logic{L}} \varphi$, therefore $\Delta \nvdash_{\logic{L}} \varphi$. It~follows that each $\logic{L}$-theory $\Gamma$ is semisimple.
\end{proof}

  Combining these simple observations with the equivalence between the LEM, the dual IL, and the classical IL now yields the following theorem, which is the main syntactic result of the first half of this paper.

\begin{theorem} \label{thm: semisimplicity}
  The following are equivalent for each coatomic logic $\logic{L}$ which enjoys the simple (parametrized) local IL w.r.t.\ $\ilfamily$:
\begin{enumerate}
\item $\logic{L}$ is semisimple.
\item $\logic{L}$ enjoys the (parametrized) local LEM.
\item $\logic{L}$ enjoys the dual (parametrized) local IL.
\item $\logic{L}$ enjoys the (parametrized) local LEM w.r.t.\ $\ilfamily$.
\item $\logic{L}$ enjoys the dual (parametrized) local IL w.r.t.\ $\ilfamily$.
\end{enumerate}
\end{theorem}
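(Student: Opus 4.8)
The plan is to establish the cycle $(1) \Rightarrow (5) \Rightarrow (3) \Rightarrow (1)$ and then read off $(3) \Leftrightarrow (2)$ and $(5) \Leftrightarrow (4)$ from the already-available correspondence between dual ILs and LEMs. Throughout, the argument is uniform in whether one works with the local or the parametrized local form, so I would not treat the two cases separately.

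The easy half is bookkeeping. Condition $(5)$ is a special case of condition $(3)$ (it merely names the witnessing family), so $(5) \Rightarrow (3)$ is immediate. The equivalences $(3) \Leftrightarrow (2)$ and $(5) \Leftrightarrow (4)$ are instances of Proposition~\ref{prop: dil iff lem} together with its corollary, applied once with an arbitrary family and once with the fixed family $\ilfamily$. Finally, $(3) \Rightarrow (1)$ is exactly Proposition~\ref{prop: dil implies semisimple}, which applies since $\logic{L}$ is assumed coatomic. So everything reduces to the implication $(1) \Rightarrow (5)$.

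For $(1) \Rightarrow (5)$ I would assume $\logic{L}$ is semisimple, i.e.\ that \emph{every} theory of $\logic{L}$ is semisimple, and check that $\ilfamily$ witnesses the dual (parametrized) local IL. The reverse direction of the dual IL is essentially the proposition stated immediately before the theorem: given $\Gamma$ with $\Gamma, \ilset(\tuple{\varphi}, \tuple{\pi}) \vdash_{\logic{L}} \allset$ for every $\ilset \in \ilfamily$ and every $\tuple{\pi}$, pass to the theory $T$ generated by $\Gamma$ (this preserves the hypothesis by monotonicity for antitheorems), observe that $T$ is semisimple, apply that proposition to obtain $T \vdash_{\logic{L}} \tuple{\varphi}$, and conclude $\Gamma \vdash_{\logic{L}} \tuple{\varphi}$ by cut. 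The forward direction is the only genuinely new step, and it is trivial: from $\Gamma \vdash_{\logic{L}} \tuple{\varphi}$, together with the clause $\tuple{\varphi}, \ilset(\tuple{\varphi}, \tuple{\pi}) \vdash_{\logic{L}} \allset$ that is built into the definition of the simple IL, monotonicity and the cut rule for antitheorems give $\Gamma, \ilset(\tuple{\varphi}, \tuple{\pi}) \vdash_{\logic{L}} \allset$. This closes the cycle and hence proves all five conditions equivalent.

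I do not expect a real obstacle here: the mathematical substance lives entirely in the propositions I would cite, above all the extension of the simple IL from simple to semisimple theories. The one point deserving a word of care is the mismatch between the $\vdash_{\logic{L}} \allset$ appearing in the simple IL and the $\vdash_{\logic{L}} \Fm \logic{L}$ appearing in the official definitions of the dual IL and the LEM; but any logic carrying one of these principles has an antitheorem, and in the presence of an antitheorem the two turnstiles agree on every set of formulas, so the translation between the formulations is harmless. I would simply note this rather than labour it.
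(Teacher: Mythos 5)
Your proposal is correct and follows essentially the same route as the paper: the paper obtains the theorem by combining the proposition extending the simple IL's dual direction to semisimple theories (giving $(1)\Rightarrow(5)$), Proposition~\ref{prop: dil implies semisimple} for $(3)\Rightarrow(1)$ under coatomicity, and Proposition~\ref{prop: dil iff lem} for the LEM/dual-IL equivalences, exactly the cycle and bookkeeping you describe. Your added remarks — the forward direction of the dual IL coming from the clause $\tuple{\varphi}, \ilset(\tuple{\varphi},\tuple{\pi}) \vdash_{\logic{L}} \allset$ built into the simple IL, and the harmlessness of the $\vdash \allset$ versus $\vdash \Fm\logic{L}$ mismatch in the presence of an antitheorem — are accurate and consistent with how the paper itself glosses over these points.
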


  An analogous equivalence for the global LEM and the dual global IL immediately follows.

\begin{theorem}
  The following are equivalent for each coatomic logic $\logic{L}$ which enjoys the $\kappa$-ary simple global local IL w.r.t.\ $\ilfamily$:
\begin{enumerate}
\item $\logic{L}$ is semisimple.
\item $\logic{L}$ enjoys the unary global LEM.
\item $\logic{L}$ enjoys the unary dual global IL.
\item $\logic{L}$ enjoys the $\kappa$-ary global LEM w.r.t.\ $\ilfamily$.
\item $\logic{L}$ enjoys the $\kappa$-ary dual global IL w.r.t.\ $\ilfamily$.
\end{enumerate}
\end{theorem}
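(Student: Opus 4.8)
The plan is to obtain this theorem as the global specialization of Theorem~\ref{thm: semisimplicity}, using the fact that a global inconsistency family is \emph{a fortiori} a parametrized local one, so that essentially no new argument is needed beyond a little bookkeeping about arities.

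First I would observe that the standing hypothesis --- that $\logic{L}$ is coatomic and enjoys the simple global IL w.r.t.\ $\ilfamily$ --- is an instance of the hypothesis of Theorem~\ref{thm: semisimplicity} read in its parametrized local form: a global family $\ilfamily$, in which each $\ilfamily_{\alpha}$ is a singleton $\{ \ilset(\tuple{p}) \}$ with no parameter variables, is in particular a parametrized local IL family, so $\logic{L}$ enjoys the simple parametrized local IL w.r.t.\ $\ilfamily$. Theorem~\ref{thm: semisimplicity} then gives that $\logic{L}$ is semisimple if and only if it enjoys the dual parametrized local IL w.r.t.\ $\ilfamily$. But for a family with no parameters and singleton components the clause ``for each $\ilset \in \ilfamily$ and each $\tuple{\pi}$'' of Definition~\ref{def: dil} is vacuous, so the dual parametrized local IL w.r.t.\ $\ilfamily$ literally coincides with the $\kappa$-ary dual global IL w.r.t.\ $\ilfamily$. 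This establishes the equivalence of (1) and (5).

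Next I would splice in the unary items (2) and (3). Restricting the $\kappa$-ary dual global IL w.r.t.\ $\ilfamily$ to the ordinal $\alpha = 1$ --- legitimate since $1 < \kappa$ --- is precisely the unary dual global IL, so (5) implies (3). Conversely, the unary dual global IL is a particular dual parametrized local IL, so by Proposition~\ref{prop: dil implies semisimple} together with the assumed coatomicity of $\logic{L}$ we obtain that (3) implies (1); hence (1), (3) and (5) are all equivalent. Finally, the equivalence of (4) and (5) follows from Proposition~\ref{prop: dil iff lem} applied with the family $\ilfamily$ (which, being global, turns the dual parametrized local IL and the parametrized local LEM w.r.t.\ $\ilfamily$ into their global versions), and the equivalence of (2) and (3) follows from the corollary immediately following that proposition, taken at arity $2$. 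This closes the chain.

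I do not expect any genuine obstacle here; the statement is essentially a corollary of what has already been proved. The one point requiring attention is the arity bookkeeping peculiar to the global setting: unlike (parametrized) local principles, a $\kappa$-ary global principle does not follow automatically from its unary instance, so one must be explicit that the ``unary'' items (2) and (3) are recovered from the ``$\kappa$-ary'' items (4) and (5) by restriction to $\alpha = 1$, and that the return from (3) to semisimplicity is routed through the weaker parametrized local form of the dual IL (via Proposition~\ref{prop: dil implies semisimple}) rather than through the $\ilfamily$-relative global IL in its full strength.
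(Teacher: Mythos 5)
Your proposal is correct and follows essentially the route the paper intends: the paper offers no separate argument (it states that the global version ``immediately follows'' from Theorem~\ref{thm: semisimplicity} and the surrounding propositions), and your specialization of the parametrized local machinery to a parameter-free singleton family, together with the explicit arity bookkeeping for items (2)--(5) via Propositions~\ref{prop: dil implies semisimple} and~\ref{prop: dil iff lem}, is exactly that intended derivation.
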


   Let us remark here that, while the equivalence between the originally algebraic notion of semisimplicity and the logical notion of the LEM is certainly of interest conceptually, in practical applications it is crucial that we know the specific form of the LEM. It would be quite difficult to apply this theorem to describe, say, the semisimple extensions of a given logic $\logic{L}$, as these could enjoy the LEM with respect to widely varying families~$\ilfamily$. However, as we shall see, the theorem can be very helpful when describing the semisimple \emph{axiomatic} extensions of a logic~$\logic{L}$ which enjoys the IL w.r.t.\ $\ilfamily$, since each of them inherits this specific IL.

\label{subsec: semantic semisimplicity}

  It will not have escaped the reader looking to apply the previous theorem to establish the semisimplicity of some class of algebras that there is a catch to it: we have only defined a syntactic notion of semi\-simplicity (each theory is an intersection of simple theories), whereas the algebraist's notion of semisimplicity is semantic and relates to algebras. Let~us therefore show how to get from the syntactic semisimplicity of an (at~least weakly) algebraizable logic to the semisimplicity of the corresponding class of algebras. This will require somewhat stronger assumptions.

% talk about extra assumptions

% simple local IL?

  We call a logic $\logic{L}$ \emph{semantically semisimple} if each $\logic{L}$-filter on each algebra~$\alg{A}$ is an intersection of simple (i.e.\ maximal non-trivial) $\logic{L}$-filters on $\alg{A}$. We call it \emph{semantically coatomic} if the lattice of $\logic{L}$-filters on each algebra is coatomic, i.e.\ if each $\logic{L}$-filter on each algebra~$\alg{A}$ is included in some simple $\logic{L}$-filter on $\alg{A}$. Clearly each semantically semisimple or coatomic logic is also syntactically so.

\begin{fact}
  The following implications hold:
\begin{enumerate}
\item Each finitary logic with an antitheorem is semantically compact.
\item Each semantically compact logic is semantically coatomic.
\end{enumerate}
\end{fact}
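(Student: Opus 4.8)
The plan is to prove the two implications separately, reading ``semantically compact'' as the semantic counterpart of $\omega$-compactness: the trivial filter $A$ is a compact element of the lattice of $\logic{L}$-filters on each algebra $\alg{A}$, i.e.\ $\Fg_{\logic{L}}^{\alg{A}} X = A$ implies $\Fg_{\logic{L}}^{\alg{A}} X' = A$ for some finite $X' \subseteq X$. The one non-routine ingredient will be the use of semantic compactness to control unions of chains of filters in part~(2).

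For~(1), I would first upgrade the antitheorem to a finite one in a single variable. If $\Gamma \vdash_{\logic{L}} \allset$ and $p$ does not occur in $\Gamma$, then $\Gamma \vdash_{\logic{L}} p$ by the Fact relating antitheorems and derivability, so by finitarity $\Gamma' \vdash_{\logic{L}} p$ for some finite $\Gamma' \subseteq \Gamma$, and then $\Gamma' \vdash_{\logic{L}} \allset$ since $p$ still does not occur in $\Gamma'$. Applying the substitution that collapses every variable to $p$ and using structurality for antitheorems, we obtain a finite antitheorem $\antitheorem(p) = \{ \pi_{1}(p), \dots, \pi_{k}(p) \}$. Its defining property, read off in the algebra $\alg{A}$, is that every $\logic{L}$-filter on $\alg{A}$ which contains $\{ \pi_{1}(a), \dots, \pi_{k}(a) \}$ for some $a \in A$ equals $A$. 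Now fix $X \subseteq A$ with $\Fg_{\logic{L}}^{\alg{A}} X = A$ and choose any $a \in A$. Since $\logic{L}$ is finitary, so is the closure operator $\Fg_{\logic{L}}^{\alg{A}}$ on every algebra (a standard fact, see~\cite{font16}); hence from $\pi_{i}(a) \in A = \Fg_{\logic{L}}^{\alg{A}} X$ we get a finite $X_{i} \subseteq X$ with $\pi_{i}(a) \in \Fg_{\logic{L}}^{\alg{A}} X_{i}$ for $i = 1, \dots, k$. Then $X' \assign X_{1} \cup \dots \cup X_{k}$ is finite and $\{ \pi_{1}(a), \dots, \pi_{k}(a) \} \subseteq \Fg_{\logic{L}}^{\alg{A}} X'$, so $\Fg_{\logic{L}}^{\alg{A}} X' = A$ by the property of the antitheorem, as required.

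For~(2), I would run the usual maximality argument. Let $F$ be a non-trivial $\logic{L}$-filter on an algebra $\alg{A}$ (there is nothing to prove for the trivial one), and let $\mathcal{P}$ be the poset of non-trivial $\logic{L}$-filters on $\alg{A}$ including $F$, ordered by inclusion; it contains $F$, hence is nonempty. Given a nonempty chain $\mathcal{C} \subseteq \mathcal{P}$, set $U \assign \bigcup \mathcal{C}$. If $\Fg_{\logic{L}}^{\alg{A}} U = A$ then by semantic compactness $\Fg_{\logic{L}}^{\alg{A}} U' = A$ for some finite $U' \subseteq U$; but a finite subset of the union of a chain lies in a single member $F_{0} \in \mathcal{C}$, so $A = \Fg_{\logic{L}}^{\alg{A}} U' \subseteq F_{0}$, contradicting that $F_{0}$ is non-trivial. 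Hence $\Fg_{\logic{L}}^{\alg{A}} U$ is a non-trivial $\logic{L}$-filter; it includes $F$ and is an upper bound of $\mathcal{C}$, so it lies in $\mathcal{P}$ (the empty chain being bounded by $F$). By Zorn's lemma $\mathcal{P}$ has a maximal element $M$. Any non-trivial $\logic{L}$-filter including $M$ also includes $F$, hence lies in $\mathcal{P}$, so maximality of $M$ in $\mathcal{P}$ forces $M$ to be maximal among non-trivial $\logic{L}$-filters, i.e.\ simple; and $F \subseteq M$. Since $\alg{A}$ and $F$ were arbitrary, $\logic{L}$ is semantically coatomic.

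The only genuine obstacle is the step in~(2) where the union of a chain of non-trivial $\logic{L}$-filters need not itself be an $\logic{L}$-filter for a non-finitary logic, so one must pass to the generated filter and invoke semantic compactness to see that it is still non-trivial. Everything else is routine bookkeeping: the reduction to a finite one-variable antitheorem in~(1) and the transfer of finitarity from $\logic{L}$ to $\Fg_{\logic{L}}^{\alg{A}}$.
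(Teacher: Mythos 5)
Your proof is correct and takes essentially the same route as the paper's (much terser) proof: for (1) the paper likewise combines the algebraicity of $\logic{L}$-filter generation for finitary logics with a finite antitheorem witnessing that the trivial filter is finitely generated, and for (2) it simply cites the general lattice-theoretic fact whose Zorn-style chain argument you spell out. The only cosmetic caveat is that extracting a finite one-variable antitheorem presupposes a variable not occurring in the given antitheorem, which may require a preliminary renaming substitution (harmless, by structurality for antitheorems).
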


\begin{proof}
  Item (1) holds because $\logic{L}$-filter generation yields an algebraic closure operator on each algebra if $\logic{L}$ is finitary, and the existence of a finite antitheorem ensures that the trivial filter on each algebra is finitely generated. Item (2) is a general lattice theoretic fact.
\end{proof}

  The semantic counterparts of the syntactic principles defined in the previous subsections are obtained by replacing the quantification over $\Gamma$, $\tuple{\varphi}$, $\tuple{\pi}$ and $\psi$ with quantification over all algebras $\alg{A}$ (instead of the algebra $\FmAlg \logic{L}$) and all subsets $X \subseteq \alg{A}$, all tuples $\tuple{a}, \tuple{b} \in \alg{A}$, and all elements $c \in \alg{A}$. The \emph{semantic $\kappa$-ary dual parametrized local IL}, for example, states that for each algebra $\alg{A}$, each $X \subseteq \alg{A}$, and each $\alpha$-tuple $\tuple{a} \in \alg{A}$ for $0 < \alpha < \kappa$
\begin{align*}
  X \vdash_{\logic{L}}^{\alg{A}} \tuple{a} & \iff X, \dilset(\tuple{a}, \tuple{b}) \vdash_{\logic{L}}^{\alg{A}} A \text{ for each set $\dilset \in \dilfamily_{\alpha}$ and each tuple $\tuple{b}$}.
\end{align*}
  Recall that $X \vdash_{\logic{L}}^{\alg{A}} \tuple{a}$ is our notation for the claim that each element of $\tuple{a}$ belongs to the $\logic{L}$-filter of $\alg{A}$ generated by $X$.

\begin{proposition} \label{prop: semantic dil implies semantically semisimple}
  Each semantically coatomic logic which enjoys the semantic simple dual para\-metrized local IL is semantically semisimple.
\end{proposition}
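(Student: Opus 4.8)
The plan is to transcribe the proof of Proposition~\ref{prop: dil implies semisimple} into the semantic setting, replacing theories by $\logic{L}$-filters on an arbitrary algebra $\alg{A}$ and $\vdash_{\logic{L}}$ by $\vdash_{\logic{L}}^{\alg{A}}$. Fix an algebra $\alg{A}$ and an $\logic{L}$-filter $F$ on $\alg{A}$; the goal is to show that $F$ is the intersection of the simple $\logic{L}$-filters on $\alg{A}$ which contain it. If $F$ is trivial there is nothing to prove, so assume $F$ is non-trivial; then by semantic coatomicity this intersection is taken over a non-empty family, and the inclusion of $F$ in it is immediate. It therefore suffices to produce, for each $a \in A \setminus F$, a simple $\logic{L}$-filter $G$ on $\alg{A}$ with $F \subseteq G$ and $a \notin G$.

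So I would fix such an $a$, i.e.\ $F \nvdash_{\logic{L}}^{\alg{A}} a$, and apply the non-trivial direction of the semantic dual parametrized local IL with $\alpha = 1$: this yields a set $\dilset \in \dilfamily_{1}$ and a tuple $\tuple{b}$ of elements of $\alg{A}$ for which $F, \dilset(a, \tuple{b}) \nvdash_{\logic{L}}^{\alg{A}} A$, that is, the $\logic{L}$-filter $H \assign \Fg_{\logic{L}}^{\alg{A}}(F \cup \dilset(a, \tuple{b}))$ is non-trivial. By semantic coatomicity, $H \subseteq G$ for some simple $\logic{L}$-filter $G$ on $\alg{A}$, whence $F \subseteq H \subseteq G$. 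Finally, the left-to-right half of the dual IL, applied to the trivially valid $a \vdash_{\logic{L}}^{\alg{A}} a$, gives $a, \dilset(a, \tuple{b}) \vdash_{\logic{L}}^{\alg{A}} A$; since $\dilset(a, \tuple{b}) \subseteq G$, having $a \in G$ would force $G = A$, contradicting the simplicity (hence non-triviality) of $G$. Hence $a \notin G$, as required.

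The transcription is essentially mechanical once the semantic counterparts of the ingredients are available, so I expect no serious obstacle; the one subtle point is that semantic coatomicity must be invoked for the non-trivial filter $H$ and not for $F$ directly — a simple $\logic{L}$-filter extending $F$ need not omit $a$, and supplying one that does is exactly the job of the dual IL. (Were the hypothesis read as furnishing only the simple form of the dual IL, restricted to simple filters, the natural move would be to replace $F$ by the intersection $F^{\star}$ of all simple $\logic{L}$-filters on $\alg{A}$ containing it, which is semisimple, and to extend the simple dual IL to $F^{\star}$ just as the simple IL is extended from simple to semisimple theories earlier in this section; but then the substantive step becomes showing $F = F^{\star}$, so it is the dual IL for arbitrary filters that ultimately drives the argument.)
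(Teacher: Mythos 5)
Your proof is correct and takes essentially the same route as the paper, whose proof is literally the one-line remark that the claim is the straightforward semantic analogue of Proposition~\ref{prop: dil implies semisimple}: you apply the semantic dual IL to $F \nvdash_{\logic{L}}^{\alg{A}} a$ to get $\dilset$ and $\tuple{b}$ with $F, \dilset(a,\tuple{b}) \nvdash_{\logic{L}}^{\alg{A}} A$, use semantic coatomicity on that non-trivial filter to reach a simple filter $G$, and exclude $a$ from $G$ via $a, \dilset(a,\tuple{b}) \vdash_{\logic{L}}^{\alg{A}} A$. Your closing remark is also on point: the argument really does use the semantic dual IL for arbitrary filters (which is what is available where the proposition is invoked, via the semantic classical IL in Theorem~\ref{thm: semantic semisimplicity}), not merely its restriction to simple filters.
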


\begin{proof}
  This is a straightforward semantic analogue of Proposition~\ref{prop: dil implies semisimple}.
\end{proof}

  Given the above implications, in order to prove that a finitary semisimple logic with an antitheorem is semantically semisimple, it suffices to show that the syntactic dual parametrized local IL extends to a semantic one. If~the~logic in question is not finitary but it is at least compact, then one moreover has to show that syntactic compactness extends to semantic compactness. 

  Such theorems, which extend a syntactic property to a corresponding semantic one, are called \emph{transfer theorems} in abstract algebraic logic. The proof of the transfer theorem for the classical local IL will involve several technical lemmas, which are in fact nothing but straightforward modifications of existing proofs found e.g.\ in~\cite{font16}. The reader unfamiliar with the techniques involved in such proofs is advised to skip the details and jump to Theorem~\ref{thm: semantic semisimplicity} and Fact~\ref{fact: logic to algebra semisimplicity}.

  In the following pair of lemmas, recall that the existence of a protoimplication set is equivalent to the parametrized local DDT (Fact~\ref{fact: protoimplication}). Moreover, each logic $\logic{L}$ such that $|{\Fm \logic{L}}| = |{\Var \logic{L}}|$ is $\lambda$-ary for $\lambda = |{\Var \logic{L}}|^{+}$.

\begin{lemma} \label{lemma: filter generation}
  Let $\logic{L}$ be $\lambda$-ary logic for $\lambda = | {\Var \logic{L}} |^{+}$ with a proto\-implication set. Then $X \vdash_{\logic{L}}^{\alg{A}} a$ if and only if there are $\Gamma$, $\varphi$, and a homomorphism $h\colon \FmAlg \logic{L} \to \alg{A}$ such that $h[\Gamma] \subseteq X \cup \Fg_{\logic{L}}^{\alg{A}} \emptyset$, $h(\varphi) = a$, and~${\Gamma \vdash_{\logic{L}} \varphi}$.
\end{lemma}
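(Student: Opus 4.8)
The right-to-left implication is the easy half and uses neither $\lambda$-arity nor the protoimplication set: if $h[\Gamma] \subseteq X \cup \Fg_{\logic{L}}^{\alg{A}} \emptyset$, $h(\varphi) = a$, and $\Gamma \vdash_{\logic{L}} \varphi$, then $\pair{\alg{A}}{\Fg_{\logic{L}}^{\alg{A}} X}$ is a model of $\logic{L}$ and $X \cup \Fg_{\logic{L}}^{\alg{A}} \emptyset \subseteq \Fg_{\logic{L}}^{\alg{A}} X$, so $h$ designates $\Gamma$, hence also $\varphi$, in this model; thus $a = h(\varphi) \in \Fg_{\logic{L}}^{\alg{A}} X$, i.e.\ $X \vdash_{\logic{L}}^{\alg{A}} a$.

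For the converse it suffices to show that the set $Y$ of all $a \in A$ admitting a triple $\Gamma$, $\varphi$, $h$ as in the statement is an $\logic{L}$-filter on $\alg{A}$. Indeed, the previous paragraph gives $Y \subseteq \Fg_{\logic{L}}^{\alg{A}} X$, while taking $\Gamma = \{ p \}$ and $\varphi = p$ gives $X \cup \Fg_{\logic{L}}^{\alg{A}} \emptyset \subseteq Y$; so once $Y$ is known to be a filter we get $\Fg_{\logic{L}}^{\alg{A}} X \subseteq Y$ and hence $\Fg_{\logic{L}}^{\alg{A}} X = Y$, which is precisely the claimed equivalence. To prove $Y$ is a filter, fix a homomorphism $g \colon \FmAlg \logic{L} \to \alg{A}$ and suppose $\Phi \vdash_{\logic{L}} \psi$ with $g[\Phi] \subseteq Y$; we must show $g(\psi) \in Y$. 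Using $\lambda$-arity we may shrink $\Phi$ so that $|\Phi| \leq |{\Var \logic{L}}|$, and for each $\phi \in \Phi$ we fix a witnessing triple $\Gamma_{\phi}, \chi_{\phi}, h_{\phi}$ with $\Gamma_{\phi} \vdash_{\logic{L}} \chi_{\phi}$, $h_{\phi}[\Gamma_{\phi}] \subseteq X \cup \Fg_{\logic{L}}^{\alg{A}} \emptyset$, $h_{\phi}(\chi_{\phi}) = g(\phi)$, and (again shrinking via $\lambda$-arity) $|\Gamma_{\phi}| \leq |{\Var \logic{L}}|$. After a routine relettering of variables, harmless by structurality, we may assume that the variable sets of the various $\Gamma_{\phi} \cup \{ \chi_{\phi} \}$ are pairwise disjoint and disjoint from the variables of $\Phi \cup \{ \psi \}$; there is enough room since everything in sight has cardinality at most $|{\Var \logic{L}}|$.

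The heart of the argument uses the protoimplication set $\protoset(p, q)$, which satisfies $\emptyset \vdash_{\logic{L}} \protoset(p, p)$ and $p, \protoset(p, q) \vdash_{\logic{L}} q$. By structurality and cut, $\Gamma_{\phi} \vdash_{\logic{L}} \chi_{\phi}$ together with detachment yields $\Gamma_{\phi}, \protoset(\chi_{\phi}, \phi) \vdash_{\logic{L}} \phi$ for each $\phi \in \Phi$. Setting $\Gamma := \bigcup_{\phi \in \Phi} \bigl( \Gamma_{\phi} \cup \protoset(\chi_{\phi}, \phi) \bigr)$ we obtain $\Gamma \vdash_{\logic{L}} \Phi$, hence $\Gamma \vdash_{\logic{L}} \psi$ by cutting with $\Phi \vdash_{\logic{L}} \psi$. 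Let $h \colon \FmAlg \logic{L} \to \alg{A}$ agree with $g$ on the variables of $\Phi \cup \{ \psi \}$, with $h_{\phi}$ on the variables of $\Gamma_{\phi} \cup \{ \chi_{\phi} \}$, and arbitrarily elsewhere; this is well defined by the disjointness just arranged, and $h(\psi) = g(\psi)$. It remains to see that $h[\Gamma] \subseteq X \cup \Fg_{\logic{L}}^{\alg{A}} \emptyset$. For $h[\Gamma_{\phi}] = h_{\phi}[\Gamma_{\phi}]$ this holds by the choice of $h_{\phi}$. For $h[\protoset(\chi_{\phi}, \phi)]$, note that $h(\chi_{\phi}) = h_{\phi}(\chi_{\phi}) = g(\phi) = h(\phi)$, so $h[\protoset(\chi_{\phi}, \phi)] = k[\protoset(p, p)]$ for the homomorphism $k$ with $k(p) = g(\phi)$; since $\emptyset \vdash_{\logic{L}} \protoset(p, p)$, this is a set of images of theorems and therefore lies in $\Fg_{\logic{L}}^{\alg{A}} \emptyset$. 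Hence $g(\psi) = h(\psi) \in Y$, so $g^{-1}[Y]$ is an $\logic{L}$-theory and $Y$ is an $\logic{L}$-filter, completing the proof.

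I expect the combination step in the last paragraph to be the main obstacle: one cannot literally substitute $\chi_{\phi}$ for $\phi$ inside a derivation of $\psi$ from $\Phi$, since $\phi$ is a formula rather than a variable, and the point of the protoimplication set is exactly to glue $\chi_{\phi}$ to $\phi$ through the detachable set $\protoset(\chi_{\phi}, \phi)$, whose $h$-image is controllable because $h$ identifies $\chi_{\phi}$ with $\phi$. The only other subtlety, arranging a single $h$ that simultaneously extends all the $h_{\phi}$ and agrees with $g$ where needed, is routine bookkeeping underwritten by the $\lambda$-arity hypothesis.
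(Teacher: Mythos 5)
Your proposal is essentially the argument the paper intends: the paper's own proof consists of a citation of Font's Proposition~6.12 for the finitary case together with the remark that the only new point in the $\lambda$-ary setting is renaming variables so that distinct witnessing rules use disjoint variables, and your relettering-plus-protoimplication gluing (with $h[\protoset(\chi_{\phi},\phi)] \subseteq \Fg_{\logic{L}}^{\alg{A}}\emptyset$ because $h$ identifies $\chi_{\phi}$ with $\phi$ and $\emptyset \vdash_{\logic{L}} \protoset(p,p)$) is exactly that standard proof spelled out. One step is justified too quickly, though: when you claim there is ``enough room'' to reletter the witnesses $\Gamma_{\phi} \cup \{\chi_{\phi}\}$ so that their variables are pairwise disjoint \emph{and} disjoint from $\Var(\Phi \cup \{\psi\})$, note that the latter set is tied to the given homomorphism $g$ and cannot be moved by your relettering, and it may already exhaust all of $\Var \logic{L}$ (e.g.\ when $|\Phi| = |{\Var \logic{L}}|$), in which case no fresh variables remain; a bound on the cardinality of the material to be relettered does not by itself create room. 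The repair is immediate and in the spirit of what you already do: using that $\Var \logic{L}$ is infinite, split $\Var \logic{L}$ into two disjoint halves $V_{1}, V_{2}$ of full cardinality, first replace the applied rule by $\sigma[\Phi] \vdash_{\logic{L}} \sigma(\psi)$ for an injective relettering $\sigma$ into $V_{1}$ and replace $g$ by any $g'$ with $g'(\sigma(p)) = g(p)$ (so $g'[\sigma[\Phi]] = g[\Phi] \subseteq Y$ and $g'(\sigma(\psi)) = g(\psi)$), and then reletter the witnesses into pairwise disjoint blocks of $V_{2}$; with this amendment your construction of $h$ and the rest of the argument go through verbatim.
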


\begin{proof}
  This is a standard lemma about filter generation for finitary logics (see~\cite[Proposition~6.12]{font16}). The only subtlety involved in generalizing the proof to $\lambda$-ary logics is that given a set of rules $\set{\Gamma_{i} \vdash \varphi_{i}}{i \in I}$ such that $| I | < \lambda$ for each $i \in I$, we need to be able to rename the variables in these rules so that distinct rules use disjoint sets of variables. The assumption that $\lambda = | \Var \logic{L}|^{+}$ takes care of this problem.
\end{proof}

\begin{lemma} \label{lemma: trivial filter generation}
  Let $\logic{L}$ be $\lambda$-ary logic for $\lambda = | {\Var \logic{L}} |^{+}$ which has an antitheorem and a protoimplication set. Then $X \vdash_{\logic{L}}^{\alg{A}} A$ if and only if there is~$\Gamma$ and a homomorphism $h\colon \FmAlg \logic{L} \to \alg{A}$ such that $h[\Gamma] \subseteq X \cup \Fg_{\logic{L}}^{\alg{A}} \emptyset$ and~${\Gamma \vdash_{\logic{L}} \allset}$.
\end{lemma}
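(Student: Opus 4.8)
I would prove them separately.

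\textbf{The easy direction ($\Leftarrow$).} Suppose $\Gamma$ and $h$ are as in the statement and put $F \assign \Fg_{\logic{L}}^{\alg{A}} X$. Since $\Fg_{\logic{L}}^{\alg{A}} \emptyset \subseteq F$, we get $h[\Gamma] \subseteq X \cup \Fg_{\logic{L}}^{\alg{A}} \emptyset \subseteq F$, so $h$ jointly designates $\Gamma$ in the matrix $\pair{\alg{A}}{F}$, which is a model of $\logic{L}$. But $\Gamma \vdash_{\logic{L}} \allset$ means $\Gamma$ is an antitheorem, hence cannot be jointly designated in any non-trivial model; therefore $\pair{\alg{A}}{F}$ is trivial, i.e.\ $F = A$, which is precisely $X \vdash_{\logic{L}}^{\alg{A}} A$.

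\textbf{The converse: reductions.} By Lemma~\ref{lemma: filter generation} together with the $\lambda$-arity of $\logic{L}$, the operator $\Fg_{\logic{L}}^{\alg{A}}$ is $\lambda$-algebraic: given $a \in \Fg_{\logic{L}}^{\alg{A}} X$, shrink the premise set witnessing this via $\lambda$-arity to cardinality $< \lambda$ and read off, using the easy direction of Lemma~\ref{lemma: filter generation}, a subset $X_{0} \subseteq X$ with $|X_{0}| < \lambda$ and $a \in \Fg_{\logic{L}}^{\alg{A}} X_{0}$. Since $\lambda = |{\Var \logic{L}}|^{+}$ is regular and $\logic{L}$ has an antitheorem of cardinality $< \lambda$ — one exists by structurality for antitheorems (identify all its variables) and $\lambda$-arity (drop all but $< \lambda$ of its formulas, which is harmless by the Fact that $\Delta \vdash_{\logic{L}} \allset$ iff $\Delta \vdash_{\logic{L}} q$ for a variable $q \notin \Delta$) — the trivial filter is $\lambda$-compact: if $\Fg_{\logic{L}}^{\alg{A}} X = A$, then imaging the antitheorem into $\alg{A}$ by some homomorphism and covering each of its finitely-many-in-spirit values by a $< \lambda$-generated filter shows $\Fg_{\logic{L}}^{\alg{A}} X' = A$ for some $X' \subseteq X$ with $|X'| < \lambda$, whereupon the easy direction applies. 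Proving the statement for such $X'$ suffices, so we may assume $|X| < \lambda$, i.e.\ $|X| \leq |{\Var \logic{L}}|$. Pick then pairwise distinct variables $p_{a}$ for $a \in X$ and a homomorphism $h\colon \FmAlg \logic{L} \to \alg{A}$ with $h(p_{a}) = a$ for all $a \in X$, and set $\Gamma \assign h^{-1}[X \cup \Fg_{\logic{L}}^{\alg{A}} \emptyset]$; then $h[\Gamma] \subseteq X \cup \Fg_{\logic{L}}^{\alg{A}} \emptyset$ holds by construction, and it remains to show $\Gamma \vdash_{\logic{L}} \allset$.

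\textbf{The heart of the matter, and the main obstacle.} Fix an antitheorem $\antitheorem$ of $\logic{L}$, its variables chosen disjoint from $\set{p_{a}}{a \in X}$. Each $\antiformula \in \antitheorem$ has $h(\antiformula) \in A = \Fg_{\logic{L}}^{\alg{A}} X = \Fg_{\logic{L}}^{\alg{A}} \set{h(p_{a})}{a \in X}$, so Lemma~\ref{lemma: filter generation} provides a set $\Gamma_{\antiformula}$, a formula $\chi_{\antiformula}$ and a homomorphism $h_{\antiformula}$ with $h_{\antiformula}[\Gamma_{\antiformula}] \subseteq X \cup \Fg_{\logic{L}}^{\alg{A}} \emptyset$, $\Gamma_{\antiformula} \vdash_{\logic{L}} \chi_{\antiformula}$ and $h_{\antiformula}(\chi_{\antiformula}) = h(\antiformula)$. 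Renaming the variables of the sets $\Gamma_{\antiformula}$ so that they become pairwise disjoint and disjoint from $\set{p_{a}}{a \in X}$ and from the variables of $\antitheorem$ — this is exactly where the hypothesis $\lambda = |{\Var \logic{L}}|^{+}$ is used, to fit the $< \lambda$ many disjoint copies inside $\Var \logic{L}$ — and then extending $h$ so as to agree with each $h_{\antiformula}$ on the corresponding renamed variables, we obtain $\Gamma_{\antiformula} \subseteq \Gamma$, hence $\Gamma \vdash_{\logic{L}} \chi_{\antiformula}$, with $h(\chi_{\antiformula}) = h(\antiformula)$ for every $\antiformula \in \antitheorem$. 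The crux is that this does not immediately finish the argument: since only the values of the single homomorphism $h$ are controlled, $\set{\chi_{\antiformula}}{\antiformula \in \antitheorem}$ need not itself be an antitheorem, and the partial derivations must be glued together more carefully — imitating the assembly step in the proof of Lemma~\ref{lemma: filter generation} — so that $\Gamma$ is shown to derive some substitution instance $\sigma[\antitheorem]$ of $\antitheorem$; since $\sigma[\antitheorem] \vdash_{\logic{L}} \allset$ by structurality for antitheorems, cut then yields $\Gamma \vdash_{\logic{L}} \allset$. This gluing, which is nothing but the $\lambda$-ary variable bookkeeping already present in Lemma~\ref{lemma: filter generation} carried out with an antitheorem as target rather than a single formula, is the main obstacle.
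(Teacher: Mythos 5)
There is a genuine gap, and it sits exactly where you flag ``the main obstacle'': you set the stage correctly (the easy direction, the application of Lemma~\ref{lemma: filter generation} to each $\antiformula \in \antitheorem$, the disjoint renaming of the sets $\Gamma_{\antiformula}$ and the assembly of a single homomorphism $h$ with $\Gamma \vdash_{\logic{L}} \chi_{\antiformula}$ and $h(\chi_{\antiformula}) = h(\antiformula)$), but you never carry out the gluing, and the route you gesture at would not work. The formulas $\chi_{\antiformula}$ are produced independently by Lemma~\ref{lemma: filter generation} and bear no syntactic relation to the $\antiformula$; no amount of ``$\lambda$-ary variable bookkeeping'' will exhibit them as a uniform substitution instance $\sigma[\antitheorem]$, and mere agreement of $h$-values yields no derivability. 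This is not the same situation as in Lemma~\ref{lemma: filter generation}, where the target formula is built together with its derivation; here the target $\antitheorem$ is fixed in advance. Tellingly, your argument never uses the protoimplication set $\protoset$, which is in the hypotheses of the lemma precisely because it is the tool that bridges this gap.

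The paper's resolution: since $h(\chi_{\antiformula}) = h(\antiformula)$ and $\emptyset \vdash_{\logic{L}} \protoset(q, q)$, every formula of $\protoset(\chi_{\antiformula}, \antiformula)$ is sent by $h$ into $\Fg_{\logic{L}}^{\alg{A}} \emptyset$, so these formulas may be added to $\Gamma$ without violating $h[\Gamma] \subseteq X \cup \Fg_{\logic{L}}^{\alg{A}} \emptyset$ (with your choice $\Gamma \assign h^{-1}[X \cup \Fg_{\logic{L}}^{\alg{A}} \emptyset]$ they in fact already belong to $\Gamma$). Modus ponens for $\protoset$, i.e.\ $p, \protoset(p,q) \vdash_{\logic{L}} q$, then gives $\Gamma \cup \bigcup_{\antiformula \in \antitheorem} \protoset(\chi_{\antiformula}, \antiformula) \vdash_{\logic{L}} \antitheorem$, and $\antitheorem \vdash_{\logic{L}} \allset$ finishes the proof; the paper does this with $\antitheorem$ normalized to a single variable $p$ and $h$ adjusted only at $p$, but your disjoint-variables setup serves equally well. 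A further remark: your preliminary reduction to $|X| < \lambda$ via a $\lambda$-compactness argument for the trivial filter is both sketchy as written and unnecessary --- what the paper needs is only that the antitheorem (not $X$) has at most $|{\Var \logic{L}}|$ formulas, so that the sets $\Gamma_{\antiformula}$ can be given pairwise disjoint variables.
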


\begin{proof}
  The right-to-left implication is straightforward. Conversely, because $\logic{L}$ is $\lambda$-ary and has an antitheorem, it has an antitheorem~$\antitheorem$ of cardinality at most $| {\Var \logic{L}} |$, w.l.o.g.\ in a single variable $p$. Then any homomorphism $g\colon \FmAlg \logic{L} \to \alg{A}$ yields a set $\antiset \assign g[\antitheorem]$ such that $\antiset \vdash_{\logic{L}}^{\alg{A}} A$ and $| {\antiset} | \leq | {\Var \logic{L}} |$. By Lemma~\ref{lemma: filter generation}, for each $a \in \antiset$ there are formulas $\Gamma_{a}, \varphi_{a}$ and a homomorphism $h_{a}\colon \FmAlg \logic{L} \to \alg{A}$ such that $h_{a}[\Gamma_{a}] \subseteq X \cup \Fg_{\logic{L}}^{\alg{A}} \emptyset$, $h_{a}(\varphi_{a}) = a$, and $\Gamma_{a} \vdash_{\logic{L}} \varphi_{a}$. Because $| {\antiset} | \leq | {\Var \logic{L}} |$, we may assume that the formulas $\Gamma_{a}, \varphi_{a}$ and the formulas $\Gamma_{a'}, \varphi_{a'}$ do not share any variables if $a$ and $a'$ are distinct. We~thus obtain a set $\Gamma \assign \bigcup_{a \in \antiset} \Gamma_{a}$, a~tuple of formulas $\tuple{\varphi}$ which consists of the formulas~$\varphi_{a}$, and a homomorphism $h\colon \Fm \logic{L} \to \alg{A}$ such that $h[\Gamma] \subseteq X \cup \Fg_{\logic{L}}^{\alg{A}} \emptyset$, $h(\tuple{\varphi}) = \tuple{a}$, and $\Gamma \vdash_{\logic{L}} \tuple{\varphi}$. We may assume w.l.o.g.\ that $\Gamma$ does not contain the variable~$p$.

  If $\protoset$ is a protoimplication set for $\logic{L}$, let $\Gamma' \assign \Gamma \cup \bigcup_{\antiformula \in \antiset} \protoset(\varphi_{g(\antiformula)}, \antiformula)$ and take $h'\colon \Fm \logic{L} \to \alg{A}$ to be the homomorphism which agrees with $h$ on every variable except $h'(p) = g(p)$, then $h'[\Gamma'] \subseteq X \cup \Fg_{\logic{L}}^{\alg{A}} \emptyset$, since $h'(\varphi_{g(\pi)}) = g(\pi) = h'(\pi)$. Moreover, now $\Gamma' \vdash_{\logic{L}} \emptyset$ because $\Gamma \vdash_{\logic{L}} \tuple{\varphi}$ and $\tuple{\varphi}, \bigcup_{\antiformula \in \antitheorem} \protoset(\varphi_{g(\antiformula)}, \antiformula) \vdash_{\logic{L}} \antitheorem$.
\end{proof}

  The equivalence of syntactic and semantic compactness for logics which satisfy the assumptions of the previous lemma now follows immediately.

\begin{proposition} \label{prop: syntactic to semantic compactness}
  Let $\logic{L}$ be a $\lambda$-ary logic for $\lambda = | {\Var \logic{L}} |^{+}$ which has an antitheorem and a protoimplication set. If $\logic{L}$ is (syntactically) $\kappa$-compact, then it is semantically $\kappa$-compact.
\end{proposition}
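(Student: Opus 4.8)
The plan is to deduce semantic $\kappa$-compactness directly from Lemma~\ref{lemma: trivial filter generation}, which already reduces generation of the trivial filter on an arbitrary algebra to derivability of an antitheorem in $\FmAlg \logic{L}$. The only preparatory observation needed is that, for a logic with an antitheorem, $\kappa$-compactness may be read off antitheorems rather than merely off the trivial theory $\Fm \logic{L}$: since $\logic{L}$ has an antitheorem $\antitheorem$, the assertions $\Gamma \vdash_{\logic{L}} \allset$ and $\Gamma \vdash_{\logic{L}} \Fm \logic{L}$ are equivalent for every $\Gamma$ (the former always implies the latter, while $\Gamma \vdash_{\logic{L}} \Fm \logic{L}$ gives $\Gamma \vdash_{\logic{L}} \antitheorem$, whence $\Gamma \vdash_{\logic{L}} \allset$ by cut with $\antitheorem \vdash_{\logic{L}} \allset$), so applying $\kappa$-compactness to $\Gamma \vdash_{\logic{L}} \Fm \logic{L}$ and then using the equivalence once more shows that $\Gamma \vdash_{\logic{L}} \allset$ implies $\Gamma_{0} \vdash_{\logic{L}} \allset$ for some $\Gamma_{0} \subseteq \Gamma$ with $|\Gamma_{0}| < \kappa$.

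Then, given $X \vdash_{\logic{L}}^{\alg{A}} A$, I would invoke Lemma~\ref{lemma: trivial filter generation} to obtain a set of formulas $\Gamma$ and a homomorphism $h \colon \FmAlg \logic{L} \to \alg{A}$ with $h[\Gamma] \subseteq X \cup \Fg_{\logic{L}}^{\alg{A}} \emptyset$ and $\Gamma \vdash_{\logic{L}} \allset$, and then shrink $\Gamma$ to some $\Gamma_{0} \subseteq \Gamma$ with $|\Gamma_{0}| < \kappa$ and $\Gamma_{0} \vdash_{\logic{L}} \allset$ by the reformulation above. Since $\Gamma_{0}$ is an antitheorem it cannot be designated in any non-trivial model of $\logic{L}$, so every $\logic{L}$-filter $F$ on $\alg{A}$ with $h[\Gamma_{0}] \subseteq F$ is trivial, i.e.\ $\Fg_{\logic{L}}^{\alg{A}}(h[\Gamma_{0}]) = A$. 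Put $X' \assign h[\Gamma_{0}] \cap X$; then $|X'| \leq |\Gamma_{0}| < \kappa$ and $h[\Gamma_{0}] \subseteq X' \cup \Fg_{\logic{L}}^{\alg{A}} \emptyset$, so from $\Fg_{\logic{L}}^{\alg{A}} \emptyset \subseteq \Fg_{\logic{L}}^{\alg{A}} X'$ we get $A = \Fg_{\logic{L}}^{\alg{A}}(h[\Gamma_{0}]) \subseteq \Fg_{\logic{L}}^{\alg{A}} X'$, that is, $X' \vdash_{\logic{L}}^{\alg{A}} A$ with $|X'| < \kappa$, which is exactly semantic $\kappa$-compactness.

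I do not anticipate a serious obstacle: the real content is packaged in Lemma~\ref{lemma: trivial filter generation} (which itself rests on Lemma~\ref{lemma: filter generation} and on the protoimplication set), and the remainder is the routine dictionary between derivability in $\FmAlg \logic{L}$ and filter generation on $\alg{A}$. The one point that calls for a moment's care is the opening observation that, in the presence of an antitheorem, the trivial-theory and antitheorem readings of $\kappa$-compactness agree; without it one would have to adjust the subset $\Gamma_{0}$ by a variable renaming of the kind used in Section~\ref{sec: prelim} to relate $\vdash_{\logic{L}} \allset$ and $\vdash_{\logic{L}} \Fm \logic{L}$.
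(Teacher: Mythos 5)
Your proof is correct and is exactly the argument the paper intends: the proposition is stated as an immediate consequence of Lemma~\ref{lemma: trivial filter generation}, and your write-up just fills in the routine translation between $\Gamma \vdash_{\logic{L}} \allset$ in $\FmAlg \logic{L}$ and trivial filter generation on $\alg{A}$, including the (valid) observation that in the presence of an antitheorem $\Gamma \vdash_{\logic{L}} \Fm \logic{L}$ and $\Gamma \vdash_{\logic{L}} \allset$ coincide, so syntactic $\kappa$-compactness shrinks antitheorems. No gaps; the passage from $\Gamma$ to $\Gamma_{0}$ and the choice $X' \assign h[\Gamma_{0}] \cap X$ work as you describe.
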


  We are now in a position to extend the syntactic classical IL to a semantic one.

\begin{proposition} \label{prop: syntactic to semantic semisimplicity}
  Let $\logic{L}$ be a $\lambda$-ary logic for $\lambda = | { \Var \logic{L} }|^{+}$. If $\logic{L}$ enjoys the syntactic $\kappa$-ary classical local IL w.r.t.\ a family $\ilfamily$ such that $| {\ilfamily_{\alpha}} | \leq | {\Var \logic{L}} |$ for each $\ilfamily_{\alpha}$ and each set in $\ilfamily_{1}$ has cardinality less than $\kappa$, then it enjoys the semantic $\kappa$-ary classical local IL w.r.t.\ $\ilfamily$.
\end{proposition}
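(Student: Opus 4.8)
The $\kappa$-ary classical local IL w.r.t.\ $\ilfamily$ is the conjunction of the ordinary and the dual $\kappa$-ary local IL, both w.r.t.\ $\ilfamily$, so the plan is to transfer each of the two separately. First I would assemble the tools. Since $\logic{L}$ enjoys the classical local IL with each set in $\ilfamily_{1}$ of cardinality less than $\kappa$, Proposition~\ref{prop: cplil implies plddt} gives a protoimplication set $\protoset$, hence by Fact~\ref{fact: protoimplication} the unrestricted parametrized local DDT; and $\logic{L}$ has an antitheorem by the Fact following Definition~\ref{def: il}. As $\logic{L}$ is $\lambda$-ary with $\lambda=|{\Var\logic{L}}|^{+}$, the filter-generation Lemmas~\ref{lemma: filter generation} and~\ref{lemma: trivial filter generation} are therefore available on every algebra $\alg{A}$, and the $\Gamma$ they produce may be taken of cardinality at most $|{\Var\logic{L}}|$. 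The two \emph{easy} directions are immediate from the syntactic derivations $\tuple{p},\ilset(\tuple{p})\vdash_{\logic{L}}\allset$ (valid for every $\ilset\in\ilfamily_{\alpha}$): pushing them along a homomorphism sending $\tuple{p}$ to $\tuple{a}$ and using monotonicity and cut of $\vdash_{\logic{L}}^{\alg{A}}$ shows that $X\vdash_{\logic{L}}^{\alg{A}}\ilset(\tuple{a})$ implies $X,\tuple{a}\vdash_{\logic{L}}^{\alg{A}}A$, and that $X\vdash_{\logic{L}}^{\alg{A}}\tuple{a}$ implies $X,\ilset(\tuple{a})\vdash_{\logic{L}}^{\alg{A}}A$ for each $\ilset$.

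For the hard direction of the semantic ordinary local IL I would argue as follows. Given $X,\tuple{a}\vdash_{\logic{L}}^{\alg{A}}A$ for an $\alpha$-tuple $\tuple{a}$ with $0<\alpha<\kappa$, Lemma~\ref{lemma: trivial filter generation} yields $\Gamma$ and $h\colon\FmAlg\logic{L}\to\alg{A}$ with $h[\Gamma]\subseteq X\cup\{\text{entries of }\tuple{a}\}\cup\Fg_{\logic{L}}^{\alg{A}}\emptyset$ and $\Gamma\vdash_{\logic{L}}\allset$; after renaming variables split $\Gamma=\Delta\cup E$ with $h[\Delta]\subseteq X\cup\Fg_{\logic{L}}^{\alg{A}}\emptyset$ and $h(\epsilon)=a_{i(\epsilon)}$ for each $\epsilon\in E$. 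Picking a fresh $\alpha$-tuple $\tuple{p}$, modus ponens for $\protoset$ gives $\Delta,\tuple{p},\bigcup_{\epsilon\in E}\protoset(p_{i(\epsilon)},\epsilon)\vdash_{\logic{L}}\allset$, so the syntactic ordinary local IL at arity $\alpha$ produces $\ilset\in\ilfamily_{\alpha}$ with $\Delta,\bigcup_{\epsilon\in E}\protoset(p_{i(\epsilon)},\epsilon)\vdash_{\logic{L}}\ilset(\tuple{p})$. Extending $h$ by $p_{i}\mapsto a_{i}$ and pushing this derivation along the extended homomorphism (observing that $\protoset^{\alg{A}}(a_{i(\epsilon)},a_{i(\epsilon)})\subseteq\Fg_{\logic{L}}^{\alg{A}}\emptyset$ because $\emptyset\vdash_{\logic{L}}\protoset(p,p)$) yields $X\vdash_{\logic{L}}^{\alg{A}}\ilset(\tuple{a})$, as wanted.

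The hard direction of the semantic dual local IL is the main obstacle, because from $X,\ilset(\tuple{a})\vdash_{\logic{L}}^{\alg{A}}A$ for \emph{every} $\ilset\in\ilfamily_{\alpha}$ one must extract a \emph{single} derivation witnessing $X\vdash_{\logic{L}}^{\alg{A}}\tuple{a}$. The plan is to apply Lemma~\ref{lemma: trivial filter generation} once for each $\ilset\in\ilfamily_{\alpha}$, obtaining $\Gamma_{\ilset}$ and $h_{\ilset}$; this is precisely where the hypothesis $|{\ilfamily_{\alpha}}|\leq|{\Var\logic{L}}|$ is needed, since it allows the $\Gamma_{\ilset}$ to be chosen using pairwise disjoint sets of variables apart from one fixed fresh $\alpha$-tuple $\tuple{p}$, so that the $h_{\ilset}$ together with $p_{i}\mapsto a_{i}$ amalgamate into one homomorphism $g$. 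As in the preceding paragraph, writing each $h_{\ilset}(\epsilon)$ for $\epsilon$ in the ``$\ilset(\tuple{a})$-part'' of $\Gamma_{\ilset}$ as $\delta_{\epsilon}^{\alg{A}}(\tuple{a})$ with $\delta_{\epsilon}\in\ilset(\tuple{p})$ and inserting the sets $\protoset(\delta_{\epsilon}(\tuple{p}),\epsilon)$, each $\Gamma_{\ilset}\vdash_{\logic{L}}\allset$ becomes $\Lambda,\ilset(\tuple{p})\vdash_{\logic{L}}\allset$ for a $\Lambda$ (the union of the $\Delta$-parts and the $\protoset$-sets) that does not depend on $\ilset$. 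The syntactic dual local IL at arity $\alpha$ then gives $\Lambda\vdash_{\logic{L}}\tuple{p}$, and pushing this along $g$ (again using that the $\protoset$-formulas land in $\Fg_{\logic{L}}^{\alg{A}}\emptyset$) yields $X\vdash_{\logic{L}}^{\alg{A}}\tuple{a}$. Combining the four directions gives the semantic ordinary and dual local ILs w.r.t.\ $\ilfamily$, i.e.\ the semantic $\kappa$-ary classical local IL w.r.t.\ $\ilfamily$. The only delicate points are the cardinality bookkeeping, which is controlled exactly by the hypotheses $|{\ilfamily_{\alpha}}|\leq|{\Var\logic{L}}|$ and $\lambda=|{\Var\logic{L}}|^{+}$, and the routine verifications that the auxiliary $\protoset$-formulas are sent into $\Fg_{\logic{L}}^{\alg{A}}\emptyset$ by the relevant homomorphisms.
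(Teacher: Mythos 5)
Your proposal is correct and follows essentially the same route as the paper's own proof: the easy directions come from $\tuple{p},\ilset(\tuple{p})$ being an antitheorem, the hard direction of the ordinary IL uses the filter-generation lemma, and the hard direction of the dual IL applies that lemma once per $\ilset\in\ilfamily_{\alpha}$, uses $|{\ilfamily_{\alpha}}|\leq|{\Var\logic{L}}|$ to choose disjoint variables and amalgamate the homomorphisms, inserts protoimplication formulas against a fresh $\alpha$-tuple $\tuple{p}$, and then invokes the syntactic dual local IL before pushing the derivation forward. The only divergence is that you also deploy the $\protoset$-insertion trick in the ordinary-IL direction, which just makes explicit a step (obtaining an $\alpha$-tuple $\tuple{\varphi}$ with $h(\tuple{\varphi})=\tuple{a}$) that the paper leaves implicit.
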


\begin{proof}
  Recall that the classical IL implies the DDT (Proposition~\ref{prop: cil implies ddt}), therefore the previous two lemmas apply. We first show the transfer of the IL. One~direction is clear: $X \vdash_{\logic{L}}^{\alg{A}} \ilset(\tuple{a}, \tuple{b})$ for $\ilset \in \ilfamily_{\alpha}$ and $\tuple{b} \subseteq \alg{A}$ implies $X, \tuple{a} \vdash_{\logic{L}}^{\alg{A}} A$ because $\tuple{a}, \ilset(\tuple{a}, \tuple{b}) \vdash_{\logic{L}}^{\alg{A}} A$ by virtue of $\tuple{p}, \ilset(\tuple{p}, \tuple{q})$ being an antitheorem. Conversely, suppose that $X, \tuple{a} \vdash_{\logic{L}}^{\alg{A}} A$ where $\tuple{a} \in \alg{A}$ has length $0 < \alpha < \kappa$. Then by the previous lemma there are $\Gamma$, $\tuple{\varphi}$, and a homomorphism $h\colon \FmAlg \logic{L} \to \alg{A}$ such that $h[\Gamma] \subseteq \Fg_{\logic{L}}^{\alg{A}} \emptyset$, $h(\tuple{\varphi}) = \tuple{a}$, and $\Gamma, \tuple{\varphi} \vdash_{\logic{L}} \allset$. By the IL there are $\ilset \in \ilfamily_{\alpha}$ and $\tuple{\pi}$  such that $\Gamma \vdash_{\logic{L}} \ilset(\tuple{\varphi}, \tuple{\pi})$, thus for $\tuple{b} \assign h(\tuple{\pi})$ we have $\ilset(\tuple{a}, \tuple{b}) = h[\ilset(\tuple{\varphi}, \tuple{\pi})] \subseteq \Fg_{\logic{L}}^{\alg{A}} h[\Gamma] \subseteq \Fg_{\logic{L}}^{\alg{A}} X$.

  For the dual IL, one direction is again clear: $X \vdash_{\logic{L}}^{\alg{A}} \tuple{a}$ implies $X, \ilset(\tuple{a}) \vdash_{\logic{L}}^{\alg{A}} A$ for each $\ilset \in \ilfamily_{\alpha}$ because $\tuple{a}, \ilset(\tuple{a}) \vdash_{\logic{L}}^{\alg{A}} A$. Conversely, suppose that $X, \ilset(\tuple{a}) \vdash_{\logic{L}}^{\alg{A}} A$ for each $\ilset \in \ilfamily_{\alpha}$. Then by the previous fact for each $\ilset \in \ilfamily_{\alpha}$ there are $\Gamma_{\ilset}, \Lambda_{\ilset} \subseteq \Fm \logic{L}$, and $h_{\ilset}\colon \FmAlg \logic{L} \to \alg{A}$ such that $h_{\ilset}[\Gamma_{\ilset}] \subseteq X \cup \Fg_{\logic{L}}^{\alg{A}} \emptyset$ and $h_{\ilset}[\Lambda_{\ilset}] \subseteq \ilset(\tuple{a})$ and $\Gamma_{\ilset}, \Lambda_{\ilset} \vdash_{\logic{L}} \allset$. More precisely, for each $\lambda_{\ilset} \in \Lambda_{\ilset}$ there is $\iota_{\lambda_{\ilset}} \in \ilset$ such that $h(\lambda_{\ilset}) = \iota_{\lambda_{\ilset}}(\tuple{a})$. We~may assume w.l.o.g.\ that there is an $\alpha$-tuple of variables $\tuple{p}$ which do not occur in any $\Gamma_{\ilset}, \Lambda_{\ilset}$. Moreover, since $| {\ilfamily_{\alpha}} | \leq | {\Var \logic{L}} |$ we may assume that the sets $\Gamma_{\ilset}, \Lambda_{\ilset}$ and $\Gamma_{\ilset'}, \Lambda_{\ilset'}$ do not share any variables for distinct $\ilset$ and $\ilset'$. This allows us to define a homomorphism $h\colon \FmAlg \logic{L} \to \alg{A}$ such that $h[\Gamma_{\ilset}] = h_{\ilset}[\Gamma_{\ilset}]$, $h[\Lambda_{\ilset}] = h_{\ilset}[\Lambda_{\ilset}]$, and $h(\tuple{a}) = \tuple{a}$.

  Now consider $\Gamma \assign \bigcup_{\ilset \in \ilfamily_{\alpha}} \Gamma_{\ilset}$ and $\Gamma' \assign \Gamma \cup \bigcup_{\ilset \in \ilfamily_{\alpha}} \bigcup_{\lambda_{\ilset} \in \Lambda_{\ilset}} \protoset(\iota_{\lambda_{\ilset}}(\tuple{p}), \lambda_{\ilset})$. Then $h[\Gamma'] \subseteq X \cup \Fg_{\logic{L}}^{\alg{A}} \emptyset$ because $h(\lambda_{\ilset}) = (\iota_{\lambda_{\ilset}}(\tuple{a})) = h(\iota_{\lambda_{\ilset}}(\tuple{p}))$. Moreover, $\Gamma', \ilset(\tuple{p}) \vdash_{\logic{L}} \Lambda_{\ilset}$ for each $\ilset \in \ilfamily_{\alpha}$, hence $\Gamma', \ilset(\tuple{p}) \vdash_{\logic{L}} \allset$ for each $\tuple{p}$. By the dual local IL it follows that $\Gamma' \vdash_{\logic{L}} \tuple{p}$. Finally, $h(\tuple{p}) = \tuple{a}$ implies that $\tuple{a} \in \Fg_{\logic{L}}^{\alg{A}} X$.
\end{proof}

  Combining the semantic results of this subsection with the characterization of syntactic semi\-simplicity (Theorem~\ref{thm: semisimplicity}) now yields the following theorem, which is the main semantic result of the first half of the paper. It upgrades our previous characterization of syntactic semisimplicity to a characterization of semantic semisimplicity, provided that the logic enjoys a local IL rather than merely a parametrized local one and certain very mild technical conditions are met.

\begin{theorem} \label{thm: semantic semisimplicity}
  The following are equivalent for a semantically coatomic logic $\logic{L}$ which enjoys the $\kappa$-ary local IL w.r.t.~$\ilfamily$, provided that $| {\ilfamily_{\alpha}} | \leq | {\Var \logic{L}} |$ for all~$\ilfamily_{\alpha}$, each set in $\ilfamily_{1}$ has cardinality less than $\kappa$, and $\logic{L}$ is $\lambda$-ary for $\lambda = |{\Var \logic{L}}|^{+}$:
\begin{enumerate}
\item $\logic{L}$ is semantically semisimple.
\item $\logic{L}$ is syntactically semisimple.
\item $\logic{L}$ enjoys the unary local LEM.
\item $\logic{L}$ enjoys the unary dual local IL.
\item $\logic{L}$ enjoys the unary classical local IL.
\item $\logic{L}$ enjoys the semantic $\kappa$-ary local LEM w.r.t.\ $\ilfamily$.
\item $\logic{L}$ enjoys the semantic $\kappa$-ary dual local IL w.r.t.\ $\ilfamily$.
\item $\logic{L}$ enjoys the semantic $\kappa$-ary classical local IL w.r.t.\ $\ilfamily$.
\end{enumerate}
  In particular, the above equivalence holds for each compact $\logic{L}$ with the $\kappa$-ary local IL w.r.t.~$\ilfamily$, provided that $| {\ilfamily_{\alpha}} | \leq | {\Var \logic{L}} |$ for~each~$\ilfamily_{\alpha}$ and each set in $\ilfamily_{1}$ has cardinality less than $\kappa$.
\end{theorem}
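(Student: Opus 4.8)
The plan is to establish the eight conditions equivalent by closing a single cycle of implications, using the syntactic characterisation of semisimplicity (Theorem~\ref{thm: semisimplicity}) for the syntactic items $(2)$--$(5)$ and the transfer result (Proposition~\ref{prop: syntactic to semantic semisimplicity}) together with Proposition~\ref{prop: semantic dil implies semantically semisimple} to reach the semantic ones $(1)$ and $(6)$--$(8)$. The first observation is that the $\kappa$-ary local IL w.r.t.\ $\ilfamily$ assumed in the statement yields in particular the $\kappa$-ary \emph{simple} local IL w.r.t.\ $\ilfamily$: restricting the IL-equivalence to a simple theory $T$ gives the required implication, while $\tuple{\varphi}, \ilset(\tuple{\varphi}, \tuple{\pi}) \vdash_{\logic{L}} \allset$ follows by applying the right-to-left direction of the IL to $\ilset(\tuple{\varphi}, \tuple{\pi}) \vdash_{\logic{L}} \ilset(\tuple{\varphi}, \tuple{\pi})$. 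Since $\logic{L}$ is syntactically coatomic, Theorem~\ref{thm: semisimplicity} now applies and delivers the equivalence of $(2)$, $(3)$ and $(4)$, together with the refinement that the local LEM (equivalently the dual local IL) it produces may be taken w.r.t.\ $\ilfamily$; here one uses that a unary local LEM, resp.\ dual local IL, automatically extends to the unrestricted one, so that the ``unary'' qualifiers cause no trouble.

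Next I would combine $(4)$ — read as the unrestricted dual local IL — with the ambient $\kappa$-ary local IL w.r.t.\ $\ilfamily$: by Proposition~\ref{prop: il and dil imply cil} this gives the \emph{syntactic} $\kappa$-ary classical local IL w.r.t.\ $\ilfamily$ (local, not merely parametrized local, because $\ilfamily$ carries no parameters). Its restriction to $\alpha = 1$ is exactly $(5)$, and conversely $(5)$ gives back $(4)$ since a classical IL contains a dual IL. The core of the argument is then the transfer step: the hypotheses $|\ilfamily_{\alpha}| \le |\Var \logic{L}|$, the smallness of the sets in $\ilfamily_{1}$, and $\lambda$-arity for $\lambda = |\Var \logic{L}|^{+}$ are precisely those of Proposition~\ref{prop: syntactic to semantic semisimplicity}, which upgrades the syntactic classical local IL w.r.t.\ $\ilfamily$ to the semantic one, i.e.\ to $(8)$. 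From $(8)$ one reads off the semantic dual local IL w.r.t.\ $\ilfamily$, which is $(7)$, and its LEM reformulation $(6)$ via the semantic analogue of Proposition~\ref{prop: dil iff lem} (whose proof is a purely formal manipulation of $\vdash_{\logic{L}}^{\alg{A}}$, identical to the syntactic one). Finally $(7)$ restricts to the semantic simple dual local IL, so Proposition~\ref{prop: semantic dil implies semantically semisimple} — using that $\logic{L}$ is semantically coatomic — yields $(1)$, and $(1) \Rightarrow (2)$ is immediate. This closes the cycle $(1) \Rightarrow (2) \Leftrightarrow (3) \Leftrightarrow (4) \Leftrightarrow (5) \Rightarrow (8) \Leftrightarrow (7) \Leftrightarrow (6) \Rightarrow (1)$.

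For the concluding ``in particular'' clause I would reduce to the main statement. A compact logic is syntactically coatomic, and under the customary assumption $|\Fm \logic{L}| = |\Var \logic{L}|$ it is $\lambda$-ary for $\lambda = |\Var \logic{L}|^{+}$. Semantic coatomicity is the only missing hypothesis, and it may be supplied on the fly: each of $(6)$--$(8)$ entails its syntactic counterpart by instantiating $\alg{A} = \FmAlg \logic{L}$, so either none of the eight conditions holds and there is nothing to prove, or else the syntactic $\kappa$-ary classical local IL w.r.t.\ $\ilfamily$ holds, whence $\logic{L}$ has a protoimplication set by Proposition~\ref{prop: cplil implies plddt} and is therefore semantically compact by Proposition~\ref{prop: syntactic to semantic compactness}, hence semantically coatomic; the main equivalence then applies verbatim.

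I expect the only genuine difficulty to be bookkeeping: keeping straight which family and which arity each of the eight principles is stated with respect to, and checking that the cardinality side-conditions feeding Propositions~\ref{prop: il and dil imply cil}, \ref{prop: syntactic to semantic semisimplicity} and \ref{prop: cplil implies plddt} are exactly the ones hypothesised here. There is no new mathematical content beyond assembling the earlier results, the substantive work having already been done in the proof of the transfer proposition.
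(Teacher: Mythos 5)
Your proof of the eight-way equivalence itself is correct and follows essentially the same route as the paper: Theorem~\ref{thm: semisimplicity} for the syntactic conditions, Proposition~\ref{prop: il and dil imply cil} to pass to the $\kappa$-ary classical local IL w.r.t.~$\ilfamily$, the transfer Proposition~\ref{prop: syntactic to semantic semisimplicity} to reach the semantic classical IL, the semantic analogues of Proposition~\ref{prop: dil iff lem} for the semantic LEM, and Proposition~\ref{prop: semantic dil implies semantically semisimple} with semantic coatomicity to return to semantic semisimplicity. No complaints there.

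The gap is in the ``in particular'' clause. That clause asserts the equivalence for \emph{every} compact $\logic{L}$ with the stated IL and cardinality hypotheses; it does not assume $|{\Fm \logic{L}}| = |{\Var \logic{L}}|$, yet you invoke exactly this ``customary assumption'' to obtain $\lambda$-arity for $\lambda = |{\Var \logic{L}}|^{+}$, which you then need twice: in Proposition~\ref{prop: syntactic to semantic semisimplicity} and in Proposition~\ref{prop: syntactic to semantic compactness} (your route to semantic coatomicity). So as written you prove a weaker statement with an extra hypothesis. The missing ingredient is Fact~\ref{fact: kappa compact to kappa ary}: once you have derived (from whichever of the eight conditions is assumed, which needs neither $\lambda$-arity nor semantic coatomicity) the unary classical local IL, and hence the dual local IL w.r.t.~$\ilfamily$ with $|{\ilfamily_{1}}| \leq |{\Var \logic{L}}| < \lambda$, compactness (i.e.\ $\lambda$-compactness, $\lambda$ being a regular cardinal) yields that $\logic{L}$ is $\lambda$-ary. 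With $\lambda$-arity so established, your ``on the fly'' argument — protoimplication set via Proposition~\ref{prop: cplil implies plddt}, then semantic compactness via Proposition~\ref{prop: syntactic to semantic compactness}, hence semantic coatomicity — goes through and the main statement applies; this is precisely how the paper discharges both hypotheses in the compact case.
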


\begin{proof}
  The~implication from semantic semisimplicity to the syntactic semi\-simplicity is trivial, and the equivalence between the syntactic conditions has already been established in Theorem~\ref{thm: semisimplicity}. The proof of the equivalence between the syntactic LEM and the syntactic dual IL (Proposition~\ref{prop: dil iff lem}) immediately extends to their semantic counterparts, as does the proof that the syntactic $\kappa$-ary IL and unary dual IL imply the syntactic $\kappa$-ary classical IL (Proposition~\ref{prop: il and dil imply cil}). The~implication from the semantic classical IL to semantic semisimplicity is Proposition~\ref{prop: semantic dil implies semantically semisimple}. Finally, the implication from the syntactic unary classical IL (which implies the syntactic $\kappa$-ary classical IL by Proposition~\ref{prop: il and dil imply cil}) to the semantic $\kappa$-ary classical IL is Proposition~\ref{prop: syntactic to semantic semisimplicity}.

  Each of the conditions implies the unary classical local IL without the use of semantic coatomicity and $\lambda$-arity. But compactness together with the unary classical local IL imply that $\logic{L}$ is $\lambda$-ary for $\lambda = |{\Var \logic{L}}|^{+}$ by Fact~\ref{fact: kappa compact to kappa ary}, using the assumption that $|{\ilfamily_{1}}| < \lambda$. Moreover, compactness together with $\lambda$-arity and the unary parametrized local DDT (which follows from the unary classical local IL by Proposition~\ref{prop: cplil implies plddt}) imply the semantic compactness of $\logic{L}$ by Proposition~\ref{prop: syntactic to semantic compactness}. The~hypotheses about $\lambda$-arity and semantic coatomicity may thus be omitted if $\logic{L}$ is compact.
\end{proof}

  Recall again that each logic with $|{\Fm \logic{L}}| = |{\Var \logic{L}}|$, in particular each logic with at most countably many connectives and constants, is $\lambda$-ary for $\lambda = |{\Var \logic{L}}|^{+}$.

  In order to use the above theorem to prove the semisimplicity of a class of algebras, it remains to connect the semantic semisimplicity of a (weakly) algebraizable logic with the semisimplicity of its algebraic counterpart.

\begin{fact} \label{fact: logic to algebra semisimplicity}
  Let $\logic{L}$ be a (weakly) algebraizable logic and let $\class{K}$ be its algebraic counterpart. Then $\class{K}$ is semisimple if and only if $\logic{L}$ is semantically semisimple.
\end{fact}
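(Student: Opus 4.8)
The plan is to pass through the Leibniz operator and reduce the statement to a purely congruence-theoretic fact about $\class{K}$. Recall that since $\logic{L}$ is (weakly) algebraizable with algebraic counterpart $\class{K}$, for every algebra $\alg{A}$ the Leibniz operator $\Omega^{\alg{A}}$ restricts to an order isomorphism between the complete lattice of $\logic{L}$-filters on $\alg{A}$ and the complete lattice $\Con_{\class{K}} \alg{A}$ of $\class{K}$-congruences on $\alg{A}$, with $\alg{A} / \Omega^{\alg{A}}(F) \in \class{K}$ for every $\logic{L}$-filter $F$ on $\alg{A}$ (see e.g.\ \cite{font16}). Under this isomorphism the trivial filter $A$ corresponds to the total congruence $\nabla_{A}$, and hence the simple (maximal non-trivial) $\logic{L}$-filters on $\alg{A}$ correspond precisely to the coatoms of $\Con_{\class{K}} \alg{A}$, i.e.\ to the maximal proper $\class{K}$-congruences on $\alg{A}$ (for such a congruence $\theta$, the $\class{K}$-congruences of $\alg{A}/\theta$ are just $\theta/\theta$ and $\nabla$, so $\alg{A}/\theta$ is a $\class{K}$-simple algebra). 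Since an order isomorphism between complete lattices preserves arbitrary meets, a filter $F$ on $\alg{A}$ is an intersection of simple filters if and only if $\Omega^{\alg{A}}(F)$ is an intersection of maximal proper $\class{K}$-congruences. Thus $\logic{L}$ is semantically semisimple if and only if the following condition $(\ast)$ holds: every $\class{K}$-congruence on every algebra in the signature is an intersection of maximal proper $\class{K}$-congruences.

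It then remains to show that $(\ast)$ is equivalent to the semisimplicity of $\class{K}$. One direction is immediate: if $(\ast)$ holds and $\alg{B} \in \class{K}$, then $\Delta_{\alg{B}}$ is a $\class{K}$-congruence (as $\alg{B}/\Delta_{\alg{B}} \cong \alg{B} \in \class{K}$), so by $(\ast)$ it is the intersection of the maximal proper $\class{K}$-congruences on $\alg{B}$, which is exactly the semisimplicity of $\alg{B}$. For the converse, assume $\class{K}$ is semisimple, fix an arbitrary algebra $\alg{A}$ and a $\class{K}$-congruence $\theta$ on $\alg{A}$. Then $\alg{A}/\theta \in \class{K}$ is semisimple, so $\Delta_{\alg{A}/\theta}$ is the intersection of the maximal proper $\class{K}$-congruences on $\alg{A}/\theta$. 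Pulling this decomposition back along the quotient map via the correspondence theorem for $\class{K}$-congruences yields that $\theta$ is the intersection of the coatoms of $\Con_{\class{K}} \alg{A}$ lying above $\theta$, which establishes $(\ast)$. Combining the two directions with the translation of the first paragraph gives the claimed equivalence.

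The step that needs genuine care is the correspondence-theorem argument in the ``$\class{K}$ semisimple $\Rightarrow (\ast)$'' direction: one must verify that $\psi \mapsto \psi/\theta$ is a meet-preserving order isomorphism from $\Con_{\class{K}} \alg{A} \cap [\theta, \nabla_{\alg{A}}]$ onto $\Con_{\class{K}}(\alg{A}/\theta)$, that it sends coatoms to coatoms, and hence that a decomposition of $\Delta_{\alg{A}/\theta}$ into maximal proper $\class{K}$-congruences pulls back to a decomposition of $\theta$. This is routine given that $\class{K}$, being the algebraic counterpart of a logic, is closed under subdirect products (so $\Con_{\class{K}}$ is a complete lattice) and under quotients by $\class{K}$-congruences: the latter is precisely what makes $\psi$ a $\class{K}$-congruence on $\alg{A}$ iff $\psi/\theta$ is one on $\alg{A}/\theta$, using $\alg{A}/\psi \cong (\alg{A}/\theta)/(\psi/\theta)$. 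Everything else is standard bookkeeping about the Leibniz operator for (weakly) algebraizable logics, for which I would simply cite the literature.
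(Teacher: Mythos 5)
Your proof is correct and follows essentially the same route as the paper, whose entire argument is a one-line appeal to the isomorphism between the lattice of $\logic{L}$-filters and the lattice of $\class{K}$-congruences on each algebra (\cite[Proposition~6.117]{font16}), i.e.\ exactly your Leibniz-operator step. The correspondence-theorem argument you add, bridging ``every $\class{K}$-congruence on every algebra is a meet of maximal proper $\class{K}$-congruences'' with ``every algebra in $\class{K}$ is semisimple'', is precisely the detail the paper leaves implicit, and you carry it out correctly.
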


\begin{proof}
  By \cite[Proposition~6.117]{font16} the lattice of $\class{K}$-congruences and the lattice of $\logic{L}$-filters on each algebra are isomorphic.
\end{proof}

\subsection{Applications}
\label{subsec: applications}

  We now show how the equivalence between semi\-simplicity and the LEM (Theorem~\ref{thm: semisimplicity}) can be used to describe the semisimple axiomatic extensions of a given logic. We consider two cases: the~semisimple axiomatic extensions of the logics $\FLe$ and~$\IK$. By Theorem~\ref{thm: semantic semisimplicity} and Fact~\ref{fact: logic to algebra semisimplicity}, this problem is equivalent to the problem of describing the semisimple varieties of $\FLe$-algebras and modal Heyting algebras.

  The semisimple varieties of $\FLew$-algebras and modal algebras were already described by Kowalski~\cite{kowalski04} and by Kowalski \& Kracht~\cite{kowalski+kracht06} respectively. The proofs of Kowalski and Kracht are algebraic in nature and, although their overall structure is similar, they involve fairly complicated computations specific to the two varieties. In contrast, having isolated the common core of the two proofs into Theorem~\ref{thm: semisimplicity}, our proofs are relatively brief and purely syntactic.

\begin{theorem}
  An axiomatic extension of $\FLe$ is semisimple if and only if the formula $(1 \wedge p) \vee (1 \wedge \neg (1\wedge p)^{n})$ is a theorem of $\logic{L}$ for some $n \in \omega$.
\end{theorem}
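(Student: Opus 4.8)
The plan is to derive the statement from Theorem~\ref{thm: semisimplicity}. First I would record the standing facts about an axiomatic extension $\logic{L}$ of $\FLe$: it is finitary and has the antitheorem $\{\bot\}$, hence it is compact, hence coatomic; and it inherits from $\FLe$ the local IL with respect to the family $\ilfamily$ whose unary component is $\ilfamily_{1}=\set{\{\neg(1\wedge p)^{n}\}}{n\in\omega}$, and therefore also the simple local IL with respect to $\ilfamily$. Thus Theorem~\ref{thm: semisimplicity} applies and $\logic{L}$ is semisimple if and only if it enjoys the local LEM with respect to $\ilfamily$ (equivalently, the dual local IL with respect to $\ilfamily$), i.e.\ iff $\Gamma\vdash_{\logic{L}}\varphi$ holds whenever $\Gamma,\neg(1\wedge\varphi)^{n}\vdash_{\logic{L}}\allset$ for every $n\in\omega$. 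The task therefore reduces to showing this principle equivalent to the provability in $\logic{L}$ of $\chi_{n}(p):=(1\wedge p)\vee(1\wedge\neg(1\wedge p)^{n})$ for some $n$. Throughout I would use the routine $\FLe$-facts $\alpha\dashv\vdash 1\wedge\alpha$, $\alpha\vdash\chi_{n}(\alpha)$ and $\neg(1\wedge\alpha)^{n}\vdash\chi_{n}(\alpha)$ (the left, resp.\ right, disjunct), $\alpha\vdash(1\wedge\alpha)^{n}$ with $(1\wedge\alpha)^{n},\neg(1\wedge\alpha)^{n}\vdash\bot$, and the monotonicity $\chi_{m}(\alpha)\vdash\chi_{n}(\alpha)$ for $m\leq n$, which holds because $(1\wedge\alpha)^{n}\leq(1\wedge\alpha)^{m}$.

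For the direction from the axiom to semisimplicity, suppose $\vdash_{\logic{L}}\chi_{n}(p)$ for some $n$; then $\vdash_{\logic{L}}\chi_{n}(\varphi)$ for every $\varphi$ by structurality. To verify the dual local IL it suffices to establish its nontrivial implication: given $\Gamma,\neg(1\wedge\varphi)^{m}\vdash_{\logic{L}}\allset$ for all $m$, I would use the instance $m=n$ to get $\Gamma,1\wedge\neg(1\wedge\varphi)^{n}\vdash_{\logic{L}}\varphi$ (since $1\wedge\neg(1\wedge\varphi)^{n}\vdash\neg(1\wedge\varphi)^{n}$ and an antitheorem entails everything), combine it with $\Gamma,1\wedge\varphi\vdash_{\logic{L}}\varphi$, and invoke the proof-by-cases property of $\FL$ and its extensions with respect to $\vee$ (a standard fact, see~\cite{residuated07}) to conclude $\Gamma,\chi_{n}(\varphi)\vdash_{\logic{L}}\varphi$; a cut with $\Gamma\vdash_{\logic{L}}\chi_{n}(\varphi)$ gives $\Gamma\vdash_{\logic{L}}\varphi$. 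By Theorem~\ref{thm: semisimplicity}, $\logic{L}$ is semisimple.

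For the converse, assume $\logic{L}$ is semisimple, so it enjoys the local LEM with respect to $\ilfamily$. The key move is to introduce a fresh variable $q\neq p$, put $\Gamma:=\set{\chi_{n}(p)\rightarrow q}{n\in\omega}$, and apply the local LEM with $\varphi:=p$ and $\psi:=q$. Its hypotheses hold: $\Gamma,p\vdash_{\logic{L}}q$ because $p\vdash\chi_{0}(p)$ and $\chi_{0}(p),\chi_{0}(p)\rightarrow q\vdash q$, and $\Gamma,\neg(1\wedge p)^{m}\vdash_{\logic{L}}q$ for each $m$ because $\neg(1\wedge p)^{m}\vdash\chi_{m}(p)$ and $\chi_{m}(p),\chi_{m}(p)\rightarrow q\vdash q$. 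Hence $\Gamma\vdash_{\logic{L}}q$. Since $\logic{L}$ is finitary, $\set{\chi_{n}(p)\rightarrow q}{n\in F}\vdash_{\logic{L}}q$ for some finite $F\subseteq\omega$; setting $N:=\max F$ and using $\chi_{m}(p)\vdash\chi_{N}(p)$ for $m\leq N$ together with the antitonicity of $\rightarrow$ in its first argument, we get $\chi_{N}(p)\rightarrow q\vdash_{\FLe}\chi_{m}(p)\rightarrow q$ for each $m\in F$, so already $\chi_{N}(p)\rightarrow q\vdash_{\logic{L}}q$. Substituting $q:=\chi_{N}(p)$ and cutting against $\vdash_{\FLe}\chi_{N}(p)\rightarrow\chi_{N}(p)$ yields $\vdash_{\logic{L}}\chi_{N}(p)$, that is, $\vdash_{\logic{L}}(1\wedge p)\vee(1\wedge\neg(1\wedge p)^{N})$.

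The step I expect to be the crux is this last direction, and in particular the device of the parameter $q$ and the implications $\chi_{n}(p)\rightarrow q$: this is exactly what lets finitarity, together with the monotonicity of the $\chi_{n}(p)$, collapse the ``for every $n$'' quantifier of the local LEM down to a single index, and it is what stands in for Kowalski's pages of algebraic computation. The other ingredients — preservation of the local IL (and hence of the simple IL) under axiomatic extensions, the elementary $\FLe$-arithmetic of the $\chi_{n}$, and the proof-by-cases property of $\FL$ — are routine, though the latter is best cited rather than reproved.
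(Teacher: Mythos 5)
Your proposal is correct and follows essentially the same route as the paper: reduce semisimplicity to the LEM/dual local IL with respect to the inherited family $\set{\neg(1\wedge p)^{n}}{n\in\omega}$ via Theorem~\ref{thm: semisimplicity}, extract a single exponent in the forward direction by applying the LEM to a set of implications into a fresh variable $q$ and using finitarity plus the monotonicity of these implications, and use the proof-by-cases property of $\FLe$ for the converse. The only cosmetic differences are that you package the disjunction as $\chi_{n}(p)\rightarrow q$ where the paper uses the implications $(1\wedge p)\rightarrow q$ and $(1\wedge\neg(1\wedge p)^{n})\rightarrow q$ separately, and that you verify the dual local IL rather than the LEM formulation in the easy direction, which are interchangeable by Proposition~\ref{prop: dil iff lem}.
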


\begin{proof}
  Let $\logic{L}$ be an axiomatic extension of $\FLe$. Then $\logic{L}$ is finitary and it inherits the local DDT and local IL of $\FLe$. By Theorem~\ref{thm: semisimplicity}, the semisimplicity of $\logic{L}$ is therefore equivalent to the LEM w.r.t.\ $\set{\neg (1 \wedge p)^{n}}{n \in \omega}$:
\begin{align*}
  \Gamma, \varphi \vdash_{\logic{L}} \psi \text{ and } \Gamma, \neg (1 \wedge \varphi)^{n} \vdash_{\logic{L}} \psi \text{ for each } n \in \omega & \implies \Gamma \vdash_{\logic{L}} \psi.
\end{align*}
  Suppose first that $\logic{L}$ is semisimple and take $\varphi \assign p$, $\psi \assign q$, and
\begin{align*}
  \Gamma \assign \{ (1 \wedge p) \rightarrow q \} \cup \set{(1 \wedge \neg (1 \wedge p)^{n}) \rightarrow q}{n \in \omega}.
\end{align*}
  The LEM yields that $\Gamma \vdash_{\logic{L}} q$. Since $(1 \wedge \neg (1 \wedge p)^{i}) \rightarrow q \vdash_{\logic{L}} (1 \wedge \neg (1 \wedge p)^{j}) \rightarrow q$ holds for $j \leq i$, finitarity implies that for some $k \in \omega$
\begin{align*}
  (1 \wedge p) \rightarrow q, (1 \wedge \neg (1 \wedge p)^{k}) \rightarrow q \vdash_{\logic{L}} q.
\end{align*}
  Substituting $(1 \wedge p) \vee (1 \wedge \neg (1 \wedge p)^{k})$ for $q$ now yields the desired theorem.

  Conversely, recall that $\logic{L}$ inherits the proof by cases property (PCP) of $\FLe$:
\begin{align*}
  \Gamma, \varphi \vdash_{\logic{L}} \chi \text{ and } \Gamma, \psi \vdash_{\logic{L}} \chi & \iff \Gamma, (1 \wedge \varphi) \vee (1 \wedge \psi) \vdash_{\logic{L}} \chi.
\end{align*}
  It immediately follows that if $(1 \wedge p) \vee (1 \wedge \neg (1 \wedge p)^{n})$ is a theorem of $\logic{L}$, then $\logic{L}$ enjoys the LEM w.r.t.\ $\set{\neg (1 \wedge p)^{n}}{n \in \omega}$.
\end{proof}

  We have in fact proved the above claim for any extension of $\FLe$ (or more generally of any reduct of $\FLe$) which shares the same local DDT.

\begin{corollary}
  A variety of $\FLe$-algebras is semisimple if and only if it satisfies the equation $(1 \wedge x) \vee (1 \wedge \neg (1 \wedge x)^{n}) \equals 1$ for some $n$.
\end{corollary}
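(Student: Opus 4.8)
The plan is to deduce the corollary from the preceding theorem using the algebraizability of $\FLe$. Under this algebraization, the subvarieties of the variety of $\FLe$-algebras correspond bijectively to the axiomatic extensions of $\FLe$, and, as the paper has already observed, a subvariety $\class{K}$ is semisimple (in the algebraist's sense) precisely when the corresponding axiomatic extension $\logic{L}$ of $\FLe$ is semantically semisimple; this is Fact~\ref{fact: logic to algebra semisimplicity}. So the first move is to replace $\class{K}$ by its logic $\logic{L}$ and reduce the claim to the assertion that $\logic{L}$ is semantically semisimple if and only if $(1\wedge x)\vee(1\wedge\neg(1\wedge x)^{n})\equals 1$ holds in $\class{K}$ for some $n$.

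Next I would check that Theorem~\ref{thm: semantic semisimplicity} is applicable to $\logic{L}$, so that the semantic and syntactic notions of semisimplicity coincide for it. The logic $\logic{L}$ is finitary and has an antitheorem (being an axiomatic extension of $\FLe$), hence is compact, and it inherits the $\omega$-ary local IL of $\FLe$ with respect to the family whose component for a finite tuple length $k$ is $\set{\{\neg(1\wedge p_{1}\wedge\cdots\wedge p_{k})^{n}\}}{n\in\omega}$; this family is countable and each of its members is a singleton, so the cardinality hypotheses of the compact case of Theorem~\ref{thm: semantic semisimplicity} are met. Hence $\logic{L}$ is semantically semisimple iff it is syntactically semisimple, and by the previous theorem the latter holds iff $(1\wedge p)\vee(1\wedge\neg(1\wedge p)^{n})$ is a theorem of $\logic{L}$ for some $n\in\omega$ (the proof of that theorem also uses that $\logic{L}$ inherits the local DDT and the proof-by-cases property of $\FLe$).

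It remains to rewrite the syntactic condition as an equation. Under the algebraization of $\FLe$ one has $\emptyset\vdash_{\logic{L}}\varphi$ iff $\vDash_{\class{K}} 1\inequals\varphi$, and since both $1\wedge x$ and $1\wedge\neg(1\wedge x)^{n}$ lie below $1$ in every $\FLe$-algebra, so does their join; thus $1\inequals(1\wedge x)\vee(1\wedge\neg(1\wedge x)^{n})$ is equivalent to the equation $(1\wedge x)\vee(1\wedge\neg(1\wedge x)^{n})\equals 1$, and chaining the equivalences gives the corollary. The argument is mostly bookkeeping, since the substantive work has already been done in the previous theorem; the only point needing a little care is verifying uniformly for every axiomatic extension of $\FLe$ that the hypotheses of Theorem~\ref{thm: semantic semisimplicity}, especially the cardinality restrictions on the inconsistency family, are in force.
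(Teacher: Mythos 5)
Your proposal is correct and follows exactly the route the paper intends: the corollary is obtained from the preceding theorem on axiomatic extensions of $\FLe$ together with Theorem~\ref{thm: semantic semisimplicity} (whose compact-case hypotheses you rightly verify via finitarity, the antitheorem, and the countable singleton-valued local IL family inherited from $\FLe$) and Fact~\ref{fact: logic to algebra semisimplicity}, with algebraizability translating theoremhood of $(1\wedge p)\vee(1\wedge\neg(1\wedge p)^{n})$ into the equation $(1\wedge x)\vee(1\wedge\neg(1\wedge x)^{n})\equals 1$. No gaps; the bookkeeping, including the observation that both joinands lie below $1$, is exactly what is needed.
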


  Specializing the above results to $\FLew$ and $\FLew$-algebras allows us to simplify their statement somewhat.

\begin{theorem}
  An axiomatic extension of $\FLew$ is semisimple if and only if $p \vee \neg p^{n}$ is a theorem of $\logic{L}$ for some $n \in \omega$.
\end{theorem}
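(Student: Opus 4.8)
The plan is to obtain this statement as a direct specialization of the preceding theorem characterizing the semisimple axiomatic extensions of $\FLe$. Since $\FLew$ is an axiomatic extension of $\FLe$, every axiomatic extension $\logic{L}$ of $\FLew$ is in particular an axiomatic extension of $\FLe$, so that theorem applies verbatim: $\logic{L}$ is semisimple if and only if $(1 \wedge p) \vee (1 \wedge \neg (1 \wedge p)^{n})$ is a theorem of $\logic{L}$ for some $n \in \omega$. It therefore suffices to show that, relative to $\FLew$, this formula may be replaced by $p \vee \neg p^{n}$.

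The key point is integrality: in every $\FLew$-algebra the unit coincides with the top element, $1 \equals \top$, so that $1 \wedge a = a$ for every element $a$. Consequently the terms $1 \wedge p$ and $p$ have the same interpretation in every $\FLew$-algebra, whence so do $(1 \wedge p)^{n}$ and $p^{n}$, then $\neg (1 \wedge p)^{n}$ and $\neg p^{n}$, and finally $1 \wedge \neg (1 \wedge p)^{n}$ and $\neg p^{n}$ again. Thus $(1 \wedge p) \vee (1 \wedge \neg (1 \wedge p)^{n})$ and $p \vee \neg p^{n}$ are interderivable already in $\FLew$, hence in every axiomatic extension $\logic{L}$, so one is a theorem of $\logic{L}$ precisely when the other is. Combining this with the previous theorem yields the claim.

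Alternatively, one may simply rerun the proof of the $\FLe$ theorem using the slightly simpler syntactic data inherited from $\FLew$: the local DDT $\Gamma, \varphi \vdash_{\logic{L}} \psi \iff \Gamma \vdash_{\logic{L}} \varphi^{n} \rightarrow \psi$ for some $n$, the local IL $\Gamma, \varphi \vdash_{\logic{L}} \allset \iff \Gamma \vdash_{\logic{L}} \neg \varphi^{n}$ for some $n$, and the proof by cases property $\Gamma, \varphi \vdash_{\logic{L}} \chi$ and $\Gamma, \psi \vdash_{\logic{L}} \chi \iff \Gamma, \varphi \vee \psi \vdash_{\logic{L}} \chi$. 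By Theorem~\ref{thm: semisimplicity}, semisimplicity of $\logic{L}$ is then equivalent to the LEM with respect to the family $\set{\neg p^{n}}{n \in \omega}$; the forward direction applies this LEM to $\Gamma \assign \{ p \rightarrow q \} \cup \set{\neg p^{n} \rightarrow q}{n \in \omega}$, uses finitarity to reduce to a single index $k$, and substitutes $p \vee \neg p^{k}$ for $q$, while the converse feeds the theorem $p \vee \neg p^{n}$ into the proof by cases property. Either route is routine; I expect no genuine obstacle here beyond keeping track of the simplification $1 \wedge x = x$.
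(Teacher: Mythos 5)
Your proposal is correct and matches the paper's treatment: the paper obtains this theorem precisely by specializing the $\FLe$ result, and your first route simply spells out why integrality ($1 \equals \top$, hence $1 \wedge x = x$) makes $(1 \wedge p) \vee (1 \wedge \neg (1 \wedge p)^{n})$ and $p \vee \neg p^{n}$ interderivable over $\FLew$ and its axiomatic extensions. Your alternative route of rerunning the $\FLe$ argument with the simpler DDT, IL, and PCP of $\FLew$ is also sound, but it is not needed.
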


\begin{corollary}[\cite{kowalski04}]
  A variety of $\FLew$-algebras is semisimple if and only if it satisfies the equation $x \vee \neg x^{n} \equals 1$ for some $n$.
\end{corollary}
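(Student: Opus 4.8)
The plan is to derive this corollary by specialization, not by any fresh argument, since all the real content has already been extracted into Theorem~\ref{thm: semisimplicity} and Theorem~\ref{thm: semantic semisimplicity}. Two routes present themselves. The first passes through the theorem just proved --- that an axiomatic extension $\logic{L}$ of $\FLew$ is semisimple iff $p \vee \neg p^{n}$ is a theorem of $\logic{L}$ for some $n$ --- together with the transfer machinery. Every axiomatic extension of $\FLew$ is finitary, has an antitheorem (e.g.\ $\{\bot\}$, since $\bot \vdash_{\FLew} \allset$), and inherits the finitary local IL of $\FLew$ with respect to the family $\set{\neg p^{n}}{n \in \omega}$, whose members are singletons (hence of cardinality less than $\omega$) and which is countable (hence of cardinality at most $|{\Var \logic{L}}|$). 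So the ``compact'' clause of Theorem~\ref{thm: semantic semisimplicity} applies, giving that syntactic and semantic semisimplicity coincide for such a logic. By Fact~\ref{fact: logic to algebra semisimplicity}, semantic semisimplicity of $\logic{L}$ is equivalent to semisimplicity of its algebraic counterpart $\class{K}$, and by algebraizability $p \vee \neg p^{n}$ is a theorem of $\logic{L}$ exactly when $\class{K}$ satisfies $x \vee \neg x^{n} \equals 1$, which is the claim.

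The second, shorter route --- the one I would actually write out --- simply specializes the corollary just stated for $\FLe$-algebras. A variety $\class{K}$ of $\FLew$-algebras is in particular a variety of $\FLe$-algebras, so by that corollary $\class{K}$ is semisimple if and only if it satisfies $(1 \wedge x) \vee (1 \wedge \neg (1 \wedge x)^{n}) \equals 1$ for some $n$. Since $\class{K}$ satisfies $1 \equals \top$, it satisfies $1 \wedge x \equals x$; hence $\neg (1 \wedge x)^{n} \equals \neg x^{n}$ and $1 \wedge \neg (1 \wedge x)^{n} \equals \neg x^{n}$, so the displayed equation collapses over $\class{K}$ to $x \vee \neg x^{n} \equals 1$. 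This recovers Kowalski's original statement.

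I do not anticipate any obstacle here: the substantive work --- the equivalence of semisimplicity with the LEM and the transfer from syntactic to semantic semisimplicity --- is already done, and the $\FLew$ case was reduced to clean equational form in the preceding theorem. The only thing to get right is the trivial bookkeeping that integrality turns $(1 \wedge x) \vee (1 \wedge \neg (1 \wedge x)^{n}) \equals 1$ into $x \vee \neg x^{n} \equals 1$, and that one should read $\neg x^{n}$ as $\neg (x^{n})$ in accordance with the paper's convention.
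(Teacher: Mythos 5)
Your proposal is correct and matches the paper's own (implicit) argument: the paper derives this corollary precisely by specializing the preceding $\FLe$ results to $\FLew$, where integrality ($1 \equals \top$, hence $1 \wedge x \equals x$) collapses $(1 \wedge x) \vee (1 \wedge \neg (1 \wedge x)^{n}) \equals 1$ to $x \vee \neg x^{n} \equals 1$, exactly as in your second route (your first route, via the $\FLew$ theorem plus Theorem~\ref{thm: semantic semisimplicity} and Fact~\ref{fact: logic to algebra semisimplicity}, is the same machinery the paper uses to pass from logics to varieties).
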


  The proof of the analogous theorem for $\IK$ will be slightly more complicated. Let us first show that the axiom of $n$-cyclicity can be given a slightly different but equivalent form in $\IKnfour$.

\begin{fact} \label{fact: k cyclicity}
  Let $\logic{L}$ be an extension of $\IK$ and $k \geq 1$. If $p \vee \Box_{1} \neg \Box_{n} p$ is a theorem of $\logic{L}$, then so is $p \vee \Box_{k} \neg \Box_{kn} p$.
\end{fact}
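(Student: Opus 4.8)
The plan is to reduce the statement to a purely equational one and prove it by induction on $k$. Since $\logic{L}$ is algebraizable with algebraic counterpart a variety $\class{K}$ of modal Heyting algebras, a formula is a theorem of $\logic{L}$ exactly when the corresponding identity holds throughout $\class{K}$; so it suffices to show that \emph{any} modal Heyting algebra $\alg{A}$ in which $a \vee \Box_{1} \neg \Box_{n} a = 1$ for all $a \in \alg{A}$ also satisfies $a \vee \Box_{k} \neg \Box_{kn} a = 1$ for all $a$. The ingredients I would use are the composition laws $\Box_{i} \circ \Box_{j} = \Box_{i+j}$ and $\Box \circ \Box_{i} = \Box_{i} \circ \Box$ (both immediate from $\Box$ preserving finite meets), the inequalities $\Box_{i} a \leq a$, and the monotonicity of $\Box$ and of every $\Box_{i}$. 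I would also record at the outset the weaker consequence of the hypothesis obtained by distributing $a \vee (\neg \Box_{n} a \wedge \Box \neg \Box_{n} a) = 1$ over the meet, namely $a \vee \Box \neg \Box_{n} a = 1$ for all $a$; this is the form actually consumed in the induction step.

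I would then prove, by induction on $m \geq 1$, that $a \vee \Box_{m} \neg \Box_{mn} a = 1$ for every $a$; the base case $m = 1$ is exactly the hypothesis, and $m = k$ is the desired conclusion. For the inductive step, write $\Box_{m+1} = \Box_{1} \circ \Box_{m}$ and expand, using $\Box \circ \Box_{m} = \Box_{m} \circ \Box$, to get $\Box_{m+1} \neg \Box_{(m+1)n} a = \Box_{m} c \wedge \Box_{m} d$, where $c = \neg \Box_{(m+1)n} a$ and $d = \Box c$. By distributivity it then suffices to verify $a \vee \Box_{m} c = 1$ and $a \vee \Box_{m} d = 1$. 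The first follows by applying the inductive hypothesis to $\Box_{n} a$ in place of $a$ (using $\Box_{mn} \circ \Box_{n} = \Box_{(m+1)n}$) and then enlarging $\Box_{n} a$ to $a$ via $\Box_{n} a \leq a$. For the second, apply the weaker form of the hypothesis to $\Box_{mn} a$ to obtain $\Box_{mn} a \vee d = 1$, hence $\neg \Box_{mn} a \leq d$, hence $\Box_{m} \neg \Box_{mn} a \leq \Box_{m} d$ by monotonicity; combining this with the inductive hypothesis at $a$ gives $a \vee \Box_{m} d = 1$.

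The only genuinely non-routine decision is what to induct on. A direct attempt to deduce the $k$-version from the $1$-version by substituting into the hypothesis is doomed, since substitution alone cannot raise the outer $\Box$-depth above one; the remedy is to strengthen the iterated operator — carry the full $\Box_{m}$ (equivalently, all of $\Box^{0}, \dots, \Box^{m}$) through the induction — and to use the composition laws so that both $\Box_{n} a$ and $\Box_{mn} a$ can be fed back into the inductive hypothesis, in two different ways. Everything else is bookkeeping with the lattice operations and the monotonicity of $\Box$.
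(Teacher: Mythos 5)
Your proof is correct and takes essentially the same route as the paper: an induction on $k$ whose step feeds a substitution instance of the $1$-cyclicity theorem at a boxed iterate of $p$ back in, transports the resulting inequality under $\Box_{m}$ by monotonicity, and absorbs it into the inductive disjunction (your algebraic framing via algebraizability is a harmless repackaging of the paper's syntactic phrasing). The paper's step is just leaner: substituting $\Box_{kn}p$ for $p$ in the \emph{full} hypothesis gives $\Box_{kn}p \vee \Box_{1}\neg\Box_{(k+1)n}p = 1$, hence $\Box_{k}\neg\Box_{kn}p \leq \Box_{k}\Box_{1}\neg\Box_{(k+1)n}p = \Box_{k+1}\neg\Box_{(k+1)n}p$, and the conclusion follows from the inductive hypothesis at $p$ alone, so your decomposition of $\Box_{m+1}\neg\Box_{(m+1)n}a$ into $\Box_{m}c \wedge \Box_{m}d$, the weakening to $a \vee \Box\neg\Box_{n}a = 1$, and the second appeal to the inductive hypothesis at $\Box_{n}a$ are all avoidable.
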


\begin{proof}
  Suppose that $p \vee \Box_{1} \neg \Box_{n} p$ and $p \vee \Box_{k} \neg \Box_{kn} p$ are theorems. Then substituting $\Box_{kn} p$ for $p$ in the former yields the theorem $\Box_{kn} p \vee \Box_{1} \neg \Box_{n} \Box_{kn} p$, which combined with $p \vee \Box_{k} \neg \Box_{kn} p$ yields the theorem $p \vee \Box_{k} \Box_{1} \neg \Box_{n} \Box_{kn} p$, or equivalently $p \vee \Box_{k+1} \neg \Box_{(k+1)n} p$.
\end{proof}

\begin{fact} \label{fact: cyclicity}
  Let $\logic{L}$ be an extension of $\IKnfour$ for $n \geq 1$. Then $p \vee \Box_{1} \neg \Box_{n} p$ is a theorem of $\logic{L}$ if and only if $\Box_{n} p \vee \Box_{n} \neg \Box_{n} p$ is.
\end{fact}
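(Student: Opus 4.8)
The plan is to reduce both directions to two standard facts about the operators $\Box_{m}$ in weakly $n$-transitive modal algebras, available here since $\logic{L}$ extends $\IKnfour$: first, the collapse $\Box_{m} x \equals \Box_{n} x$ for every $m \geq n$ (obtained by iterating weak $n$-transitivity: from $\Box_{n} x \inequals \Box_{n+1} x$ one derives $\Box_{n} x \inequals \Box^{k} x$ for all $k$, whence $\Box_{n} x \inequals \Box_{m} x$), and second, its consequence $\Box_{n} \Box_{n} x \equals \Box_{n} x$ (using that $\Box$ distributes over finite meets, so $\Box_{n} \Box_{n} x \equals \Box_{2n} x$). I will also use the trivial ``more conjuncts'' inequalities $\Box_{n} x \inequals x$ and, since $n \geq 1$, $\Box_{n} x \inequals \Box_{1} x$. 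Throughout, recall that ``$\varphi$ is a theorem of $\logic{L}$'' means that $\varphi \equals 1$ holds throughout the algebraic counterpart of $\logic{L}$, so any equation or inequality valid in $\IKnfour$-algebras may be applied freely.

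The direction from $\Box_{n} p \vee \Box_{n} \neg \Box_{n} p$ to $p \vee \Box_{1} \neg \Box_{n} p$ is pure monotonicity: from $\Box_{n} p \inequals p$ and $\Box_{n} \neg \Box_{n} p \inequals \Box_{1} \neg \Box_{n} p$ we get the term inequality $\Box_{n} p \vee \Box_{n} \neg \Box_{n} p \inequals p \vee \Box_{1} \neg \Box_{n} p$, so if the smaller term is identically $1$ then so is the larger.

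For the converse, assume $p \vee \Box_{1} \neg \Box_{n} p$ is a theorem of $\logic{L}$. Applying Fact~\ref{fact: k cyclicity} with $k \assign n$ (legitimate as $n \geq 1$) gives that $p \vee \Box_{n} \neg \Box_{n^{2}} p$ is a theorem; since $n^{2} \geq n$, the collapse $\Box_{n^{2}} x \equals \Box_{n} x$ rewrites this to ``$p \vee \Box_{n} \neg \Box_{n} p$ is a theorem''. Now substitute $\Box_{n} p$ for $p$: this yields that $\Box_{n} p \vee \Box_{n} \neg \Box_{n} \Box_{n} p$ is a theorem, and one further use of $\Box_{n} \Box_{n} x \equals \Box_{n} x$ inside the negation gives exactly ``$\Box_{n} p \vee \Box_{n} \neg \Box_{n} p$ is a theorem''. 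I do not expect a genuine obstacle; the only delicate point is the bookkeeping with the $\Box_{m}$ operators, in particular establishing the collapse $\Box_{m} x \equals \Box_{n} x$ for $m \geq n$, which is where weak $n$-transitivity is actually used and which underlies both the rewriting of $\Box_{n^{2}}$ and the idempotency of $\Box_{n}$. If one prefers not to appeal to Fact~\ref{fact: k cyclicity}, the conclusion can be obtained directly: substituting $\Box_{n} p$ for $p$ in the hypothesis and simplifying via $\Box_{n} \Box_{n} x \equals \Box_{n} x$ shows that $\Box_{n} p \vee \Box_{1} \neg \Box_{n} p$ is a theorem, and in any Heyting algebra $a \vee \Box_{1} \neg a \equals 1$ forces $\neg a \inequals \Box_{1} \neg a$, hence $\neg a \inequals \Box \neg a$, hence $\neg a \inequals \Box^{k} \neg a$ for all $k$, so $\Box_{n} \neg a \equals \neg a \equals \Box_{1} \neg a$ and therefore $\Box_{n} p \vee \Box_{n} \neg \Box_{n} p \equals \Box_{n} p \vee \Box_{1} \neg \Box_{n} p \equals 1$.
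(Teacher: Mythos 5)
Your proof is correct, and your main argument is essentially the paper's: both apply Fact~\ref{fact: k cyclicity} with $k = n$ to get $p \vee \Box_{n} \neg \Box_{n^{2}} p$, substitute $\Box_{n} p$ for $p$, and then use iterated weak $n$-transitivity to align the indices. The only difference there is bookkeeping: you first rewrite $\Box_{n^{2}}$ as $\Box_{n}$ via the collapse $\Box_{m} x \equals \Box_{n} x$ ($m \geq n$) and then invoke idempotency $\Box_{n}\Box_{n} x \equals \Box_{n} x$, whereas the paper substitutes first, obtains $\Box_{n} p \vee \Box_{n} \neg \Box_{n^{2}+n} p$, and then uses the theorem $\Box_{n} p \rightarrow \Box_{n^{2}+n} p$; these are the same use of weak $n$-transitivity. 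Your closing alternative, however, is genuinely different and arguably cleaner: substituting $\Box_{n} p$ into the hypothesis and using only $\Box_{n}\Box_{n} x \equals \Box_{n} x$, you reduce everything to the purely Heyting-algebraic observation that $a \vee \Box_{1} \neg a \equals 1$ forces $\neg a \inequals \Box \neg a$ (by distributivity), hence $\Box_{n} \neg a \equals \Box_{1} \neg a$. This bypasses Fact~\ref{fact: k cyclicity} altogether and makes explicit that weak $n$-transitivity enters only through the idempotency of $\Box_{n}$. One small caution: since $\logic{L}$ is an arbitrary extension, the equational rewriting should be read as applying rules valid in $\IKnfour$ (under which the theorem set of $\logic{L}$ is closed, together with structurality), rather than literally as completeness of $\logic{L}$ with respect to a variety; all your steps are of this form, so nothing is lost.
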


\begin{proof}
  The right-to-left direction is immediate given the definition of $\Box_{n}$. Conversely, if $p \vee \Box \neg \Box_{n} p$ is a theorem, then so is $p \vee \Box_{n} \neg \Box_{n^{2}} p$ by the previous fact. Substituting $\Box_{n} p$ for $p$ yields that $\Box_{n} p \vee \Box_{n} \neg \Box_{n^{2} + n} p$. Finally, $\Box_{n} p \rightarrow \Box_{n+1} p$ is a theorem because $\logic{L}$ extends $\IKnfour$, therefore $\Box_{n} p \rightarrow \Box_{k} p$ is a theorem for each $k \geq n$. Taking $k \assign n^{2} + n$ now yields the theorem $\Box_{n} p \rightarrow \Box_{n^{2} + n} p$, which combined with $\Box_{n} p \vee \Box_{n} \neg \Box_{n^{2} + n} p$ yields the theorem $\Box_{n} p \vee \Box_{n} \neg \Box_{n} p$.
\end{proof}

\begin{theorem}
  An axiomatic extension of $\IK$ is semisimple if and only if the formulas $p \vee \Box_{1} \neg \Box_{n} p$ and $\Box_{n} p \rightarrow \Box_{n+1} p$ are theorems of $\logic{L}$ for some $n \in \omega$.
\end{theorem}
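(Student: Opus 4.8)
The plan is to apply Theorem~\ref{thm: semisimplicity} to reduce the semisimplicity of $\logic{L}$ to a law of the excluded middle, and then to translate this meta-rule into (and back out of) the two stated axioms. Being an axiomatic extension of $\IK$, the logic $\logic{L}$ is finitary, and since $\IK$ has an antitheorem (for example $\{ p, \neg p \}$) it is also compact, hence coatomic; moreover $\logic{L}$ inherits the local DDT and the local IL of $\IK$, the latter with respect to the family $\ilfamily$ whose inconsistency sets in one variable are the singletons $\{ \neg \Box_{n} p \}$ for $n \in \omega$. Theorem~\ref{thm: semisimplicity} then says that $\logic{L}$ is semisimple if and only if it enjoys the local LEM with respect to $\ilfamily$, i.e.
\begin{align*}
  \Gamma, \varphi \vdash_{\logic{L}} \psi \text{ and } \Gamma, \neg \Box_{n} \varphi \vdash_{\logic{L}} \psi \text{ for each } n \in \omega & \implies \Gamma \vdash_{\logic{L}} \psi.
\end{align*}

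First I would derive $n$-cyclicity from semisimplicity. Applying the LEM with $\varphi \assign p$, $\psi \assign q$, and $\Gamma \assign \{ p \rightarrow q \} \cup \set{\Box_{1} \neg \Box_{n} p \rightarrow q}{n \in \omega}$, both hypotheses hold (the first because $p, p \rightarrow q \vdash_{\logic{L}} q$; the second because the local DDT with box-depth $1$ turns $\Box_{1} \neg \Box_{n} p \rightarrow q \in \Gamma$ into $\Gamma, \neg \Box_{n} p \vdash_{\logic{L}} q$), so $\Gamma \vdash_{\logic{L}} q$. By finitarity only finitely many $n$ are used, and since $\Box_{1} \neg \Box_{n} p \rightarrow q \vdash_{\logic{L}} \Box_{1} \neg \Box_{m} p \rightarrow q$ whenever $m \leq n$, this collapses to $p \rightarrow q, \Box_{1} \neg \Box_{N} p \rightarrow q \vdash_{\logic{L}} q$ for some $N$; substituting $p \vee \Box_{1} \neg \Box_{N} p$ for $q$ and discharging the two now-provable premises yields the theorem $p \vee \Box_{1} \neg \Box_{N} p$.

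The harder task, and where I expect the main obstacle to lie, is producing a theorem $\Box_{M} p \rightarrow \Box_{M+1} p$. The cyclicity axiom alone does not force this: the modal algebra of $\mathbb{Z}$ under the adjacency relation validates $1$-cyclicity yet fails $\Box_{n} x \equals \Box_{n+1} x$ for every $n$, so semisimplicity must be used again in an essential way. I anticipate obtaining weak $n$-transitivity either by a second, more delicate application of the LEM in which the ambient context is arranged so that finitarity caps the otherwise unbounded deduction–detachment depths appearing in the second hypothesis — with Facts~\ref{fact: k cyclicity} and~\ref{fact: cyclicity} supplying the bookkeeping needed to line up the nested-box parameters — or, failing a purely syntactic route, by passing to the semantic side via Theorem~\ref{thm: semantic semisimplicity} and Fact~\ref{fact: logic to algebra semisimplicity} and arguing that semisimplicity of the variety of $\logic{L}$-algebras forces a uniform bound on the depth at which the descending chain $\Box_{0} x \geq \Box_{1} x \geq \cdots$ stabilizes in its simple members. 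Once both $p \vee \Box_{1} \neg \Box_{N} p$ and $\Box_{M} p \rightarrow \Box_{M+1} p$ are theorems, taking $n \assign \max(M, N)$ serves for both, since weak $M$-transitivity propagates upwards and $p \vee \Box_{1} \neg \Box_{N} p \vdash_{\logic{L}} p \vee \Box_{1} \neg \Box_{n} p$ for $n \geq N$.

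For the converse I would argue as follows. If $p \vee \Box_{1} \neg \Box_{n} p$ and $\Box_{n} p \rightarrow \Box_{n+1} p$ are theorems, then $\logic{L}$ extends $\IKnfour$, hence enjoys the global DDT, the global IL with respect to $\{ \neg \Box_{n} p \}$, and the global proof-by-cases property with respect to the disjunction $\Box_{n} x \vee \Box_{n} y$; and by Fact~\ref{fact: cyclicity} it proves $\Box_{n} p \vee \Box_{n} \neg \Box_{n} p$. Using the proof-by-cases property together with the theorem $\Box_{n} \varphi \vee \Box_{n} \neg \Box_{n} \varphi$ shows that $\Gamma, \varphi \vdash_{\logic{L}} \psi$ and $\Gamma, \neg \Box_{n} \varphi \vdash_{\logic{L}} \psi$ jointly imply $\Gamma \vdash_{\logic{L}} \psi$; together with $\varphi, \neg \Box_{n} \varphi \vdash_{\logic{L}} \allset$ (immediate from the IL) this is exactly the global LEM, equivalently the dual global IL, with respect to $\{ \neg \Box_{n} p \}$. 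Since $\logic{L}$ is coatomic, Proposition~\ref{prop: dil implies semisimple} then gives that $\logic{L}$ is semisimple.
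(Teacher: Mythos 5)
Your converse direction and your derivation of the cyclicity axiom from semisimplicity are both correct and essentially match the paper (the paper derives cyclicity from the LEM applied to $\Gamma \assign \set{q \rightarrow \Box_{k} p}{k \in \omega}$ and gets the converse via the global DDT rather than the PCP, but these are cosmetic differences). The genuine gap is exactly where you flag it: you never produce the theorem $\Box_{M} p \rightarrow \Box_{M+1} p$, and this is the real content of the theorem (it is the hard half of the Kowalski--Kracht result). Your proposed fallbacks do not close it. The semantic fallback is essentially circular: ``semisimplicity of the variety forces a uniform bound on the stabilization of $\Box_{0}x \geq \Box_{1}x \geq \cdots$'' \emph{is} weak $n$-transitivity at the variety level, i.e.\ the very statement being proved; there is no shortcut there that avoids redoing Kowalski--Kracht. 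And a ``second, more delicate application of the LEM'' with the same shape as your first one (fixed box-depth in front of the negations) cannot work, precisely because of the kind of example you mention: without a device that lets cyclicity interact with the depths, the LEM instance you would discharge is already a consequence of cyclicity alone.

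The paper's actual argument shows what the missing device is: after obtaining $n$-cyclicity, it applies the LEM to $\Gamma \assign \{ p \rightarrow q \} \cup \set{\Box_{f(k)} \neg \Box_{k} p \rightarrow q}{k \in \omega}$ where the guarding depth $f(k) \assign kn+1$ \emph{grows with} $k$ and is calibrated to the cyclicity index $n$. Finitarity then yields a theorem $p \vee \bigvee_{k \leq m} \Box_{f(k)} \neg \Box_{k} p$; substituting $\neg \Box_{mn} p$ for $p$, rewriting $\Box_{mn} p$ as $\Box_{kn} \Box_{(m-k)n} p$, and cancelling the inner negated boxes against the instances $p \vee \Box_{k} \neg \Box_{kn} p$ of iterated cyclicity (Fact~\ref{fact: k cyclicity}) collapses each disjunct to $\Box_{f(k)} \Box_{(m-k)n} p$, giving $\neg \Box_{mn} p \vee \Box_{mn+1} p$ and hence $\Box_{mn} p \rightarrow \Box_{mn+1} p$. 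So the choice of a $k$-dependent depth $f(k)$ tied to cyclicity, plus the substitution $p \mapsto \neg \Box_{mn} p$, is the idea your sketch is missing; Facts~\ref{fact: k cyclicity} and~\ref{fact: cyclicity} alone (as bookkeeping) will not get you there. As written, your proof establishes only cyclicity plus the converse, not the full equivalence.
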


\begin{proof}
  Let $\logic{L}$ be an axiomatic extension of $\IK$. Then $\logic{L}$ is finitary and it inherits the local DDT and local IL of $\IK$. By Theorem~\ref{thm: semisimplicity}, the semisimplicity of $\logic{L}$ is therefore equivalent to the LEM w.r.t.\ $\set{\neg \Box_{n} p}{n \in \omega}$.

  Suppose first that $\Box_{n} p \rightarrow \Box_{n+1} p$ and $p \vee \Box_{1} \neg \Box_{n} p$ are theorems of $\logic{L}$. Recall that the latter is equivalent to $\Box_{n} p \vee \Box_{n} \neg \Box_{n} p$ by Fact~\ref{fact: cyclicity}. Then the local DDT inherited by $\logic{L}$ from $\IK$ reduces to the global DDT w.r.t.\ $\Box_{n} p \rightarrow q$. It now suffices to show that $\logic{L}$ enjoys the global LEM w.r.t.\ $\neg \Box_{n} p$. Using the global DDT of $\logic{L}$, this global LEM is equivalent to the following implication:
\begin{prooftree}
  \def\fCenter{\vdash_{\logic{L}}}
  \Axiom$\Gamma \fCenter \Box_{n} \varphi \rightarrow  \psi$
  \Axiom$\Gamma \fCenter \Box_{n} \neg \Box_{n} \varphi \rightarrow \psi$
  \BinaryInf$\Gamma \fCenter \psi$
\end{prooftree}
  This implication holds if (and only if) $\Box_{n} p \rightarrow q, \Box_{n} \neg \Box_{n} p \rightarrow q \vdash_{\logic{L}} q$, i.e.\ if (and only if) $\Box_{n} p \vee \Box_{n} \neg \Box_{n} p$ is a theorem of $\logic{L}$.

  Conversely, suppose that $\logic{L}$ is semisimple. We first prove that $p \vee \Box_{1} \neg \Box_{n} p$ is a theorem of $\logic{L}$ for some $n \in \omega$. To this end, take $\Gamma \assign \set{q \rightarrow \Box_{k} p}{k \in \omega}$. Then $p, \Gamma \vdash_{\logic{L}} p \vee \Box_{1} \neg q$ and $\neg \Box_{k} p, \Gamma \vdash_{\logic{L}} \neg q \vdash_{\logic{L}} \Box_{1} \neg q \vdash_{\logic{L}} p \vee \Box_{1} \neg q$ for each $k \in \omega$, therefore $\Gamma \vdash_{\logic{L}} p \vee \Box_{1} q$ by the LEM. Because $q \rightarrow \Box_{k+1} p \vdash_{\logic{L}} q \rightarrow \Box_{k} p$, by finitarity there is some $n \in \omega$ such that $q \rightarrow \Box_{n} p \vdash_{\logic{L}} p \vee \Box_{1} \neg q$. Substituting $\Box_{n} p$ for $q$ now yields that $\Box_{n} p \vee \Box_{1} \neg \Box_{n} p$, hence also $p \vee \Box_{1} \neg \Box_{n} p$, is a theorem of $\logic{L}$.

  Consider now the function $f(k) = kn +1$ and take
\begin{align*}
  \Gamma & \assign \{ p \rightarrow q \} \cup \set{\Box_{f(k)} \neg \Box_{k} p \rightarrow q}{k \in \omega}.
\end{align*}
  Then $p, \Gamma \vdash_{\logic{L}} q$ and $\neg \Box_{k} p, \Gamma \vdash_{\logic{L}} q$ for each $k \in \omega$, therefore $\Gamma \vdash_{\logic{L}} q$ by the LEM. By finitarity there is some $m \geq 1$ such that
\begin{align*}
  p \rightarrow q, \set{\Box_{f(k)} \neg \Box_{k} p \rightarrow q}{k \leq m} \vdash_{\logic{L}} q.
\end{align*}
  Substituting the formula $\psi \assign p \vee \bigvee_{k \leq m} \Box_{f(k)} \neg \Box_{k} p$ for $q$ yields that $\psi$ is a theorem of $\logic{L}$. By Fact~\ref{fact: k cyclicity} the formula $p \vee \Box_{k} \neg \Box_{kn} p$ is a theorem of $\logic{L}$ for each $k \leq m$. Substituting $\neg \Box_{mn} p$ for $p$ in $\psi$ yields the theorem
\begin{align*}
  \neg \Box_{mn} p \vee \bigvee_{k \leq m} \Box_{f(k)} \neg \Box_{k} \neg \Box_{kn} \Box_{(m-k) n} p,
\end{align*}
  since $\Box_{mn} p$ is equivalent (in every context) to $\Box_{kn} \Box_{(m-k) n} p$ for each $k \leq m$. Applying the substitution instances $\Box_{(m-k) n} p \vee \Box_{k} \neg \Box_{kn} \Box_{(m-k) n} p$ of the theorem $p \vee \Box_{k} \neg \Box_{kn} p$ to the theorem displayed above yields
\begin{align*}
  \neg \Box_{mn} p \vee \bigvee_{k \leq m} \Box_{f(k)} \Box_{(m-k)n} p.
\end{align*}
  For our function $f(k) \assign kn + 1$ this is equivalent to $\neg \Box_{mn} p \vee \Box_{mn+1} p$, which implies $\Box_{mn} p \rightarrow \Box_{mn+1} p$. Of~course, if $p \vee \Box_{1} \neg \Box_{n} p$ is a theorem, so is $p \vee \Box_{1} \neg \Box_{mn} p$.

  (Let us also provide a semantic proof that $n$-cyclicity plus the complicated \mbox{theorem} $p \vee \bigvee_{k \leq m} \Box_{f(k)} \neg \Box_{k} p$ imply weak $n$-transitivity. We only state the proof for classical modal logic for the sake of simplicity, but the argument can be extended to the intuitionistic setting. These axiom of $n$-cyclicity and the theorem $p \vee \bigvee_{k \leq m} \Box_{f(k)} \neg \Box_{k} p$ are equivalent to so-called Sahlqvist formulas, therefore they correspond certain first-order conditions on Kripke frames and the logic $\logic{L}$ is complete with respect to the class of all Kripke frames satisfying these conditions (see e.g.~\cite[Sections~3.6~and~4.3]{blackburn+derijke+venema02}). Using the notation introduced in Subsection~\ref{subsec: running examples modal}, \mbox{$n$-cyclicity} corresponds to the condition that $uRw$ implies $w R^{\smallleq k}u$. The~more complicated theorem corresponds to the following condition:
\begin{align*}
  \text{for each $u \in W$ there is some $n \leq m$ such that $u R^{\smallleq nk+1} w$ implies $w R^{\smallleq n} u$}.
\end{align*}
  But by the first condition $w R^{\smallleq n} u$ implies $u R^{\smallleq nk} w$, therefore for each $u \in W$ there is some $n \leq m$ such that $u R^{\smallleq nk+1} w$ implies that $u R^{\smallleq nk} w$. It follows that $u R^{\smallleq mk + 1} w$ implies that $u R^{\smallleq mk} w$. Informally, all of the alternatives imply that each for each path of length $mk+1$ there is a shortcut of length at most $mk$. The formula $\Box_{mk} p \rightarrow \Box_{mk+1} p$ is therefore a theorem of $\logic{L}$.)
\end{proof}

\begin{corollary}
  A variety of modal Heyting algebras is semisimple if and only if it satisfies the equations $1 \equals x \vee \Box_{1} \neg \Box_{n} x$ and $\Box_{n} x \inequals \Box_{n+1} x$ for some $n \in \omega$.
\end{corollary}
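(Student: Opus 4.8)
The plan is to transfer the preceding theorem — the characterization of the semisimple axiomatic extensions of $\IK$ — to the algebraic side via the algebraizability of $\IK$. First I would recall that $\IK$ is algebraizable with algebraic counterpart the variety of modal Heyting algebras, so that every variety $\class{K}$ of modal Heyting algebras is the algebraic counterpart of some axiomatic extension $\logic{L}$ of $\IK$. Such an $\logic{L}$ is finitary and inherits an antitheorem from $\IK$, hence is compact, and it inherits the local IL of $\IK$; for each arity $\alpha$ the inconsistency family $\ilfamily_{\alpha}$ is countable and consists of singleton sets, so the cardinality hypotheses in the ``in particular'' part of Theorem~\ref{thm: semantic semisimplicity} for compact logics are met. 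That theorem then gives that $\logic{L}$ is semantically semisimple if and only if it is syntactically semisimple, while Fact~\ref{fact: logic to algebra semisimplicity} gives that $\class{K}$ is semisimple if and only if $\logic{L}$ is semantically semisimple. Chaining these, $\class{K}$ is semisimple if and only if $\logic{L}$ is syntactically semisimple.

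Next I would apply the preceding theorem: $\logic{L}$ is syntactically semisimple if and only if $p \vee \Box_{1} \neg \Box_{n} p$ and $\Box_{n} p \rightarrow \Box_{n+1} p$ are theorems of $\logic{L}$ for some $n \in \omega$. Finally I would read off these theoremhood conditions as equations in $\class{K}$. Since $\logic{L}$ is algebraizable with algebraic counterpart $\class{K}$, a formula $\varphi$ is a theorem of $\logic{L}$ exactly when $\class{K} \vDash 1 \equals \varphi$; in particular $p \vee \Box_{1} \neg \Box_{n} p$ is a theorem of $\logic{L}$ precisely when $\class{K} \vDash 1 \equals x \vee \Box_{1} \neg \Box_{n} x$. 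For the second formula I would use the residuation identity $a \rightarrow b = 1 \iff a \leq b$, which holds in every Heyting algebra, to conclude that $\Box_{n} p \rightarrow \Box_{n+1} p$ is a theorem of $\logic{L}$ precisely when $\class{K} \vDash \Box_{n} x \inequals \Box_{n+1} x$. Combining the three equivalences yields the corollary.

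I do not expect a real obstacle here: all the content sits in the preceding theorem, and what remains is bookkeeping — checking that the mild hypotheses of Theorem~\ref{thm: semantic semisimplicity} hold (compactness, which follows from finitarity together with the inherited antitheorem, and the smallness of the inconsistency family, which is immediate from its description) and carrying out the routine dictionary translation from theorems to equations, including the elementary Heyting-algebra fact relating the implication $\Box_{n} p \rightarrow \Box_{n+1} p$ to the inequality $\Box_{n} x \inequals \Box_{n+1} x$. The only mildly delicate point is that the same $n$ must work for both equations simultaneously, but this is already built into the statement of the preceding theorem and so requires no extra argument.
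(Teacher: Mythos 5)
Your proposal is correct and matches the paper's intended route: the paper treats this corollary as an immediate consequence of the preceding theorem together with Theorem~\ref{thm: semantic semisimplicity} and Fact~\ref{fact: logic to algebra semisimplicity}, exactly as announced at the start of Subsection~\ref{subsec: applications}, with the same verification of compactness and the smallness of the inconsistency family and the same dictionary translating theoremhood into the equations $1 \equals x \vee \Box_{1} \neg \Box_{n} x$ and $\Box_{n} x \inequals \Box_{n+1} x$. Nothing is missing.
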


\begin{corollary}[\cite{kowalski+kracht06}]
  A variety of modal algebras is semisimple if and only if it satisfies the equations $x \inequals \Box \Diamond_{n} x$ and $\Box_{n} x \inequals \Box_{n+1} x$ for some $n \in \omega$.
\end{corollary}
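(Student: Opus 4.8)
The plan is to deduce this from the preceding corollary on modal Heyting algebras, using the fact that modal algebras form, up to term-equivalence, a subvariety of modal Heyting algebras, and then to rewrite the resulting $n$-cyclicity equation in the Boolean setting.

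First I would settle the signature bookkeeping. Viewing a modal algebra $\alg{A}$ as a modal Heyting algebra by setting $\Diamond x \assign \neg \Box \neg x$ (as noted just after the definition of modal Heyting algebras), the variety of modal algebras is term-equivalent to the variety $\class{W}$ consisting of those modal Heyting algebras which satisfy $x \vee \neg x \equals 1$ and $\Diamond x \equals \neg \Box \neg x$. Term-equivalence preserves congruence lattices, hence semisimplicity, and it induces a bijection between subvarieties; so for any variety $\class{V}$ of modal algebras, identifying it with the corresponding subvariety $\class{V}'$ of $\class{W}$ (which is in particular a variety of modal Heyting algebras), the preceding corollary yields: $\class{V}$ is semisimple if and only if $\class{V}'$ satisfies $1 \equals x \vee \Box_{1} \neg \Box_{n} x$ and $\Box_{n} x \inequals \Box_{n+1} x$ for some $n$.

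It then remains to show that, over modal (Boolean) algebras, the equation $1 \equals x \vee \Box_{1} \neg \Box_{n} x$ is interderivable with $x \inequals \Box \Diamond_{n} x$, the equation $\Box_{n} x \inequals \Box_{n+1} x$ being literally the same in either signature. For this I would use the De~Morgan identities $\neg \Box^{i} x = \Diamond^{i} \neg x$, immediate by induction from $\Diamond y = \neg \Box \neg y$, which give $\neg \Box_{n} x = \Diamond_{n} \neg x$, together with $\Box_{1} y = y \wedge \Box y$. These yield
\begin{align*}
  x \vee \Box_{1} \neg \Box_{n} x = (x \vee \Diamond_{n} \neg x) \wedge (x \vee \Box \Diamond_{n} \neg x) = x \vee \Box \Diamond_{n} \neg x,
\end{align*}
where the last equality uses $x \vee \Diamond_{n} \neg x \geq x \vee \neg x = 1$. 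Since $a \vee b = 1$ is equivalent to $\neg a \inequals b$ in a Boolean algebra, the equation $1 \equals x \vee \Box_{1} \neg \Box_{n} x$ asserts exactly that $\neg x \inequals \Box \Diamond_{n} \neg x$ holds identically, and since $x \mapsto \neg x$ is a bijection of the algebra this is equivalent to $x \inequals \Box \Diamond_{n} x$. Feeding this equivalence back into the characterization obtained above gives the corollary.

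The only step calling for a little care is the signature translation in the first paragraph — verifying that the term-equivalence between modal algebras and $\class{W}$ genuinely matches up subvarieties and preserves semisimplicity — but this is routine; everything else is the short Boolean computation displayed above plus an appeal to the preceding corollary, so I do not anticipate any real obstacle.
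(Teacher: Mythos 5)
Your argument is correct and is essentially the route the paper leaves implicit: the corollary is just the modal Heyting algebra corollary specialized to the (term-equivalent) Boolean subcase, together with the routine Boolean rewriting of the $n$-cyclicity equation $1 \equals x \vee \Box_{1} \neg \Box_{n} x$ into $x \inequals \Box \Diamond_{n} x$ via $\neg \Box_{n} x = \Diamond_{n} \neg x$. Your displayed computation and the substitution $x \mapsto \neg x$ are exactly the missing details, so nothing further is needed.
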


\section{Glivenko theorems and semisimple companions}
\label{sec: glivenko}

  In this section we study the (syntactic) semisimple companion of a logic, which for compact logics is the largest extension with the same inconsistent sets of formulas as the original logic. The rules valid in the semisimple companion can be described in terms of the antitheorems of the original logic. This leads to what we call antiadmissible rules, by analogy with the admissible rules of a logic. In fact, the semisimple companion can be thought of as a construction dual to the structural completion, which is the largest extension with the same theorems as the original logics.

  Under favourable circumstances, the models of the semisimple companion are precisely the semisimple models of the original logic. This fact can be applied to describe the semisimple algebras in a given variety, provided that we know how to axiomatize the semisimple companion of the corresponding logic. This~strategy works for logics with a well-behaved implication or disjunction (which satisfy the global DDT or the global PCP). For such logics the semisimple companion is axiomatized by an axiomatic form of the LEM.

  The description of antiadmissible will then allow us to establish a Glivenko-like connection between a logic and its semisimple companion, subsuming several existing Glivenko theorems in the literature. In~particular, we describe all extensions of $\FLe$ which enjoy a Glivenko-like connection to classical logic.

\subsection{The semisimple companion of a logic}
\label{subsec: semisimple companion}

  Let us start by recalling the relevant definitions from the previous section. A \emph{simple} theory of~$\logic{L}$ is a largest non-trivial theory of~$\logic{L}$, while a \emph{semisimple} theory is an intersection of simple theories. A~logic~$\logic{L}$ is called \emph{(syntactically) semisimple} if each theory of $\logic{L}$ is semisimple. The \emph{(syntactic) semisimple companion} of $\logic{L}$, denoted $\ssc{\logic{L}}$, is then defined as the logic determined by the simple, or equivalently semisimple, theories of~$\logic{L}$. That is, it is the logic determined by the matrices $\langle \FmAlg \logic{L}, T \rangle$ where $T$ ranges over the simple, or equivalently semisimple, theories of $\logic{L}$.  Note that it is not obvious from the definition whether this companion is a semisimple logic, although it is always complete with respect to its simple theories.

  The semisimple companion can only be expected to have a significant relation to the original logic if each non-trivial theory may be extended to a simple theory, a condition that we call \emph{(syntactic) coatomicity}. Otherwise, much information about the theories of the original logic may be lost. This condition will be assumed in almost all of the results of this section. Recall that in particular each compact logic is coatomic.

\begin{fact} \label{fact: largest extension with same simple theories}
  Let $\logic{L}$ be a coatomic logic. Then $\ssc{\logic{L}}$ is the largest logic with the same simple theories as $\logic{L}$. In particular, $\ssc{\logic{L}}$ is also coatomic.
\end{fact}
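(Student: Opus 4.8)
The plan is to lean on two routine facts about matrix semantics: (i) if every defining matrix of a logic $\logic{N}$ is a model of a logic $\logic{M}$, then $\logic{N}$ extends $\logic{M}$, i.e.\ ${\vdash_{\logic{M}}} \subseteq {\vdash_{\logic{N}}}$; and (ii) if $\pair{\FmAlg \logic{L}}{S}$ is one of the defining matrices of $\logic{N}$, then $S$ is a theory of $\logic{N}$ — indeed, if $S \vdash_{\logic{N}} \varphi$, then applying the identity substitution to the matrix $\pair{\FmAlg \logic{L}}{S}$ forces $\varphi \in S$. Everything else is bookkeeping around the definition of $\ssc{\logic{L}}$ as the logic of the matrices $\pair{\FmAlg \logic{L}}{S}$ with $S$ a simple theory of $\logic{L}$.

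First I would observe that $\ssc{\logic{L}}$ extends $\logic{L}$: each such $S$ is a theory of $\logic{L}$, and by structurality of $\logic{L}$ so is $\sigma^{-1}[S]$ for every substitution $\sigma$, so $\pair{\FmAlg \logic{L}}{S}$ is a model of $\logic{L}$ and fact (i) applies. The consequence I actually want is ${\Th \ssc{\logic{L}}} \subseteq {\Th \logic{L}}$: every theory of $\ssc{\logic{L}}$ is a theory of $\logic{L}$, and in particular every nontrivial $\ssc{\logic{L}}$-theory is a nontrivial $\logic{L}$-theory. This is the single place where structure is transported from $\logic{L}$ to $\ssc{\logic{L}}$, and I expect it to be the conceptual crux; once it is in hand, both the maximality of simple theories and coatomicity pass from $\logic{L}$ to $\ssc{\logic{L}}$ essentially for free.

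Next I would identify the simple theories of $\ssc{\logic{L}}$ with those of $\logic{L}$. If $S$ is simple in $\logic{L}$, then it is a theory of $\ssc{\logic{L}}$ by fact (ii), it is nontrivial, and any $\ssc{\logic{L}}$-theory properly containing it is an $\logic{L}$-theory properly containing it, hence trivial; so $S$ is simple in $\ssc{\logic{L}}$. Conversely, if $X$ is simple in $\ssc{\logic{L}}$, then $X$ is a nontrivial $\logic{L}$-theory, so by coatomicity of $\logic{L}$ it extends to a simple $\logic{L}$-theory $S$, which we have just seen is a nontrivial $\ssc{\logic{L}}$-theory; maximality of $X$ yields $X = S$. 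Running the same argument with an arbitrary nontrivial $\ssc{\logic{L}}$-theory in place of a simple one shows that every nontrivial $\ssc{\logic{L}}$-theory extends to a simple one, i.e.\ $\ssc{\logic{L}}$ is coatomic.

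Finally I would establish maximality: let $\logic{M}$ be any logic over the same formulas whose simple theories are precisely those of $\logic{L}$. Then each simple theory $S$ of $\logic{L}$ is a theory of $\logic{M}$, so $\pair{\FmAlg \logic{L}}{S}$ is a model of $\logic{M}$ by structurality of $\logic{M}$; since $\ssc{\logic{L}}$ is the logic of exactly these matrices, fact (i) gives ${\vdash_{\logic{M}}} \subseteq {\vdash_{\ssc{\logic{L}}}}$. Thus $\ssc{\logic{L}}$ is an upper bound, in the ordering of logics by inclusion of consequence relations, of all logics with the same simple theories as $\logic{L}$, and by the previous step it is itself such a logic; hence it is the largest one, coatomicity having already been recorded along the way. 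The only genuine computations are the one-line verifications of facts (i) and (ii), which are standard matrix bookkeeping and which I would not belabour.
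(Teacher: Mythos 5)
Your proof is correct and follows essentially the same route as the paper: show $\ssc{\logic{L}}$ extends $\logic{L}$, use coatomicity of $\logic{L}$ to identify the simple theories of $\ssc{\logic{L}}$ with those of $\logic{L}$, and get maximality from the observation that any logic with the same simple theories has the defining matrices $\pair{\FmAlg \logic{L}}{T}$ as models. You merely spell out the matrix-semantics bookkeeping and the coatomicity of $\ssc{\logic{L}}$ more explicitly than the paper does, which is fine.
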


\begin{proof}
  If $\logic{L}'$ has the same simple theories as~$\logic{L}$, then each matrix of the form $\langle \FmAlg \logic{L}, T \rangle$ for some simple $\logic{L}$-theory $T$ is a model of $\logic{L}'$, hence $\logic{L}' \logleq \ssc{\logic{L}}$. Clearly each simple theory of $\logic{L}$ is a simple theory of $\ssc{\logic{L}}$. Conversely, each simple theory $T$ of $\ssc{\logic{L}}$ is a theory of $\logic{L}$, so by coatomicity it extends to a simple theory $T'$ of $\logic{L}$, which is by definition a theory of $\ssc{\logic{L}}$. But $T$ is a simple theory of $\ssc{\logic{L}}$, therefore $T' = T$ and $T$ is $\logic{L}$-simple.
\end{proof}

\begin{fact}
  Let $\logic{L}$ be a coatomic logic. An~extension $\logic{L}'$ of $\logic{L}$ has the same simple theories as $\logic{L}$ if and only if it satisfies the equivalence
\begin{align*}
  \Gamma \vdash_{\logic{L}} \Fm \logic{L} \iff \Gamma \vdash_{\logic{L}'} \Fm \logic{L}.
\end{align*}
\end{fact}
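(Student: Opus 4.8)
The plan is to first observe that, since $\logic{L}'$ is an extension of $\logic{L}$, every $\logic{L}'$-theory is an $\logic{L}$-theory (the converse need not hold), and in particular $\Gamma \vdash_{\logic{L}} \Fm \logic{L}$ already entails $\Gamma \vdash_{\logic{L}'} \Fm \logic{L}$. The displayed equivalence thus reduces to the single implication $\Gamma \vdash_{\logic{L}'} \Fm \logic{L} \implies \Gamma \vdash_{\logic{L}} \Fm \logic{L}$, and it is only this implication that I need to manufacture or exploit. Coatomicity of $\logic{L}$ will be used in both directions to extend a non-trivial $\logic{L}$-theory to a simple one.

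For the left-to-right direction I would argue contrapositively, mimicking the proof of Fact~\ref{fact: largest extension with same simple theories}: assuming $\Gamma \nvdash_{\logic{L}} \Fm \logic{L}$, the $\logic{L}$-theory generated by $\Gamma$ is non-trivial, hence by coatomicity of $\logic{L}$ it extends to a simple $\logic{L}$-theory $S \supseteq \Gamma$; by hypothesis $S$ is then a simple, in particular non-trivial, $\logic{L}'$-theory containing $\Gamma$, so $\Gamma \nvdash_{\logic{L}'} \Fm \logic{L}$. This settles the implication and hence the equivalence.

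For the right-to-left direction the key step --- the one that needs a little care --- is to show that every simple $\logic{L}$-theory $T$ remains closed under $\logic{L}'$; this is where the assumed implication is used and where one must resist conflating $\logic{L}$-theories with $\logic{L}'$-theories. I would take $S$ to be the $\logic{L}'$-theory generated by $T$; it is an $\logic{L}$-theory with $T \subseteq S$, and if it were trivial then $T \vdash_{\logic{L}'} \Fm \logic{L}$, hence $T \vdash_{\logic{L}} \Fm \logic{L}$ by the equivalence, contradicting the non-triviality of the simple theory $T$. So $S$ is a non-trivial $\logic{L}$-theory containing $T$, whence $S = T$ by maximality of $T$. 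Once $T$ is known to be an $\logic{L}'$-theory, its $\logic{L}'$-maximality is immediate, since any $\logic{L}'$-theory strictly between $T$ and $\Fm \logic{L}$ would be such an $\logic{L}$-theory as well. Conversely, for a simple $\logic{L}'$-theory $T$: it is a non-trivial $\logic{L}$-theory, so by coatomicity of $\logic{L}$ it lies inside some simple $\logic{L}$-theory $S$; by what was just shown $S$ is also a non-trivial $\logic{L}'$-theory with $T \subseteq S$, forcing $T = S$, so $T$ is $\logic{L}$-simple.

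The only genuine obstacle is the asymmetry inherent in ``extension'': $\logic{L}'$-closure implies $\logic{L}$-closure but not conversely, so the step showing that simple $\logic{L}$-theories are $\logic{L}'$-closed must be handled explicitly, and it is precisely there that the hypothesis enters in the backward direction. Everything else is routine bookkeeping in the lattices of theories.
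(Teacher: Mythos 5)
Your proof is correct and follows essentially the same route as the paper's: the trivial inclusion of consequence relations, coatomicity plus the shared simple theories for one direction, and generating the $\logic{L}'$-theory over a simple $\logic{L}$-theory and invoking the equivalence for the other. The only cosmetic difference is in the final step (showing simple $\logic{L}'$-theories are $\logic{L}$-simple), where you extend to a simple $\logic{L}$-theory via coatomicity and apply the part already proved, while the paper argues directly from the equivalence; both are fine.
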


\begin{proof}
  If $\logic{L}$ and $\logic{L}'$ have the same simple theories and ${\Gamma \nvdash_{\logic{L}} \Fm \logic{L}}$, then by the coatomicity of $\logic{L}$ there is a simple $\logic{L}$-theory, hence also a simple $\logic{L}'$-theory, $T \supseteq \Gamma$. But $T \nvdash_{\logic{L}'} \Fm \logic{L}$, hence also $\Gamma \nvdash_{\logic{L}'} \Fm \logic{L}$. Of course $\Gamma \vdash_{\logic{L}} \Fm \logic{L}$ implies $\Gamma \vdash_{\logic{L}'} \Fm \logic{L}$ for each extension $\logic{L}'$ of $\logic{L}$.

  Conversely, if $\logic{L}'$ satisfies the equivalence and $T$ is a simple theory of $\logic{L}$, then by the equivalence $T \nvdash_{\logic{L}'} \Fm \logic{L}$. Let $T'$ be the $\logic{L}'$-theory generated by $T$. Then $T'$ is a non-trivial $\logic{L}$-theory extending $T$, hence $T' =T$ by the $\logic{L}$-simplicity of $T$. Thus $T$ is an $\logic{L}'$-theory, and in fact a simple $\logic{L}'$-theory because $\logic{L}'$ is an extension of $\logic{L}$. On the other hand, if $T$ is a simple theory of $\logic{L}'$, then it is a theory of $\logic{L}$. If $T'$ is a non-trivial theory of $\logic{L}$ which properly extends $T$, then by the assumed equivalence $T' \nvdash_{\logic{L}'} \Fm \logic{L}$, therefore $T' = T$ by the $\logic{L}'$-simplicity of $T$. Thus $T$ is a simple $\logic{L}$-theory.
\end{proof}

\begin{corollary}
  Let $\logic{L}$ be a coatomic logic. Then the semisimple companion $\ssc{\logic{L}}$ is the largest extension of $\logic{L}$ such that
\begin{align*}
  \Gamma \vdash_{\logic{L}} \Fm \logic{L} \iff \Gamma \vdash_{\ssc{\logic{L}}} \Fm \logic{L}.
\end{align*}
\end{corollary}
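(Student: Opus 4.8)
The plan is to derive this corollary directly from the two preceding facts, so that essentially no new argument is needed beyond a routine check that $\ssc{\logic{L}}$ is itself an extension of $\logic{L}$. First I would record this observation: by definition the theories of $\ssc{\logic{L}}$ are exactly the intersections of simple $\logic{L}$-theories, each simple $\logic{L}$-theory is in particular an $\logic{L}$-theory, and since $\Th \logic{L}$ is a complete lattice any intersection of $\logic{L}$-theories is again an $\logic{L}$-theory. Hence every $\ssc{\logic{L}}$-theory is an $\logic{L}$-theory, i.e.\ $\logic{L} \logleq \ssc{\logic{L}}$, so that $\ssc{\logic{L}}$ qualifies as an extension of $\logic{L}$ and the preceding facts may legitimately be applied to it.

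Next I would show that $\ssc{\logic{L}}$ satisfies the displayed equivalence. By Fact~\ref{fact: largest extension with same simple theories}, $\ssc{\logic{L}}$ has precisely the same simple theories as $\logic{L}$. Applying the immediately preceding fact (the one characterizing, for a coatomic $\logic{L}$, the extensions of $\logic{L}$ with the same simple theories as $\logic{L}$) to $\logic{L}' \assign \ssc{\logic{L}}$ then yields exactly $\Gamma \vdash_{\logic{L}} \Fm \logic{L} \iff \Gamma \vdash_{\ssc{\logic{L}}} \Fm \logic{L}$.

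For maximality I would take an arbitrary extension $\logic{L}'$ of $\logic{L}$ satisfying $\Gamma \vdash_{\logic{L}} \Fm \logic{L} \iff \Gamma \vdash_{\logic{L}'} \Fm \logic{L}$. By the same characterizing fact, $\logic{L}'$ has the same simple theories as $\logic{L}$, and then Fact~\ref{fact: largest extension with same simple theories}, which identifies $\ssc{\logic{L}}$ as the largest logic with this property, gives $\logic{L}' \logleq \ssc{\logic{L}}$. Combining the three steps shows that $\ssc{\logic{L}}$ is the largest extension of $\logic{L}$ for which the stated equivalence holds.

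I do not expect any genuine obstacle: the statement is a formal corollary of the two facts, and the only point that needs a moment's care is the preliminary (and routine) verification that $\ssc{\logic{L}}$ is an extension of $\logic{L}$, since that is implicitly required before the characterizing fact can be invoked with $\logic{L}'$ replaced by $\ssc{\logic{L}}$.
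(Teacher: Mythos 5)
Your proof is correct and is exactly the argument the paper intends: the corollary is an immediate combination of Fact~\ref{fact: largest extension with same simple theories} and the fact preceding the corollary, applied once to $\logic{L}' \assign \ssc{\logic{L}}$ and once to an arbitrary extension satisfying the equivalence. The one slip is your justification of $\logic{L} \logleq \ssc{\logic{L}}$: by definition the theories of $\ssc{\logic{L}}$ are intersections of substitution-preimages $\sigma^{-1}[T]$ of simple $\logic{L}$-theories, not of simple theories themselves (that stronger description is Proposition~\ref{prop: semisimple companion is semisimple} and needs the simple local IL); your conclusion nevertheless stands, since such preimages are $\logic{L}$-theories by structurality --- or, more directly, each defining matrix $\pair{\FmAlg \logic{L}}{T}$ with $T$ an $\logic{L}$-theory is a model of $\logic{L}$, so the logic these matrices determine extends $\logic{L}$.
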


  If a logic fails to be coatomic, then pathological behaviour can occur. In the extreme case, it may lack simple theories altogether. For example, consider the logic with a single binary operation $x \cdot y$ axiomatized by the rules
\begin{align*}
  p \cdot q & \vdash p, & p \cdot (q \cdot r) & \vdash (p \cdot q) \cdot r, & (p \cdot q) \cdot r & \vdash p \cdot (q \cdot r).
\end{align*}
  The lattice of theories of this logic is isomorphic to the lattice of sets of words over the given variables closed under taking initial segments. This lattice has no coatoms, so the semisimple companion of this logic is the trivial logic. It~may therefore even happen that $\emptyset \in \Th \logic{L}$ while $\emptyset \vdash_{\ssc{\logic{L}}} \Fm \logic{L}$.

  Among logics which have an antitheorem, such pathological behaviour can only occur if the logic is not finitary, since each non-trivial finitary logic with an antitheorem is compact, hence coatomic. One fairly natural example is the following expansion of G\"{o}del's well-known super-intuitionistic logic.\footnote{This logic in fact lacks subdirectly irreducible models altogether (see~\cite[Section~3.2]{lavicka+noguera17}).} % different signature!

\newcommand{\Galg}{\alg{G}^{\scriptstyle\mathbb{Q}}}
\newcommand{\Glog}{\logic{GO}^{\scriptstyle\mathbb{Q}}}
\newcommand{\Glogsmall}{\logic{GO}^{\scriptscriptstyle\mathbb{Q}}}

  Consider the real interval $[0, 1]$ with the natural order viewed as a Heyting algebra (i.e.\ as a bounded distributive lattice with the binary operation $\rightarrow$ such that $x \rightarrow y = y$ if $y < x$ and $x \rightarrow y = 1$ otherwise) expanded by a constant~$c_{q}$ interpreted by $q$ for each rational $q \in [0, 1]$. We denote this algebra $\Galg$ and define the \emph{G\"{o}del logic of order with rational constants} $\Glog$ as the logic (over some given set of variables) in the signature of the algebra $\Galg$ such that
\begin{align*}
  \Gamma \vdash_{\Glogsmall} \varphi & \iff \bigwedge h[\Gamma] \leq h(\varphi) \text{ for each homomorphism } h\colon \FmAlg \Glog \to \Galg.
\end{align*}
  The presence of infinitely many constants is not an essential feature of this example. We could make do with the unary function $x \mapsto \frac{x}{2}$ instead.

\begin{example} \label{ex: goq}
  G\"{o}del logic of order with rational constants has no simple theories and no largest extension with the same antitheorems.
\end{example}

\begin{proof}
  Consider the logics $\Glog_{q}$ for $q \in [0, 1] \cap \mathbb{Q}$ defined (over the same algebra of formulas) as
\begin{align*}
  \Gamma \vdash_{\Glogsmall_{q}} \varphi & \iff \bigwedge h[\Gamma] \wedge q \leq h(\varphi) \text{ for each hom.\ } h\colon \FmAlg \Glog \to \Galg.
\end{align*}
  Clearly $\Glog_{1} = \Glog$. Moreover, $\Glog_{r}$ is a proper extension of $\Glog_{q}$ if $r < q$ because $\emptyset \vdash_{\Glogsmall_{r}} c_{q}$ if and only if $r < q$. Finally, $\Gamma$ is an antitheorem of $\Glog_{q}$ for $q > 0$ if and only if $\bigwedge h[\Gamma] = 0$ for each homomorphism $h\colon \FmAlg \Glog \to \Galg$.

  It follows that adding any finite set of axioms $\emptyset \vdash c_{q}$ for $q > 0$ to $\Glog$ does not change the set of antitheorems, since it results in a logic below some $\Glog_{q}$ with $q > 0$. However, adding all of these axioms results in the logic becoming trivial thanks to the antitheorem $\set{c_{q}}{q \in (0, 1] \cap \mathbb{Q}}$.

  If $T$ is a non-trivial theory of $\Glog$, then $T \nvdash_{\Glogsmall} c_{q}$ for some constant~$c_{q}$ with $q > 0$. Because $T$ is not an antitheorem of $\Glog$, it is also not an antitheorem of $\Glog_{q}$, hence the theory of $\Glog_{q}$ generated by $T$ is a non-trivial theory of $\Glog$ properly extending $T$ by virtue of containing $c_{q}$.
\end{proof}

  Contrast this with the situation for theorems rather than antitheorems: each logic~$\logic{L}$, finitary or not, is guaranteed to have a largest extension with the same set of theorems $\Thm \logic{L}$, called the \emph{structural completion} of $\logic{L}$, which is defined by the single matrix $\langle \FmAlg \logic{L}, \Thm \logic{L} \rangle$. The reader may observe a natural duality between the semisimple companion defined the maximal (non-trivial) theories and the structural completion defined by the unique minimal theory. Indeed, the semisimple companion may well be called the \emph{antistructural completion}: if~a~coatomic logic~$\logic{L}$ has an antitheorem, then $\ssc{\logic{L}}$ is precisely the largest extension of $\logic{L}$ with the same set of antitheorems.

\subsection{Antiadmissible rules}
\label{subsec: antiadmissible rules}

  The rules valid in the structural completion of a logic~$\logic{L}$ can be described in terms of the theorems of~$\logic{L}$: they are precisely the \emph{admissible rules} of $\logic{L}$, that is, rules $\Gamma \vdash \varphi$ such that
\begin{align*}
  \emptyset \vdash_{\logic{L} }\sigma[\Gamma] \implies \emptyset \vdash_{\logic{L}} \sigma(\varphi) \text{ for each substitution } \sigma.
\end{align*}
  Given the duality between the structural completion and the semisimple companion, we now wish to provide a similar description of the rules valid in the semisimple companion~$\ssc{\logic{L}}$ in terms of the antitheorems of~$\logic{L}$.

  Throughout this section only, we shall abuse notation and talk about the rule $\Gamma \vdash \tuple{\varphi}$, i.e.\ the set of rules $\Gamma \vdash \varphi_{i}$ for $\varphi_{i} \in \tuple{\varphi}$, where $\tuple{\varphi}$ is an $\alpha$-tuple for $0 < \alpha < \kappa$. This is not an essential point: the~reader is free to substitute $\varphi$ for $\tuple{\varphi}$ throughout this chapter.
  
\begin{definition}
  A rule $\Gamma \vdash \tuple{\varphi}$ will be called \emph{antiadmissible} in $\logic{L}$ if
\begin{align*}
  \sigma(\tuple{\varphi}), \Delta \vdash \Fm \logic{L} \implies \sigma[\Gamma], \Delta \vdash \Fm \logic{L} \text{ for each substitution } \sigma \text{ and } \Delta \subseteq \Fm \logic{L},
\end{align*}
  or equivalently if
\begin{align*}
  \sigma(\tuple{\varphi}), T \vdash \Fm \logic{L} \implies \sigma[\Gamma], T \vdash \Fm \logic{L} \text{ for each substitution } \sigma \text{ and } T \in \Th \logic{L}.
\end{align*}
\end{definition}

\begin{proposition} \label{prop: antiadmissible rules}
  A rule $\Gamma \vdash \tuple{\varphi}$ is antiadmissible in a coatomic logic $\logic{L}$ if and only if $\Gamma \vdash_{\ssc{\logic{L}}} \tuple{\varphi}$.
\end{proposition}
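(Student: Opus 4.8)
The plan is to unwind both sides of the claimed biconditional into statements about the simple theories of $\logic{L}$ and then match them directly. By definition, $\ssc{\logic{L}}$ is determined by the matrices $\langle \FmAlg \logic{L}, T \rangle$ with $T$ a simple theory of $\logic{L}$, and homomorphisms $\FmAlg \logic{L} \to \FmAlg \logic{L}$ are precisely substitutions, so $\Gamma \vdash_{\ssc{\logic{L}}} \tuple{\varphi}$ holds exactly when, for every simple theory $T$ of $\logic{L}$ and every substitution $\sigma$, the inclusion $\sigma[\Gamma] \subseteq T$ forces $\sigma[\tuple{\varphi}] \subseteq T$. On the antiadmissibility side I will use the equivalent reformulation given in the definition, with an arbitrary theory $T \in \Th \logic{L}$ in place of an arbitrary set $\Delta$.

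For the implication from antiadmissibility to $\Gamma \vdash_{\ssc{\logic{L}}} \tuple{\varphi}$, I would argue by contradiction. Fix a simple theory $T$ and a substitution $\sigma$ with $\sigma[\Gamma] \subseteq T$ but $\sigma(\varphi_{i}) \notin T$ for some $i$. Since $T$ is a maximal non-trivial theory and $\sigma(\varphi_{i}) \notin T$, the $\logic{L}$-theory generated by $T \cup \{\sigma(\varphi_{i})\}$ is the trivial theory, i.e.\ $\sigma(\varphi_{i}), T \vdash_{\logic{L}} \Fm \logic{L}$; by monotonicity $\sigma(\tuple{\varphi}), T \vdash_{\logic{L}} \Fm \logic{L}$ as well (recall $\sigma(\tuple{\varphi})$ denotes the set of its components). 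Antiadmissibility then yields $\sigma[\Gamma], T \vdash_{\logic{L}} \Fm \logic{L}$. But $\sigma[\Gamma] \subseteq T$ and $T$ is a theory, so $\sigma[\Gamma] \cup T$ generates exactly the non-trivial theory $T$, contradicting $\sigma[\Gamma], T \vdash_{\logic{L}} \Fm \logic{L}$. Hence $\sigma[\tuple{\varphi}] \subseteq T$, as required. Note that this direction does not use coatomicity.

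For the converse, suppose $\Gamma \vdash_{\ssc{\logic{L}}} \tuple{\varphi}$, fix a substitution $\sigma$ and a theory $T$ with $\sigma(\tuple{\varphi}), T \vdash_{\logic{L}} \Fm \logic{L}$, and suppose towards a contradiction that $\sigma[\Gamma], T \nvdash_{\logic{L}} \Fm \logic{L}$, so that the $\logic{L}$-theory generated by $\sigma[\Gamma] \cup T$ is non-trivial. This is where coatomicity enters: that theory extends to a simple theory $S$, which then contains both $\sigma[\Gamma]$ and $T$. From $\sigma[\Gamma] \subseteq S$ together with $\Gamma \vdash_{\ssc{\logic{L}}} \tuple{\varphi}$ we obtain $\sigma[\tuple{\varphi}] \subseteq S$, hence $\sigma[\tuple{\varphi}] \cup T \subseteq S$; but then $\sigma(\tuple{\varphi}), T \vdash_{\logic{L}} \Fm \logic{L}$ forces $S = \Fm \logic{L}$, contradicting the non-triviality of the simple theory $S$. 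Therefore $\sigma[\Gamma], T \vdash_{\logic{L}} \Fm \logic{L}$, so $\Gamma \vdash \tuple{\varphi}$ is antiadmissible.

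I do not expect a serious obstacle here: the argument is essentially bookkeeping with the definitions. The one conceptual point worth flagging is that coatomicity is exactly what guarantees enough simple theories for the second direction — without it $\Gamma \vdash_{\ssc{\logic{L}}} \tuple{\varphi}$ could hold vacuously while antiadmissibility fails. The minor care points are keeping straight that $\sigma(\tuple{\varphi})$ denotes the set of its components (so that monotonicity applies), using the form of the definition with an arbitrary theory $T$ rather than an arbitrary $\Delta$ (the two being interchangeable since $\Delta$ and the theory it generates have the same trivial closures), and being careful throughout to use $\vdash_{\logic{L}} \Fm \logic{L}$ — triviality of the generated theory — rather than the stronger notion $\vdash_{\logic{L}} \allset$.
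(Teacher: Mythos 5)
Your proof is correct and follows essentially the same route as the paper: both directions are handled by unwinding $\vdash_{\ssc{\logic{L}}}$ into the matrix semantics over simple theories, with the failure-of-validity case producing a simple theory witnessing non-antiadmissibility, and coatomicity supplying the simple extension in the other direction. The only cosmetic difference is that you inline the coatomicity argument (extending the non-trivial theory generated by $\sigma[\Gamma] \cup T$ to a simple theory) where the paper instead cites its earlier corollary that $\Gamma \vdash_{\logic{L}} \Fm \logic{L} \iff \Gamma \vdash_{\ssc{\logic{L}}} \Fm \logic{L}$ and reasons by structurality and cut inside $\ssc{\logic{L}}$.
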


\begin{proof}
  If the rule $\Gamma \vdash \tuple{\varphi}$ is valid in $\ssc{\logic{L}}$, then ${\sigma(\tuple{\varphi}), \Delta \vdash_{\ssc{\logic{L}}} \Fm \logic{L}}$ implies $\sigma[\Gamma], \Delta \vdash_{\ssc{\logic{L}}} \Fm \logic{L}$ by structurality and cut. Applying the equivalence $\Gamma \vdash_{\logic{L}} \Fm \logic{L} \iff \Gamma \vdash_{\ssc{\logic{L}}} \Fm \logic{L}$ now yields the antiadmissibility of $\Gamma \vdash \tuple{\varphi}$. Conversely, suppose that $\Gamma \vdash \tuple{\varphi}$ is not valid in $\ssc{\logic{L}}$. Then there is a simple $\logic{L}$-theory $T$ such that $\Gamma \vdash \tuple{\varphi}$ fails in the matrix $\langle \FmAlg \logic{L}, T \rangle$, i.e.\ there is a substitution $\sigma$ such that $\sigma[\Gamma] \subseteq T$ and $\sigma(\tuple{\varphi}) \nsubseteq T$, hence $\sigma(\tuple{\varphi}), T \vdash_{\logic{L}} \Fm \logic{L}$ but $\sigma[\Gamma], T \nvdash_{\logic{L}} \Fm \logic{L}$.
\end{proof}

  A rule $\Gamma \vdash \varphi$ is admissible in a logic $\logic{L}$ if and only if the extension of $\logic{L}$ by this rule has the same theorems as $\logic{L}$. An analogous observation can be made about antiadmissible rules in coatomic logics with an antitheorem.

\begin{corollary}
  A rule $\Gamma \vdash \varphi$ is antiadmissible in a coatomic logic $\logic{L}$ with an antitheorem if and only if extending $\logic{L}$ by this rule does not add any antitheorems.
\end{corollary}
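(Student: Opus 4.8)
The plan is to reduce the statement to Proposition~\ref{prop: antiadmissible rules} together with the observation recorded at the end of Subsection~\ref{subsec: semisimple companion}, namely that for a coatomic logic $\logic{L}$ with an antitheorem the semisimple companion $\ssc{\logic{L}}$ is the largest extension of $\logic{L}$ with the same antitheorems. Write $\logic{L}'$ for the extension of $\logic{L}$ by the rule $\Gamma \vdash \varphi$, i.e.\ the least logic with $\logic{L} \logleq \logic{L}'$ and $\Gamma \vdash_{\logic{L}'} \varphi$. First I would note the routine fact that antitheorems are preserved upwards along extensions: if $\logic{L} \logleq \logic{L}'$ and $\Gamma$ is an antitheorem of $\logic{L}$, then, since every non-trivial model of $\logic{L}'$ is a non-trivial model of $\logic{L}$ and therefore cannot jointly designate $\Gamma$, the set $\Gamma$ is an antitheorem of $\logic{L}'$ as well. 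Consequently, the phrase ``extending $\logic{L}$ by the rule adds no antitheorems'' is literally the assertion that $\logic{L}$ and $\logic{L}'$ have the same antitheorems.

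For the left-to-right implication, suppose $\Gamma \vdash \varphi$ is antiadmissible in $\logic{L}$. By Proposition~\ref{prop: antiadmissible rules} this means $\Gamma \vdash_{\ssc{\logic{L}}} \varphi$, so $\ssc{\logic{L}}$ is an extension of $\logic{L}$ in which the rule $\Gamma \vdash \varphi$ is valid; by the minimality of $\logic{L}'$ among such extensions we obtain $\logic{L}' \logleq \ssc{\logic{L}}$. Hence every antitheorem of $\logic{L}'$ is an antitheorem of $\ssc{\logic{L}}$, and the antitheorems of $\ssc{\logic{L}}$ coincide with those of $\logic{L}$ by the characterisation recalled above; combined with upward preservation this sandwiches the antitheorems of $\logic{L}'$ between those of $\logic{L}$ and those of $\logic{L}$, so they are exactly the antitheorems of $\logic{L}$. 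For the converse, suppose $\logic{L}'$ has the same antitheorems as $\logic{L}$. Since $\ssc{\logic{L}}$ is the largest extension of $\logic{L}$ with this property, $\logic{L}' \logleq \ssc{\logic{L}}$, whence $\Gamma \vdash_{\logic{L}'} \varphi$ yields $\Gamma \vdash_{\ssc{\logic{L}}} \varphi$. As $\Gamma \vdash_{\logic{L}'} \varphi$ holds by construction of $\logic{L}'$, we conclude $\Gamma \vdash_{\ssc{\logic{L}}} \varphi$, which by Proposition~\ref{prop: antiadmissible rules} is precisely the antiadmissibility of $\Gamma \vdash \varphi$ in $\logic{L}$.

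There is no serious obstacle here: the whole argument is a sandwich of $\logic{L}'$ between two extremal extensions of $\logic{L}$ — the smallest one validating the given rule and the largest one preserving antitheorems — and the only point demanding a modicum of care is keeping the direction of $\logleq$ straight and remembering that passing to an extension can only create antitheorems, never destroy them. All the substantive work has already been carried out in Proposition~\ref{prop: antiadmissible rules} and in the identification of $\ssc{\logic{L}}$ with the antistructural completion.
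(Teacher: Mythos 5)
Your proof is correct and follows exactly the route the paper intends: the corollary is stated without proof as an immediate consequence of Proposition~\ref{prop: antiadmissible rules} together with the observation that, for a coatomic logic with an antitheorem, $\ssc{\logic{L}}$ is the largest extension with the same antitheorems, and your sandwich argument (the extension by the rule sits between $\logic{L}$ and $\ssc{\logic{L}}$, with antitheorems preserved upwards) fills in precisely the intended details.
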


  Without the assumption of coatomicity, we can still show that at least the finitary antiadmissible rules form a finitary logic.

\begin{fact}
  The finitary antiadmissible rules of a logic form a finitary logic. More precisely, the set of all rules $\Gamma \vdash \varphi$ such that $\Gamma' \vdash \varphi$ is antiadmissible in $\logic{L}$ for some finite $\Gamma' \subseteq \Gamma$ is a finitary logic.
\end{fact}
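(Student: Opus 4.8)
The plan is to verify directly that the relation $\vdash^{*}$, defined by letting $\Gamma \vdash^{*} \varphi$ hold if and only if $\Gamma' \vdash \varphi$ is antiadmissible in $\logic{L}$ for some finite $\Gamma' \subseteq \Gamma$, satisfies reflexivity, monotonicity, cut, and structurality, and is finitary. Three of these are routine. Finitarity and monotonicity are immediate: if $\Gamma \vdash^{*} \varphi$ is witnessed by a finite $\Gamma' \subseteq \Gamma$, then $\Gamma'$ itself witnesses $\Gamma' \vdash^{*} \varphi$, and it also witnesses $\Gamma, \Delta \vdash^{*} \varphi$ for any $\Delta$. Reflexivity holds because $\{\varphi\} \vdash \varphi$ is trivially antiadmissible: the implication $\sigma(\varphi), \Delta \vdash_{\logic{L}} \Fm \logic{L} \implies \sigma[\{\varphi\}], \Delta \vdash_{\logic{L}} \Fm \logic{L}$ has identical premise and conclusion. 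For structurality, given a substitution $\tau$ and a finite $\Gamma' \subseteq \Gamma$ with $\Gamma' \vdash \varphi$ antiadmissible, the set $\tau[\Gamma']$ is a finite subset of $\tau[\Gamma]$, and $\tau[\Gamma'] \vdash \tau(\varphi)$ is antiadmissible: for $\sigma$ and $\Delta$ with $\sigma(\tau(\varphi)), \Delta \vdash_{\logic{L}} \Fm \logic{L}$, apply the antiadmissibility of $\Gamma' \vdash \varphi$ to the substitution $\sigma \circ \tau$ to get $(\sigma \circ \tau)[\Gamma'], \Delta \vdash_{\logic{L}} \Fm \logic{L}$, which is precisely $\sigma[\tau[\Gamma']], \Delta \vdash_{\logic{L}} \Fm \logic{L}$.

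The one step that needs real work is cut. I would argue as follows. Suppose $\Gamma \vdash^{*} \psi$ for every $\psi \in \Phi$ and $\Phi \vdash^{*} \varphi$. Fix a finite subset $\Phi' = \{\psi_{1}, \dots, \psi_{n}\} \subseteq \Phi$ witnessing that $\Phi' \vdash \varphi$ is antiadmissible, and for each $i$ a finite $\Gamma_{i} \subseteq \Gamma$ witnessing that $\Gamma_{i} \vdash \psi_{i}$ is antiadmissible. I claim $\Gamma' := \Gamma_{1} \cup \dots \cup \Gamma_{n}$, a finite subset of $\Gamma$, witnesses that $\Gamma' \vdash \varphi$ is antiadmissible. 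Given $\sigma$ and $\Delta \subseteq \Fm \logic{L}$ with $\sigma(\varphi), \Delta \vdash_{\logic{L}} \Fm \logic{L}$, antiadmissibility of $\Phi' \vdash \varphi$ yields $\sigma(\psi_{1}), \dots, \sigma(\psi_{n}), \Delta \vdash_{\logic{L}} \Fm \logic{L}$. Now eliminate the hypotheses $\sigma(\psi_{k})$ one at a time: having obtained $\sigma[\Gamma_{1}], \dots, \sigma[\Gamma_{k-1}], \sigma(\psi_{k}), \dots, \sigma(\psi_{n}), \Delta \vdash_{\logic{L}} \Fm \logic{L}$, view everything in the context other than $\sigma(\psi_{k})$ as a single set $\Delta_{k} \subseteq \Fm \logic{L}$ and apply the antiadmissibility of $\Gamma_{k} \vdash \psi_{k}$ with parameter $\Delta_{k}$ to replace $\sigma(\psi_{k})$ by $\sigma[\Gamma_{k}]$. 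After $n$ steps this produces $\sigma[\Gamma_{1}], \dots, \sigma[\Gamma_{n}], \Delta \vdash_{\logic{L}} \Fm \logic{L}$, that is, $\sigma[\Gamma'], \Delta \vdash_{\logic{L}} \Fm \logic{L}$.

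The crux of the argument, and the only place where the finiteness clause in the statement is used, will be exactly this inductive elimination in the cut step: it terminates precisely because $\Phi'$ is finite, and it works because the $\Delta$-parameter in the definition of antiadmissibility is arbitrary, so at each stage we may absorb both the remaining hypotheses $\sigma(\psi_{j})$ and the already-produced sets $\sigma[\Gamma_{i}]$ into the parameter. I expect no use of coatomicity or any other structural assumption on $\logic{L}$. (The analogue with a tuple $\tuple{\varphi}$ in place of $\varphi$ would follow the same lines, collecting in addition a finite witness for each component of $\tuple{\varphi}$, but the statement as given concerns single formulas.) Together with the routine cases above, this will establish that $\vdash^{*}$ is a finitary logic.
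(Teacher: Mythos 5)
Your proof is correct and takes essentially the same route as the paper: the axioms of a finitary consequence relation are verified directly, with the only substantive point being cut, handled by absorbing the accumulated context into the arbitrary $\Delta$-parameter of antiadmissibility. Your step-by-step elimination of the finitely many premises $\sigma(\psi_{1}),\dots,\sigma(\psi_{n})$ is just the iterated form of the paper's binary finitary-cut argument, combined (as in the paper) with finitarity to obtain full cut.
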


\begin{proof}
  Reflexivity, monotonicity, structurality, and finitarity hold by definition. Moreover, antiadmissible rules are closed under finitary cut: if the rules $\Gamma \vdash \varphi$ and $\varphi, \Delta \vdash \psi$ are antiadmissible, then
\begin{align*}
  \sigma(\psi), \Phi \vdash \Fm \logic{L} \implies \sigma(\varphi), \sigma[\Delta], \Phi \vdash \Fm \logic{L} \implies \sigma[\Gamma], \sigma[\Delta], \Phi \vdash \Fm \logic{L},
\end{align*}
  therefore the rule $\Gamma, \Delta \vdash \psi$ is antiadmissible in $\logic{L}$. Full cut now follows from finitarity and finitary cut.
\end{proof}

  The above fact fails if we look at all antiadmissible rules. These are closed under reflexivity, monotonicity, structurality, and finitary cut, but not under full cut. For example, in the G\"{o}del logic of order with rational constants $\Glog$ (recall Example~\ref{ex: goq}) each of the rules $\emptyset \vdash c_{q}$ for $q \in (0, 1] \cap \mathbb{Q}$ is antiadmissible, as is the valid rule $\set{c_{q}}{q \in (0, 1] \cap \mathbb{Q}} \vdash c_{0}$, but the rule $\emptyset \vdash c_{0}$ is not antiadmissible, since this would imply that $\Glog$ is the trivial logic.

  It may be instructive to contrast this with the situation of admissible rules. Essentially, a rule is admissible if all of its instances lead only from theorems to theorems. If the terminal nodes of a proof (which is a well-founded tree) are theorems and all rules applied in the proof preserve theoremhood, then by induction on well-founded trees the conclusion must be a theorem too. However, the analogous argument for antiadmisible rules fails: if the conclusion of a proof is an antitheorem, and all rules reflect antitheoremhood in each context (if $\Gamma \vdash \varphi$ is an instance of the rule and $\varphi, \Delta$ is an antitheorem, then so is $\Gamma, \Delta$), then we cannot infer that the set of all terminal nodes of the proof forms an antitheorem: induction on well-founded trees does not work in this direction. Such an inference only works for finite proofs, hence the above fact. Note that a finitary logic may well have infinitary antiadmissible rules, therefore restricting to antiadmissible rules in finitary logics does not help.

  In the rest of this subsection, we show that in many cases the definition of an antiadmissible rule may be simplified. Indeed, the double quantification over contexts $\Delta$ and substitutions $\sigma$ makes the definition of an antiadmissible rule more complicated than the analogous definition of an admissible rule. Remarkably, it turns out that we can often do without the quantification over substitutions. This has no analogue in the case of admissible rules, where quantification over substitutions is of course essential.

  The proof will rely on the (parametrized) local ILs introduced in Section~\ref{sec: ils and ddts}. Recall that in Proposition~\ref{prop: swapping for antitheorems} we proved that the $\kappa$-ary (parametrized) local IL is equivalent in a logic with an antitheorem to (surjective) substitution swapping for antitheorems: for each (surjective) substitution~$\sigma$, each theory $T$ of a logic $\logic{L}$ with an antitheorem, and each $\alpha$-tuple of formulas $\tuple{\varphi}$ for $0 < \alpha < \kappa$
\begin{align*}
  \sigma(\tuple{\varphi}), T \vdash_{\logic{L}} \allset \implies \tuple{\varphi}, \sigma^{-1}[T] \vdash_{\logic{L}} \allset.
\end{align*}

  In the statement of the following proposition, an invertible substitution is a substitution~$\iota$ such that $\tau \circ \iota$ is the identity map on $\Fm \logic{L}$ for some substitution~$\tau$. Invertible substitutions are precisely those injective substitutions $\tau$ such that $\tau(p) \in \Var \logic{L}$ for each variable $p$.

\begin{proposition} \label{prop: invertible antiadmissibility}
  Let $\logic{L}$ be a coatomic logic with the unary parametrized local IL. Then $\Gamma \vdash \tuple{\varphi}$ is antiadmissible in $\logic{L}$ if and only~if
\begin{align*}
  \iota(\tuple{\varphi}), \Delta \vdash_{\logic{L}} \allset & \implies \iota[\Gamma], \Delta \vdash_{\logic{L}} \allset \text{ for each invertible substition } \iota \text{ and each } \Delta.
\end{align*}
\end{proposition}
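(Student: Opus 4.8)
The plan is to prove the two directions separately. First observe that since every logic with a parametrized local IL has an antitheorem, the relations $\vdash_{\logic{L}} \Fm\logic{L}$ and $\vdash_{\logic{L}} \allset$ coincide here: if $\Gamma \vdash_{\logic{L}} \Fm\logic{L}$ then in particular $\Gamma$ proves every formula of some fixed antitheorem, so $\Gamma \vdash_{\logic{L}} \allset$ by cut for antitheorems, and the reverse implication always holds. Hence the definition of antiadmissibility may be stated with $\vdash_{\logic{L}} \allset$ in place of $\vdash_{\logic{L}} \Fm\logic{L}$, and the left-to-right implication of the proposition becomes the triviality that restricting the quantifier over all substitutions to the invertible ones weakens the statement. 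As the subsection's convention permits, I treat a single formula $\varphi$ rather than a tuple $\tuple{\varphi}$; handling a genuine tuple would require surjective substitution swapping for tuples of that length, i.e.\ a stronger IL than the unary one assumed.

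The heart of the converse is the factorization: \emph{every substitution $\sigma$ on $\FmAlg\logic{L}$ can be written as $\sigma = \bar{\sigma} \circ \iota$ with $\iota$ an invertible substitution and $\bar{\sigma}$ surjective}. To see this, use that $\Var\logic{L}$ is infinite to partition it as $V_1 \sqcup V_2$ with $|V_1| = |V_2| = |\Var\logic{L}|$, take $\iota\colon \Var\logic{L} \to V_1$ a bijective renaming (hence invertible), and define $\bar{\sigma}$ by $\bar{\sigma}(\iota(p)) := \sigma(p)$ for $p \in \Var\logic{L}$ and by sending $V_2$ onto all of $\Var\logic{L}$; then $\bar{\sigma} \circ \iota = \sigma$ since the two homomorphisms agree on variables, and $\bar{\sigma}$ is surjective since its image contains every variable and $\FmAlg\logic{L}$ is absolutely free.

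Now assume the displayed condition for all invertible substitutions and all $\Delta$, and suppose $\sigma(\varphi), \Delta \vdash_{\logic{L}} \allset$ for an arbitrary $\sigma$. Writing $\sigma = \bar{\sigma} \circ \iota$ as above and letting $T$ be the theory generated by $\Delta$, monotonicity for antitheorems gives $\bar{\sigma}(\iota(\varphi)), T \vdash_{\logic{L}} \allset$. Because $\logic{L}$ has the unary parametrized local IL, Proposition~\ref{prop: swapping for antitheorems} gives surjective substitution swapping for antitheorems, so $\iota(\varphi), \bar{\sigma}^{-1}[T] \vdash_{\logic{L}} \allset$; the invertible-substitution hypothesis applied with $\iota$ and the set $\bar{\sigma}^{-1}[T]$ yields $\iota[\Gamma], \bar{\sigma}^{-1}[T] \vdash_{\logic{L}} \allset$; applying $\bar{\sigma}$ via structurality for antitheorems and using $\bar{\sigma} \circ \iota = \sigma$ gives $\sigma[\Gamma], \bar{\sigma}[\bar{\sigma}^{-1}[T]] \vdash_{\logic{L}} \allset$, hence $\sigma[\Gamma], T \vdash_{\logic{L}} \allset$ by monotonicity since $\bar{\sigma}[\bar{\sigma}^{-1}[T]] \subseteq T$. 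Finally $\Delta \vdash_{\logic{L}} T$, so $\sigma[\Gamma] \cup \Delta \vdash_{\logic{L}} \sigma[\Gamma] \cup T$, and cut for antitheorems delivers $\sigma[\Gamma], \Delta \vdash_{\logic{L}} \allset$. The step that needs care is the factorization: one must verify that the $\bar{\sigma}$ constructed really is surjective and really satisfies $\bar{\sigma} \circ \iota = \sigma$, which is exactly where the infinitude of $\Var\logic{L}$ and the freeness of the formula algebra are used; granting that, the rest is a routine chain of the monotonicity, structurality, and cut rules for antitheorems together with substitution swapping.
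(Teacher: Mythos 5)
Your proof is correct and follows essentially the same route as the paper's: you factor an arbitrary substitution as $\sigma = \tau \circ \iota$ with $\iota$ invertible and $\tau$ surjective, then chain surjective substitution swapping for antitheorems (Proposition~\ref{prop: swapping for antitheorems}), the invertible-substitution hypothesis, and structurality plus monotonicity and cut for antitheorems, exactly as the paper does (your preliminary identification of $\vdash_{\logic{L}} \Fm \logic{L}$ with $\vdash_{\logic{L}} \allset$ via the antitheorem supplied by the IL is a detail the paper leaves implicit, and correctly so). Your restriction to a single formula $\varphi$ is explicitly licensed by the subsection's convention, and your caveat is fair --- the paper's own proof applies swapping to the whole tuple $\tuple{\varphi}$, which strictly requires the IL of the corresponding arity --- although the tuple version can still be recovered from your single-formula version, since the joint invertible-substitution condition implies the componentwise one by monotonicity for antitheorems, and antiadmissibility of the rule $\Gamma \vdash \tuple{\varphi}$ reduces to antiadmissibility of the rules $\Gamma \vdash \varphi_{i}$ by Proposition~\ref{prop: antiadmissible rules} and coatomicity.
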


\begin{proof}
  Suppose that $\Gamma \vdash \tuple{\varphi}$ satisfies the implication and $\sigma (\tuple{\varphi}), T \vdash_{\logic{L}} \allset$ for some theory $T$ of $\logic{L}$. Let us first construct an invertible substitution $\iota$ and a surjective substitution~$\tau$ such that $\sigma = \tau \circ \iota$.

  To define $\iota$ and $\tau$, pick $X \subseteq \Var \logic{L}$ such that $| {X} | = |{\Var \logic{L}}| = | {\Var \logic{L} \setminus X} |$, and an injective map $f\colon \Var \logic{L} \rightarrow X$ and a surjective map ${g\colon X \rightarrow \Var \logic{L}}$ such that $g \circ f$ is the identity map on $\Var \logic{L}$. The map $f$ extends to an invertible substitution $\iota$ with an inverse $\lambda$ such that $\lambda \circ \iota$ is the identity map on $\Fm \logic{L}$. We can then define $\tau$ such that the restriction $\tau\colon X \rightarrow \Fm \logic{L}$ is defined as $\tau (p) = (\sigma \circ \lambda) (p)$ for $p \in X$ and the restriction $\tau\colon \Var \logic{L} \setminus X \rightarrow \Var \logic{L}$ is surjective. Then $\tau$ is a surjective substitution such that $\sigma = \tau \circ \iota$.

  The assumption $\sigma(\tuple{\varphi}), T \vdash_{\logic{L}} \allset$, i.e.\ $(\tau \circ \iota) (\tuple{\varphi}), T \vdash_{\logic{L}} \allset$, implies ${\iota (\tuple{\varphi}), \tau^{-1}[T] \vdash_{\logic{L}} \allset}$ by surjective substitution swapping for antitheorems. The assumed implication now yields ${\iota[\Gamma], \tau^{-1}[T] \vdash_{\logic{L}} \allset}$. Finally, we may apply the substitution $\tau$ to obtain $(\tau \circ \iota)[\Gamma], \tau[\tau^{-1}[T]] \vdash_{\logic{L}} \allset$, hence $\sigma[\Gamma], T \vdash_{\logic{L}} \allset$.
\end{proof}

% hbox
  For the purposes of the following proposition, a rule $\Gamma \vdash \tuple{\varphi}$ is said to \emph{omit enough variables} if $| {\Var \logic{L} \setminus \Var (\Gamma, \tuple{\varphi})} | = | {\Var \logic{L}} |$, where $\Var \Delta$ denotes the set of all variables which occur in $\Delta$. In other words, it leaves $| {\Var \logic{L}} |$ variables unused. This in particular covers all finitary rules $\Gamma \vdash \varphi$.
  
\begin{proposition}
  Let $\logic{L}$ be a coatomic logic with a $\kappa$-ary parametrized local IL. Then a rule $\Gamma \vdash \tuple{\varphi}$ which omits enough variables is antiadmissible in~$\logic{L}$ if and only~if
\begin{align*}
  \tuple{\varphi}, \Delta \vdash_{\logic{L}} \allset & \implies \Gamma, \Delta \vdash_{\logic{L}} \allset \text{ for each } \Delta \subseteq \Fm \logic{L}.
\end{align*}
\end{proposition}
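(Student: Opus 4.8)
The plan is to reduce the claim, via Proposition~\ref{prop: invertible antiadmissibility}, to a statement about invertible substitutions, and then to use the hypothesis that the rule omits enough variables in order to enlarge such a substitution to a surjective one. Since the $\kappa$-ary parametrized local IL subsumes the unary one, Proposition~\ref{prop: invertible antiadmissibility} applies and tells us that $\Gamma \vdash \tuple{\varphi}$ is antiadmissible in $\logic{L}$ if and only if $\iota(\tuple{\varphi}), \Delta \vdash_{\logic{L}} \allset$ implies $\iota[\Gamma], \Delta \vdash_{\logic{L}} \allset$ for every invertible substitution $\iota$ and every $\Delta \subseteq \Fm \logic{L}$. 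It therefore suffices to show that this condition is equivalent to the one in the statement, which is just its instance for $\iota$ the identity substitution. The forward implication is immediate, so I would concentrate on the converse.

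For the converse I would assume that $\tuple{\varphi}, \Delta \vdash_{\logic{L}} \allset$ implies $\Gamma, \Delta \vdash_{\logic{L}} \allset$ for every $\Delta$, and fix an invertible substitution $\iota$ and a set $\Delta \subseteq \Fm \logic{L}$ with $\iota(\tuple{\varphi}), \Delta \vdash_{\logic{L}} \allset$. Put $V \assign \Var(\Gamma, \tuple{\varphi})$; the assumption that the rule omits enough variables says that $\Var \logic{L} \setminus V$ has cardinality $| \Var \logic{L} |$. The restriction of $\iota$ to $V$ is an injective map into $\Var \logic{L}$, and since $\Var \logic{L} \setminus V$ is as large as $\Var \logic{L}$, we may extend this restriction to a \emph{surjective} substitution $\hat\iota$, for instance by mapping $\Var \logic{L} \setminus V$ onto $\Var \logic{L} \setminus \iota[V]$. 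Because $\tuple{\varphi}$ and the formulas of $\Gamma$ involve only variables from $V$, we then have $\hat\iota(\tuple{\varphi}) = \iota(\tuple{\varphi})$ and $\hat\iota[\Gamma] = \iota[\Gamma]$.

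Next, let $T$ be the $\logic{L}$-theory generated by $\Delta$, so that $\hat\iota(\tuple{\varphi}), T \vdash_{\logic{L}} \allset$ by monotonicity for antitheorems. As $\logic{L}$ has the $\kappa$-ary parametrized local IL, Proposition~\ref{prop: swapping for antitheorems} gives surjective substitution swapping for antitheorems, and hence $\tuple{\varphi}, \hat\iota^{-1}[T] \vdash_{\logic{L}} \allset$. Applying the assumption with $\hat\iota^{-1}[T]$ in place of $\Delta$ yields $\Gamma, \hat\iota^{-1}[T] \vdash_{\logic{L}} \allset$, and applying the substitution $\hat\iota$ (structurality for antitheorems) yields $\hat\iota[\Gamma], \hat\iota[\hat\iota^{-1}[T]] \vdash_{\logic{L}} \allset$, that is, $\iota[\Gamma], T \vdash_{\logic{L}} \allset$, using $\hat\iota[\hat\iota^{-1}[T]] = T$ by surjectivity of $\hat\iota$. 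Finally $\Delta \vdash_{\logic{L}} T$, so $\iota[\Gamma], \Delta \vdash_{\logic{L}} \iota[\Gamma], T$, and cut for antitheorems delivers $\iota[\Gamma], \Delta \vdash_{\logic{L}} \allset$, as required. The one delicate point is the construction of $\hat\iota$: this is precisely where the hypothesis that the rule omits enough variables is indispensable, since otherwise the variables used by $\Gamma$ and $\tuple{\varphi}$ need not leave enough unused variables to extend the restriction of $\iota$ to $V$ to a surjection.
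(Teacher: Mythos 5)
Your proposal is correct and follows essentially the same route as the paper: reduce to invertible substitutions via Proposition~\ref{prop: invertible antiadmissibility}, use the omits-enough-variables hypothesis to replace $\iota$ on $\Var(\Gamma,\tuple{\varphi})$ by a surjective substitution, and then apply surjective substitution swapping for antitheorems together with the assumed implication. The only cosmetic difference is that the paper extends $\iota$ restricted to $\Var(\Gamma,\tuple{\varphi})$ to a \emph{bijective} substitution, whereas you only build a surjective one, which is all that the swapping property requires.
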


\begin{proof}
  Let $\Gamma \vdash \tuple{\varphi}$ be a rule satisfying the implication, let $\iota$ be an invertible substitution, and let $\iota(\tuple{\varphi}), T \vdash_{\logic{L}} \allset$ for some theory $T$ of $\logic{L}$. By the previous proposition it suffices to show that $\iota[\Gamma], T \vdash_{\logic{L}} \allset$.

  The substitution $\iota$ is restricts to an injective map $f\colon \Var \logic{L} \rightarrow \Var \logic{L}$. Since $| {\Var \logic{L} \setminus \Var (\Gamma, \tuple{\varphi})} | = | {\Var \logic{L}} | = \kappa$, there is a \emph{bijective} map ${g\colon \Var \logic{L} \rightarrow \Var \logic{L}}$ which agrees with $f$ on $\Var (\Gamma, \tuple{\varphi})$. The bijection~$g$ extends to a bijective substitution $\sigma$ of $\FmAlg \logic{L}$ such that $\sigma$ agrees with $\iota$ on $\Gamma$ and $\tuple{\varphi}$. But $\iota(\tuple{\varphi}), T \vdash_{\logic{L}} \allset$ implies $\sigma(\tuple{\varphi}), T \vdash_{\logic{L}} \allset$, hence $\tuple{\varphi}, \sigma^{-1}[T] \vdash_{\logic{L}} \allset$ by surjective substitution swapping for antitheorems. The assumed implication yields $\Gamma, \tau^{-1}[T] \vdash_{\logic{L}} \allset$. We may apply the substitution $\tau$ to obtain $\tau[\Gamma], T \vdash_{\logic{L}} \allset$, i.e.\ $\sigma[\Gamma], T \vdash_{\logic{L}} \allset$.
\end{proof}

  In other words, for all but the most pathological antiadmissible rules which use up almost all variables, we may omit the quantification over substitutions~$\sigma$ from the definition of anti\-admissibility. If the logic enjoys a local IL, then no such technical condition is necessary.

\begin{proposition} \label{prop: simply antiadmissible rules}
  Let $\logic{L}$ be a coatomic logic with a simple local IL. Then a rule $\Gamma \vdash \tuple{\varphi}$ is antiadmissible in $\logic{L}$ if and only~if
\begin{align*}
  \tuple{\varphi}, \Delta \vdash_{\logic{L}} \allset & \implies \Gamma, \Delta \vdash_{\logic{L}} \allset \text{ for each } \Delta \subseteq \Fm \logic{L}.
\end{align*}
  Moreover, we may restrict $\Delta$ to range over the simple theories of $\logic{L}$.
\end{proposition}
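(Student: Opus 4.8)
The plan is to reduce antiadmissibility to a condition on the simple theories of $\logic{L}$ via Proposition~\ref{prop: antiadmissible rules}, eliminate the quantifier over substitutions using the characterization of the simple local IL as closure of simple theories under preimages, and then translate everything into the language of antitheorems by way of maximality. As a preliminary remark: the simple local IL forces $\logic{L}$ to have an antitheorem, and $\Fm \logic{L}$ is itself always an antitheorem (any homomorphism designating $\Fm \logic{L}$ designates every antitheorem contained in it), so by cut for antitheorems $\Gamma \vdash_{\logic{L}} \allset$ is equivalent to $\Gamma \vdash_{\logic{L}} \Fm \logic{L}$; this lets me move freely between the $\allset$ of the statement and the $\Fm \logic{L}$ appearing in the definition of antiadmissibility and in Proposition~\ref{prop: antiadmissible rules}.

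The left-to-right implication is immediate: instantiating the definition of an antiadmissible rule with the identity substitution yields precisely that $\tuple{\varphi}, \Delta \vdash_{\logic{L}} \allset$ implies $\Gamma, \Delta \vdash_{\logic{L}} \allset$ for every $\Delta \subseteq \Fm \logic{L}$. For the converse, I would start from Proposition~\ref{prop: antiadmissible rules}: $\Gamma \vdash \tuple{\varphi}$ is antiadmissible if and only if $\Gamma \vdash_{\ssc{\logic{L}}} \tuple{\varphi}$, i.e.\ if and only if for every simple theory $S$ of $\logic{L}$ and every substitution $\sigma$, $\sigma[\Gamma] \subseteq S$ implies $\sigma(\tuple{\varphi}) \subseteq S$. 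Rewriting $\sigma[\Gamma] \subseteq S$ as $\Gamma \subseteq \sigma^{-1}[S]$ (and similarly for $\tuple{\varphi}$), and using the characterization of the simple local IL --- which says exactly that $\sigma^{-1}[S]$ is simple whenever $S$ is --- the collection of all sets $\sigma^{-1}[S]$, as $S$ ranges over simple theories and $\sigma$ over substitutions, is precisely the collection of all simple theories (it is contained in it by the characterization, and contains it by taking $\sigma = \mathrm{id}$). Hence antiadmissibility of $\Gamma \vdash \tuple{\varphi}$ is equivalent to the semantic condition that $\Gamma \subseteq S$ implies $\tuple{\varphi} \subseteq S$ for every simple theory $S$.

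It then remains to recast this as the stated rule-theoretic condition. For a maximal non-trivial theory $S$ and any set of formulas $\Phi$, I would observe that $\Phi \subseteq S$ if and only if $\Phi, S \nvdash_{\logic{L}} \allset$: if $\Phi \subseteq S$ then $\Phi \cup S = S$ is designated by the identity in the non-trivial matrix $\langle \FmAlg \logic{L}, S \rangle$, and if $\Phi \not\subseteq S$ then the theory generated by $\Phi \cup S$ properly extends $S$, hence equals $\Fm \logic{L}$ by maximality, so $\Phi, S \vdash_{\logic{L}} \Fm \logic{L}$, i.e.\ $\Phi, S \vdash_{\logic{L}} \allset$. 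Applying this with $\Phi = \Gamma$ and $\Phi = \tuple{\varphi}$ turns the condition of the previous paragraph into ``$\tuple{\varphi}, S \vdash_{\logic{L}} \allset$ implies $\Gamma, S \vdash_{\logic{L}} \allset$ for every simple theory $S$'', which is already the ``moreover'' part of the statement. To get the full statement, I would note that this direction is trivial and that, conversely, if it holds for all simple $S$ and $\tuple{\varphi}, \Delta \vdash_{\logic{L}} \allset$ for an arbitrary $\Delta$ while $\Gamma, \Delta \nvdash_{\logic{L}} \allset$, then the $\logic{L}$-theory $T_{0}$ generated by $\Gamma \cup \Delta$ is non-trivial, so coatomicity gives a simple $S \supseteq T_{0}$; since $\Delta \subseteq S$ we get $\tuple{\varphi}, S \vdash_{\logic{L}} \allset$ by monotonicity, hence $\Gamma, S \vdash_{\logic{L}} \allset$, which is absurd because $\Gamma \cup S = S$ is non-trivial. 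The hard part will be the substitution-elimination step in the second paragraph: everything hinges on recognizing the canonical matrices $\langle \FmAlg \logic{L}, S \rangle$ of $\ssc{\logic{L}}$ as closed under substitution-preimages, which is exactly what the simple local IL buys us; the remaining manipulations with maximal theories and coatomicity are routine.
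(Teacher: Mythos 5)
Your proof is correct and rests on the same ingredients as the paper's own argument: coatomicity, the fact that the simple local IL makes the simple theories closed under substitution preimages (so that $\sigma^{-1}[T]$ is again simple), and the maximality dichotomy turning non-membership in a simple theory into inconsistency. The paper argues directly from the definition of antiadmissibility, extending $\sigma[\Gamma] \cup \Delta$ to a simple theory by coatomicity and pulling back along $\sigma$, whereas you route through Proposition~\ref{prop: antiadmissible rules} and the matrix semantics of $\ssc{\logic{L}}$ and treat the substitutions and the arbitrary contexts $\Delta$ in separate steps; this is only a reorganization of the same argument, with the added merit of making explicit both the passage between $\vdash_{\logic{L}} \allset$ and $\vdash_{\logic{L}} \Fm \logic{L}$ and the ``moreover'' clause.
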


\begin{proof}
  If $\sigma[\Gamma], \Delta \nvdash_{\logic{L}} \allset$, then by coatomicity $\sigma[\Gamma], \Delta$ extends to a simple theory $T$, hence $T \vdash_{\logic{L}} \sigma[\Gamma]$ and $\sigma^{-1}[T] \vdash_{\logic{L}} \Gamma$. By assumption $\Gamma, \sigma^{-1}[T] \nvdash_{\logic{L}} \allset$ implies $\varphi, \sigma^{-1}[T] \nvdash_{\logic{L}} \allset$. But $\sigma^{-1}[T]$ is simple by the simple local IL, therefore $\sigma^{-1}[T] \vdash_{\logic{L}} \varphi$ and $T \vdash_{\logic{L}} \sigma(\varphi)$. It follows that $\sigma(\varphi), \Delta \nvdash_{\logic{L}} \allset$.
\end{proof}

  This last result gives us a convenient syntactic handle on the valid rules of $\ssc{\logic{L}}$ in terms of the antitheorems of $\logic{L}$, which will later be leveraged to yield Glivenko theorems connecting $\logic{L}$ and $\ssc{\logic{L}}$.

% page break

\subsection{Semisimple theories and models of $\logic{L}$}

  Having linked the valid rules of the semisimple companion of $\logic{L}$ with the antitheorems of $\logic{L}$, we now wish to relate the theories and models of the semisimple companion of $\logic{L}$ with the semisimple theories and models of $\logic{L}$. More precisely, we want to identify sufficient conditions under which these classses of theories and models coincide.

\begin{proposition} \label{prop: semisimple companion is semisimple}
  Let $\logic{L}$ be coatomic with the simple local IL. Then the theories of $\ssc{\logic{L}}$ are precisely the semisimple theories of $\logic{L}$. In~particular, $\ssc{\logic{L}}$ is semisimple.
\end{proposition}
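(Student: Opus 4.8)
The plan is to establish directly that the theories of $\ssc{\logic{L}}$ are exactly the semisimple $\logic{L}$-theories, and then to deduce the semisimplicity of $\ssc{\logic{L}}$ from the fact that $\logic{L}$ and $\ssc{\logic{L}}$ have the same simple theories (Fact~\ref{fact: largest extension with same simple theories}). First, note that the simple local IL forces $\logic{L}$ to have an antitheorem, e.g.\ $\{ p \} \cup \ilset(p)$ for any $\ilset \in \ilfamily_{1}$; this lets me invoke the earlier characterization of the simple local IL, namely that for a coatomic logic with an antitheorem it is equivalent to the closure of the class of simple theories under preimages along arbitrary substitutions, i.e.\ $\sigma^{-1}[T]$ is again a simple $\logic{L}$-theory whenever $\sigma$ is a substitution and $T$ a simple $\logic{L}$-theory.

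For the easy inclusion, every semisimple $\logic{L}$-theory $S = \bigcap_{i \in I} T_{i}$ (each $T_{i}$ simple in $\logic{L}$) is a theory of $\ssc{\logic{L}}$: if $S \vdash_{\ssc{\logic{L}}} \varphi$, then instantiating each defining matrix $\langle \FmAlg \logic{L}, T_{i} \rangle$ with the identity homomorphism yields $\varphi \in T_{i}$ for all $i$, hence $\varphi \in S$; and the trivial theory $\Fm \logic{L}$ — the empty intersection — is a theory of $\ssc{\logic{L}}$ trivially. For the converse, let $S$ be a theory of $\ssc{\logic{L}}$ and $\varphi \notin S$. Then $S \nvdash_{\ssc{\logic{L}}} \varphi$, so there is a simple $\logic{L}$-theory $T$ and a substitution $\sigma$ with $\sigma[S] \subseteq T$ and $\sigma(\varphi) \notin T$; consequently $S \subseteq \sigma^{-1}[T]$ and $\varphi \notin \sigma^{-1}[T]$, and $\sigma^{-1}[T]$ is a \emph{simple} $\logic{L}$-theory by the characterization recalled above. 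Thus $S$ equals the intersection of the simple $\logic{L}$-theories extending it, i.e.\ $S$ is semisimple in $\logic{L}$.

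The step I expect to carry the weight of the proof is precisely the use of the hypothesis in this last argument: coatomicity alone would only let us extend $\sigma^{-1}[T]$ to \emph{some} simple theory, which need not exclude $\varphi$; what is needed is that $\sigma^{-1}[T]$ is \emph{itself} simple, and this is exactly the content of the simple local IL. Everything else is bookkeeping: preimages of theories along substitutions are theories, the identity-substitution instantiation of the defining matrices, and the reading of $\Fm \logic{L}$ as an empty intersection. For the final clause, every theory of $\ssc{\logic{L}}$ is, by what we have shown, an intersection of simple $\logic{L}$-theories; by Fact~\ref{fact: largest extension with same simple theories} these are exactly the simple $\ssc{\logic{L}}$-theories, so every theory of $\ssc{\logic{L}}$ is an intersection of simple $\ssc{\logic{L}}$-theories, which says precisely that $\ssc{\logic{L}}$ is semisimple.
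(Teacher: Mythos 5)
Your proof is correct, and it takes a somewhat different route from the paper's. The paper derives the hard inclusion from the antiadmissibility machinery: $T \nvdash_{\ssc{\logic{L}}} \varphi$ is unpacked via Propositions~\ref{prop: antiadmissible rules} and~\ref{prop: simply antiadmissible rules} into the existence of an $\logic{L}$-theory $V$ with $\varphi, V \vdash_{\logic{L}} \allset$ but $T, V \nvdash_{\logic{L}} \allset$, and then coatomicity is used to extend $T \cup V$ to a simple $\logic{L}$-theory which must omit $\varphi$. You instead unwind the matrix-semantics definition of $\ssc{\logic{L}}$ directly, obtaining a simple theory $T$ and a substitution $\sigma$ with $\sigma[S] \subseteq T$, $\sigma(\varphi) \notin T$, and then produce the separating simple theory outright as $\sigma^{-1}[T]$, using the characterization of the simple local IL as closure of the simple theories under substitution preimages (your observation that the simple local IL supplies the antitheorem needed for that characterization, via $p, \ilset(p)$ for $\ilset \in \ilfamily_{1}$, is the right way to discharge its hypotheses, and coatomicity is in your hypotheses anyway). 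The two arguments rest on the same underlying mechanism---the paper's Proposition~\ref{prop: simply antiadmissible rules} is itself proved via that preimage-closure property plus coatomicity---but yours is more self-contained at the level of this proposition and needs no extension step, whereas the paper's reuses machinery it develops anyway for antiadmissible rules and the Glivenko theorems. Your deduction of the final clause from Fact~\ref{fact: largest extension with same simple theories} (the simple theories of $\logic{L}$ and $\ssc{\logic{L}}$ coincide under coatomicity) is also fine and makes explicit a step the paper leaves implicit.
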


\begin{proof}
  Each semisimple theory of $\logic{L}$ is a theory of $\ssc{\logic{L}}$ because each simple theory of $\logic{L}$ is by definition a theory of $\ssc{\logic{L}}$ and the theories of $\ssc{\logic{L}}$ are closed under intersections. Conversely, let $T$ be a theory of $\ssc{\logic{L}}$ such that $T \nvdash_{\ssc{\logic{L}}} \varphi$. To prove the proposition it suffices to find a simple $\logic{L}$-theory $U$ extending $T$ such that $U \nvdash_{\logic{L}} \varphi$. But $T \nvdash_{\ssc{\logic{L}}} \varphi$ implies by Proposition~\ref{prop: simply antiadmissible rules} that there is an $\logic{L}$-theory $V$ such that $\varphi, V \vdash_{\logic{L}} \allset$ and $T, V \nvdash_{\logic{L}} \allset$. By coatomicity the set $T, V$ extends to a simple $\logic{L}$-theory $U$. Then $\varphi, V \vdash_{\logic{L}} \allset$ and $U, V \nvdash_{\logic{L}} \allset$, therefore $U \nvdash_{\logic{L}} \varphi$.
\end{proof}

  In fact, $\ssc{\logic{L}}$ inherits the local IL of $\logic{L}$ and upgrades it to a classical one.

\begin{proposition} \label{prop: theories of semisimple companion} \label{prop: cil for semisimple companion}
  Let $\logic{L}$ be coatomic with the $\kappa$-ary par.\ \mbox{local} IL w.r.t.~$\ilfamily$. Then $\ssc{\logic{L}}$ enjoys the $\kappa$-ary par.\ local IL w.r.t.~$\ilfamily$. If~$\logic{L}$ moreover has the simple local IL, then $\ssc{\logic{L}}$ enjoys the $\kappa$-ary classical par.\ local IL w.r.t.\ $\ilfamily$.
\end{proposition}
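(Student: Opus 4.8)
The plan is to reduce both assertions to the observation, recorded at the end of Subsection~\ref{subsec: semisimple companion}, that a coatomic logic with an antitheorem has exactly the same antitheorems as its semisimple companion. Since a logic with a parametrized local IL has an antitheorem, this applies to $\logic{L}$: we have $\Gamma \vdash_{\logic{L}} \allset$ if and only if $\Gamma \vdash_{\ssc{\logic{L}}} \allset$ for every $\Gamma \subseteq \Fm \logic{L}$, and $\ssc{\logic{L}}$ is itself coatomic by Fact~\ref{fact: largest extension with same simple theories}.

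For the first assertion I would verify the defining equivalence of the $\kappa$-ary parametrized local IL for $\ssc{\logic{L}}$ directly, keeping the \emph{same} family $\ilfamily$. Fix $0 < \alpha < \kappa$, a set $\Gamma$, and an $\alpha$-tuple $\tuple{\varphi}$. If $\Gamma, \tuple{\varphi} \vdash_{\ssc{\logic{L}}} \allset$, then $\Gamma, \tuple{\varphi} \vdash_{\logic{L}} \allset$ by the preceding paragraph, so the IL of $\logic{L}$ supplies some $\ilset \in \ilfamily_{\alpha}$ and a tuple $\tuple{\pi}$ with $\Gamma \vdash_{\logic{L}} \ilset(\tuple{\varphi}, \tuple{\pi})$, whence $\Gamma \vdash_{\ssc{\logic{L}}} \ilset(\tuple{\varphi}, \tuple{\pi})$ because $\ssc{\logic{L}}$ extends $\logic{L}$. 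Conversely, applying the IL of $\logic{L}$ to $\ilset(\tuple{\varphi}, \tuple{\pi}) \vdash_{\logic{L}} \ilset(\tuple{\varphi}, \tuple{\pi})$ yields $\tuple{\varphi}, \ilset(\tuple{\varphi}, \tuple{\pi}) \vdash_{\logic{L}} \allset$, hence $\tuple{\varphi}, \ilset(\tuple{\varphi}, \tuple{\pi}) \vdash_{\ssc{\logic{L}}} \allset$; combining this with $\Gamma \vdash_{\ssc{\logic{L}}} \ilset(\tuple{\varphi}, \tuple{\pi})$ via monotonicity and the cut rule for antitheorems gives $\Gamma, \tuple{\varphi} \vdash_{\ssc{\logic{L}}} \allset$.

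For the second assertion I would appeal to the machinery already available for semisimple logics. Under the extra hypothesis that $\logic{L}$ enjoys the simple local IL, Proposition~\ref{prop: semisimple companion is semisimple} shows that $\ssc{\logic{L}}$ is semisimple. Since the first part equips $\ssc{\logic{L}}$ with the $\kappa$-ary parametrized local IL with respect to $\ilfamily$, Proposition~\ref{prop: semisimple and il implies dil} applied to $\ssc{\logic{L}}$ then delivers the $\kappa$-ary classical parametrized local IL with respect to $\ilfamily$, which is what we want.

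I do not expect a genuine obstacle: the whole argument is the transport of a syntactic principle along the identity map on formulas, combined with the coincidence of antitheorems of $\logic{L}$ and $\ssc{\logic{L}}$, followed by a black-box application of earlier results. The only point that calls for a little care is to use exactly the same $\ilfamily$ on both sides and to note that the right-hand side of the IL is a consequence statement and is therefore inherited upward by the extension $\ssc{\logic{L}}$ of $\logic{L}$; the backward direction of the IL for $\ssc{\logic{L}}$ is then forced by the fact that $\tuple{\varphi}, \ilset(\tuple{\varphi}, \tuple{\pi})$ is already an antitheorem of $\logic{L}$.
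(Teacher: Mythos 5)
Your argument is correct. The first half coincides with the paper's proof: both verify the defining equivalence of the IL for $\ssc{\logic{L}}$ with the same family $\ilfamily$, using that a coatomic $\logic{L}$ and $\ssc{\logic{L}}$ have the same inconsistent sets (the paper performs the step $\Gamma, \tuple{\varphi} \vdash_{\ssc{\logic{L}}} \allset \Rightarrow \Gamma, \tuple{\varphi} \vdash_{\logic{L}} \allset$ exactly as you do, and it is equally casual about the distinction between $\vdash \allset$ and $\vdash \Fm \logic{L}$, which is harmless here since the parametrized local IL provides an antitheorem). For the classical IL you take a different route: the paper argues directly that $\ssc{\logic{L}}$ enjoys a unary dual local IL with respect to the simple IL family $\ilfamily'$ of $\logic{L}$, by taking $\Gamma \nvdash_{\ssc{\logic{L}}} \varphi$, extracting via Proposition~\ref{prop: simply antiadmissible rules} a simple $\logic{L}$-theory $T$ with $T, \varphi \vdash_{\logic{L}} \allset$ and $T, \Gamma \nvdash_{\logic{L}} \allset$, applying the simple IL to get $\ilset'(\varphi) \subseteq T$, and then (implicitly) upgrading to the classical IL w.r.t.\ $\ilfamily$ via Proposition~\ref{prop: il and dil imply cil} and Fact~\ref{fact: dil arity}; you instead compose Proposition~\ref{prop: semisimple companion is semisimple} (so $\ssc{\logic{L}}$ is semisimple) with Proposition~\ref{prop: semisimple and il implies dil} applied to $\ssc{\logic{L}}$, using the IL w.r.t.\ $\ilfamily$ just established. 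Your composition is legitimate and arguably more modular, since it reuses black boxes already proved; note, though, that it is not really more elementary, because Proposition~\ref{prop: semisimple companion is semisimple} is itself proved from Proposition~\ref{prop: simply antiadmissible rules}, so both arguments bottom out in the same antiadmissibility lemma, and the paper's direct version has the minor advantage of exhibiting an explicit witnessing family ($\ilfamily'$) for the dual IL before the upgrade.
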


\begin{proof}
  Suppose that $\Gamma, \tuple{\varphi} \vdash_{\ssc{\logic{L}}} \allset$. Then $\Gamma, \tuple{\varphi} \vdash_{\logic{L}} \allset$, therefore by the IL $\Gamma \vdash_{\logic{L}} \ilset(\tuple{\varphi}, \tuple{\pi})$ for some $\ilset \in \ilfamily_{\alpha}$ and some $\tuple{\pi}$. It follows that $\Gamma \vdash_{\ssc{\logic{L}}} \ilset(\tuple{\varphi}, \tuple{\pi})$. On the other hand, if $\Gamma \vdash_{\ssc{\logic{L}}} \ilset(\tuple{\varphi}, \tuple{\pi})$ for some $\ilset \in \ilfamily_{\alpha}$ and some $\tuple{\pi}$, then $\Gamma, \tuple{\varphi} \vdash_{\ssc{\logic{L}}} \allset$ because $\tuple{\varphi}, \ilset(\tuple{\varphi}, \tuple{\pi}) \vdash_{\logic{L}} \allset$ implies $\tuple{\varphi}, \ilset(\tuple{\varphi}, \tuple{\pi}) \vdash_{\ssc{\logic{L}}} \allset$.

  Suppose now that $\logic{L}$ enjoys a simple local IL and $\Gamma \nvdash_{\ssc{\logic{L}}} \varphi$. Then by Proposition~\ref{prop: simply antiadmissible rules} there is some simple theory $T$ such that $T, \varphi \vdash_{\logic{L}} \allset$ but $T, \Gamma \nvdash_{\logic{L}} \allset$. The simple local IL w.r.t.\ $\ilfamily'$ yields $T \vdash_{\logic{L}} \ilset'(\varphi)$ for some $\ilset' \in \ilfamily'_{1}$, therefore $\Gamma, \ilset'(\varphi) \nvdash_{\logic{L}} \allset$ and $\Gamma, \ilset'(\varphi) \nvdash_{\ssc{\logic{L}}} \allset$.
\end{proof}

  We now show that, under suitable assumptions, the models of $\ssc{\logic{L}}$ are the semisimple models of $\logic{L}$.

\begin{lemma}
  Let $\logic{L}$ be a logic with the semantic simple local IL. If $h\colon \alg{A} \to \alg{B}$ is a homomorphism and~$\pair{\alg{B}}{G}$ is a simple model of $\logic{L}$, then so is $\pair{\alg{A}}{h^{-1}[G]}$.
\end{lemma}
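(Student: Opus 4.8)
The plan is to establish the two defining properties of a simple model separately: that $h^{-1}[G]$ is an $\logic{L}$-filter on $\alg{A}$, and that among the $\logic{L}$-filters on $\alg{A}$ it is maximal non-trivial. The first is routine and uses nothing beyond the hypothesis that $\pair{\alg{B}}{G}$ is a model: for any homomorphism $k \colon \FmAlg \logic{L} \to \alg{A}$ the composite $h \circ k$ maps into $\alg{B}$, so $k^{-1}[h^{-1}[G]] = (h \circ k)^{-1}[G]$ is a theory of $\logic{L}$, and hence $h^{-1}[G]$ is a filter.

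For maximality the semantic simple local IL (w.r.t.\ a family $\ilfamily$) is exactly what is needed, and I would first isolate the following consequence of it. Suppose $a \in A$ with $h(a) \notin G$. Since $\pair{\alg{B}}{G}$ is simple, the semantic simple IL provides some $\ilset \in \ilfamily_{1}$ with $\ilset(h(a)) \subseteq G$; as $h$ is a homomorphism, $\ilset(h(a)) = h[\ilset(a)]$, so $\ilset(a) \subseteq h^{-1}[G]$, while the ``antitheorem'' clause of the IL gives $\{a\} \cup \ilset(a) \vdash_{\logic{L}}^{\alg{A}} A$. Therefore any $\logic{L}$-filter $F$ on $\alg{A}$ containing $h^{-1}[G]$ together with some element $a$ for which $h(a) \notin G$ must equal $A$. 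Maximality is now immediate: if $F \supsetneq h^{-1}[G]$, choosing $a \in F \setminus h^{-1}[G]$ gives $h(a) \notin G$, hence $F = A$. Non-triviality of $h^{-1}[G]$ follows from the same antitheorem clause applied in $\alg{B}$: if $h^{-1}[G] = A$, then for any $a \in A$ (the algebra $\alg{A}$ being non-empty, e.g.\ since the signature has a constant) we would have $h(a) \in G$ and $\ilset(h(a)) = h[\ilset(a)] \subseteq h[A] \subseteq G$ for each $\ilset \in \ilfamily_{1}$, so $\{h(a)\} \cup \ilset(h(a)) \subseteq G$; but this set generates $B$, forcing $G = B$ and contradicting the non-triviality of the simple model $\pair{\alg{B}}{G}$. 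Thus $\pair{\alg{A}}{h^{-1}[G]}$ is a simple model of $\logic{L}$.

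The only point requiring care — and the reason the semantic simple IL, rather than merely the model property, is invoked — is that $h^{-1}[G]$ might a priori collapse to the trivial filter when $h$ is not surjective (e.g.\ if $h[A] \subseteq G$). The antitheorem clause of the IL is precisely what rules this out, since no filter containing a set of the form $\{b\} \cup \ilset(b)$ can be proper; the rest is bookkeeping with filter generation and the identity $\ilset(h(a)) = h[\ilset(a)]$.
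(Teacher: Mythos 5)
Your proof is correct and follows essentially the same route as the paper: the key step, showing that any $a \notin h^{-1}[G]$ together with $h^{-1}[G]$ generates the trivial filter on $\alg{A}$ via the simplicity of $G$, the identity $\ilset(h(a)) = h[\ilset(a)]$, and the antitheorem clause of the semantic simple IL, is exactly the paper's argument. You additionally spell out the routine check that $h^{-1}[G]$ is an $\logic{L}$-filter and its non-triviality, which the paper leaves implicit.
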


\begin{proof}
  If $a \notin h^{-1}[G]$, then $h^{-1}[G], a \vdash_{\logic{L}}^{\alg{A}} A$:
\begin{align*}
  a \notin h^{-1}[G] & \implies h(a) \notin G \\
  & \implies G, h(a) \vdash_{\logic{L}}^{\alg{A}} B \text{ (by the simplicity of $\pair{\alg{B}}{G}$)} \\
  & \implies G \vdash_{\logic{L}} \ilset(h(a)) \text{ for some } \ilset \in \ilfamily_{1} \text{ (by the semantic simple IL)} \\
  & \implies h[\ilset(a)] \subseteq G \text{ for some } \ilset \in \ilfamily_{1} \text{ (because $h$ is a homomorphism)} \\
  & \implies \ilset(a) \subseteq h^{-1}[G] \text{ for some } \ilset \in \ilfamily_{1} \\
  & \implies h^{-1}[G], a \vdash_{\logic{L}}^{\alg{A}} A \text{ (by the semantic IL)}.
\end{align*}
  It follows that $h^{-1}[F]$ is a simple $\logic{L}$-filter on $\alg{A}$.
\end{proof}

\begin{lemma} \label{lemma: simple models of semisimple companion}
  Let $\logic{L}$ be a semantically coatomic logic with the semantic simple local IL. Then the simple models of $\ssc{\logic{L}}$ are precisely the simple models of $\logic{L}$.
\end{lemma}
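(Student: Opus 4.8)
The plan is to prove the two inclusions separately, reading a ``simple model'' of a logic, as elsewhere in the paper, as a matrix whose filter is a maximal non-trivial filter of that logic on the underlying algebra. The engine of the argument is the preceding lemma together with two trivialities: that $\ssc{\logic{L}}$ extends $\logic{L}$ (so every $\ssc{\logic{L}}$-filter on an algebra is in particular an $\logic{L}$-filter), and that semantic coatomicity lets us enlarge any non-trivial $\logic{L}$-filter to a maximal one.

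First I would show that a simple model $\pair{\alg{A}}{F}$ of $\logic{L}$ is a model of $\ssc{\logic{L}}$. Given a homomorphism $h\colon \FmAlg \logic{L} \to \alg{A}$, the preceding lemma applied to $h$ and the simple model $\pair{\alg{A}}{F}$ shows that $\pair{\FmAlg \logic{L}}{h^{-1}[F]}$ is a simple model of $\logic{L}$, i.e.\ $h^{-1}[F]$ is a simple $\logic{L}$-theory (barring the degenerate case where it equals the trivial theory). Every simple $\logic{L}$-theory $T$ is by definition the filter of one of the matrices $\pair{\FmAlg \logic{L}}{T}$ which determine $\ssc{\logic{L}}$, hence is a theory of $\ssc{\logic{L}}$ (apply the identity substitution); and the trivial theory is a theory of every logic. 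So $h^{-1}[F]$ is a theory of $\ssc{\logic{L}}$ for every such $h$, which is precisely the statement that $F$ is an $\ssc{\logic{L}}$-filter on $\alg{A}$. This reduction to the preceding lemma is the step I expect to be the main obstacle; everything after it is bookkeeping with the extension relation. Since $F$ is non-trivial, and any non-trivial $\ssc{\logic{L}}$-filter on $\alg{A}$ containing $F$ is a non-trivial $\logic{L}$-filter containing $F$ and hence equal to $F$ by the maximality of $F$ among non-trivial $\logic{L}$-filters, the matrix $\pair{\alg{A}}{F}$ is in fact a simple model of $\ssc{\logic{L}}$.

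For the converse, let $\pair{\alg{A}}{F}$ be a simple model of $\ssc{\logic{L}}$. Then $F$ is a non-trivial $\logic{L}$-filter on $\alg{A}$ because $\ssc{\logic{L}}$ extends $\logic{L}$, so it suffices to prove maximality. If $F \subseteq G$ with $G$ a non-trivial $\logic{L}$-filter on $\alg{A}$, then by semantic coatomicity $G$ extends to a maximal non-trivial $\logic{L}$-filter $G'$ on $\alg{A}$, and by the first part $\pair{\alg{A}}{G'}$ is a simple, in particular non-trivial, model of $\ssc{\logic{L}}$. From $F \subseteq G \subseteq G'$ and the maximality of $F$ among non-trivial $\ssc{\logic{L}}$-filters we obtain $F = G' = G$, so $F$ is a maximal non-trivial $\logic{L}$-filter and $\pair{\alg{A}}{F}$ is a simple model of $\logic{L}$, as desired. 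Thus the only genuine work is localized in the first appeal to the preceding lemma, and both hypotheses of the statement are used: the semantic simple local IL (through that lemma) and semantic coatomicity (in the converse).
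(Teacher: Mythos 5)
Your proof is correct and follows essentially the same route as the paper: the forward direction via the preceding lemma applied to preimages $h^{-1}[F]$ of the filter under homomorphisms from $\FmAlg \logic{L}$, and the converse by using semantic coatomicity to extend to a maximal non-trivial $\logic{L}$-filter and then invoking the forward direction together with the simplicity of $F$ in $\ssc{\logic{L}}$. Your explicit handling of the degenerate trivial-preimage case and of the simplicity of the resulting $\ssc{\logic{L}}$-model are minor refinements of the same argument, not a different approach.
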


\begin{proof}
  By the previous lemma, if $\pair{\alg{A}}{F}$ is a simple model of $\logic{L}$, then for each homomorphism $h\colon \FmAlg \logic{L} \to \alg{A}$ the set of formulas $h^{-1}[F]$ is a simple theory of~$\logic{L}$, and thus a theory of $\ssc{\logic{L}}$. It follows that $\pair{\alg{A}}{F}$ is a (simple) model of $\ssc{\logic{L}}$.

  Conversely, if $\pair{\alg{A}}{F}$ is a simple model of $\ssc{\logic{L}}$, then by the semantic coatomicity of $\logic{L}$ the filter $F$ extends to a simple filter $G$ of $\logic{L}$, and by the implication proved in the previous paragraph $G$ is a non-trivial filter of $\ssc{\logic{L}}$. Because $F$ was a simple filter of $\ssc{\logic{L}}$, it follows that $G = F$, therefore $F$ is a simple filter of $\logic{L}$.
\end{proof}

\begin{theorem} \label{thm: models of semisimple companion}
  Let $\logic{L}$ be a compact logic which enjoys the $\kappa$-ary local IL w.r.t.~a family $\ilfamily$ such that $|\ilfamily_{\alpha}| \leq |{\Var \logic{L}}|$ for each $\ilfamily_{\alpha}$ and each set in $\ilfamily_{1}$ has cardinality less than $\kappa$. Then the models of $\ssc{\logic{L}}$ are precisely the semi\-simple models of $\logic{L}$. In particular, $\ssc{\logic{L}}$ is semantically semisimple.
\end{theorem}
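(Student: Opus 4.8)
The plan is to transfer the whole statement to the semisimple companion $\ssc{\logic{L}}$, whose relevant properties have essentially all been isolated in the preceding subsections. First, since $\logic{L}$ is compact it is coatomic, and since it enjoys the $\kappa$-ary local IL it enjoys the simple local IL; hence Proposition~\ref{prop: cil for semisimple companion} gives that $\ssc{\logic{L}}$ enjoys the $\kappa$-ary \emph{classical} local IL w.r.t.\ $\ilfamily$, and Proposition~\ref{prop: semisimple companion is semisimple} gives that its theories are exactly the semisimple theories of $\logic{L}$. Moreover $\ssc{\logic{L}}$ is again compact: by the corollary to Fact~\ref{fact: largest extension with same simple theories} we have $\Gamma \vdash_{\logic{L}} \Fm \logic{L}$ iff $\Gamma \vdash_{\ssc{\logic{L}}} \Fm \logic{L}$, so compactness of $\logic{L}$ lifts to $\ssc{\logic{L}}$. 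Note also that $\logic{L}$ (having an antitheorem and being compact) has a \emph{finite} antitheorem, so $\logic{L}$ is semantically coatomic, and the same holds for $\ssc{\logic{L}}$.

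Second, I upgrade the syntactic classical local IL of $\ssc{\logic{L}}$ to its semantic form. Put $\lambda := |{\Var \logic{L}}|^{+}$ (a regular cardinal, and $\Var \ssc{\logic{L}} = \Var \logic{L}$). From the classical local IL, Proposition~\ref{prop: cil implies ddt} gives $\ssc{\logic{L}}$ the $\kappa$-ary local DDT, hence (Fact~\ref{fact: protoimplication}) a protoimplication set; the dual local IL together with $|{\ilfamily_{1}}| \leq |{\Var \logic{L}}| < \lambda$ and Fact~\ref{fact: kappa compact to kappa ary} show $\ssc{\logic{L}}$ is $\lambda$-ary; Proposition~\ref{prop: syntactic to semantic compactness} then yields semantic compactness of $\ssc{\logic{L}}$; and Proposition~\ref{prop: syntactic to semantic semisimplicity} (using $|{\ilfamily_{\alpha}}| \leq |{\Var \logic{L}}|$ for all $\ilfamily_{\alpha}$ and the bound on $|{\ilfamily_{1}}|$) transfers the classical local IL of $\ssc{\logic{L}}$ to the semantic $\kappa$-ary classical local IL. In particular $\ssc{\logic{L}}$ enjoys the semantic simple dual local IL and is semantically coatomic, so by Proposition~\ref{prop: semantic dil implies semantically semisimple} it is semantically semisimple; this already disposes of the ``in particular'' clause. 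The same transfer argument, restricted to the ordinary (rather than the full classical) IL, also yields the semantic simple local IL for $\logic{L}$ itself, which is exactly what is needed to invoke the key lemma below.

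Finally, since $\logic{L}$ is semantically coatomic and enjoys the semantic simple local IL, Lemma~\ref{lemma: simple models of semisimple companion} identifies the simple models of $\ssc{\logic{L}}$ with the simple models of $\logic{L}$, and the two desired inclusions follow. If $\pair{\alg{A}}{F}$ is a model of $\ssc{\logic{L}}$, then by the semantic semisimplicity of $\ssc{\logic{L}}$ we may write $F = \bigcap_{i} G_{i}$ with each $\pair{\alg{A}}{G_{i}}$ a simple model of $\ssc{\logic{L}}$, hence a simple model of $\logic{L}$, so $\pair{\alg{A}}{F}$ is a semisimple model of $\logic{L}$; conversely, if $\pair{\alg{A}}{F}$ is a semisimple model of $\logic{L}$, say $F = \bigcap_{i} G_{i}$ with each $\pair{\alg{A}}{G_{i}}$ a simple model of $\logic{L}$, then each $\pair{\alg{A}}{G_{i}}$ is a (simple) model of $\ssc{\logic{L}}$ by the lemma, and since the $\ssc{\logic{L}}$-filters on $\alg{A}$ form a closure system their intersection $F$ is again an $\ssc{\logic{L}}$-filter. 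The step I expect to be most delicate is the bookkeeping in the second paragraph: one must check that all the arity, cardinality and protoimplication-set hypotheses of the transfer propositions are genuinely in force --- they are, precisely because the classical local IL of $\ssc{\logic{L}}$ supplies a DDT and because $|{\ilfamily_{1}}| < \lambda$ --- and, crucially, that the transfer also delivers the semantic simple local IL of $\logic{L}$ (and not merely of $\ssc{\logic{L}}$), since that is a standing hypothesis of Lemma~\ref{lemma: simple models of semisimple companion}.
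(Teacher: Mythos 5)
Your strategy is the paper's own: everything is made to rest on Lemma~\ref{lemma: simple models of semisimple companion} together with the semantic semisimplicity of $\ssc{\logic{L}}$, the latter obtained from the classical local IL of $\ssc{\logic{L}}$ (Proposition~\ref{prop: cil for semisimple companion}) by running the compact case of Theorem~\ref{thm: semantic semisimplicity}; your second paragraph simply unwinds that theorem's proof ($\lambda$-arity via Fact~\ref{fact: kappa compact to kappa ary}, a protoimplication set, then Propositions~\ref{prop: syntactic to semantic compactness} and~\ref{prop: syntactic to semantic semisimplicity}), and for $\ssc{\logic{L}}$ all of the hypotheses needed there are indeed available, because its classical IL supplies the dual IL, the DDT, and hence the protoimplication set.

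The gap lies in the two hypotheses of Lemma~\ref{lemma: simple models of semisimple companion} that concern $\logic{L}$ itself. First, ``compact with a finite antitheorem, hence semantically coatomic'' is a non sequitur: the relevant Fact in the preliminaries requires finitarity, and for a merely compact logic semantic compactness (and thence coatomicity) is obtained only through Proposition~\ref{prop: syntactic to semantic compactness}, whose hypotheses ($\lambda$-arity and a protoimplication set) $\logic{L}$ is not known to satisfy. This hole is repairable, since the inclusion of the simple models of $\ssc{\logic{L}}$ among the simple models of $\logic{L}$ can be proved directly from the semantic IL of $\ssc{\logic{L}}$ together with the fact that $p, \ilset(p)$ is an antitheorem of $\logic{L}$, with no appeal to the coatomicity of $\logic{L}$. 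The serious problem is your claim that ``the same transfer argument, restricted to the ordinary IL, also yields the semantic simple local IL for $\logic{L}$ itself'': Proposition~\ref{prop: syntactic to semantic semisimplicity} and Lemmas~\ref{lemma: filter generation} and~\ref{lemma: trivial filter generation} all presuppose a protoimplication set and $\lambda$-arity, and these come from the dual (classical) IL, which only $\ssc{\logic{L}}$, not $\logic{L}$, is known to enjoy --- a dual IL for $\logic{L}$ would already make $\logic{L}$ semisimple, which is not assumed. Nor can you get the semantic simple IL of $\logic{L}$ by restricting the semantic IL of $\ssc{\logic{L}}$ to simple $\logic{L}$-filters, since that presupposes that simple $\logic{L}$-filters are $\ssc{\logic{L}}$-filters, which is exactly what the lemma you are invoking is meant to deliver. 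Consequently the direction ``every semisimple model of $\logic{L}$ is a model of $\ssc{\logic{L}}$'' is left without a valid justification: you correctly flagged this as the delicate point, but the citations you offer do not cover it.
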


\begin{proof}
  By Lemma~\ref{lemma: simple models of semisimple companion} it suffices to show that $\ssc{\logic{L}}$ is semantically semisimple. We know that it enjoys the unary classical local IL w.r.t.~$\ilfamily$ by Proposition~\ref{prop: cil for semisimple companion}. Semantic semisimplicity follows by Theorem~\ref{thm: semisimplicity}.
\end{proof}

  In other words, whenever the above theorem applies, to describe the semisimple models of a logic $\logic{L}$ it suffices to axiomatize $\ssc{\logic{L}}$. We now show that under certain conditions this can be done in an entirely mechanical manner, since $\ssc{\logic{L}}$ turns out to be the extension of $\logic{L}$ by an axiomatic form of the~LEM. We~illustrate this on $\FLen$ and $\IKnfour$.

  This strategy works whenever the logic in question enjoys the global IL and has either a well-behaved implication or a well-behaved disjunction, obeying either a global DDT or a so-called global proof by cases property (PCP). Accordingly, we call the resulting sets of axioms the \emph{DDT--axiomatic form of the LEM} and the \emph{PCP--axiomatic form of the LEM}.

  In the following, $p \genrightarrow q$ and $\genbot$ shall denote sets of formulas in the same way that $\Gamma(p, q)$ and $\Delta$ do. Given these two sets of formulas, we define
\begin{align*}
  \genneg \varphi & \assign \varphi \genrightarrow \genbot = \bigcup_{\pi \in \genbot} \varphi \genrightarrow \pi.
\end{align*}
  If $\tuple{\varphi}$ is a finite tuple of formulas, we define the set of formulas $\tuple{\varphi} \genrightarrow \psi$ as follows:
\begin{align*}
  \emptyset \genrightarrow \psi & \assign \{ \psi \}, \\
  \langle \tuple{\varphi}, \varphi_{n+1} \rangle \genrightarrow \psi & \assign \set{\varphi_{n+1} \genrightarrow \chi}{\chi \in \tuple{\varphi} \genrightarrow \psi}.
\end{align*}

\begin{fact}
  If $\logic{L}$ enjoys the unary global DDT w.r.t.~$p \genrightarrow q$, then it enjoys the finitary global DDT w.r.t~the family $\{ p_{1}, \dots, p_{n} \} \genrightarrow q$.
\end{fact}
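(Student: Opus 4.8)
The plan is to establish, by induction on $n$, the equivalence
\begin{align*}
  \Gamma, \varphi_{1}, \dots, \varphi_{n} \vdash_{\logic{L}} \psi \iff \Gamma \vdash_{\logic{L}} \langle \varphi_{1}, \dots, \varphi_{n} \rangle \genrightarrow \psi
\end{align*}
for every set $\Gamma$, every formula $\psi$, and every tuple of formulas $\langle \varphi_{1}, \dots, \varphi_{n} \rangle$; since this exhibits a singleton DDT family $\ddtfamily_{n} = \{ \langle p_{1}, \dots, p_{n} \rangle \genrightarrow q \}$ for each finite $n$, it is precisely the finitary global DDT with respect to $\{ p_{1}, \dots, p_{n} \} \genrightarrow q$. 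Before the induction I would record the harmless observation that the unary global DDT extends from single formulas to sets of formulas on the right of the turnstile: since $\Gamma, \varphi \vdash_{\logic{L}} \Phi$ abbreviates $\Gamma, \varphi \vdash_{\logic{L}} \chi$ for all $\chi \in \Phi$, applying the unary DDT formula by formula and recalling that $\varphi \genrightarrow \Phi = \bigcup_{\chi \in \Phi} \varphi \genrightarrow \chi$ gives $\Gamma, \varphi \vdash_{\logic{L}} \Phi \iff \Gamma \vdash_{\logic{L}} \varphi \genrightarrow \Phi$.

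For the induction, the base case $n = 0$ is trivial, as $\langle \rangle \genrightarrow \psi = \{ \psi \}$ (alternatively, take $n = 1$, which is literally the hypothesis). In the inductive step I would peel off the \emph{last} component: by the recursive definition $\langle \tuple{\varphi}, \varphi_{n+1} \rangle \genrightarrow \psi = \varphi_{n+1} \genrightarrow (\tuple{\varphi} \genrightarrow \psi)$, so applying the set-valued unary DDT, with $\varphi_{n+1}$ the formula moved across the turnstile and $\tuple{\varphi} \genrightarrow \psi$ the set on its right, turns $\Gamma \vdash_{\logic{L}} \langle \tuple{\varphi}, \varphi_{n+1} \rangle \genrightarrow \psi$ into $\Gamma, \varphi_{n+1} \vdash_{\logic{L}} \tuple{\varphi} \genrightarrow \psi$. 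The induction hypothesis, applied with the context $\Gamma, \varphi_{n+1}$, now yields $\Gamma, \varphi_{n+1}, \tuple{\varphi} \vdash_{\logic{L}} \psi$, and since the premise side of the turnstile is an unordered set this is the same as $\Gamma, \langle \tuple{\varphi}, \varphi_{n+1} \rangle \vdash_{\logic{L}} \psi$, completing the step.

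There is no genuine obstacle here; the argument is a routine induction, and the points requiring care are purely notational. First, $p \genrightarrow q$ — and hence $\{ p_{1}, \dots, p_{n} \} \genrightarrow q$ — denotes a \emph{set} of formulas, so the unary DDT and every intermediate step must be read with set-valued right-hand sides, which is exactly why the preliminary extension to sets is worth stating explicitly. Second, the recursive definition nests the most recently appended component outermost, so in the inductive step one must strip off precisely $\varphi_{n+1}$ first, matching the shape $\varphi_{n+1} \genrightarrow (\tuple{\varphi} \genrightarrow \psi)$; peeling off $\varphi_{1}$ instead would also work but would force one to carry the induction hypothesis for set-valued conclusions. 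Finally, $\{ p_{1}, \dots, p_{n} \} \genrightarrow q$ literally depends on the chosen enumeration of the variables, but Definition~\ref{def: ddt} already fixes an $\alpha$-tuple $\tuple{p}$ of variables, and in any case different enumerations yield interderivable sets, so this causes no difficulty.
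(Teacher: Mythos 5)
Your proof is correct, and it is exactly the routine induction the paper has in mind when it states this fact without proof: peel off the last component via the recursion $\langle \tuple{\varphi}, \varphi_{n+1} \rangle \genrightarrow \psi = \set{\varphi_{n+1} \genrightarrow \chi}{\chi \in \tuple{\varphi} \genrightarrow \psi}$, using the unary DDT read with a set-valued right-hand side. The notational cautions you flag (set-valued $\genrightarrow$, outermost nesting of the last formula, choice of enumeration) are the only delicate points, and you handle them correctly.
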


\begin{proposition} \label{prop: lem with gddt}
  Let $\logic{L}$ be a logic with a finite antitheorem $\genbot$ and a unary global DDT w.r.t.~a finite set $p \genrightarrow q$. Then $\logic{L}$ enjoys the unary LEM if and only if $(p \genrightarrow q) \genrightarrow ((\genneg p \Rightarrow q) \genrightarrow q)$ is a theorem of $\logic{L}$.
\end{proposition}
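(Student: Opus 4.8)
The plan is to deduce the statement from the correspondence already established between dual ILs and LEMs, after first identifying which negation is relevant. Write $\genneg p := \bigcup_{\pi \in \genbot} p \genrightarrow \pi$, the variables of the finite antitheorem $\genbot$ being disjoint from $p$ and $q$, as is implicit in the notation. The first step is to observe that $\logic{L}$ enjoys the IL with respect to $\genneg p$: by the preceding Fact the unary global DDT extends to the finitary global DDT, so for every set $\Gamma$ and formula $\varphi$ we have $\Gamma, \varphi \vdash_{\logic{L}} \pi$ iff $\Gamma \vdash_{\logic{L}} \varphi \genrightarrow \pi$ for each $\pi \in \genbot$, and since $\genbot$ is an antitheorem this gives $\Gamma, \varphi \vdash_{\logic{L}} \allset$ iff $\Gamma, \varphi \vdash_{\logic{L}} \genbot$ iff $\Gamma \vdash_{\logic{L}} \genneg \varphi$; in particular $\varphi, \genneg \varphi \vdash_{\logic{L}} \allset$, which is the ``explosion'' clause of the LEM with respect to $\genneg p$. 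Now ``$\logic{L}$ enjoys the unary LEM'' is, by Proposition~\ref{prop: dil iff lem}, the same as ``$\logic{L}$ enjoys the unary dual IL''; feeding this together with the IL with respect to $\genneg p$ into Proposition~\ref{prop: il and dil imply cil} yields the classical IL with respect to $\genneg p$, hence the dual IL with respect to $\genneg p$, hence --- by Proposition~\ref{prop: dil iff lem} once more --- the LEM with respect to $\genneg p$; the reverse implication is trivial. Thus the statement reduces to showing that $\logic{L}$ enjoys the LEM with respect to $\genneg p$ if and only if $(p \genrightarrow q) \genrightarrow ((\genneg p \genrightarrow q) \genrightarrow q)$ is a theorem.

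For this remaining equivalence I would apply the finitary global DDT twice to see that $(p \genrightarrow q) \genrightarrow ((\genneg p \genrightarrow q) \genrightarrow q)$ being a theorem is precisely a restatement of the validity of the rule $(p \genrightarrow q),\ (\genneg p \genrightarrow q) \vdash_{\logic{L}} q$. Assuming the LEM with respect to $\genneg p$, apply it with $\Gamma := (p \genrightarrow q) \cup (\genneg p \genrightarrow q)$, $\varphi := p$ and $\psi := q$: the premise $\Gamma, p \vdash_{\logic{L}} q$ holds by detachment from $p \genrightarrow q$, and the premise $\Gamma, \genneg p \vdash_{\logic{L}} q$ by detachment from $\genneg p \genrightarrow q$, so the LEM gives $\Gamma \vdash_{\logic{L}} q$, which is the required rule. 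Conversely, assume that rule and suppose $\Gamma, \varphi \vdash_{\logic{L}} \psi$ and $\Gamma, \genneg \varphi \vdash_{\logic{L}} \psi$; the finitary global DDT turns these into $\Gamma \vdash_{\logic{L}} \varphi \genrightarrow \psi$ and $\Gamma \vdash_{\logic{L}} \genneg \varphi \genrightarrow \psi$, while substituting $\varphi$ for $p$ and $\psi$ for $q$ in the valid rule --- using that $\genneg$ and $\genrightarrow$ commute with substitutions that fix the variables of $\genbot$ --- gives $(\varphi \genrightarrow \psi),\ (\genneg \varphi \genrightarrow \psi) \vdash_{\logic{L}} \psi$; cutting these together yields $\Gamma \vdash_{\logic{L}} \psi$. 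The explosion clause of the LEM with respect to $\genneg p$ was already verified in the first step.

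The step I expect to be the main obstacle, such as it is, is the bookkeeping around the curried-implication notation $\tuple{\varphi} \genrightarrow \psi$: one must remember that it denotes a \emph{set} of formulas (so that ``$\vdash$'' of it abbreviates ``$\vdash$'' of each member), that it is finite exactly because $\genbot$ is, and that substitution behaves well with respect to it only when the variables of $\genbot$ are kept away from the variables being substituted for --- which is the natural reading of the hypotheses and can anyway be arranged by structurality for antitheorems. Once these conventions are fixed, every inference above is an instance of the deduction--detachment theorem together with monotonicity, cut and structurality, so the genuine content of the argument is the reduction carried out in the first paragraph, which turns the abstract ``unary LEM'' into the concrete ``LEM with respect to $\genneg p$'' via Propositions~\ref{prop: dil iff lem} and~\ref{prop: il and dil imply cil}.
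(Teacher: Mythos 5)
Your proposal is correct and follows essentially the same route as the paper: it reduces the abstract unary LEM to the global LEM w.r.t.\ $\genneg p$ via Propositions~\ref{prop: dil iff lem} and~\ref{prop: il and dil imply cil} (using that the DDT plus the antitheorem $\genbot$ yield the global IL w.r.t.\ $\genneg p$), and then uses the global DDT to translate that LEM into the rule $p \genrightarrow q,\ \genneg p \genrightarrow q \vdash_{\logic{L}} q$ and hence into theoremhood of $(p \genrightarrow q) \genrightarrow ((\genneg p \genrightarrow q) \genrightarrow q)$. You merely spell out the detachment and substitution bookkeeping that the paper leaves implicit.
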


\begin{proof}
  We know that $\logic{L}$ enjoys the unary LEM if and only if it enjoys the unary global LEM w.r.t.~$\neg p$ (Propositions~\ref{prop: dil iff lem} and \ref{prop: il and dil imply cil}). By the global DDT this LEM amounts to the implication
\begin{prooftree}
  \def\fCenter{\vdash_{\logic{L}}}
  \Axiom$\Gamma \fCenter \varphi \genrightarrow \psi$
  \Axiom$\Gamma \fCenter \genneg \varphi \genrightarrow \psi$
  \BinaryInf$\Gamma \fCenter \psi$
\end{prooftree}
  This is equivalent to $\varphi \genrightarrow \psi, \genneg \varphi \genrightarrow \psi \vdash_{\logic{L}} \psi$ for all $\varphi$ and $\psi$, i.e.\ $p \genrightarrow q, \genneg p \genrightarrow q \vdash_{\logic{L}} q$. By the global DDT, this is equivalent to $\emptyset \vdash_{\logic{L}} (p \genrightarrow q) \genrightarrow ((\genneg p \Rightarrow q) \genrightarrow q)$.
\end{proof}

  We call $(p \genrightarrow q) \genrightarrow ((\genneg p \Rightarrow q) \genrightarrow q)$ the DDT--axiomatic form of the LEM for~$\logic{L}$. For example, the DDT-axiomatic form of the LEM for $\FLen$ and $\IKnfour$ is
\begin{align*}
  {(p \rightarrow_{n} q) \rightarrow_{n} ((\neg_{n} p \rightarrow_{n} q) \rightarrow_{n} q)},
\end{align*}
  where $x \rightarrow_{n} y \assign (1 \wedge x)^{n} \rightarrow y$ and $\neg_{n} x \assign \neg (1 \wedge x)^{n}$ in the case of $\FLen$ and $x \rightarrow_{n} y \assign \Box_{n} x \rightarrow y$ and $\neg_{n} x \assign \neg \Box_{n} x$ in the case of $\IKnfour$.

  Although a similar procedure works for the local DDT as well, its utility there appears to be limited. For example, suppose first that $\logic{L}$ is a logic which enjoys the local DDT of $\FLe$ w.r.t.~the family $\set{\{ p \rightarrow_{n} q \}}{n \in \omega}$, perhaps on account of being an axiomatic extension of~$\FLe$. Then, using this local DDT, $\logic{L}$~enjoys the local LEM w.r.t.~the family $\set{\{ \neg_{n} p \}}{n \in \omega}$ if and only if for each $m \in \omega$ and each function $f\colon \omega \to \omega$
\begin{prooftree}
  \def\fCenter{\vdash_{\logic{L}}}
  \Axiom$\Gamma \fCenter \varphi \rightarrow_{m} \psi$
  \Axiom$\Gamma \fCenter \neg_{n} \varphi \rightarrow_{f(n)} \psi$
  \BinaryInf$\Gamma \fCenter \psi$
\end{prooftree}
  This is equivalent to the fact that for each $m \in \omega$ and $f\colon \omega \to \omega$ and all $\varphi$ and $\psi$
\begin{align*}
  \varphi \rightarrow_{m} \psi, \set{\neg_{n} \varphi \rightarrow_{f(n)} \psi}{n \in \omega} \vdash_{\logic{L}} \psi.
\end{align*}
  By structurality this is equivalent to the fact that for each $m \in \omega$ and $f\colon \omega \to \omega$
\begin{align*}
  p \rightarrow_{m} q, \set{\neg_{n} p \rightarrow_{f(n)} q}{n \in \omega} \vdash_{\logic{L}} q.
\end{align*}
  One could go a step further and use finitarity and the local DDT to express this rule as the validity of some set of theorems. However, the resulting family of sets of theorems would be rather complicated.

  A more familiar form of the LEM can be obtained for logics which enjoy what Cintula \& Noguera~\cite{cintula+noguera13} call the proof by cases property (PCP). For example, the logic $\FLe$ and its axiomatic extensions satisfy the equivalence
\begin{align*}
  \Gamma, \varphi \vdash_{\FLe} \chi \text{ and } \Gamma, \psi \vdash_{\FLe} \chi & \iff \Gamma, (1 \wedge \varphi) \vee (1 \wedge \psi) \vdash_{\FLe} \chi,
\end{align*}
   while $\IKnfour$ and its axiomatic extensions satisfy the equivalence
\begin{align*}
  \Gamma, \varphi \vdash_{\IKnfour} \chi \text{ and } \Gamma, \psi \vdash_{\IKnfour} \chi & \iff \Gamma, \Box_{n} \varphi \vee \Box_{n} \psi \vdash_{\IKnfour} \chi.
\end{align*}
  The logic $\IK$ only satisfies a local form of this condition, namely
\begin{align*}
  \Gamma, \varphi \vdash_{\IK} \chi \text{ and } \Gamma, \psi \vdash_{\IK} \chi & \iff \Gamma, \Box_{n} \varphi \vee \Box_{n} \psi \vdash_{\IK} \chi \text{ for some } n \in \omega.
\end{align*}

\begin{definition}
  A logic $\logic{L}$ enjoys the \emph{binary global proof by cases property (PCP)} if there is a set of formulas $\pcpset(p, q)$ such that
\begin{align*}
  \Gamma, \varphi \vdash_{\logic{L}} \chi \text{ and } \Gamma, \psi \vdash_{\logic{L}} \chi & \iff \Gamma, \pcpset(\varphi, \psi) \vdash_{\logic{L}} \chi \text{ for some } n \in \omega.
\end{align*}
\end{definition}

\newcommand{\auxcit}{\cite[Lemma~3.10]{cintula+noguera13}}

\begin{fact}[\auxcit]
  Let $\sqcup(\Phi, \Psi) \assign \bigcup_{\varphi \in \Phi} \bigcup_{\psi \in \Psi} \sqcup(\varphi, \psi)$. If $\logic{L}$ enjoys the binary global PCP w.r.t.~$\pcpset(p, q)$,~then for all finite sets $\Phi$ and~$\Psi$
\begin{align*}
  \Gamma, \Phi \vdash_{\logic{L}} \chi \text{ and } \Gamma, \Psi \vdash_{\logic{L}} \chi \iff \Gamma, \pcpset(\Phi, \Psi) \vdash_{\logic{L}} \chi.
\end{align*}
\end{fact}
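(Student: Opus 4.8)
The plan is to reduce the claim to the binary global PCP by a pair of inductions, treating the two directions of the equivalence separately. For the right-to-left direction I would first record the auxiliary observation that $\varphi \vdash_{\logic{L}} \pcpset(\varphi, \psi)$ and $\psi \vdash_{\logic{L}} \pcpset(\varphi, \psi)$ for all formulas $\varphi, \psi$: indeed, for each $\chi \in \pcpset(\varphi, \psi)$ we have $\pcpset(\varphi, \psi) \vdash_{\logic{L}} \chi$ by reflexivity and monotonicity, so the binary global PCP with empty context gives $\varphi \vdash_{\logic{L}} \chi$ and $\psi \vdash_{\logic{L}} \chi$. Since every element of $\pcpset(\Phi, \Psi)$ lies in some $\pcpset(\varphi, \psi)$ with $\varphi \in \Phi$ and $\psi \in \Psi$, monotonicity then yields $\Gamma, \Phi \vdash_{\logic{L}} \pcpset(\Phi, \Psi)$ and $\Gamma, \Psi \vdash_{\logic{L}} \pcpset(\Phi, \Psi)$, and cut with the assumption $\Gamma, \pcpset(\Phi, \Psi) \vdash_{\logic{L}} \chi$ delivers $\Gamma, \Phi \vdash_{\logic{L}} \chi$ and $\Gamma, \Psi \vdash_{\logic{L}} \chi$. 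No induction is needed for this direction.

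For the left-to-right direction I would argue in two stages, writing $\pcpset(\varphi, \Psi)$ for $\bigcup_{\psi \in \Psi} \pcpset(\varphi, \psi)$. The first stage is a one-sided lemma: for every formula $\varphi$ and every finite set $\Psi$, if $\Gamma, \varphi \vdash_{\logic{L}} \chi$ and $\Gamma, \Psi \vdash_{\logic{L}} \chi$, then $\Gamma, \pcpset(\varphi, \Psi) \vdash_{\logic{L}} \chi$. This goes by induction on $|\Psi|$: the case $\Psi = \emptyset$ is trivial, and for $\Psi = \Psi_{0} \cup \{\psi_{0}\}$ one applies the binary global PCP in the context $\Gamma \cup \Psi_{0}$ to the cases $\varphi$ and $\psi_{0}$ (using monotonicity to get $\Gamma, \Psi_{0}, \varphi \vdash_{\logic{L}} \chi$) to obtain $\Gamma, \Psi_{0}, \pcpset(\varphi, \psi_{0}) \vdash_{\logic{L}} \chi$, and then invokes the inductive hypothesis in the context $\Gamma \cup \pcpset(\varphi, \psi_{0})$ for the smaller set $\Psi_{0}$. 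The second stage proves the full statement by induction on $|\Phi|$: the case $\Phi = \emptyset$ is trivial, and for $\Phi = \Phi_{0} \cup \{\varphi_{0}\}$ monotonicity together with the one-sided lemma (applied in the context $\Gamma \cup \Phi_{0}$ to $\varphi_{0}$ and $\Psi$) gives $\Gamma, \Phi_{0}, \pcpset(\varphi_{0}, \Psi) \vdash_{\logic{L}} \chi$, whereupon the inductive hypothesis in the context $\Gamma \cup \pcpset(\varphi_{0}, \Psi)$ for the sets $\Phi_{0}$ and $\Psi$ yields $\Gamma, \pcpset(\Phi_{0}, \Psi), \pcpset(\varphi_{0}, \Psi) \vdash_{\logic{L}} \chi$, which is precisely $\Gamma, \pcpset(\Phi, \Psi) \vdash_{\logic{L}} \chi$.

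The only real work is the bookkeeping: each inductive step is carried out relative to a working context obtained from $\Gamma$ by adjoining a previously derived piece of $\pcpset(\Phi, \Psi)$, and one must check that both premises needed to fire the binary global PCP or the inductive hypothesis are available — one of them always being the current assumption, the other obtained by monotonicity. I expect this to be the main obstacle, though a mild one; nothing beyond reflexivity, monotonicity, cut, and the binary global PCP is used, and the degenerate cases $\Phi = \emptyset$ and $\Psi = \emptyset$ are immediate because then $\pcpset(\Phi, \Psi) = \emptyset$ and $\Gamma \vdash_{\logic{L}} \chi$ already entails $\Gamma, \Phi \vdash_{\logic{L}} \chi$ and $\Gamma, \Psi \vdash_{\logic{L}} \chi$ by monotonicity.
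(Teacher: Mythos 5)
Your proof is correct: the right-to-left direction via $\varphi \vdash_{\logic{L}} \pcpset(\varphi,\psi)$ and $\psi \vdash_{\logic{L}} \pcpset(\varphi,\psi)$ plus cut, and the two nested inductions for the left-to-right direction (first extending one side to a finite set, then the other, each time working in a context enlarged by the already-derived part of $\pcpset(\Phi,\Psi)$), all check out, including the degenerate cases $\Phi=\emptyset$ or $\Psi=\emptyset$. The paper itself offers no proof here—it simply cites Cintula \& Noguera's Lemma~3.10—and your double-induction argument is essentially the standard proof of that lemma, so your approach coincides with the intended one.
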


\begin{proposition} \label{prop: lem with pcp}
  Let $\logic{L}$ be a logic which enjoys the unary global IL w.r.t.~a finite set $\genneg p$ and the binary global PCP w.r.t.~$\pcpset(p, q)$. Then $\logic{L}$ enjoys the unary LEM if and only if $\pcpset(p, \genneg p)$ is a theorem of $\logic{L}$.
\end{proposition}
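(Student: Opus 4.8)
The plan is to follow the proof of Proposition~\ref{prop: lem with gddt}, with the global PCP playing the role there played by the global DDT. First I would reduce the unary LEM to a single meta-rule. Since $\logic{L}$ enjoys the unary global IL w.r.t.\ $\genneg p$, Propositions~\ref{prop: dil iff lem} and~\ref{prop: il and dil imply cil} show that $\logic{L}$ enjoys the unary LEM if and only if it enjoys the unary global LEM w.r.t.\ $\genneg p$, namely the meta-rule asserting that $\Gamma, \varphi \vdash_{\logic{L}} \psi$ and $\Gamma, \genneg \varphi \vdash_{\logic{L}} \psi$ jointly imply $\Gamma \vdash_{\logic{L}} \psi$. (The side condition $\varphi, \genneg \varphi \vdash_{\logic{L}} \Fm \logic{L}$ occurring in the definition of the LEM is immediate from the global IL applied to $\genneg \varphi \vdash_{\logic{L}} \genneg \varphi$.)

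Next I would invoke the PCP to collapse the two premises into one. Because $\genneg p$, and hence every instance $\genneg \varphi$, is a \emph{finite} set of formulas, the Fact preceding this proposition (taken from Cintula \& Noguera) applies with $\Phi \assign \{ \varphi \}$ and $\Psi \assign \genneg \varphi$, yielding that ``$\Gamma, \varphi \vdash_{\logic{L}} \psi$ and $\Gamma, \genneg \varphi \vdash_{\logic{L}} \psi$'' is equivalent to ``$\Gamma, \pcpset(\varphi, \genneg \varphi) \vdash_{\logic{L}} \psi$''. Hence the unary LEM is equivalent to the implication ``$\Gamma, \pcpset(\varphi, \genneg \varphi) \vdash_{\logic{L}} \psi$ implies $\Gamma \vdash_{\logic{L}} \psi$'' holding for all $\Gamma$, $\varphi$, and $\psi$.

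Finally I would eliminate the quantifiers. An implication of the shape ``$\Gamma, \Delta \vdash_{\logic{L}} \psi$ implies $\Gamma \vdash_{\logic{L}} \psi$ for all $\Gamma$ and $\psi$'' holds exactly when $\emptyset \vdash_{\logic{L}} \Delta$: the ``if'' direction is cut, and the ``only if'' direction follows by taking $\Gamma \assign \emptyset$ and letting $\psi$ range over $\Delta$, each such $\psi$ being derivable from $\Delta$ by reflexivity. Applying this with $\Delta \assign \pcpset(\varphi, \genneg \varphi)$, and then using structurality (the instance $\varphi \assign p$ in one direction, and substitution of $\varphi$ for $p$ in the other) to pass between $\pcpset(p, \genneg p)$ and all the sets $\pcpset(\varphi, \genneg \varphi)$, one concludes that $\logic{L}$ enjoys the unary LEM if and only if $\emptyset \vdash_{\logic{L}} \pcpset(p, \genneg p)$, i.e.\ $\pcpset(p, \genneg p)$ is a theorem of $\logic{L}$.

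This is a routine adaptation of the DDT case, so I do not expect a genuine obstacle. The two points that need attention are that the finiteness of $\genneg p$ is essential — it is precisely what lets us absorb the set $\genneg \varphi$ into a single application of the generalized PCP — and that the reduction from arbitrary $\varphi, \psi$ to the schematic instance $p, q$ has to be routed through structurality together with cut (via ``$\emptyset \vdash_{\logic{L}} \pcpset(\varphi, \genneg \varphi)$ for every $\varphi$''), rather than by naively substituting into $\Gamma$, since $\Gamma$ need not avoid the variables $p$ and $q$.
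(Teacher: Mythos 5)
Your proposal is correct and follows essentially the same route as the paper's proof: reduce the unary LEM to the unary global LEM w.r.t.\ $\genneg p$ via Propositions~\ref{prop: dil iff lem} and~\ref{prop: il and dil imply cil}, collapse the two premises using the generalized binary PCP (the Cintula--Noguera Fact, where finiteness of $\genneg p$ is indeed what is needed), and observe that the resulting meta-rule amounts to $\emptyset \vdash_{\logic{L}} \pcpset(\varphi, \genneg \varphi)$, hence to the theoremhood of $\pcpset(p, \genneg p)$ by structurality. You merely spell out the final quantifier-elimination and substitution step in more detail than the paper does; there is no substantive difference.
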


\begin{proof}
  We know that $\logic{L}$ enjoys the unary LEM if and only if it enjoys the unary global LEM w.r.t.~$\neg p$ (Propositions~\ref{prop: dil iff lem} and \ref{prop: il and dil imply cil}). By the global binary PCP this amounts to the implication
\begin{prooftree}
  \def\fCenter{\vdash_{\logic{L}}}
  \AxiomC{$\Gamma, \sqcup(\varphi, \genneg \varphi) \vdash_{\logic{L}} \psi$}
  \UnaryInfC{$\Gamma \vdash_{\logic{L}} \psi$}
\end{prooftree}
  This implication is equivalent to $\emptyset \vdash_{\logic{L}} \pcpset(\varphi, \genneg \varphi)$.
\end{proof}

  We call $\pcpset(p, \genneg p)$ the PCP--axiomatic form of the LEM for $\logic{L}$. For example, the PCP--axiomatic form of the LEM for $\FLen$ is
\begin{align*}
  (1 \wedge p) \vee (1 \wedge \neg (1 \wedge p)^{n}),
\end{align*}
  while the PCP--axiomatic form of the LEM for $\IKnfour$ is
\begin{align*}
  \Box_{n} p \vee \Box_{n} \neg \Box_{n} p,
\end{align*}
  which we have already shown to be equivalent to $p \vee \Box_{1} \neg \Box_{n} p$ in Fact~\ref{fact: cyclicity}.

  We now show that the DDT--axiomatic and PCP--axiomatic form of the LEM precisely describes the semisimple models of $\logic{L}$.

\begin{proposition} \label{prop: axiomatization of semisimple companion}
  Let $\logic{L}$ be a coatomic logic with a finite antitheorem and a unary global DDT w.r.t.~a finite set (alternatively, with a unary global IL w.r.t.~a finite set and a binary global PCP). Then $\ssc{\logic{L}}$ is precisely the extension of $\logic{L}$ by the DDT--axiomatic (alternatively, the PCP--axiomatic) form of the LEM.
\end{proposition}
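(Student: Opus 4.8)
The plan is to identify $\ssc{\logic{L}}$ with the logic $\logic{L}^{+}$ obtained from $\logic{L}$ by adding the DDT--axiomatic (resp.\ PCP--axiomatic) form of the LEM as an axiom. Since $\logic{L}^{+}$ is an axiomatic extension of $\logic{L}$, I would first note that it inherits the finite antitheorem $\genbot$, the unary global DDT w.r.t.\ the finite set $p \genrightarrow q$ (resp.\ the unary global IL w.r.t.\ the finite set $\genneg p$ and the binary global PCP), and --- in the DDT case, using that a global DDT together with an antitheorem yields a global IL --- a global IL w.r.t.\ a finite set $\genneg p$ as well, all these principles being preserved under axiomatic extensions. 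Applying Proposition~\ref{prop: lem with gddt} (resp.\ Proposition~\ref{prop: lem with pcp}) to $\logic{L}^{+}$, and using that the axiomatic form of the LEM is a theorem of $\logic{L}^{+}$ by construction, I get that $\logic{L}^{+}$ enjoys the unary LEM, and hence the unary dual global IL by the corollary to Proposition~\ref{prop: dil iff lem}.

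The crucial step, and the one I expect to require real work, is showing that every simple theory $T$ of $\logic{L}$ is already a theory of $\logic{L}^{+}$, i.e.\ is closed under the axiomatic LEM. By the equivalence extracted in the proof of Proposition~\ref{prop: lem with gddt}, this amounts to checking that $T, (\varphi \genrightarrow \psi), (\genneg \varphi \genrightarrow \psi) \vdash_{\logic{L}} \psi$ for all $\varphi, \psi$ (resp., by Proposition~\ref{prop: lem with pcp}, that $\pcpset(\varphi, \genneg \varphi) \subseteq T$ for all $\varphi$). Here I would argue by maximality: if the conclusion fails, the $\logic{L}$-theory generated by $T$ together with the relevant premises is non-trivial and contains $T$, hence equals $T$, so $\varphi \genrightarrow \psi \subseteq T$, $\genneg \varphi \genrightarrow \psi \subseteq T$, and $\psi \notin T$ (resp.\ $\pcpset(\varphi, \genneg \varphi) \subseteq T$ while $T$ remains non-trivial). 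In the DDT case the global DDT turns $\varphi \genrightarrow \psi \subseteq T$ into $T, \varphi \vdash_{\logic{L}} \psi$, so $\varphi \notin T$; simplicity of $T$ and the antitheorem $\genbot$ then give $T, \varphi \vdash_{\logic{L}} \genbot$, hence $T, \varphi \vdash_{\logic{L}} \emptyset$, and the global DDT yields $T \vdash_{\logic{L}} \varphi \genrightarrow \genbot = \genneg \varphi$; combining $\genneg \varphi \subseteq T$ with $\genneg \varphi \genrightarrow \psi \subseteq T$ via the DDT produces $T \vdash_{\logic{L}} \psi$, a contradiction. The PCP case is parallel: from $T, \pcpset(\varphi, \genneg \varphi) \vdash_{\logic{L}} \genbot$ the binary global PCP (for finite sets) yields $T, \varphi \vdash_{\logic{L}} \emptyset$ and $T, \genneg \varphi \vdash_{\logic{L}} \emptyset$; the global IL w.r.t.\ $\genneg p$ converts the former into $T \vdash_{\logic{L}} \genneg \varphi$, i.e.\ $\genneg \varphi \subseteq T$, and then $T, \genneg \varphi \vdash_{\logic{L}} \emptyset$ contradicts the non-triviality of $T$.

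Granting this, the rest is bookkeeping. Each simple $\logic{L}^{+}$-theory is a non-trivial $\logic{L}$-theory, hence by coatomicity of $\logic{L}$ extends to a simple $\logic{L}$-theory, which is an $\logic{L}^{+}$-theory by the crucial step and therefore equal to it; conversely each simple $\logic{L}$-theory is $\logic{L}^{+}$-closed and, being $\logic{L}$-maximal, is maximal among $\logic{L}^{+}$-theories, hence $\logic{L}^{+}$-simple. Thus $\logic{L}$ and $\logic{L}^{+}$ have exactly the same simple theories, and the same extension argument shows $\logic{L}^{+}$ is coatomic. Coatomicity together with the unary dual global IL makes $\logic{L}^{+}$ semisimple by Proposition~\ref{prop: dil implies semisimple}. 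Finally, a semisimple logic is determined by the matrices $\langle \FmAlg \logic{L}^{+}, S \rangle$ with $S$ ranging over its simple theories (a rule failing in an intersection of simple theories fails in one of them); since $\FmAlg \logic{L}^{+} = \FmAlg \logic{L}$ and the simple theories of $\logic{L}^{+}$ are precisely those of $\logic{L}$, this is exactly the class of matrices defining $\ssc{\logic{L}}$, so $\logic{L}^{+} = \ssc{\logic{L}}$.

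The main obstacle is thus the second paragraph --- verifying that simple theories absorb the axiomatic LEM --- since this is where the interplay of the global DDT (or the global IL together with the PCP), the finite antitheorem, and the maximality of simple theories is genuinely used; the remaining steps only assemble transfer results and equivalences already established in the paper, namely Propositions~\ref{prop: lem with gddt}, \ref{prop: lem with pcp}, \ref{prop: dil iff lem}, and~\ref{prop: dil implies semisimple}, together with the preservation of DDTs, ILs, and PCPs under axiomatic extensions.
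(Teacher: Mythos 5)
Your proposal is correct and follows essentially the same route as the paper's proof: the core step in both is that every simple $\logic{L}$-theory absorbs the axiomatic LEM (the paper argues this by the direct dichotomy ``either $T \vdash_{\logic{L}} \varphi$ or $T,\varphi$ is inconsistent, whence $T \vdash_{\logic{L}} \genneg\varphi$'' rather than your contradiction-via-maximality phrasing, but the content is identical), after which semisimplicity of the extension via Propositions~\ref{prop: lem with gddt}/\ref{prop: lem with pcp} and the coincidence of simple theories yield the identification with $\ssc{\logic{L}}$. Your explicit verification that the extension is coatomic before invoking semisimplicity is a point the paper leaves implicit, and the only blemishes are notational (writing $\vdash_{\logic{L}} \emptyset$ where triviality or $\vdash_{\logic{L}} \allset$ is meant).
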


\begin{proof}
  For the IL set, DDT set, and PCP set we use the notation $\genneg \varphi$, $\varphi \Rightarrow \psi$, and $\pcpset(\Phi, \Psi)$ introduced in Subsection~\ref{subsec: lem}. Let $\logic{L}'$ be the extension of $\logic{L}$ by the DDT--axiomatic form of the LEM. We first prove that $\logic{L}' \logleq \ssc{\logic{L}}$. Since $\ssc{\logic{L}}$ is complete w.r.t.\ the simple theories of $\logic{L}$, it suffices to prove that each simple theory $T$ of $\logic{L}$ is a theory of $\logic{L}'$. But for each simple theory $T$ of $\logic{L}$ either $T \vdash_{\logic{L}} \varphi$ or $T, \varphi \vdash_{\logic{L}} \allset$, in~which case $T \vdash_{\logic{L}} \genneg \varphi$. In either case $T, \varphi \genrightarrow \psi, \genneg \varphi \Rightarrow \psi \vdash_{\logic{L}} \psi$ by the DDT, therefore $T$ proves the DDT--axiomatic form of the LEM again by the DDT. Thus $T$ is a theory of $\logic{L}'$.

  Conversely, the logic $\logic{L}'$ is an axiomatic extension of $\logic{L}$, therefore it inherits the global DDT of $\logic{L}$. By Proposition~\ref{prop: lem with gddt} the DDT--axiomatic form of the LEM implies that $\logic{L}'$ is semisimple. It thus suffices to prove that each simple theory $T$ of $\logic{L}'$ is a theory of $\ssc{\logic{L}}$. But because $\logic{L}$ is coatomic, $T$ extends to a simple theory $T'$ of $\logic{L}$, which is a theory of $\logic{L}'$ by the previous paragraph. The simplicity of~$T$ with respect to $\logic{L}'$ now implies that $T' = T$, hence $T$ is a simple theory of $\logic{L}$. As~such, it is by definition a theory of $\ssc{\logic{L}}$.

  The proof in the PCP case is analogous. If $T$ is a simple theory of $\logic{L}$, then either $T \vdash_{\logic{L}} \varphi$ or $T \vdash_{\logic{L}} \genneg \varphi$, so $T \vdash_{\logic{L}} \sqcup(\varphi, \genneg \varphi)$. Conversely, the logic $\logic{L}'$ which extends $\logic{L}$ by the PCP--axiomatic form of the LEM inherits the binary global PCP as well as the unary global IL of $\logic{L}$. By Proposition~\ref{prop: lem with pcp} the PCP--axiomatic of the LEM implies that $\logic{L}'$ is semisimple. The same argument as in the DDT case now shows that each simple theory $T$ of $\logic{L}'$ is a theory of $\ssc{\logic{L}}$.
\end{proof}

\begin{theorem} \label{thm: semisimple models}
  Let $\logic{L}$ be a compact logic which enjoys a unary global DDT w.r.t.~a finite set (alternatively, a unary global IL w.r.t.~a finite set and a binary global PCP). Then a model of $\logic{L}$ is semisimple if and only if it validates the DDT--axiomatic (alternatively, the PCP--axiomatic) form of the LEM.
\end{theorem}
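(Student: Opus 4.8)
The plan is to read off Theorem~\ref{thm: semisimple models} as the juxtaposition of two results already established. Proposition~\ref{prop: axiomatization of semisimple companion} identifies the semisimple companion $\ssc{\logic{L}}$ with the axiomatic extension $\logic{L} + \varepsilon$ of $\logic{L}$, where $\varepsilon$ is the DDT--axiomatic form $(p \genrightarrow q) \genrightarrow ((\genneg p \genrightarrow q) \genrightarrow q)$ of the LEM in the first alternative and the PCP--axiomatic form $\pcpset(p, \genneg p)$ in the second. Theorem~\ref{thm: models of semisimple companion} identifies the models of $\ssc{\logic{L}}$ with the semisimple models of $\logic{L}$. Putting the two together, a model of $\logic{L}$ is semisimple if and only if it is a model of $\ssc{\logic{L}} = \logic{L} + \varepsilon$. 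Since $\logic{L} + \varepsilon$ arises from $\logic{L}$ by adjoining the single axiom $\varepsilon$, a matrix is a model of $\logic{L} + \varepsilon$ precisely when it is a model of $\logic{L}$ whose filter contains every substitution instance (equivalently, every evaluation) of $\varepsilon$ — that is, precisely when it is a model of $\logic{L}$ validating $\varepsilon$. Chaining the two equivalences gives exactly the assertion of the theorem, uniformly in the DDT and PCP cases.

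What remains is to check that the hypotheses of the two invoked results are met under the assumptions in force. Since $\logic{L}$ is compact it is coatomic, which is what Proposition~\ref{prop: axiomatization of semisimple companion} and Theorem~\ref{thm: models of semisimple companion} require of the underlying logic; and $\logic{L}$ has a finite antitheorem — this is tacit in the statement, being needed to formulate $\genneg$ and hence the axiomatic form of the LEM, and in the PCP case it is in any event furnished by $\{ p \} \cup \genneg p$, the IL set $\genneg p$ being finite. In the DDT case, a unary global DDT w.r.t.\ a finite set is automatically a finitary global DDT, which together with the finite antitheorem upgrades, by the fact deriving inconsistency lemmas from deduction theorems, to a finitary global IL w.r.t.\ a single finite set; the PCP case carries such a global IL w.r.t.\ a finite set outright. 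Either way $\logic{L}$ is a compact logic equipped with a $\kappa$-ary local IL w.r.t.\ a family $\ilfamily$ which is a singleton consisting of a single finite set, so the side-conditions $|\ilfamily_{\alpha}| \le |\Var \logic{L}|$ and ``each set in $\ilfamily_1$ has cardinality less than $\kappa$'' of Theorem~\ref{thm: models of semisimple companion} hold trivially, and the hypotheses of Proposition~\ref{prop: axiomatization of semisimple companion} (coatomicity, a finite antitheorem, and the unary global DDT w.r.t.\ a finite set, alternatively the unary global IL w.r.t.\ a finite set with a binary global PCP) are exactly what is assumed. Both cited results therefore apply, and the argument of the first paragraph goes through as stated.

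I expect no genuine difficulty: the substance has already been discharged in Proposition~\ref{prop: axiomatization of semisimple companion} and in Theorem~\ref{thm: models of semisimple companion} (the latter resting in turn on the transfer machinery of Subsection~\ref{subsec: semantic semisimplicity}), and the present statement is their combination. The only points calling for a moment's care are the bookkeeping of hypotheses just described and the elementary observation that a model of an axiomatic extension $\logic{L} + \varepsilon$ which is already a model of $\logic{L}$ is nothing but a model of $\logic{L}$ satisfying $\varepsilon$. If anything is delicate it is the implicit presence of a finite antitheorem, which I would make explicit at the outset so that the axiomatic form of the LEM is well formed and the cardinality conditions of Theorem~\ref{thm: models of semisimple companion} can be met.
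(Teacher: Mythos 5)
Your proposal is correct and is essentially the paper's own proof, which simply combines Proposition~\ref{prop: axiomatization of semisimple companion} with Theorem~\ref{thm: models of semisimple companion}; your extra bookkeeping (coatomicity from compactness, the finite antitheorem needed to formulate $\genneg$, the derived global IL in the DDT case, and the trivially satisfied cardinality side conditions) is exactly the verification the paper leaves implicit.
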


\begin{proof}
  This follows from the previous proposition and Theorem~\ref{thm: models of semisimple companion}.
\end{proof}

  If $\logic{L}$ is a (weakly) algebraizable and $\class{K}$ is its algebraic counterpart, then on each algebra the lattice of $\logic{L}$-filters is isomorphic to the lattice of $\class{K}$-congruences, therefore the above theorem tells us that the semisimple algebras of $\class{K}$ are precisely those which satisfy the equational translation of (either of the two forms of) the LEM. For $\FLen$ algebras and $\IKnfour$-algebras, this yields the following results.

\begin{fact}
  An $\FLen$-algebra is semisimple if and only if it satisfies the equation $1 \inequals x \sqcup \neg_{n} x$, or equivalently the equation $x \rightarrow_{n} y \inequals (\neg_{n} x \rightarrow_{n} y) \rightarrow_{n} y$.
\end{fact}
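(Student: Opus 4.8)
The plan is to read the statement off Theorem~\ref{thm: semisimple models} applied to $\logic{L} = \FLen$, and then pass through the algebraization dictionary for $\FLen$. First I would check that $\FLen$ satisfies the hypotheses of that theorem. It is finitary and possesses the finite antitheorem $\{\bot\}$, since an $\FLen$-filter containing the bottom element is the whole algebra; hence $\FLen$ is compact. Moreover it enjoys the unary global DDT with respect to the singleton set whose element we write $p \rightarrow_{n} q \assign (1 \wedge p)^{n} \rightarrow q$, and simultaneously the unary global IL with respect to the finite set $\neg_{n} p \assign \neg (1 \wedge p)^{n}$ together with the binary global PCP with respect to $\pcpset(p, q) \assign (1 \wedge p) \vee (1 \wedge q)$, as recorded after Propositions~\ref{prop: lem with gddt} and~\ref{prop: lem with pcp}. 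Theorem~\ref{thm: semisimple models} then yields that a model of $\FLen$ is semisimple if and only if it validates the DDT--axiomatic form of the LEM, namely $(p \rightarrow_{n} q) \rightarrow_{n} ((\neg_{n} p \rightarrow_{n} q) \rightarrow_{n} q)$, equivalently the PCP--axiomatic form, namely $\pcpset(p, \neg_{n} p) = (1 \wedge p) \vee (1 \wedge \neg (1 \wedge p)^{n})$.

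Second, I would translate this into a statement about algebras. Since $\FLen$ is algebraizable with algebraic counterpart the variety of $\FLen$-algebras, Fact~\ref{fact: logic to algebra semisimplicity} (via \cite[Proposition~6.117]{font16}) provides, on each algebra $\alg{A}$, an isomorphism between the lattice of $\FLen$-filters and $\Con \alg{A}$; this isomorphism sends the least filter $\Fg_{\FLen}^{\alg{A}} \emptyset$ (the up-set of $1$) to the equality congruence and simple, i.e.\ maximal non-trivial, filters to maximal congruences. Consequently $\alg{A}$ is a semisimple algebra if and only if the equality congruence is an intersection of maximal congruences, if and only if $\Fg_{\FLen}^{\alg{A}} \emptyset$ is an intersection of simple filters, i.e.\ the matrix $\pair{\alg{A}}{\Fg_{\FLen}^{\alg{A}} \emptyset}$ is a semisimple model of $\FLen$. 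By the first paragraph this happens exactly when $\alg{A}$ validates the LEM axiom. Unwinding the equational translation of algebraizability (with defining equation $1 \inequals x$), validity of the PCP--axiomatic form becomes the equation $1 \inequals x \pcpset \neg_{n} x$, while validity of the DDT--axiomatic form becomes $1 \inequals (p \rightarrow_{n} q) \rightarrow_{n} ((\neg_{n} p \rightarrow_{n} q) \rightarrow_{n} q)$, which residuation rewrites in the stated form $x \rightarrow_{n} y \inequals (\neg_{n} x \rightarrow_{n} y) \rightarrow_{n} y$. That the two equations are equivalent over $\FLen$-algebras is immediate from Proposition~\ref{prop: axiomatization of semisimple companion}, since both axiomatize $\ssc{\FLen}$ relative to $\FLen$.

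The one point requiring a little care is the model-to-algebra passage: one must verify that the filter--congruence isomorphism respects the property of being an intersection of maximal non-trivial elements, and that, $\FLen$-algebras forming a variety, there is no gap between relative and absolute congruences, so that the algebraist's notion of semisimplicity for $\alg{A}$ coincides with semisimplicity of the model $\pair{\alg{A}}{\Fg_{\FLen}^{\alg{A}} \emptyset}$. Everything else is routine bookkeeping: identifying the DDT, IL, and PCP sets of $\FLen$ and carrying out the elementary residuated-lattice manipulation that identifies the two displayed equations. I do not expect a genuine obstacle here, as the substantive work has already been done in Theorem~\ref{thm: semisimple models} and Proposition~\ref{prop: axiomatization of semisimple companion}.
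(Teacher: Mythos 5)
Your overall route is exactly the paper's: the paper states this fact as an immediate consequence of Theorem~\ref{thm: semisimple models} together with the filter--congruence isomorphism for (weakly) algebraizable logics (Fact~\ref{fact: logic to algebra semisimplicity}), and your verification of the hypotheses (compactness from finitarity plus the antitheorem $\{\bot\}$, the global DDT, IL and PCP data of $\FLen$) and the model-to-algebra translation are precisely the intended argument. For the equation $1 \inequals x \sqcup \neg_{n} x$ your proof is complete and matches the paper.

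The weak point is the ``or equivalently'' half. The equational translation of the DDT--axiomatic LEM is $1 \inequals (x \rightarrow_{n} y) \rightarrow_{n} ((\neg_{n} x \rightarrow_{n} y) \rightarrow_{n} y)$, and since $a \rightarrow_{n} b = (1 \wedge a)^{n} \rightarrow b$, residuation unfolds this to $(1 \wedge (x \rightarrow_{n} y))^{n} \inequals (\neg_{n} x \rightarrow_{n} y) \rightarrow_{n} y$, not to the displayed $x \rightarrow_{n} y \inequals (\neg_{n} x \rightarrow_{n} y) \rightarrow_{n} y$; and your appeal to Proposition~\ref{prop: axiomatization of semisimple companion} only shows that the two $1\inequals$-forms (DDT and PCP) are interderivable, so it does not supply the missing strengthening either. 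In the integral case ($\FLewn$) the stronger inequality can indeed be checked directly on simple algebras, but over $\FLen$ it can fail in a semisimple algebra: on the three-element chain $\bot < 1 < \top$ with $\top \cdot \top \assign \top$ (a simple, hence semisimple, $\FLen$-algebra for every $n \geq 1$, with $0 \assign \bot$), taking $x \assign \bot$ and $y \assign 1$ gives $x \rightarrow_{n} y = \top$ while $(\neg_{n} x \rightarrow_{n} y) \rightarrow_{n} y = 1$. So this clause either has to be read in its $1\inequals$-form (which is what your argument actually establishes) or requires a separate argument; the paper gives no justification at this point, so the defect is inherited from its formulation rather than introduced by your proof, but as written the ``residuation rewrites'' step does not go through.
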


\begin{fact}
  An $\IKnfour$-algebra is semisimple if and only if it satisfies the equation $1 \equals x \sqcup \neg_{n} x$, or equivalently the equation $x \rightarrow_{n} y \inequals (\neg_{n} x \rightarrow_{n} y) \rightarrow_{n} y$.
\end{fact}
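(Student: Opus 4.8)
The plan is to read this off from Theorem~\ref{thm: semisimple models} (and, behind it, Proposition~\ref{prop: axiomatization of semisimple companion} and Theorem~\ref{thm: models of semisimple companion}) together with the algebraizability of $\IKnfour$.

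First I would check the hypotheses. The logic $\IKnfour$ is an axiomatic extension of $\IK$, hence finitary, and it has a finite antitheorem --- for instance the singleton $\{ \bot \}$, which cannot be designated in any non-trivial model --- so it is compact. It is algebraizable with the variety of $\IKnfour$-algebras as its algebraic counterpart. Moreover, as recorded in Sections~\ref{sec: prelim}--\ref{sec: ils and ddts}, it enjoys the unary global DDT with respect to the single formula $\Box_{n} p \rightarrow q$, the unary global IL with respect to the single formula $\neg \Box_{n} p$, and the binary global PCP with respect to the single formula $\Box_{n} p \vee \Box_{n} q$; in the notation of Subsection~\ref{subsec: lem} these are $p \rightarrow_{n} q$, $\genneg p = \neg_{n} p$, and $\pcpset(p, q) = p \sqcup q$.

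Since $\IKnfour$ meets the hypotheses of Theorem~\ref{thm: semisimple models} in both the DDT and the PCP variants, that theorem gives that a model of $\IKnfour$ is semisimple if and only if it validates the PCP--axiomatic form of the LEM, $p \sqcup \neg_{n} p$, if and only if it validates the DDT--axiomatic form, $(p \rightarrow_{n} q) \rightarrow_{n} ((\neg_{n} p \rightarrow_{n} q) \rightarrow_{n} q)$. To pass to algebras I would use that algebraizability yields, on each $\IKnfour$-algebra $\alg{A}$, a lattice isomorphism between the $\IKnfour$-filters on $\alg{A}$ and the congruences of $\alg{A}$ (as in the proof of Fact~\ref{fact: logic to algebra semisimplicity}), under which the least filter $\{ 1 \}$ corresponds to the identity congruence and the simple filters to the maximal congruences. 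Hence $\alg{A}$ is semisimple precisely when the model $\pair{\alg{A}}{\{ 1 \}}$ is semisimple, i.e.\ --- by the previous sentence --- precisely when $h(\chi) = 1$ for every homomorphism $h \colon \FmAlg \IKnfour \to \alg{A}$, where $\chi$ is either axiomatic form of the LEM. Taking $\chi = p \sqcup \neg_{n} p$ this reads $\alg{A} \vDash 1 \equals x \sqcup \neg_{n} x$. For the other form, one checks --- using $\Box_{n} x \inequals x$ and the Heyting identity $(u \rightarrow w) \wedge (v \rightarrow w) \equals (u \vee v) \rightarrow w$ --- that an $\IKnfour$-algebra satisfies $1 \equals x \sqcup \neg_{n} x$ if and only if it satisfies $x \rightarrow_{n} y \inequals (\neg_{n} x \rightarrow_{n} y) \rightarrow_{n} y$; alternatively, this equivalence is immediate from Proposition~\ref{prop: axiomatization of semisimple companion}, which axiomatizes $\ssc{\IKnfour}$ over $\IKnfour$ by \emph{either} form of the LEM, so that the two extensions --- and hence their algebraic counterparts, the two candidate subvarieties --- coincide.

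The main obstacle, such as it is, is the bookkeeping in the transfer from the logical statement to the algebraic one: one must match ``$\alg{A}$ is a semisimple algebra'' with ``$\pair{\alg{A}}{\{ 1 \}}$ is a semisimple model'' (rather than with the semisimplicity of some other model over $\alg{A}$), which requires identifying $\{ 1 \}$ as the filter corresponding to the identity congruence and recognizing that validity of an axiom $\chi$ in $\pair{\alg{A}}{\{ 1 \}}$ amounts to the equation $1 \equals \chi$. The remaining points --- verifying compactness and the finiteness of the DDT, IL, and PCP sets, and the short Heyting-algebraic manipulation relating the two displayed equations --- are routine.
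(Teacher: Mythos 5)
Your proposal is correct and follows essentially the same route as the paper: the paper states this fact as an immediate consequence of Theorem~\ref{thm: semisimple models} (via Proposition~\ref{prop: axiomatization of semisimple companion}) together with the filter--congruence isomorphism supplied by algebraizability, using exactly the DDT, IL, and PCP sets $\Box_{n} p \rightarrow q$, $\neg \Box_{n} p$, and $\Box_{n} p \vee \Box_{n} q$ that you identify. Your only addition is to spell out the routine verification that the two displayed equations define the same subvariety (via the Heyting identity and $\Box_{n} x \inequals x$), a point the paper leaves as an implicit remark.
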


  The simpler form of the axiom of $n$-cyclicity (Fact~\ref{fact: cyclicity}) yields an alternative formulation of the last fact.

\begin{fact}
  A $\Knfour$-algebra is semisimple if and only if it satisfies $x \inequals \Box \Diamond_{n} x$. An $\IKnfour$-algebra is semisimple if and only if it satisfies $1 \equals x \vee \Box_{1} \neg \Box_{n} x$.
\end{fact}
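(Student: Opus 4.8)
\emph{Approach.} By the preceding Fact, an $\IKnfour$-algebra $\alg{A}$ is semisimple exactly when it satisfies $1 \equals x \pcpset \neg_{n} x$; and since the binary global PCP set of $\IKnfour$ is $\pcpset(p,q) = \{ \Box_{n} p \vee \Box_{n} q \}$ while $\neg_{n} x = \neg \Box_{n} x$, this equation is literally $1 \equals \Box_{n} x \vee \Box_{n} \neg \Box_{n} x$. The whole Fact thus reduces to two elementary equational equivalences, which I would treat in turn: \emph{(i)} in every $\IKnfour$-algebra the equation $1 \equals \Box_{n} x \vee \Box_{n} \neg \Box_{n} x$ is equivalent to $1 \equals x \vee \Box_{1} \neg \Box_{n} x$; and \emph{(ii)} in every $\Knfour$-algebra the equation $1 \equals x \vee \Box_{1} \neg \Box_{n} x$ is equivalent to $x \inequals \Box \Diamond_{n} x$. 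Granting \emph{(i)}, the $\IKnfour$-part of the Fact is immediate; granting \emph{(ii)}, the $\Knfour$-part follows because every $\Knfour$-algebra is an $\IKnfour$-algebra.

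\emph{Step (i).} This is precisely the algebraic content of Fact~\ref{fact: cyclicity} (which itself rests on Fact~\ref{fact: k cyclicity}). Those facts are phrased in terms of theorems of extensions of $\IKnfour$, but each proof is a fixed finite sequence of substitutions together with the monotonicity of $\vee$, $\wedge$ and $\Box$ and a single use of the $\IKnfour$-axiom $\Box_{n} x \inequals \Box_{n+1} x$; all such steps are meaningful and valid inside one $\IKnfour$-algebra, so the equivalence of the two equations transfers verbatim. (Alternatively, for a fixed $\IKnfour$-algebra $\alg{A}$ one may pass to the logic determined by the matrix $\pair{\alg{A}}{\{ 1 \}}$ — an extension of $\IKnfour$ — apply Fact~\ref{fact: cyclicity} there, and translate back using that $\alg{A} \models 1 \equals \varphi$ iff $\varphi$ is a theorem of that logic.)

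\emph{Step (ii).} Here I would compute inside a modal (Boolean) algebra $\alg{A}$, exploiting classical negation. Applying the identity $\neg \Box^{k} \neg x = \Diamond^{k} x$ to each conjunct of $\Box_{n}$ gives $\neg \Box_{n} \neg x = \Diamond_{n} x$. Using that in a Boolean algebra $1 \equals a \vee b$ holds iff $\neg a \inequals b$, and substituting $\neg x$ for $x$, the equation $1 \equals x \vee \Box_{1} \neg \Box_{n} x$ becomes $1 \equals \neg x \vee \Box_{1} \neg \Box_{n} \neg x$, i.e.\ $x \inequals \Box_{1} \Diamond_{n} x = \Diamond_{n} x \wedge \Box \Diamond_{n} x$; since $x \inequals \Diamond_{n} x$ holds in every modal algebra, this reduces to $x \inequals \Box \Diamond_{n} x$. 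Running the computation backwards (using $\Diamond_{n} \neg x = \neg \Box_{n} x$) recovers $1 \equals x \vee \Box_{1} \neg \Box_{n} x$ from $x \inequals \Box \Diamond_{n} x$, which completes \emph{(ii)}.

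\emph{Main obstacle.} There is no genuine difficulty here: the statement is an unwinding of the PCP--axiomatic form of the LEM for $\IKnfour$ followed by routine lattice and Boolean computation. The points that do need a little care are the transfer of Fact~\ref{fact: cyclicity} from theoremhood to validity of an equation in a single algebra (dealt with in Step~\emph{(i)}), keeping the direction of the Boolean equivalence $1 \equals a \vee b \iff \neg a \inequals b$ and the substitution $x \mapsto \neg x$ straight in Step~\emph{(ii)}, and the degenerate case $n = 0$, where $\Box_{0}$ is the identity operator and the claimed equivalences are best checked directly (or $n \geq 1$ simply assumed throughout, as is customary for $\Knfour$).
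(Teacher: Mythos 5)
Your proposal is correct and follows exactly the route the paper intends: the Fact is stated there without proof as a consequence of the preceding Fact (the PCP--axiomatic form of the LEM, $1 \equals \Box_{n} x \vee \Box_{n} \neg \Box_{n} x$) together with Fact~\ref{fact: cyclicity}, plus the routine Boolean rewriting of $1 \equals x \vee \Box_{1} \neg \Box_{n} x$ as $x \inequals \Box \Diamond_{n} x$ in the classical case. Your two steps supply precisely these details (including the transfer of Fact~\ref{fact: cyclicity} to validity in a single algebra and the $n=0$ caveat), and the computations check out.
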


  These facts may not be particularly difficult to prove directly. Our~point, \mbox{however}, is that the above theorem enables us to prove such results in a uniform~way for any logic which satisfies certain syntactic prerequisites. Moreover, it immediately extends to all (weakly) algebraizable fragments of these logics which contain sufficient syntactic material to express the IL and the DDT or the~PCP. (Although the two forms of the LEM are equivalent for $\FLen$ and $\IKnfour$, in such fragments only one of them may be expressible.)

\subsection{Glivenko theorems}
\label{subsec: glivenko}

  In this final section, we show that a Glivenko-like connection obtains between a logic and its semisimple companion, subsuming several known Glivenko theorems under a single umbrella.

\begin{proposition} \label{prop: glivenko}
  Let $\logic{L}$ be a coatomic logic with the $\kappa$-ary local IL w.r.t.\ a family $\ilfamily$ such that each $\ilset \in \ilfamily_{1}$ has cardinality less than~$\kappa$. Then $\Gamma \vdash_{\ssc{\logic{L}}} \varphi$ if and only if for each $\ilset \in \ilfamily_{1}$ there is some $\ilsettwo \in \ilfamily_{|\ilset|}$ such that $\Gamma \vdash_{\logic{L}} \ilsettwo(\ilset(\varphi))$.
\end{proposition}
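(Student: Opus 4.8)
The plan is to convert $\Gamma \vdash_{\ssc{\logic{L}}} \varphi$ into a statement about inconsistency in $\logic{L}$ via the characterisation of antiadmissible rules, and then apply the local IL twice: once in its unary form, to replace ``$\varphi$ is inconsistent with a context'' by ``$\ilset(\varphi)$ holds in that context'', and once more in its $|\ilset|$-ary form, to replace ``$\ilset(\varphi)$ is inconsistent with $\Gamma$'' by ``$\ilsettwo(\ilset(\varphi))$ holds over $\Gamma$''.

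First I would note that, since $\logic{L}$ enjoys the $\kappa$-ary local IL, it has an antitheorem and this IL restricts to the $\kappa$-ary simple local IL w.r.t.\ the same family $\ilfamily$; thus $\logic{L}$ is a coatomic logic with the simple local IL. By Proposition~\ref{prop: antiadmissible rules} together with Proposition~\ref{prop: simply antiadmissible rules} (specialised to a single formula), $\Gamma \vdash_{\ssc{\logic{L}}} \varphi$ holds if and only if for every $\Delta \subseteq \Fm \logic{L}$ we have that $\varphi, \Delta \vdash_{\logic{L}} \allset$ implies $\Gamma, \Delta \vdash_{\logic{L}} \allset$. The next step is to show that this meta-implication is equivalent to the condition that $\Gamma, \ilset(\varphi) \vdash_{\logic{L}} \allset$ for every $\ilset \in \ilfamily_{1}$. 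For the forward direction, applying the unary local IL to the trivial consequence $\ilset(\varphi) \vdash_{\logic{L}} \ilset(\varphi)$ yields $\varphi, \ilset(\varphi) \vdash_{\logic{L}} \allset$ for each $\ilset \in \ilfamily_{1}$, so it suffices to take $\Delta \assign \ilset(\varphi)$. For the converse, if $\varphi, \Delta \vdash_{\logic{L}} \allset$ then the unary local IL produces some $\ilset \in \ilfamily_{1}$ with $\Delta \vdash_{\logic{L}} \ilset(\varphi)$; since $\Gamma \cup \ilset(\varphi) \vdash_{\logic{L}} \allset$ by hypothesis and $\Gamma, \Delta \vdash_{\logic{L}} \Gamma \cup \ilset(\varphi)$ by monotonicity, cut gives $\Gamma, \Delta \vdash_{\logic{L}} \allset$.

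It remains to apply the $\kappa$-ary local IL once more, this time to the set $\ilset(\varphi)$ regarded as a tuple of length $|\ilset|$ (possibly with repetitions, as permitted by the conventions of Subsection~\ref{subsec: notational conventions}). This is legitimate precisely because $0 < |\ilset| < \kappa$ by hypothesis, and it shows that $\Gamma, \ilset(\varphi) \vdash_{\logic{L}} \allset$ if and only if $\Gamma \vdash_{\logic{L}} \ilsettwo(\ilset(\varphi))$ for some $\ilsettwo \in \ilfamily_{|\ilset|}$. Chaining the three equivalences and quantifying over $\ilset \in \ilfamily_{1}$ yields the claim. The main thing to watch is the arity bookkeeping in this last step: one must be sure that $\ilset(\varphi)$ is short enough for the $\kappa$-ary IL to apply, which is exactly what the assumption $|\ilset| < \kappa$ guarantees. (A reader who prefers may instead derive the middle equivalence from Proposition~\ref{prop: cil for semisimple companion}: $\ssc{\logic{L}}$ enjoys the classical $\kappa$-ary local IL w.r.t.\ $\ilfamily$, whose dual part says $\Gamma \vdash_{\ssc{\logic{L}}} \varphi$ iff $\Gamma, \ilset(\varphi) \vdash_{\ssc{\logic{L}}} \allset$ for all $\ilset \in \ilfamily_{1}$, and $\ssc{\logic{L}}$ has the same antitheorems as $\logic{L}$.)
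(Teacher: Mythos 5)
Your proof is correct and follows essentially the same route as the paper's: it combines Proposition~\ref{prop: antiadmissible rules} with Proposition~\ref{prop: simply antiadmissible rules} and then applies the local IL twice, once to reduce the antiadmissibility condition to $\Gamma, \ilset(\varphi) \vdash_{\logic{L}} \allset$ for each $\ilset \in \ilfamily_{1}$, and once in arity $|\ilset| < \kappa$ to obtain $\Gamma \vdash_{\logic{L}} \ilsettwo(\ilset(\varphi))$. The only difference is that you make explicit the bookkeeping the paper leaves implicit (that the local IL yields the simple local IL needed for Proposition~\ref{prop: simply antiadmissible rules}, the choice $\Delta \assign \ilset(\varphi)$, and the arity constraint justifying the second application).
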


\begin{proof}
  By Proposition~\ref{prop: antiadmissible rules} the valid rules of $\ssc{\logic{L}}$ coincide with the antiadmissible rules of $\logic{L}$. Using Proposition~\ref{prop: simply antiadmissible rules} and the local IL, the antiadmissible rules of $\logic{L}$ are precisely those rules $\Gamma \vdash \varphi$ such that $\Delta \vdash_{\logic{L}} \ilset(\varphi)$ implies $\Gamma, \Delta \vdash_{\logic{L}} \allset$ for each $\Delta$ and each $\ilset \in \ilfamily_{1}$. In other words, $\Gamma, \ilset(\varphi) \vdash_{\logic{L}} \allset$ for each $\ilset \in \ilfamily_{1}$. Another application of the local IL yields the desired conclusion.
\end{proof}

  The above equivalence may be called a \emph{local} Glivenko theorem connecting $\logic{L}$ and $\ssc{\logic{L}}$: it states that
\begin{align*}
  \Gamma \vdash_{\ssc{\logic{L}}} \varphi \iff \Gamma \vdash_{\logic{L}} \set{f(\ilset)(\ilset(\varphi))}{\ilset \in \ilfamily_{1}} \text{ for some } f\colon \ilfamily_{1} \to \bigcup_{0 < \alpha < \kappa} \ilfamily_{\alpha}
\end{align*}
  such that $f(\ilset) \in \ilfamily_{|\ilset|}$. For example, for $\FLew$ we get the local Glivenko theorem
\begin{align*}
  \Gamma \vdash_{\ssc{\FLew}} \varphi \iff \Gamma \vdash_{\FLew} \set{\neg (\neg \varphi^{n})^{f(n)} }{n \in \omega} \text{ for some } f\colon \omega \to \omega,
\end{align*}
  while for $\IK$ we get the local Glivenko theorem
\begin{align*}
  \Gamma \vdash_{\ssc{\IK}} \varphi \iff \Gamma \vdash_{\IK} \set{\neg \Box_{f(n)} (\neg \Box_{n} \varphi}{n \in \omega} \text{ for some } f\colon \omega \to \omega.
\end{align*}
  Of course, for such Glivenko theorems to be of interest we need to be able to identify the semisimple companions of these logics. For example, for $\BL$ we get the local Glivenko theorem
\begin{align*}
  \Gamma \vdash_{\Lukinfty} \varphi \iff \Gamma \vdash_{\BL} \set{\neg (\neg \varphi^{n})^{f(n)} }{n \in \omega} \text{ for some } f\colon \omega \to \omega.
\end{align*}
  If $\Gamma$ is finite, this theorem is superseded by the much simpler equivalence
\begin{align*}
  \Gamma \vdash_{\Luk} \varphi \iff \Gamma \vdash_{\BL} \neg \neg \varphi,
\end{align*}
  which was proved by Cignoli \& Torrens~\cite[Theorem~2.1]{cignoli+torrens03}.\footnote{Cignoli and Torrens prove the equivalence for theorems, but their proof extends to consequence relations. In fact, the equivalence extends to a connection between $\Lukinfty$ and the infinitary logic $\BLinfty$, defined as the logic of all $\BL$-algebras over the unit interval~$[0, 1]$. Using the decomposition of such $\BL$-algebras into ordinal sums of {\L}ukasiewicz and product components~\cite[Theorem~2.16]{metcalfe+olivetti+gabbay08}, one easily proves that $\Gamma \vdash_{\Lukinfty} \varphi \iff \Gamma \vdash_{\BL_{\infty}} \neg \neg \varphi$, from which $\Gamma \vdash_{\Luk} \varphi \iff \Gamma \vdash_{\BL} \neg \neg \varphi$ follows.}

  The most satisfactory results hold for logics which enjoys the global IL w.r.t.\ some set $\genneg x$, and moreover the LEM can be expressed in axiomatic form. For~such logics, $\ssc{\logic{L}}$ is the extension of $\logic{L}$ by the DDT--axiomatic or PCP--axiomatic form of the LEM, as shown in the previous subsection, and moreover we obtain a global rather than a local form of the Glivenko theorem:
\begin{align*}
  \Gamma \vdash_{\ssc{\logic{L}}} \varphi & \iff \Gamma \vdash_{\logic{L}} \genneg \genneg \varphi.
\end{align*}
  This immediately yields the following Glivenko theorems in the substructural and intuitionistic modal cases.

\begin{fact}
  Let $\logic{L}$ be an axiomatic extension of $\FLen$ and let $\logic{L} + \lem$ be the extension of $\logic{L}$ by the axiom $(1 \wedge p) \vee (1 \wedge \neg (1 \wedge p)^{n})$. Then
\begin{align*}
  \Gamma \vdash_{\logic{L} + \lem} \varphi \iff \Gamma \vdash_{\logic{L}} \neg (1 \wedge \neg (1 \wedge \varphi)^{n})^{n}.
\end{align*}
\end{fact}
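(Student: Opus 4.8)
The plan is to read the asserted equivalence as the global form of the Glivenko theorem connecting $\logic{L}$ with its semisimple companion, and hence to deduce it from Proposition~\ref{prop: axiomatization of semisimple companion} together with Proposition~\ref{prop: glivenko}. First I would check that $\logic{L}$ meets the hypotheses of both. As an axiomatic extension of $\FLen$, the logic $\logic{L}$ is finitary and has an antitheorem (for instance $\{ \bot \}$, or $\{ p, \neg (1 \wedge p)^{n} \}$), hence it is compact and therefore coatomic; and it inherits from $\FLen$ the global IL with respect to the finite set $\genneg p \assign \neg (1 \wedge p)^{n}$, the unary global DDT with respect to the finite set $(1 \wedge p)^{n} \rightarrow q$, and the binary global PCP with respect to $\pcpset(p, q) \assign (1 \wedge p) \vee (1 \wedge q)$.

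Next I would invoke Proposition~\ref{prop: axiomatization of semisimple companion} in its PCP version: since $\logic{L}$ is coatomic with a finite antitheorem, a unary global IL with respect to a finite set, and a binary global PCP, the companion $\ssc{\logic{L}}$ is precisely the extension of $\logic{L}$ by the PCP--axiomatic form of the LEM, i.e.\ by $\pcpset(p, \genneg p)$. Unwinding the definitions, $\pcpset(p, \genneg p) = (1 \wedge p) \vee (1 \wedge \neg (1 \wedge p)^{n})$, which is exactly the axiom $\lem$; so $\ssc{\logic{L}} = \logic{L} + \lem$. (It is convenient here that for $\FLen$ the DDT--axiomatic form $(p \rightarrow_{n} q) \rightarrow_{n} ((\neg_{n} p \rightarrow_{n} q) \rightarrow_{n} q)$ is interderivable with $\lem$, so either version of Proposition~\ref{prop: axiomatization of semisimple companion} would do.)

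Finally I would apply Proposition~\ref{prop: glivenko} with $\kappa \assign \omega$ and $\ilfamily$ the global IL family of $\logic{L}$. Globality means $\ilfamily_{1}$ is the singleton whose unique member $\ilset$ is the one-element set $\{ \neg (1 \wedge p)^{n} \}$, so $| {\ilset} | = 1 < \omega$ and the cardinality proviso of the proposition holds; moreover the quantifier alternation collapses, since both the $\ilset \in \ilfamily_{1}$ and the $\ilsettwo \in \ilfamily_{| {\ilset} |} = \ilfamily_{1}$ can only be $\ilset$ itself. Thus the proposition reduces to $\Gamma \vdash_{\ssc{\logic{L}}} \varphi \iff \Gamma \vdash_{\logic{L}} \ilset(\ilset(\varphi))$, and the routine computation $\ilset(\ilset(\varphi)) = \{ \neg (1 \wedge \neg (1 \wedge \varphi)^{n})^{n} \} = \genneg \genneg \varphi$ together with $\ssc{\logic{L}} = \logic{L} + \lem$ gives exactly $\Gamma \vdash_{\logic{L} + \lem} \varphi \iff \Gamma \vdash_{\logic{L}} \neg (1 \wedge \neg (1 \wedge \varphi)^{n})^{n}$. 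No step here is genuinely hard: the statement is a corollary of the two cited propositions once one confirms that their hypotheses pass from $\FLen$ to $\logic{L}$, and the only real care needed is the bookkeeping that identifies $\pcpset(p, \genneg p)$ with $\lem$ and $\genneg \genneg \varphi$ with the double negation appearing in the conclusion.
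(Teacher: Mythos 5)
Your proposal is correct and follows essentially the route the paper intends: the Fact is stated there as an immediate instance of the global Glivenko theorem $\Gamma \vdash_{\ssc{\logic{L}}} \varphi \iff \Gamma \vdash_{\logic{L}} \genneg\genneg\varphi$ obtained from Proposition~\ref{prop: glivenko} specialized to the global IL family $\{\neg(1\wedge p)^{n}\}$, combined with the identification $\ssc{\logic{L}} = \logic{L} + \lem$ from Proposition~\ref{prop: axiomatization of semisimple companion}. Your verification of the inherited hypotheses (finite antitheorem, compactness hence coatomicity, global IL, DDT, and PCP) and the bookkeeping identifying $\pcpset(p,\genneg p)$ with $\lem$ and $\genneg\genneg\varphi$ with $\neg(1\wedge\neg(1\wedge\varphi)^{n})^{n}$ are exactly what the paper leaves implicit.
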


\begin{fact} \label{fact: glivenko for iml}
  Let $\logic{L}$ be an axiomatic extension of $\IKnfour$ and let $\logic{L} + \lem$ be the extension of $\logic{L}$ by the axiom $\Box_{n} p \vee \Box_{n} \neg \Box_{n} p$ (or equivalently, $p \vee \Box_{1} \neg \Box_{n} p$). Then
\begin{align*}
  \Gamma \vdash_{\logic{L} + \lem} \varphi \iff \Gamma \vdash_{\logic{L}} \neg \Box_{n} \neg \Box_{n} \varphi.
\end{align*}
\end{fact}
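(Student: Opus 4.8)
The plan is to read the statement as a direct instantiation of the apparatus built above, with $\neg \Box_{n} x$ in the role of the negation $\genneg x$ of the general theory, $\Box_{n} p \vee \Box_{n} q$ in the role of the PCP disjunction, and $\Box_{n} p \rightarrow q$ in the role of the DDT implication. First I would record what $\logic{L}$ inherits from $\IKnfour$. Being an axiomatic extension of the finitary logic $\IKnfour$, the logic $\logic{L}$ is finitary; it has a finite antitheorem (namely $\{p, \neg \Box_{n} p\}$, since any logic with a parametrized local IL has an antitheorem), hence it is compact and therefore coatomic. Moreover $\logic{L}$ inherits the unary global IL of $\IKnfour$ w.r.t.\ the finite set $\{\neg \Box_{n} p\}$ and the binary global PCP of $\IKnfour$ w.r.t.\ $\Box_{n} p \vee \Box_{n} q$, all such syntactic principles being preserved under axiomatic extensions.

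Next I would apply Proposition~\ref{prop: axiomatization of semisimple companion} in its PCP form: since $\logic{L}$ is coatomic with a finite antitheorem, a unary global IL w.r.t.\ a finite set, and a binary global PCP, its semisimple companion $\ssc{\logic{L}}$ is exactly the extension of $\logic{L}$ by the PCP--axiomatic form of the LEM, which for $\IKnfour$ and its extensions is $\Box_{n} p \vee \Box_{n} \neg \Box_{n} p$; that is, $\ssc{\logic{L}} = \logic{L} + \lem$. For the parenthetical alternative presentation of $\logic{L} + \lem$, I would invoke Fact~\ref{fact: cyclicity}: for $n \geq 1$ the axioms $\Box_{n} p \vee \Box_{n} \neg \Box_{n} p$ and $p \vee \Box_{1} \neg \Box_{n} p$ are interderivable over $\logic{L}$ (having either as a theorem is equivalent to having the other), so extending $\logic{L}$ by the one is the same as extending it by the other.

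Then I would invoke the Glivenko statement Proposition~\ref{prop: glivenko}, applied with the global inconsistency family, for which $\ilfamily_{1} = \{\{\neg \Box_{n} p\}\}$ and $\kappa = \omega$ (every member of $\ilfamily_{1}$ has cardinality $1 < \omega$). Because the unique set in $\ilfamily_{1}$ has cardinality $1$, the index family $\ilfamily_{|\ilset|}$ appearing in that proposition is again $\ilfamily_{1}$, and the only available choice of $\ilsettwo$ is $\ilset$ itself; the local Glivenko equivalence thus collapses to its global form $\Gamma \vdash_{\ssc{\logic{L}}} \varphi \iff \Gamma \vdash_{\logic{L}} \neg \Box_{n} \neg \Box_{n} \varphi$. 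Substituting $\ssc{\logic{L}} = \logic{L} + \lem$ from the previous step yields precisely the claimed equivalence.

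I do not expect a genuine obstacle: the entire content is verifying that the hypotheses of Propositions~\ref{prop: axiomatization of semisimple companion} and~\ref{prop: glivenko} hold — coatomicity of $\logic{L}$ (from finitarity plus a finite antitheorem), the inheritance of the global IL and global PCP under axiomatic extension, and the identification of the PCP--axiomatic form of the LEM with the axiom as stated — together with the passage between the two forms of the $n$-cyclicity axiom supplied by Fact~\ref{fact: cyclicity}. Should one prefer to bypass Proposition~\ref{prop: glivenko}, the same equivalence can be obtained directly: by Propositions~\ref{prop: antiadmissible rules} and~\ref{prop: simply antiadmissible rules} one has $\Gamma \vdash_{\ssc{\logic{L}}} \varphi$ iff $\Gamma, \neg \Box_{n} \varphi \vdash_{\logic{L}} \allset$ (using that $\{\varphi, \neg \Box_{n} \varphi\}$ is an antitheorem of $\logic{L}$ for the nontrivial direction), and one application of the global IL rewrites the right-hand side as $\Gamma \vdash_{\logic{L}} \neg \Box_{n} \neg \Box_{n} \varphi$; but routing through the prepared machinery is the cleanest route.
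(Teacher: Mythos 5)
Your proposal is correct and follows essentially the same route as the paper, which obtains this fact as an immediate consequence of Proposition~\ref{prop: axiomatization of semisimple companion} (identifying $\logic{L} + \lem$ with $\ssc{\logic{L}}$ via the PCP--axiomatic form of the LEM, using the inherited global IL w.r.t.\ $\neg\Box_{n}p$ and the binary global PCP) together with the global specialization of Proposition~\ref{prop: glivenko}, with Fact~\ref{fact: cyclicity} handling the alternative form of the axiom. Your verification of the hypotheses (finitarity, finite antitheorem, hence compactness and coatomicity) and your remark that the singleton inconsistency family collapses the local Glivenko theorem to the global one are exactly the checks the paper leaves implicit.
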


  The familiar Glivenko theorem for intuitionistic logic
\begin{align*}
  \Gamma \vdash_{\CL} \varphi \iff \Gamma \vdash_{\IL} \neg \neg \varphi,
\end{align*}
  as well as the Glivenko theorem for $\Sfour$
\begin{align*}
  \Gamma \vdash_{\Sfive} \varphi \iff \Gamma \vdash_{\Sfour} \Diamond \Box \varphi,
\end{align*}
  are immediate special cases (see~\cite{glivenko29,matsumoto55}). We also obtain the Glivenko theorem of Bezhanishvili~\cite[Thm\,10]{bezhanishvili01} relating two different intuitionistic versions of the modal logic $\Sfive$, namely the logics known as $\MIPC$ and $\WSfive$. $\MIPC$ is the logic of so-called monadic Heyting algebras, i.e.\ modal Heyting algebras which satisfy $\Box \Box x \equals \Box x \inequals x$, $x \inequals \Diamond x \equals \Diamond \Diamond x$ and $x \inequals \Box \Diamond x$ and $\Diamond \Box x \inequals x$, and $\WSfive$ adds the axiom $\neg \Box \neg x \equals \Diamond x$, or equivalently $1 \equals \Box x \vee \neg \Box x$, to $\MIPC$~\cite[Thm\,23]{bezhanishvili98}.

\begin{fact}
  $\WSfive = \MIPC + \lem$.
\end{fact}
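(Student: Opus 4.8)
The plan is to identify $\MIPC + \lem$ explicitly via the general theory and then check by a short algebraic computation that the resulting logic coincides with $\WSfive$. First I would record that $\MIPC$ is an axiomatic extension of $\IKnfour$ with $n = 1$: monadic Heyting algebras satisfy $\Box x \inequals x$ and $\Box \Box x \equals \Box x$, so $\Box_{1} x \equals x \wedge \Box x \equals \Box x$ and weak $1$-transitivity $\Box_{1} x \inequals \Box_{2} x$ holds trivially. Hence Fact~\ref{fact: glivenko for iml} applies, and $\MIPC + \lem$ is the extension of $\MIPC$ by the axiom $\Box_{1} p \vee \Box_{1} \neg \Box_{1} p$; using $\Box \neg \Box p \inequals \neg \Box p$ this reduces over $\MIPC$ to $\Box p \vee \Box \neg \Box p$ (equivalently $p \vee \Box \neg \Box p$, cf.\ Fact~\ref{fact: cyclicity}). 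In fact this extension is $\ssc{\MIPC}$ by Proposition~\ref{prop: axiomatization of semisimple companion}, whose hypotheses are met: $\MIPC$ is finitary, has a finite antitheorem (e.g.\ $\{ \bot \}$), hence is coatomic, and inherits from $\IKnfour$ both a unary global IL w.r.t.\ the finite set $\neg \Box p$ and a binary global PCP w.r.t.\ $\Box p \vee \Box q$. Since $\WSfive$ is $\MIPC$ extended by $\Box x \vee \neg \Box x \equals 1$, it therefore suffices to show that the equations $\Box x \vee \Box \neg \Box x \equals 1$ and $\Box x \vee \neg \Box x \equals 1$ are equivalent over $\MIPC$.

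One direction is immediate, since $\Box \neg \Box x \inequals \neg \Box x$ holds in every modal Heyting algebra, so the first equation entails the second. The substantive step — and the only place where real work is needed — is to show that every $\WSfive$-algebra $\alg{A}$ validates $\Box x \vee \Box \neg \Box x \equals 1$. The argument I have in mind is: fix $a \in \alg{A}$; since $\alg{A}$ satisfies $\Box x \vee \neg \Box x \equals 1$, the element $\Box a$ is complemented with complement $\neg \Box a$, so $\neg \neg \Box a = \Box a$; applying the defining $\WSfive$ identity $\neg \Box \neg x \equals \Diamond x$ at $x \assign \neg \Box a$ and using $\neg(\neg \Box a) = \Box a$ together with $\Box \Box a = \Box a$ gives
\begin{align*}
  \Diamond \neg \Box a = \neg \Box \neg(\neg \Box a) = \neg \Box \Box a = \neg \Box a;
\end{align*}
finally, monadic Heyting algebras satisfy $\Box \Diamond x \equals \Diamond x$ (substitute $\Diamond x$ for $x$ in $x \inequals \Box \Diamond x$ and use $\Diamond \Diamond x \equals \Diamond x$ for one inequality, $\Box \Diamond x \inequals \Diamond x$ for the other), whence
\begin{align*}
  \Box \neg \Box a = \Box \Diamond \neg \Box a = \Diamond \neg \Box a = \neg \Box a,
\end{align*}
so $\Box a \vee \Box \neg \Box a = \Box a \vee \neg \Box a = 1$. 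This gives the remaining inclusion, so the classes of $\WSfive$-algebras and of $(\MIPC + \lem)$-algebras coincide and $\WSfive = \MIPC + \lem$; combined with Fact~\ref{fact: glivenko for iml} this recovers Bezhanishvili's Glivenko theorem $\Gamma \vdash_{\WSfive} \varphi \iff \Gamma \vdash_{\MIPC} \neg \Box \neg \Box \varphi$.

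The main obstacle is precisely the middle computation: in $\MIPC$ alone the pseudocomplement of a $\Box$-fixed element need not be $\Box$-fixed, so one genuinely needs the $\WSfive$ axiom, and the point is that it makes $\Box a$ complemented, after which the monadic identity $\Box \Diamond \equals \Diamond$ forces $\neg \Box a$ to be $\Box$-fixed. Everything else — determining which instance of $\IKnfour$ contains $\MIPC$, simplifying $\Box_{1}$ to $\Box$ using reflexivity, and verifying the hypotheses of Proposition~\ref{prop: axiomatization of semisimple companion} — is routine bookkeeping.
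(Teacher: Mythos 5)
Your proposal is correct, and it follows the same overall strategy as the paper: both reduce the statement to showing that, over $\MIPC$, the PCP--axiomatic LEM $\Box p \vee \Box \neg \Box p$ and the $\WSfive$ axiom $\Box p \vee \neg \Box p$ are interderivable, the easy direction being $\Box \neg \Box x \inequals \neg \Box x$. Where you differ is in the nontrivial direction: the paper notes that, granted $1 \equals \Box y \vee \neg \Box y$, the equation $1 \equals x \vee \Box \neg \Box x$ amounts to $\neg \Box \neg \Box x \inequals x$, which is immediate from $\neg \Box \neg \Box x = \Diamond \Box x \inequals x$; you instead show that $\neg \Box a$ is a $\Box$-fixed element, using $\neg \neg \Box a = \Box a$ (complementation of $\Box a$), the identity $\neg \Box \neg x \equals \Diamond x$ at $x \assign \neg \Box a$, and the derived law $\Box \Diamond x \equals \Diamond x$. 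Both computations are valid; the paper's is a step or two shorter and invokes only the monadic axiom $\Diamond \Box x \inequals x$, whereas yours leans on $x \inequals \Box \Diamond x$ and $\Diamond \Diamond x \equals \Diamond x$. Your additional bookkeeping -- identifying $\MIPC$ as an axiomatic extension of $\IKnfour$ with $n = 1$, reducing $\Box_{1}$ to $\Box$ via reflexivity, and checking the hypotheses of Proposition~\ref{prop: axiomatization of semisimple companion} so that $\MIPC + \lem = \ssc{\MIPC}$ -- is accurate and is left implicit in the paper, which uses those facts only in the surrounding discussion.
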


\begin{proof}
  If $1 \equals \Box x \vee \Box \neg \Box x$, then $1 \equals \Box x \vee \neg \Box x$, using the axiom $\Box x \leq x$. Conversely, if $1 \equals \Box x \vee \neg \Box x$, then $1 \equals x \vee \Box \neg \Box x$ is equivalent to $\neg \Box \neg \Box x \leq x$, but indeed $\neg \Box \neg \Box x = \Diamond \Box x \leq x$.
\end{proof}

  Theorem~\ref{thm: semisimple models} now immediately yields the known fact that $\WSfive$-algebras are precisely the semisimple monadic Heyting algebras \cite[Thm\,24]{bezhanishvili98}.

\begin{fact}[\cite{bezhanishvili01}]
  The following Glivenko theorem connects the intuitionistic modal logics $\MIPC$ and $\WSfive$:
\begin{align*}
  \Gamma \vdash_{\WSfive} \varphi \iff \Gamma \vdash_{\MIPC} \neg \neg \Box \varphi.
\end{align*}
\end{fact}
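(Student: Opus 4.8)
The plan is to derive this Glivenko theorem by applying the general machinery of Section~\ref{sec: glivenko} to $\logic{L} \assign \MIPC$ with the negation taken to be $\genneg x \assign \neg \Box x$. The preliminary observations are routine: $\MIPC$ is finitary and has a finite antitheorem (e.g.\ $\{ \bot \}$, or $\{ p \wedge \neg p \}$), so it is compact, in particular coatomic. Moreover $\MIPC$ is an axiomatic extension of $\ISfour$ — its algebras satisfy $\Box \Box x \equals \Box x \inequals x$ — and $\ISfour$ is itself an axiomatic extension of $\IKnfour$ with $n = 1$, since $\Box x \inequals x$ makes $\Box_{1} x$ equivalent to $\Box x$ and $\Box_{2} x$ equivalent to $\Box \Box x \equals \Box x$. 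Hence $\MIPC$ inherits from $\IKnfour$ the unary global IL w.r.t.\ the finite set $\genneg x = \neg \Box x$, the binary global PCP w.r.t.\ $\pcpset(p,q) = \Box p \vee \Box q$, and the global DDT w.r.t.\ $\Box p \rightarrow q$. All the hypotheses needed for Proposition~\ref{prop: axiomatization of semisimple companion}, Theorem~\ref{thm: semisimple models}, and Proposition~\ref{prop: glivenko} are therefore in place.

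The first real step is to identify $\ssc{\MIPC}$. By Proposition~\ref{prop: axiomatization of semisimple companion} in its PCP formulation, $\ssc{\MIPC}$ is the extension of $\MIPC$ by the PCP--axiomatic form of the LEM, namely $\pcpset(p, \genneg p) = \Box p \vee \Box \neg \Box p$. By Fact~\ref{fact: cyclicity} (with $n = 1$ and $\Box_{1} x$ equivalent to $\Box x$ in $\MIPC$) this is equivalent over $\MIPC$ to $p \vee \Box_{1} \neg \Box p$, i.e.\ to the axiom $\lem$; and by the Fact just established that $\WSfive = \MIPC + \lem$ we conclude $\ssc{\MIPC} = \WSfive$.

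Next I would invoke the Glivenko theorem. Since $\MIPC$ is coatomic and enjoys the finitary global IL w.r.t.\ the singleton family $\ilfamily$ with $\ilfamily_{1} = \{ \neg \Box p \}$ (its single member having cardinality $1 < \omega$), Proposition~\ref{prop: glivenko} specialises to the global form
\begin{align*}
  \Gamma \vdash_{\ssc{\MIPC}} \varphi & \iff \Gamma \vdash_{\MIPC} \neg \Box \neg \Box \varphi .
\end{align*}
Combined with $\ssc{\MIPC} = \WSfive$, it only remains to replace $\neg \Box \neg \Box \varphi$ by $\neg \neg \Box \varphi$.

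This last reconciliation is the one place where a genuine computation is needed, and it is where I expect the only real effort to lie. The claim is that $\neg \Box \neg \Box \varphi$ and $\neg \neg \Box \varphi$ are interderivable in $\MIPC$, and I would obtain it from the algebraic identity $\Box \neg \Box x \equals \neg \Box x$ valid in every monadic Heyting algebra $\alg{A}$: the open elements $\Box[\alg{A}] = \Diamond[\alg{A}]$ form a (relatively complete) Heyting subalgebra, so since $\Box x$ and $0$ are open, $\neg \Box x = \Box x \rightarrow 0$ is open as well; applying $\neg$ to both sides of $\Box \neg \Box x \equals \neg \Box x$ gives $\neg \Box \neg \Box x \equals \neg \neg \Box x$, hence the two formulas are interderivable in $\MIPC$. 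It is worth noting that this cannot be shortcut using the valid rules $\varphi \vdash_{\MIPC} \Box \varphi$ and $\Box \varphi \vdash_{\MIPC} \varphi$: because $\MIPC$ has no deduction theorem, interderivability is not a congruence for $\neg$, so the honest algebraic identity above (which rests on the structure of monadic Heyting algebras, not on the abstract framework) is really required. Granting it, the announced equivalence $\Gamma \vdash_{\WSfive} \varphi \iff \Gamma \vdash_{\MIPC} \neg \neg \Box \varphi$ follows at once.
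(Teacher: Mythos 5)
Your proposal is correct and follows essentially the same route as the paper: it identifies $\ssc{\MIPC} = \WSfive$ via the (PCP--)axiomatic form of the LEM and the preceding fact $\WSfive = \MIPC + \lem$, obtains the global Glivenko equivalence $\Gamma \vdash_{\WSfive} \varphi \iff \Gamma \vdash_{\MIPC} \neg \Box \neg \Box \varphi$ (you unpack this from Propositions~\ref{prop: axiomatization of semisimple companion} and~\ref{prop: glivenko}, where the paper simply cites Fact~\ref{fact: glivenko for iml}), and then reconciles $\neg \Box \neg \Box \varphi$ with $\neg \neg \Box \varphi$ in $\MIPC$. The only divergence is in that last step, where you invoke the known structural fact that the open elements of a monadic Heyting algebra form a Heyting subalgebra (so $\Box \neg \Box x \equals \neg \Box x$), while the paper verifies the same identity by a short explicit chain of inequalities; both justifications are valid.
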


\begin{proof}
  In view of the general Glivenko theorem for axiomatic extensions of $\IKnfour$ (Fact~\ref{fact: glivenko for iml}), it suffices to prove that $\neg \neg \Box \varphi$ and $\neg \Box \neg \Box \varphi$ are equivalent in $\MIPC$. In~monadic Heyting algebras we have $\Box \neg \Box x \leq \neg \Box x$, thus $\neg \neg \Box x \leq \neg \Box \neg \Box x$. Conversely $\Diamond z \leq \neg \Box \neg z$ implies that $\neg \Box \neg \Box \neg z \leq \neg z$, so taking $z \assign \neg y$ yields $\neg \Box \neg \Box y \leq \neg \Box \neg \Box \neg \neg y \leq \neg \neg y$, and taking $y \assign \Box x$ yields $\neg \Box \neg \Box x = \neg \Box \neg \Box \Box x \leq \neg \neg \Box x$. 
\end{proof}

  Finaly, let us describe the subclassical substructural logics which are Glivenko related to classical logic. This extends the Glivenko equivalence between the fuzzy logic $\SBL$ (strict $\BL$) and classical logic $\CL$ due to Cignoli \& Torrens~\cite[Theorem~2.2]{cignoli+torrens03}. The following theorem is the only place in the paper where we use the constant $0$ in the signature of $\FL$. Let us use the notation $\negbs x \assign x \bs 0$ and $\negs x \assign 0 / x$ (the shape of the symbol indicates whether $x$ occurs to the left or to the right of the slash).

\begin{theorem}
  Let $\logic{L}$ be a compact logic such that $\FL \logleq \logic{L} \logleq \CL$. Then the following are equivalent:
\begin{enumerate}
\item $\Gamma \vdash_{\CL} \varphi \iff \Gamma \vdash_{\logic{L}} \negbs \negbs \varphi$ for all $\Gamma$ and $\varphi$,
\item $\Gamma \vdash_{\CL} \varphi \iff \Gamma \vdash_{\logic{L}} \negs \negs \varphi$ for all $\Gamma$ and $\varphi$,
\item $\Gamma \vdash_{\CL} \negbs \varphi \iff \Gamma \vdash_{\logic{L}} \negbs \varphi$ for all $\Gamma$ and $\varphi$,
\item $\Gamma \vdash_{\CL} \negs \varphi \iff \Gamma \vdash_{\logic{L}} \negs \varphi$ for all $\Gamma$ and $\varphi$,
\item $\Gamma, \varphi \vdash_{\logic{L}} 0 \iff \Gamma \vdash_{\logic{L}} \negbs \varphi$ for all $\Gamma$ and $\varphi$, and $\negbs (p \wedge \negbs q) \vdash_{\logic{L}} \negbs \negbs (p \bs q)$.
\item $\Gamma, \varphi \vdash_{\logic{L}} 0 \iff \Gamma \vdash_{\logic{L}} \negs \varphi$ for all $\Gamma$ and $\varphi$, and $\negs (p \wedge \negs q) \vdash_{\logic{L}} \negs \negs (q / p)$.
\end{enumerate}
\end{theorem}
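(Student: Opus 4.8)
The plan is to show that all six conditions are equivalent to the single statement $\ssc{\logic{L}} = \CL$, i.e.\ that the semisimple companion of $\logic{L}$ (which, since $\logic{L}$ is compact and has the antitheorem $\{\bot\}$, is also its antistructural completion) is classical logic; the theorem follows immediately. Two elementary observations about the base logic are used throughout. First, $\negbs \varphi$ and $\negs \varphi$ are interderivable already in $\FL$: both $\Gamma \vdash_{\FL} \negbs \varphi$ and $\Gamma \vdash_{\FL} \negs \varphi$ assert that $\Gamma$ forces the inequality $\varphi \inequals 0$, since $1 \inequals \varphi \bs 0 \iff \varphi \inequals 0 \iff 1 \inequals 0 / \varphi$; hence $\negbs \varphi \mathrel{\dashv\vdash_{\logic{L}}} \negs \varphi$ for every $\logic{L}$ extending $\FL$, and for such $\logic{L}$ a global IL with respect to $\negbs p$ is the same thing as a global IL with respect to $\negs p$. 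Second, the ``$\vdash 0$'' clauses force $0$ to be an antitheorem: instantiating the equivalence in the first clause of~(5) at $\Gamma := \{0\}$ and a variable $\varphi := p$ not occurring in $0$ gives $0 \vdash_{\logic{L}} \negbs p$, whence $0 \vdash_{\logic{L}} \allset$ by the Fact relating $\vdash_{\logic{L}} \allset$ with derivability of a fresh variable; the same argument works for~(6), and, using $\{0\} \vdash_{\CL} \negbs p$ with~(3) (resp.\ $\{0\} \vdash_{\CL} \negs p$ with~(4)), also for (3) and (4); for (1) and (2) the conclusion $0 \vdash_{\logic{L}} \allset$ requires a short additional residuated-lattice computation. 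In each case $\Gamma \vdash_{\logic{L}} 0 \iff \Gamma \vdash_{\logic{L}} \allset$, so that the first clauses of (5) and (6) are exactly the global IL for $\logic{L}$ with respect to $\negbs p$.

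First I would pass from the inconsistency lemma to the Glivenko connection. Assuming $\logic{L}$ has the global IL with respect to $\negbs p$, Proposition~\ref{prop: glivenko} (applied with the singleton family $\ilfamily_{1} = \{\negbs p\}$, and then with $\{\negs p\}$) yields $\Gamma \vdash_{\ssc{\logic{L}}} \varphi \iff \Gamma \vdash_{\logic{L}} \negbs \negbs \varphi$ and $\Gamma \vdash_{\ssc{\logic{L}}} \varphi \iff \Gamma \vdash_{\logic{L}} \negs \negs \varphi$. Since in $\CL$ the formulas $\negbs \negbs \varphi$ and $\negbs \varphi$ are equivalent to $\varphi$ and to $\neg \varphi$ respectively, this shows that, in the presence of the IL, each of (1)--(4) is equivalent to $\ssc{\logic{L}} = \CL$. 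For the converse I would check that each of (1)--(4) forces that IL (so that the hypothesis may be discharged): for instance (1) transports the classical equivalence $\Gamma, \varphi \vdash_{\CL} \allset \iff \Gamma \vdash_{\CL} \neg \varphi$ along the (by hypothesis faithful) double-negation translation, and the translated statement reduces to $\Gamma, \varphi \vdash_{\logic{L}} \allset \iff \Gamma \vdash_{\logic{L}} \negbs \varphi$ by manipulations available in $\FL$ (notably $\varphi \cdot \negbs \varphi \inequals 0$), in the course of which one also obtains $0 \vdash_{\logic{L}} \allset$; the cases (2)--(4) are analogous. Hence (1)--(4) are each equivalent to $\ssc{\logic{L}} = \CL$, and in particular to one another.

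It then remains to treat (5) and (6): for a compact $\logic{L}$ with the global IL with respect to $\negbs p$, one has $\ssc{\logic{L}} = \CL$ if and only if the quasi-inequality $\negbs(p \wedge \negbs q) \vdash_{\logic{L}} \negbs \negbs(p \bs q)$ holds (equivalently the mirror inequality of~(6)). Since $\CL$ has no proper consistent extension and $\ssc{\logic{L}}$ is a consistent extension of $\logic{L} \logleq \CL$ with the same antitheorems as $\logic{L}$, the statement $\ssc{\logic{L}} = \CL$ is equivalent to $\logic{L}$ and $\CL$ having the same antitheorems. If $\ssc{\logic{L}} = \CL$ then~(1) holds, and the quasi-inequality is then immediate from~(1) applied to the classical consequence $\negbs(p \wedge \negbs q) \vdash_{\CL} p \bs q$. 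For the converse I would argue that every simple model $\pair{\alg{A}}{F}$ of $\logic{L}$ is already a model of $\CL$: by the global IL exactly one of $a$ and $\negbs a$ lies in $F$ for each $a \in A$ (both cannot, since the IL gives $p, \negbs p \vdash_{\logic{L}} \allset$), so the congruence $\theta_{F}$ collapses $\alg{A}$ onto a residuated lattice on which $\negbs$ acts as a Boolean complement, and the quasi-inequality of~(5) is precisely what is needed to verify that this collapse respects the residual $\bs$, so that the quotient is a Boolean algebra and $F$ is (the preimage of) an ultrafilter. Then every simple $\logic{L}$-theory is a maximal consistent $\CL$-theory and conversely, so $\ssc{\logic{L}}$, being complete with respect to its simple theories, equals $\CL$.

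The hard part will be this last step: the residuated-lattice computation showing that the single quasi-inequality of~(5), together with the global inconsistency lemma, forces the simple models of $\logic{L}$ to be Boolean algebras. Everything else is bookkeeping with the machinery of Sections~\ref{sec: ils and ddts} and~\ref{sec: glivenko} -- the invocation of Proposition~\ref{prop: glivenko} and of the absence of proper consistent extensions of $\CL$, the elementary interderivability observation, and the slightly delicate verification that the ``$\vdash 0$'' formulations coincide with honest antitheoremhood and that the left-handed conditions (1), (3), (5) match their mirror images (2), (4), (6). It is in that matching that the constant $0$ (rather than $\bot$) plays an essential role, since it is precisely what keeps $\negbs$ and $\negs$, and hence a condition and its mirror image, distinct.
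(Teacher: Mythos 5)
Your opening move---reducing all six conditions to the single statement $\ssc{\logic{L}} = \CL$---does not work, and the error occurs exactly at the step you gloss over: the claim that the conditions force $0$ to be an antitheorem of $\logic{L}$. From the first clause of (5) with $\Gamma := \{0\}$ you only obtain $0 \vdash_{\logic{L}} \negbs p$ for a fresh variable $p$, and the Fact about fresh variables requires deriving $p$ itself, not $\negbs p$; so the inference to $0 \vdash_{\logic{L}} \allset$ is unjustified. In fact it is false that the conditions give $0 \vdash_{\logic{L}} \allset$, or the global IL for $\logic{L}$ in the antitheorem sense, or $\ssc{\logic{L}} = \CL$. A counterexample: let $M$ be the matrix whose algebra is the three-element G\"{o}del chain $\bot < a < \top$ (a Heyting algebra, hence an $\FL$-algebra) with the constant $0$ interpreted as $a$, and whose filter is $\{a, \top\}$, and put $\logic{L} := \CL \cap \logic{M}$, where $\logic{M}$ is the logic of $M$. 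Then $\logic{L}$ is compact and $\FL \logleq \logic{L} \logleq \CL$. In $M$ every formula of the form $\negbs \psi$ or $\negs \psi$, as well as $0$ itself, is designated under every valuation (its value is always $\geq a$), so any rule whose conclusion has one of these forms is valid in $\logic{L}$ iff it is valid in $\CL$; hence all of (1)--(6) hold. Yet $0$ and even $p \wedge \negbs p$ are designatable in the non-trivial model $M$, so they are not antitheorems of $\logic{L}$, the equivalence $\Gamma, \varphi \vdash_{\logic{L}} \allset \iff \Gamma \vdash_{\logic{L}} \negbs \varphi$ fails, and a simple $\logic{L}$-theory containing $0$ witnesses $\ssc{\logic{L}} \neq \CL$. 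Consequently your subsequent steps collapse as well: the appeal to Proposition~\ref{prop: glivenko} for $\logic{L}$, the claim that (1)--(4) ``force that IL,'' and the final semantic argument that the simple models of $\logic{L}$ are Boolean (which again presupposes that exactly one of $a$, $\negbs a$ lies in every simple filter).

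The paper's proof is built precisely to avoid this trap: the conditions only speak of derivability of $0$, not of genuine inconsistency, so one passes to the auxiliary logic $\logic{L}_{0} := \logic{L} + (0 \vdash \bot)$, proves the proof-theoretic identity $\Gamma \vdash_{\logic{L}} 0 \iff \Gamma \vdash_{\logic{L}_{0}} \allset$, and then shows $\ssc{\logic{L}_{0}} = \CL$ (not $\ssc{\logic{L}} = \CL$). That last identification is carried out syntactically rather than via simple models: $\logic{L}_{0}$ enjoys the global IL of intuitionistic logic, so $\ssc{\logic{L}_{0}}$ has the classical IL and hence a global DDT; the extra rule in (5), transported by the Glivenko connection between $\logic{L}_{0}$ and $\ssc{\logic{L}_{0}}$, upgrades this to the intuitionistic DDT, which yields Exchange, Contraction, and Weakening, whence $\IL \logleq \logic{L}_{0} \logleq \CL$ and $\ssc{\logic{L}_{0}} = \ssc{\IL} = \CL$. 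If you want to salvage your outline, you must systematically replace $\vdash_{\logic{L}} \allset$ by $\vdash_{\logic{L}} 0$ (equivalently work with $\logic{L}_{0}$ throughout); also note that the theorem compares a condition with its mirror image through the interderivability $\negbs \varphi \dashv\vdash_{\FL} \negs \varphi$, exactly as you do, so your closing remark that $0$ ``keeps a condition and its mirror image distinct'' has the emphasis wrong---the role of $0$ is that it need not behave as a falsum in $\logic{L}$ at all, which is the very point your proposal misses.
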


\begin{proof}
  We prove the equivalence of (1), (3), and (5). The equivalence of (2), (4), and (6) is proved in an entirely analogous manner, and (3) and (4) are equivalent because $\negbs p \vdash_{\FL} \negs p$ and $\negs p \vdash_{\FL} \negbs p$. 

  (1) $\Rightarrow$ (3): if $\Gamma \vdash_{\CL} \negbs \varphi$, then $\Gamma \vdash_{\CL} \negs \varphi$, therefore $\Gamma \vdash_{\logic{L}} \negbs \negbs \negs \varphi$ by (1). But $\negbs p \vdash_{\FL} \negs p$ and $\negs \negbs \negs p \vdash_{\FL} \negs p$, therefore $\Gamma \vdash_{\logic{L}} \negs \negbs \negs \varphi$ and $\Gamma \vdash_{\logic{L}} \negs \varphi$. The rule $\negs p \vdash_{\FL} \negbs p$ now yields $\Gamma \vdash_{\logic{L}} \negbs \varphi$. Conversely, $\Gamma \vdash_{\logic{L}} \negbs \varphi$ implies $\Gamma \vdash_{\CL} \negbs \varphi$ because $\logic{L}$ is subclassical.

  (3) $\Rightarrow$ (5): the rule in question is valid in $\CL$, thus it is valid in $\logic{L}$ by~(3). If~$\Gamma \vdash_{\logic{L}} \negbs \varphi$, then $\Gamma, \varphi \vdash_{\logic{L}} 0$ because $p, \negbs p \vdash_{\FL} 0$. Conversely, if $\Gamma, \varphi \vdash_{\logic{L}} 0$, then $\Gamma, \varphi \vdash_{\CL} 0$, hence $\Gamma \vdash_{\CL} \negbs \varphi$, and $\Gamma \vdash_{\logic{L}} \negbs \varphi$ by (3).

  (5) $\Rightarrow$ (3): let $\logic{L}_{0}$ be the extension of $\logic{L}$ by the rule $0 \vdash \bot$. We claim that
\begin{align*}
  \Gamma \vdash_{\logic{L}} 0 \iff \Gamma \vdash_{\logic{L}_{0}} \allset.
\end{align*}
  The left-to-right direction is trivial, and conversely either the proof of $\Gamma \vdash_{\logic{L}_{0}} \bot$ does not use the rule $0 \vdash \bot$, in which case $\Gamma \vdash_{\logic{L}} \bot \vdash_{\logic{L}} 0$, or it does use the rule, in which case $\Gamma \vdash_{\logic{L}} 0$. Now if~$\ssc{\logic{L}_{0}} = \CL$, then
\begin{align*}
  \Gamma \vdash_{\CL} \varphi & \iff \Gamma, \negbs \varphi \vdash_{\CL} \allset \text{ by the IL of $\CL$} \\
  & \iff \Gamma, \negbs \varphi \vdash_{\logic{L}_{0}} \allset \text{ because } \ssc{\logic{L}_{0}} = \CL \\
  & \iff \Gamma \negbs \varphi \vdash_{\logic{L}} 0 \text{ by the above equivalence} \\
  & \iff \Gamma \vdash_{\logic{L}} \negbs \negbs \varphi \text{ by (5)}.
\end{align*}
  It will therefore suffice to prove that $\ssc{\logic{L}_{0}} = \CL$.

  The logic $\logic{L}_{0}$ enjoys the global IL of intuitionistic logic: in one direction, $\Gamma \vdash_{\logic{L}_{0}} \neg (\varphi_{1} \wedge \dots \wedge \varphi_{k})$ implies $\Gamma, \varphi_{1}, \dots, \varphi_{k} \vdash_{\logic{L}_{0}} \allset$, since $0 \vdash_{\logic{L}_{0}} \bot$. Conversely,
\begin{align*}
  \Gamma, \varphi_{1}, \dots, \varphi_{k} \vdash_{\logic{L}_{0}} \allset & \implies \Gamma, \varphi_{1}, \dots, \varphi_{k} \vdash_{\logic{L}} 0 \\
  & \implies \Gamma, \varphi_{1} \wedge \dots \wedge \varphi_{k} \vdash_{\logic{L}} 0 \\
  & \implies \Gamma \vdash_{\logic{L}} \neg (\varphi_{1} \wedge \dots \wedge \varphi_{k}) \text{ by (5)} \\
  & \implies \Gamma \vdash_{\logic{L}_{0}} \neg (\varphi_{1} \wedge \dots \wedge \varphi_{k}).
\end{align*}
  The logic $\ssc{\logic{L}_{0}}$ enjoys the classical global IL with respect to the same IL family by Proposition~\ref{prop: cil for semisimple companion}, therefore it enjoys the global DDT in the form
\begin{align*}
  \Gamma, \varphi \vdash_{\ssc{\logic{L}_{0}}} \psi & \iff \Gamma \vdash_{\ssc{\logic{L}_{0}}} \negbs (\varphi \wedge \negbs \psi).
\end{align*}
  Since $\logic{L}_{0}$ is compact and enjoys a finitary global IL w.r.t.~a family of finite sets,  the assumed rule $\negbs (\negbs p \wedge q) \vdash_{\logic{L}_{0}} \negbs \negbs (p \bs q)$ yields that $\negbs (p \wedge \negbs q) \vdash_{\ssc{\logic{L}_{0}}} p \bs q$ by the Glivenko theorem connecting $\logic{L}_{0}$ and $\ssc{\logic{L}_{0}}$, i.e.\ by Proposition~\ref{prop: glivenko}. Due~to this rule, $\ssc{\logic{L}_{0}}$ enjoys the DDT of intuitionistic logic:
\begin{align*}
  \Gamma, \varphi \vdash_{\ssc{\logic{L}_{0}}} \psi & \iff \Gamma \vdash_{\ssc{\logic{L}_{0}}} \varphi \bs \psi.
\end{align*}
  This DDT immediately implies that the axioms of Exchange, Contraction, and Weakening, i.e.\ the formulas $(p \bs (q \bs r)) \bs (q \bs (p \bs r)$, $(p \bs (p \bs q)) \bs (p \bs q),$ and $p \bs (q \bs p)$, are theorems of $\ssc{\logic{L}_{0}}$. Since adding these axioms to $\FL$ yields intuitionistic logic, $\IL \logleq \logic{L}_{0} \logleq \CL$. But each non-trivial theory of $\IL$ extends to a simple theory of $\IL$, which is a theory of $\CL$ and therefore also a theory of $\logic{L}_{0}$. It follows that the simple theories of $\IL$ and $\logic{L}_{0}$ coincide, therefore $\ssc{\logic{L}_{0}} = \ssc{\IL} = \CL$.
\end{proof}

  The associativity of $\FL$ was not used in the above proof, therefore the same theorem in fact holds for extensions of the non-associative Full Lambek calculus.

%\bibliographystyle{plain}
%\bibliography{ref}

\end{document}